\title{Some aspects of descent theory and applications}
\author{Rui Rodrigues de Abreu Fernandes Prezado}
\begin{document}


\frontmatter

\begin{titlepage}
  \vspace*{30mm}
  \maketitle
\end{titlepage}


\begin{acknowledgements}

  I am forever indebted to Maria Manuel Clementino and Fernando Lucatelli
  Nunes. Not only for your guidance and your teachings, but also for all the
  opportunities you have provided for me through this experience, as well as
  providing encouragement and support.

  I would like to extend this gratitude to the jury members of my Ph.~D.
  defense: Jorge Picado, Marcelo Fiore, Peter Gothen, Dirk Hofmann, Diana
  Rodelo, Manuela Sobral. Thank you for reading the thesis with care, and for
  the fascinating discussion that followed. 

  I am also very thankful for the opportunity to co-author with Lurdes Sousa,
  and I hope this bears future occasions for collaboration.

  I would like to thank the people at the Centre of Mathematics of the
  University of Coimbra (CMUC), particularly the Algebra, Logic and Topology
  research group which taught me many things about research in category
  theory.

  For being kindly patient and very helpful, I would like to thank Rute
  Andrade. 

  For warmly hosting me during my stay at UCLouvain, a special thank you to
  Marino Gran and Tim Van der Linden.

  To André, Carlos and Eli, I thank each of you for the valuable friendships,
  and for all the conservations we have had about mathematics and other
  general nonsense.

  Thanks to Mercedes and Silvana, I was supplied plenty of cold beers and hot
  coffee. I am grateful for all the humourous moments at the ``Bar de
  Matemática'' we had, and I hope more are to come.

  Finally, I would like to thank my family for always being supportive and
  present. In particular: my parents, Maria de Fátima and Jaime, thank you
  for nurturing my passion for mathematics; my partner, Liliana, thank you
  for your endless love and patience, which I will always treasure; my
  companions, Boris and Maria.

  The financial supports from Fundação para a Ciência e Tecnologia (FCT)
  through the grant PD/BD/150461/2019, and from Centro de Matemática da
  Universidade de Coimbra (CMUC) funded by the Portuguese Government through
  FCT/MCTES are gratefully acknowledged. 

  The support from the Oberwolfach Foundation Fellowship, funded by the
  Mathematisches Forschunginstitut Oberwolfach (MFO), as well as the
  opportunity for the research stay at MFO, are gratefully acknowledged as
  well.

\end{acknowledgements}

\begin{abstract}
  This thesis is an exposition of the author's contribution on  effective
  descent morphisms in various categories of generalized categorical
  structures. It consists of: Chapter 1, where an elementary description of
  descent theory and the content of each remaining chapter is provided,
  supplemented with references; Chapter 2, consisting of various descent
  theoretical definitions and results employed in the remainder of this work;
  four chapters, each corresponding to an article written by the author during
  the period of his PhD studies.

  In Chapter 3, we describe conditions for which a $ \mathcal V $-functor 
  is an effective descent morphism in the category $ \mathcal{V} \text{-}
  \mathsf{Cat} $ of $ \mathcal V $-categories, where $ \mathcal V $ is a
  cartesian monoidal category with finite limits. Since these conditions rely
  on understanding (effective) descent morphisms in the free coproduct
  completion $ \mathsf{Fam}(\mathcal V) $ of the category $ \mathcal V $, we
  also carried out a study of such morphisms. We show how these results may be
  applied to describe the effective descent $ \mathcal V $-functors for the
  categories $ \mathcal V = \mathsf{CHaus} $ of compact Hausdorff spaces and $
  \mathcal V = \mathsf{Stn} $ of Stone spaces. The main reference of this
  chapter is the single-authored article \textit{On effective descent
  $\mathcal V$-functors and familial descent morphisms}, published in the
  \textit{Journal of Pure and Applied Algebra, vol. 228, n. 5, 2024}.

  We study effective descent morphisms for generalized multicategories
  internal to a category $ \mathcal V $ with finite limits in Chapter 4,
  proposing two approaches to obtain their description. The first approach
  relies on depicting the category $ \mathsf{Cat}(T,\mathcal V) $ of $ T
  $-categories internal to $ \mathcal V $ as a 2-dimensional limit, which
  provides a method of studying their effective descent morphisms. The second
  approach extends Ivan Le Creurer's techniques on internal categories to the
  setting of generalized internal multicategories. As a consequence of this
  work, we provide conditions for functors between internal multicategories to
  be of effective descent, as well as for functors between internal graded
  categories (by an internal monoid), internal operadic multicategories and
  ``enhanced'' multicategories. The main reference for this chapter is the
  article \textit{Descent for internal multicategory functors}, published in
  \textit{Applied Categorical Structures, vol. 31, n. 11, 2023}, with Fernando
  Lucatelli Nunes.

  Furnished with the results for effective descent morphisms in internal
  generalized multicategories, Chapter 5 aims to extend these results to the
  setting of \textit{enriched} generalized multicategories -- the so-called $
  (T, \mathcal V)$-categories. This is accomplished by extending the embedding
  of ``enriched $ \to $ internal'' categories to the setting of generalized
  multicategories, via a broad notion of \textit{change-of-base} for
  generalized categorical structures, which we specialize to our setting. We
  discuss the conditions under which the embedding $ (\overline{T}, \mathcal
  V)\text{-}\mathsf{Cat} \to \mathsf{Cat}(T,\mathcal V) $ exists and whether
  it reflects effective descent morphisms. Finally, we show these results can
  be applied to the enriched counterparts of the multicategories considered in
  Chapter 4.  More precisely, we obtain descriptions of the effective descent
  functors between enriched multicategories, enriched graded categories,
  enriched operadic multicategories, and the discrete counterparts to the
  ``enhanced'' multicategories. The main reference for this chapter is the
  article \textit{Generalized multicategories: change-of-base, embedding and
  descent, arXiv:2309.08084, DMUC preprints 23-29}, under review, with
  Fernando Lucatelli Nunes.

  Chapter 6 considers the techniques used by Sobral to study effective descent
  functors with respect to the fibration of discrete opfibrations under a new
  perspective. More specifically, we first highlight the relationship between
  the Cauchy completion of $ \mathcal V $-enriched categories and the $
  \mathcal V $-fully faithful lax epimorphisms: the latter are precisely those
  $\mathcal V$-functors that induce an equivalence on the Cauchy completions.
  Second, we show that the study of effective descent functors with respect to
  a suitable pseudofunctor $ \mathsf{Cat}^\mathsf{op} \to \mathsf{CAT} $ can
  be simplified via formal methods. Combining these two ideas, we confirm that
  Sobral's characterization can be extended, showing the same conditions
  also characterize the effective descent morphisms with respect to the
  fibration of \textit{split opfibrations}. The main reference for this
  chapter is the article \textit{Cauchy completeness, lax epimorphisms and
  effective descent for split fibrations}, published in \textit{Bulletin of
  the Belgian Mathematical Society -- Simon Stevin, vol. 30, n. 1, 2023}, with
  Fernando Lucatelli Nunes and Lurdes Sousa.

  \cleardoublepage
  \setsinglecolumn
  \chapter*{\centering \Large Resumo}
  \thispagestyle{empty}

  Esta tese é uma exposição das contribuições do autor sobre morfismos de
  descida efetiva em várias categorias de estruturas categoriais
  generalizadas. É consistido por: Capítulo 1, onde é fornecida uma descrição
  elementar sobre a teoria de descida e o conteúdo dos demais capítulos,
  complementado com referências bibliográficas; Capítulo 2, composto por
  várias definições e vários resultados da teoria de descida empregues na
  restante obra; quatro Capítulos, correspondendo a cada artigo escrito pelo
  autor durante o período dos seus estudos doutorais.

  No Capítulo 3, descrevemos condições para que um $ \mathcal V $-functor seja
  de um morfismo de descida efetiva na categoria $ \mathcal{V} \text{-}
  \mathsf{Cat} $ de $ \mathcal V $-categorias, onde $ \mathcal V $ é uma
  categoria monoidal cartesiana com limites finitos. Como estas condições
  dependem do conhecimento dos morfismos de descida (efetiva) no completamento
  livre $ \mathsf{Fam}(\mathcal V) $ da categoria $ \mathcal V $ para
  coprodutos, também se desempenhou um estudo de tais morfismos. Demostramos
  como estes resultados podem ser aplicados para descrever os $ \mathcal V
  $-functores de descida efetiva para as categorias $ \mathcal V =
  \mathsf{CHaus} $ de espaços de Hausdorff compactos e $ \mathcal V =
  \mathsf{Stn} $ de espaços de Stone. A referência principal deste capítulo é
  o artigo de autoria única \textit{On effective descent $\mathcal V$-functors
  and familial descent morphisms}, publicado no \textit{Journal of Pure and
  Applied Algebra, vol. 228, n. 5, 2024}.

  Estudamos morfismos de descida efetiva para multicategorias generalizadas
  internas a uma categoria $ \mathcal V $ com limites finitos no Capítulo 4,
  propondo duas abordagens para obter a sua descrição. A primeira abordagem
  recorre a uma descrição da categoria $ \mathsf{Cat}(T,\mathcal V) $ de $ T
  $-categorias internas a $ \mathcal V $ como um limite de dimensão 2, que
  proporciona um método para o estudo dos seus morfismos de descida efetiva.
  A segunda abordagem extende as técnicas de Ivan Le Creurer para a descrição
  de morfismos de descida efetiva em categorias internas para o contexto das
  multicategorias internas generalizadas. Como consequência destas descrições,
  fornecemos condições para que functores entre multicategorias internas sejam
  de descida efetiva, tal como functores entre categorias graduadas internas
  (por um monóide interno), multicategorias operádicas internas, e
  multicategorias ``aprimoradas''. A referência principal para este capítulo é
  o artigo \textit{Descent for internal multicategory functors}, publicado em
  \textit{Applied Categorical Structures, vol. 31, nº 11, 2023}, com Fernando
  Lucatelli Nunes.

  Munido com os resultados sobre morfismos de descida efetiva em
  multicategorias generalizadas internas, o Capítulo 5 pretende extender estes
  resultados para o contexto das multicategorias generalizadas
  \textit{enriquecidas} -- as ditas $(T,\mathcal V)$-categorias. Isto foi
  concretizado através de uma extensão da imersão de categorias ``enriquecidas
  $ \to $ internas'' para o ambiente das multicategorias generalizadas,
  através de uma noção abrangente de \textit{mudança de base} para estruturas
  categoriais generalizadas, que especializamos para o nosso contexto. São
  também discutidas as condições sob as quais é possível levar a cabo uma tal
  extensão, e quando é, se $ (\overline{T}, \mathcal V)\text{-}\mathsf{Cat}
  \to \mathsf{Cat}(T,\mathcal V) $ reflete morfismos de descida efetiva.
  Finalmente, demostra-se que estes resultados podem ser aplicados às
  multicategorias enriquecidas associadas às consideradas no Capítulo 4. Mais
  precisamente, obtemos descrições para os morfismos de descida efetiva entre
  multicategorias enriquecidas, categorias graduadas enriquecidas,
  multicategorias operádicas enriquecidas, e os análogos discretos das
  multicategorias ``aprimoradas''.  A referência principal para este capítulo
  é o artigo \textit{Generalized multicategories: change-of-base, embedding
  and descent, arXiv:2309.08084, pré-publicações DMUC 23-29}, sob revisão, com
  Fernando Lucatelli Nunes.

  O Capítulo 6 considera as técnicas utilizadas por Sobral no estudo de
  morfismos de descida efetiva em relação ao fibrado dos opfibrados discretos
  sob uma nova perspetiva. Mais especificamente, realçamos, em primeiro lugar,
  a relação entre o completamento de Cauchy para categorias enriquecidas em $
  \mathcal V $ e os epimorfismos lassos $ \mathcal V $-plenamente fiéis: estes
  últimos são precisamente os $\mathcal V$-functores que induzem uma
  equivalência nos completamentos de Cauchy. Em segundo lugar, mostramos que o
  estudo de morfismos de descida efetiva em relação a um pseudofunctor 
  $ \mathsf{Cat}^\mathsf{op} \to \mathsf{CAT} $ pode ser simplificado através
  de métodos formais. Combinando estas duas ideias, confirmamos que a
  caracterização de Sobral pode ser extendida, mostrando que as mesmas
  condições também caracterizam os morfismos de descida efetiva em relação ao
  pseudofunctor de \textit{opfibrados cindidos}. A referência principal para
  este capítulo é o artigo \textit{Cauchy completeness, lax epimorphisms and
  effective descent for split fibrations}, publicado em \textit{Bulletin of
  the Belgian Mathematical Society -- Simon Stevin, vol. 30, nº 1, 2023}, com
  Fernando Lucatelli Nunes e Lurdes Sousa.

\end{abstract}


\tableofcontents



%
%






\mainmatter

\chapter{Introduction} 
\label{chap:intro}

\textit{Descent theory} was first established in \cite{Gro60, Gir64, Dem70,
Gro71} in the context of algebraic geometry, aiming to generalize the solution
of the following problem: describe the commutative ring homomorphisms \( R \to
S \) for which the extension-of-scalars functor \( \RMod \to \SMod \) is
well-behaved. A more recent account of this problem, studied in a broader
context, can be found in \cite{JT04}.

Descent theory has since found various applications and connections with
other areas of mathematics, namely:
\begin{itemize}[label=--,noitemsep]
  \item
    the theory of monads \cite{BR70}, \cite{Luc21}, \cite{Luc22},
  \item
    two-dimensional limits and coherence, \cite{Lac02}, \cite{Luc16},
    \cite{Luc18},
  \item
    algebraic topology \cite{BJ97}, \cite{CH12},
  \item
    Janelidze-Galois theory \cite{Jan90}, \cite{BJ01},
  \item
    topology \cite{RT94}, \cite{CH02}.
\end{itemize}
It is often useful to depict descent theory as a higher dimensional analogue
of \textit{sheaf theory}, as in \cite[Introduction]{JT94}. The \textit{gluing
condition}, described in terms of an equalizer of a parallel pair of functions
on sets (\textit{sheaf condition}), is replaced by \textit{descent data},
described in terms of a descent object of a suitable diagram of categories
(\textit{descent condition}).

\section{Descent theory with respect to the basic bifibration}

The fundamental setting begins with a category \( \cat C \) with pullbacks, a
morphism \( p \colon e \to b \), and considers the following
\textit{change-of-base} adjunction
\begin{equation*}
  \begin{tikzcd}
    \cat C \comma b \ar[r,bend right,"p^*"{name=A,below},swap]
    & \cat C \comma e, \ar[l,bend right,"p_!"{name=B,above},swap]
    \ar[from=A,to=B,phantom,"\adj" {anchor=center, rotate=-90}]
  \end{tikzcd}
\end{equation*}
where \( \cat C \comma x \) is the comma category whose objects are the
morphisms in \( \cat C \) with codomain \(x\), also called \textit{bundles}
over \(x\). The descent problem, in this setting, can be stated as follows:
describe the morphisms \( p \colon e \to b \) where bundles over \( b \) admit
a presentation as bundles over \( e \) plus some structure, specified by \(
p^* \), satisfying coherence conditions -- this structure is the so-called
\textit{descent data}.

The \textit{category} \( \Desc(p) \) \textit{of descent data of \( p \)} may
be presented as the category \( T^p\dash \Alg \) of \mbox{\(T^p\)-algebras},
where \(T^p\) is the monad induced by the change-of-base adjunction, by the
Bénabou-Roubaud theorem~\cite{BR70}. Hence, we may consider the
\textit{Eilenberg-Moore factorization} of the pullback functor \( p^* \) as
follows:
\begin{equation*}
  \begin{tikzcd}
    \cat C \comma b \ar[rd,"\mathcal K^p",swap] \ar[rr,"p^*"] 
      && \cat C \comma e \\
    & \Desc(p) \ar[ur]
  \end{tikzcd}
\end{equation*}

Therefore, the descent problem is reduced to the question of whether the
comparison functor \( \mathcal K^p \colon \cat C \comma b \to \Desc(p) \) is
an equivalence. When this is the case, we say that \(p\) is an
\textit{effective descent morphism}: these morphisms, the main object of study
of this work, are precisely the solutions to the descent problem. Therefore it
is informative to obtain descriptions of such morphisms. In a pursuit of such
descriptions, it is useful to consider the notions of \textit{descent} and
\textit{almost descent} morphism. We say that \(p\) is
\begin{itemize}[noitemsep,label=--]
  \item
    \( p \) is a descent morphism if \( \mathcal K^p \) is fully faithful, 
  \item
    \( p \) is an almost descent morphism in \( \mathcal K^p \) is faithful.
\end{itemize}
These refinements will be useful in our description of effective descent
morphisms in categorical structures.

If \( \cat C \) has all finite limits, then
\begin{itemize}[noitemsep,label=--]
  \item
    the descent morphisms are precisely the pullback-stable regular
    epimorphisms, and
  \item
    the almost descent morphisms are precisely the pullback-stable
    epimorphisms,
\end{itemize}
but, in general\footnote{If $\cat C$ is either Barr-exact \cite{Bar71} or
locally cartesian closed, the effective descent morphisms are precisely the
regular epimorphisms.}, the effective descent morphisms seldom have an
elementary description. In fact, our story begins with the classical example
of this phenomenon: the characterization of effective descent morphisms in the
category \( \Top \) of topological spaces, first given in \cite{RT94}, shows
how involved such a description can get.

In \cite{Cre99}, we find another example of the prominently challenging
problem of studying effective descent morphisms. This work studies these
morphisms in categories of essentially algebraic theories internal to a
category \( \cat B \) with finite limits. In particular, given a functor \( p
\colon x \to y \) of categories internal to \( \cat B \), Le Creurer shows
that \(p\) is effective for descent if\footnote{Le Creurer required the
component of $p$ on objects, $p_0 \colon x_0 \to y_0$ to be effective for
descent, but this was shown to be redundant in \cite[Lemma A.3]{PL23}.}.
\begin{enumerate}[noitemsep,label=(\Roman*)]
  \item
    \label{enum:singles.eff.desc}
    \( p_1 \colon x_1 \to y_1 \) is an effective descent morphism in \( \cat B
    \),
  \item
    \label{enum:pairs.desc}
    \( p_2 \colon x_2 \to y_2 \) is a descent morphism in \( \cat B \),
  \item
    \label{enum:triples.almost.desc}
    \( p_3 \colon x_3 \to y_3 \) is an almost descent morphism in \( \cat B \),
\end{enumerate}
where \( p_n \colon x_n \to y_n \) is the component of \(p\) on the object of
the \(n\)-tuples of composable morphisms (or \textit{\(n\)-chains}).
Moreover, when \( \cat B \) is lextensive and has a (regular epi,
mono)-factorization system, it was also verified that these criteria are
necessary.

The effective descent morphisms in the category of finite ordered sets
(equivalent to the category of finite topological spaces) were studied in
\cite{JS02b}, proving that a morphism \( p \colon x \to y \) between finite
ordered sets is effective for descent if and only if for all \( a,b,c \in y \)
with \( a \leq b \leq c \) there exist \( a',b',c' \in x \) with \( a' \leq b'
\leq c' \) such that \( a = pa' \), \( b = pb' \) and \( c = pc' \). We point
out the similarity of this condition with \ref{enum:pairs.desc}, as any
ordered set is a category. This insight of ``chain-surjectivity'' led
\cite{CH02} to show that a suitable restatement of these conditions provided a
neat perspective on the characterization of the effective descent morphisms in
\( \Top \), which arise naturally once topological spaces are incarnated as a
\textit{generalized categorical structure}. Inspired by this perspective,
\cite{CH12, CH17, CH04, CJ11} studied the description problem of effective
descent morphisms in various notions of \textit{spaces}; when \( \cat V \) is
a suitable quantale, these are the so-called \( (T, \cat V)
\)\textit{-categories}, which were introduced in a more general setting in
\cite{CT03}. These categories of \( (T,\cat V) \)-categories are a notion of
\textit{enriched} generalized categorical structure, of which the category \(
\Top \) of topological spaces is an example. This corroborates the perspective
of Lawvere, given in \cite{Law73}, that \textit{fundamental structures}, such
as topological spaces, are \textit{categorical} in nature.

The insight of \cite{Cre99, JS02b, CH02} supports the intuition that effective
descent morphisms of categorical structures have a natural description in
terms of the ``chain-surjectivity'' conditions \ref{enum:singles.eff.desc},
\ref{enum:pairs.desc} and \ref{enum:triples.almost.desc}. As an example, via
his study on the commutativity of bilimits, Lucatelli Nunes obtained
\cite[Theorem 9.10]{Luc18}, where it was shown that the embedding of the
category \( \VCat \) of \( \cat V \)-categories into the category \( \CatV \)
of categories internal to \( \cat V \) 
\begin{equation}
  \label{eq:enr.int}
  \VCat \to \CatV 
\end{equation}
reflects effective descent functors, when \( \cat V \) is a suitable
lextensive, cartesian monoidal category. Together with the
``chain-surjectivity'' criteria of Le Creurer, we recover a list of criteria
for such enriched \( \cat V \)-functors to be effective for descent.

This is where the work of the author comes in. It was shown in \cite{Pre23}
that 
\begin{equation*}
  \VCat \to \FamVCat 
\end{equation*}
reflects effective descent morphisms, when  \( \cat V \) is a category with
finite limits (\cite[Lemma 3.1]{Pre23}), where \( \FamV \) is the category of
\textit{families} of objects of \( \cat V \), also known as the \textit{free
coproduct completion} of \( \cat V \). The category \( \FamV \) is a suitable
lextensive category, so that \cite[Theorem 9.10]{Luc18} can be applied to the
embedding \( \FamVCat \to \CatFamV \), confirming that it reflects effective
descent morphisms. Thus, by reflecting along the composite
\begin{equation*}
  \begin{tikzcd}
    \VCat \ar[r] & \FamVCat \ar[r] & \CatFamV,
  \end{tikzcd}
\end{equation*}
the ``chain--surjectivity'' criteria of \cite{Cre99} allow us to describe the
effective descent morphisms in \( \VCat \) in terms of morphisms in \( \FamV
\) (\cite[Theorem 3.3]{Pre23}), extending \cite[Theorem 9.10]{Luc18} to all
categories \( \cat V \) with finite limits. 

The conditions stated in \cite[Theorem 3.3]{Pre23} for effective descent
morphisms in \( \VCat \) are stated in terms of conditions on morphisms in \(
\FamV \). This naturally prompts the study of effective descent morphisms in
free coproduct completions, which was also carried out in \cite{Pre23}. Via
these results, we can show that, when \( \cat V \) is a frame, we obtain one
implication of the main results of \cite{CH04} (\cite[Theorem~4.7]{Pre23}),
effectively confirming that the criterion set forth by \cite{JS02b} are
implied by the criteria of \cite{Cre99}, confirming that both approaches to
seemingly unrelated descent problems have the same underlying ideas.

The work of \cite{CH02, CH12, CH17, CJ11} regards effective descent morphisms
of \textit{\( (T,\cat V) \)-categories}, when \( \cat V \) is a quantale. The
conditions described therein can also be described via similar
``chain-surjectivity'' conditions, evidencing that this perspective on
effective descent morphisms goes beyond plain categorical structures.

\textit{Multicategories} are the most fundamental example of a generalized
categorical structure. An illustrative example of which is the multicategory
\( \Vect \) of vector spaces and \textit{multilinear} maps, that is, functions
\( f \colon V_1 \times \ldots \times V_n \to W \) from a finite list of
vector spaces \( V_1, \ldots, V_n \) to a vector space \( W \), which are
linear in each component:
\begin{equation*}
  f(v_1, \ldots, v_i + \lambda w_i, \ldots, v_n)
    = f(v_1,\ldots,v_i,\ldots,v_n)
        + \lambda f(v_1,\ldots,w_i,\ldots,v_n),
\end{equation*}
where \( v_j \) is a vector in \( V_j \) for each \( j = 1, \ldots, n \), \(
w_i \) is a vector in \(V_i \), and \( \lambda \) is a scalar. This definition
includes \( n = 0 \); in which case \(f\) consists of a vector in \(W\).

Thus, multicategories generalize categories in the sense that the domain of a
morphism consists of a finite string of objects, with an adequate notion of
composition of morphisms, as well as identity morphisms, satisfying suitable
associativity and unity laws. A more thorough introduction to these objects
can be found in Chapter \ref{chap:internal-multi}, along with references for
further study.

More general notions of ``multicategory'' can be obtained by varying the
``shape'' of the domain of a morphism. In the case of categories, the
``shape'' is just an object, while in the multicategory case, the ``shape'' is
a finite string of objects. As we have mentioned, topological spaces are
generalized categorical structures. In this case, for a topological space
\(X\), the domains of the morphisms are \textit{ultrafilters} on its
underlying set of objects (points), and a morphism \( \mathfrak x \to x \) is
the \textit{assertion} that the ultrafilter \( \mathfrak x \) converges to the
point \(x\).\footnote{This is analogous to the notion that a ordered set $X$
can be viewed as a category, whose morphisms $x \to y$ are the assertions that
``$x$ is related to $y$''.}  This perspective on topological spaces can be
traced back to \cite{Bar70}.

As in the case of plain categories, we have a notion of \textit{internal}
generalized multicategories, first considered in \cite{Bur71}, and more
recently in \cite{Her00}, as well as a notion of \textit{enriched} generalized
multicategories, which are the \( (T,\cat V) \)-categories of \cite{CT03}. Our
goal is to obtain an uniform description of the effective descent morphisms on
both accounts of generalized categorical structure.

Towards such a description, the work carried out in \cite{PL23} is our first
step, where we studied the effective descent morphisms of
\textit{\(T\)-categories}\footnote{Here, $T$ is a suitable monad on a $\cat V$
with finite limits which models the shape of the domains of the morphisms.}
\textit{internal to $\cat V$}. Therein, we showed that, if \( \cat V \) is a
category with finite limits, then any functor of \(T\)-categories internal to
\( \cat V \) satisfying a suitable notion of ``chain-surjectivity'' conditions
is an effective descent morphism (\cite[Theorem 5.3]{PL23}), extending the
result of \cite{Cre99} to the setting of internal generalized categorical
structures. In particular, our results provide insight into the effective
descent functors of (ordinary) multicategories internal to any category with
finite limits, such as \( \Set \) or \( \Top \).

Based on \cite{PL23}, and inspired by the techniques of \cite{Luc18}, the work
developed in \cite{PL23b} considers the problem of reflecting effective
descent morphisms along a suitable embedding ``enriched \( \to \) internal''
in the setting of generalized multicategories, analogous to
\eqref{eq:enr.int}. Therein, a notion of \textit{change-of-base} for
generalized multicategories was developed \cite[Theorem 5.2]{PL23b}, and it
was shown that, under suitable conditions (for instance, when \( \cat V \) is
lextensive), the natural generalization of \eqref{eq:enr.int} to the setting
of generalized multicategories
\begin{equation}
  \label{eq:multi.enr.int}
  \TtVCat \to \CatTV
\end{equation}
is an embedding (Theorem 9.2) and reflects effective descent morphisms. Thus,
by applying the results of \cite{PL23}, we obtain a description of effective
descent morphisms for enriched generalized multicategories in terms of
``chain-surjectivity'' conditions (Theorem 10.5).

The topic of obtaining the relationship between the work of \cite{CH02, CH12,
CH17, CJ11} and \cite{PL23b} regarding effective descent morphisms of enriched
generalized multicategories is still the subject of on-going work.

\section{Descent theory with respect to a pseudofunctor}

The descent problem in \cite{Gro60} was stated in a more general setting. 
For each object \(x\) in \( \cat C \), we replace the category of bundles \(
\cat C \comma x \) by a category \( Fx \) of ``structures'' over \(x\), and
for each morphism \( p \colon e \to b \), we have a change-of-base functor \(
p^* \colon Fb \to Fe \), replacing the pullback functor. Together with
coherent isomorphisms \( \id^*_x \iso \id_{Fx} \) and \( (r \circ p)^* \iso
p^* \circ r^* \) for a morphism \( r \colon b \to c \), \(F\) defines a
\textit{pseudofunctor} \( \cat C^\op \to \CAT \).

As was the case for the basic bifibration, we can also define a category \(
\Desc_F(p) \) of \(F\)-descent data for \( p \) for any pseudofunctor \(F\),
for which we obtain a factorization of \( p^* \) -- the \textit{\(F\)-descent
factorization}:
\begin{equation}
  \label{eq:intro.desc.fact}
  \begin{tikzcd}
    Fb \ar[rd,"\mathcal K^p_F",swap] \ar[rr,"p^*"] 
      && Fe \\
    & \Desc_F(p) \ar[ur]
  \end{tikzcd}
\end{equation}
We say that \(p\) is an \textit{effective $F$-descent} ($F$-descent)
morphism if \(\mathcal K^p_F\) is an equivalence (fully faithful).

The main objects of study of \cite{Sob04} are the (effective) descent
morphisms with respect to the pseudofunctor 
\begin{equation*}
  F = \CAT(-,\Set) \colon \Cat^\op \to \CAT 
\end{equation*}
of discrete opfibrations.  Therein, Sobral has shown that any functor \( p
\colon e \to b \) between small categories has a factorization
\begin{equation}
  \label{eq:cat.fact}
  \begin{tikzcd}
    b && e \ar[ll,"p",swap] \ar[ld] \\
    & k_p \ar[lu,"\phi"]
  \end{tikzcd}
\end{equation}
whose image via \( \CAT(-,\Set) \) is equivalent to the \( F \)-descent
factorization~\eqref{eq:intro.desc.fact} with \( F = \CAT(-,\Set) \).
Consequently, we have an equivalence \( \theta \colon \Desc_F(p) \eqv
\CAT(k_p, \Set) \) such that \( \cat K_F^p = \CAT(\phi,\Set) \circ \theta \). 

In this way, the relevance of the notion of \textit{lax epimorphism} becomes
transparent. We say that a functor \( f \colon c \to d \) between small
categories is a \textit{lax epimorphism} \cite{ABSV01} if \( \CAT(f,\Set) \)
is fully faithful. Thus, it follows that \( p \) is a \( \CAT(-,\Set)
\)-descent morphism if and only if \( \phi \) is a lax epimorphism
\cite[Theorem 1]{Sob04}. Moreover \( p \) is an effective \( \CAT(-,\Set)
\)-descent morphism if and only if \( \CAT(\phi,\Set) \) is an equivalence. It
was shown in \cite[Theorem 2]{Sob04} that this is the case if and only if \(
\phi \) is a \textit{fully faithful} lax epimorphism.

The work developed in \cite{LPS23} aims to give a systematic view on the
observations of \cite{Sob04}, aiming to apply them in other contexts. More
specifically, we confirm that the characterization given in \cite{Sob04} can
be plainly extended to further characterize the (effective) descent morphisms
with respect to the pseudofunctor \( \CAT(-, \Cat) \colon \Cat^\op \to \CAT \)
of \textit{split opfibrations}, see \cite[Theorem 3.2]{LPS23}. 

We begin by considering a factorization as in \eqref{eq:cat.fact} where \( k_p
\) is the \textit{lax codescent category} for the kernel pair of \(p\). When
a pseudofunctor \( \cat F \colon \Cat^\op \to \CAT \) preserves lax descent
categories, it follows that \( \cat K^F_p \) is equivalent to \( \cat F\phi
\), reducing the study of whether \(p\) is an (effective) \(F\)-descent
morphism to the study of \( \phi \).

The relationship of (fully faithful) lax epimorphisms with copresheaf
categories influenced the study of their relationship with the \textit{Cauchy
completion} of a category. This work was carried out in the \( \cat V
\)-enriched context, as it is suitable for future considerations, and to do
so, we consider the notion of \( \cat V \)-fully faithful functors, and \(
\cat V \)-lax epimorphisms, as studied in \cite{LS21}.  We have shown that the
following are equivalent, for a \( \cat V \)-functor \( p \colon e \to b \)
between small \( \cat V \)-categories:
\begin{itemize}[label=--,noitemsep]
  \item
    \( p \) is a \( \cat V \)-fully faithful lax epimorphism,
  \item
    the functor \(p^* \colon \Cauchy b \to \Cauchy b\) on the Cauchy
    completions induced by \(p\) is an equivalence,
  \item
    the change-of-base functor \( p^* \colon \VCAT(b,\cat V) \to \VCAT(e,\cat
    V) \) is an equivalence,
\end{itemize}
provided \( \cat V \) is a suitable monoidal category.

We obtain the main result of \cite{LPS23}, Theorem 3.2, by applying our
characterization of \( \cat V \)-fully faithful lax epimorphisms when \( \cat
V = \Set, \Cat \) to the formal considerations pertaining to the factorization
\eqref{eq:cat.fact}. 

\section*{Outline}

In Chapter \ref{chap:descent} we provide a concise introduction to descent
theory. We begin by recalling the 2-dimensional limit known as
\textit{lax descent category} \cite{Luc21}, and stating its 2-dimensional
universal property in Section \ref{sect:lax.desc}. Afterwards, we proceed to
establish the fundamental notion of this thesis: that of \textit{effective
descent morphism} with respect to a pseudofunctor \( \cat C^\op \to \CAT \),
in Section \ref{sect:eff.desc}, where we also give some remarks about the
\textit{Beck-Chevalley condition}. In Section \ref{sect:basic.bifib}, we focus
on the \textit{basic bifibration}, fixing several pieces of notation and
describing the fundamental descent-theoretical results present in
\cite{PL23}, \cite{Pre23} and \cite{PL23b}.

Chapter \ref{chap:enriched}, which covers the work done in \cite{Pre23}, aims to
study effective descent morphisms in \( \VCat \) for a cartesian monoidal
category \( \cat V \) with finite limits. Our first goal is to establish that
\begin{theorem}
  \label{thm:refl1}
  \( \VCat \to \FamVCat \) reflects effective descent morphisms.
\end{theorem}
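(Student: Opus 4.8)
The plan is to exploit the explicit description of \( \FamV \) as the free coproduct completion, together with the good behaviour of descent morphisms under finitely continuous, fully faithful functors. Write \( j \colon \VCat \to \FamVCat \) for the embedding induced by the canonical inclusion \( \cat V \to \FamV \) that sends an object to the corresponding singleton family.

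First I would record the elementary properties of \( j \). The inclusion \( \cat V \to \FamV \) is fully faithful and preserves finite limits — a pullback of singleton families is again a singleton family, computed componentwise in \( \cat V \) — and since limits of enriched categories are formed objectwise in \( \Set \) and homwise in the base, the same holds for \( j \). Consequently, for any \( \cat V \)-functor \( p \colon X \to Y \), the induced functors on slices \( \VCat \comma Y \to \FamVCat \comma jY \) and on descent data \( \Desc(p) \to \Desc(jp) \) are fully faithful (for the latter, using that \( j \) preserves the pullbacks \( X\times_Y X \), \( X\times_Y X\times_Y X \)), and together with the comparison functors \( \mathcal{K}^{p} \), \( \mathcal{K}^{jp} \) they assemble into a commuting square. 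I would also identify the essential image of \( j \) precisely: it consists of those \( \FamV \)-categories all of whose hom-families are singletons (equivalently, whose "index category'' of indices is indiscrete), and such a \( \FamV \)-category is literally a \( \cat V \)-category.

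Now assume \( jp \) is an effective descent morphism. From the commuting square — top and bottom arrows fully faithful, right-hand arrow \( \mathcal{K}^{jp} \) an equivalence — a routine diagram chase gives that \( \mathcal{K}^{p} \) is fully faithful, i.e.\ \( p \) is a descent morphism. For essential surjectivity of \( \mathcal{K}^{p} \), take a descent datum \( d \in \Desc(p) \); since \( \mathcal{K}^{jp} \) is an equivalence, its image \( jd \in \Desc(jp) \) is, up to isomorphism, \( (jp)^\ast q \) for some \( \FamV \)-functor \( q \colon W \to jY \), so its underlying object over \( jX \) is \( W\times_{jY} jX \); but this object is also \( j \) applied to the underlying \( \cat V \)-category over \( X \) of \( d \), hence \( W\times_{jY} jX \) lies in the essential image of \( j \), i.e.\ is all-singleton. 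The crux is to upgrade this to "\( W \) itself is all-singleton''. Here I would use that \( jp \), being an effective descent and hence a pullback-stable regular epimorphism in the finitely complete category \( \FamVCat \), is carried by \( \ob \colon \FamVCat \to \Set \) — which has the indiscrete/chaotic functor as a right adjoint and therefore preserves regular epimorphisms — to a surjection, so \( \ob p \) is surjective. Given \( w, w' \in \ob W \), pick \( x, x' \in \ob X \) with \( \ob p(x) = q(w) \), \( \ob p(x') = q(w') \); then \( (w,x),(w',x') \) are objects of \( W\times_{jY} jX \), and the hom-family of this pullback at that pair has, on indices, exactly the index set of \( W(w,w') \), which is therefore a singleton. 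Thus \( W \) is all-singleton, so \( q \iso jc \) for a \( \cat V \)-functor \( c \colon \bar W \to Y \); applying \( \mathcal{K}^{jp} \) and the full faithfulness of \( \Desc(p)\to\Desc(jp) \) yields \( \mathcal{K}^{p} c \iso d \), completing the argument.

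The main obstacle is this last step: controlling the hom-families of the \( \FamV \)-category underlying an arbitrary descent datum for \( jp \). Everything there rests on the interplay between the componentwise formula for pullbacks of families in \( \FamV \), the surjectivity of \( \ob p \) extracted from \( jp \) being a regular epimorphism, and the explicit essential image of \( j \); by contrast, the genuinely descent-theoretic content is light, entering only through the diagram chase and through the fact that descent morphisms in a finitely complete category are pullback-stable regular epimorphisms.
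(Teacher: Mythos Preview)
Your argument is correct, and it takes a genuinely different route from the paper's. The paper never touches the comparison functors $\mathcal K^p$, $\mathcal K^{jp}$ directly; instead it exhibits a pseudopullback square
\[
\begin{tikzcd}
  \VCat \ar[r,"\eta_!"] \ar[d] \ar[rd,"\iso" description,phantom]
    & \FamVCat \ar[d] \\
  \Set \ar[r] & \Set\dash\Cat
\end{tikzcd}
\]
obtained by applying the enrichment 2-functor to the pseudopullback $\cat V \simeq \FamV \times_{\Set} 1$, and then invokes the general bilimit machinery (Proposition~\ref{prop:pspb.descent}): one checks that $\FamVCat \to \Set\dash\Cat$ preserves descent morphisms (via Lemma~\ref{lem:preserve.desc}), that the image is reflected to a surjection in $\Set$, and concludes. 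Your approach is instead the classical ``check the image'' method of Proposition~\ref{prop:full.subcat.desc}: you show by hand that any $\FamV$-category $W$ over $jY$ whose pullback along $jp$ is all-singleton must itself be all-singleton, using the explicit formula for pullbacks of families and the surjectivity of $\ob p$. Both arguments pivot on the same fact --- that $\ob p$ is surjective --- but yours extracts it more directly and finishes with a concrete index-set calculation, while the paper's pseudopullback packaging is more abstract and is what lets the same template be reused verbatim for generalized multicategories in Chapter~\ref{chap:enriched-multi}.
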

Theorem \ref{thm:refl1} is obtained via a series of observations on
pseudopullbacks and the fact that the enrichment 2-functor preserves these
2-dimensional limits. Moreover, since \( \FamV \) is a suitable lextensive
category, we conclude, by \cite[Theorem 9.11]{Luc18}, that \( \FamVCat \to
\CatFamV \) reflects effective descent morphisms. Thus, if \(F\) is a \( \cat
V \)-functor, we can verify whether it is effective for descent in terms of
(effective, almost) descent morphisms in \( \FamV \) (Theorem
\ref{thm:desc.vcat}). In turn, this motivated us to study the (effective)
descent morphisms in the free coproduct completions of categories with finite
limits. We apply these results when \( \cat V \) is 
\begin{itemize}[label=--,noitemsep]
  \item
    a (co)complete Heyting lattice, establishing the connection between the
    ``chain-surjectivity'' ideas from \cite{Cre99} and \cite{JS02b},
  \item
    a regular category, such as the categories \( \CHaus \) of compact
    Hausdorff spaces and \( \Stn \) of Stone spaces,
\end{itemize}
This aforementioned connection between \cite{Cre99} and \cite{JS02b} helps
solidify our intuition and general understanding of the problem of effective
descent in categorical structures. More precisely, we recover one implication
of \cite[Theorem 2.5]{CH04} for Heyting lattices \( \cat V \), which, when
taking \( \cat V = 2 \), also recovers the ``chain surjectivity'' of
\cite{JS02b}, confirming the link with the approach of \cite{Cre99}.

In Chapter \ref{chap:internal-multi}, covering the work done in \cite{PL23},
we begin with an overview of the notion of generalized internal
multicategories, studied in \cite{Bur71} and \cite{Her00}. After illustrating
the approach carried out in Section \ref{sect:cat.tv} for the simpler setting
of \textit{reflexive $T$-graphs}, we provide a description of the category \(
\CatTV \) of \textit{internal $T$-categories} via a 2-dimensional limit, via
which we describe the effective descent morphisms, Theorem
\ref{thm:desc.cattv}. This is one of the approaches; we give a second approach
to the study of effective descent morphisms in \( \CatTV \) via direct
calculation, closely following the ideas of \cite[Chapter 3]{Cre99}, Theorem
\ref{thm:int.eff.desc}. We finish the chapter by applying our results to
various sorts of generalized multicategory, among them are the graded,
operadic and enhanced multicategories.

In Chapter \ref{chap:enriched-multi}, covering the relevant definitions and
the descent theoretical results of \cite{PL23b}, we revisit (a slight
generalization of) the notion of \( (T, \cat V) \)-categories defined in
\cite{CT03}, under the terminology \textit{enriched $(T,\cat V)$-categories}.
Afterwards, we give a few concise remarks regarding the change-of-base functor
``enriched \( \to \) internal'' in the context of generalized multicategories,
mentioning only the most fundamental definitions to fix the notation, and
ideas for the results. Having established the embedding under suitable
conditions, we then study the problem of reflection of effective descent
morphisms, and, via Theorem \ref{thm:int.eff.desc}, we obtain our main result,
Theorem \ref{thm:desc.tvcat}, providing a description of the effective descent
morphisms in the enriched generalized multicategory setting. The chapter ends
with some brief comments on the scope of our results, and we list some
examples. 

In Chapter \ref{chap:fib-descent}, covering the work of \cite{LPS23}, the goal
is to study the effective descent morphisms with respect to the bifibration of
\textit{split opfibrations}. The main observation is that the results of
\cite{Sob04} on effective descent morphisms for the bifibration of
\textit{discrete opfibrations} carry over exactly to our setting (Theorem
\ref{thm:desc.fib.opfib}). Indeed, we verify that a functor is of effective \(
\CAT(-,\Cat) \)-descent if and only if it is of effective \( \CAT(-,\Set)
\)-descent. These results are obtained via a study of the relationship between
the Cauchy completion of a category and the fully faithful, lax epimorphisms;
this study was carried out in the enriched setting, alluding to future work.

\section*{List of publications}

The present thesis is based on the work of the following four papers:

\begin{itemize}[noitemsep]
  \item[\cite{LPS23}]
    F. Lucatelli Nunes, R. Prezado and L. Sousa. Cauchy completeness, lax
    epimorphisms and effective descent for split fibrations. \textit{Bull.
    Belg.  Math. Soc. Simon Stevin}, 30(1):130--139, (2023). 
  \item[\cite{Pre23}]
    R. Prezado. On effective descent $\cat V$-functors and familial descent
    morphisms. \textit{J. Pure Appl. Algebra}, 228(5) (2024).
  \item[\cite{PL23}]
    R. Prezado and F. Lucatelli Nunes. Descent for internal multicategory
    functors. \textit{Appl. Categor. Struct.}, 31(11) (2023).
  \item[\cite{PL23b}]
    R. Prezado, F. Lucatelli Nunes. Generalized multicategories:
    change-of-base, embedding and descent. \textit{DMUC preprints} 23--29,
    under review.
\end{itemize}

\chapter{A primer on descent theory}
\label{chap:descent}

This chapter aims to give a concise introduction to classical descent theory,
under a categorical point of view, as well as to uniformize the notation and
gather the preliminary results from \cite{PL23}, \cite{LPS23}, \cite{Pre23}
and \cite{PL23b} pertaining to descent theory.

We begin by reviewing the 2-dimensional limit known as \textit{lax descent
category} \cite[p. 177]{Str76}, \cite{Luc18b, Luc21, Luc22} in Section
\ref{sect:lax.desc}, which is the fundamental notion encompassing the idea of
coherence, and we state its universal property. Part of the work developed in
Chapter \ref{chap:enriched} is done directly under the perspective of the lax
descent category, particularly in Section \ref{sect:fam.desc}.

The fundamental notion to our work, that of \textit{effective descent
morphism} with respect to a pseudofunctor \( F \colon \cat C^\op \to \CAT \),
is presented in Section \ref{sect:eff.desc}. We also provide a few remarks on
the \textit{Beck-Chevalley} condition (introduced in \cite{BR70}), and its
importance in the relationship between monadicity and descent theory, as
evidenced by the \textit{Bénabou-Roubaud theorem}.

In Section \ref{sect:basic.bifib}, we begin our study of descent theory with
respect to the \textit{basic bifibration} 
\begin{equation*}
  \cat C \comma - \colon \cat C^\op \to \CAT 
\end{equation*}
associated to a category \( \cat C \) with pullbacks. We establish the
preliminary descent theoretical tools which are employed in Chapters
\ref{chap:internal-multi} and \ref{chap:enriched-multi}, as well as most of
Chapter \ref{chap:enriched}, namely, classical results regarding the
\textit{reflection} of effective descent morphisms along an embedding \( \cat
C \to \cat D \) (Propositions \ref{prop:full.subcat.desc} and Corollary
\ref{cor:eff.desc.iso}), descent-theoretical results via bilimits
(Propositions \ref{prop:pspb.descent} and \ref{prop:pseq.descent}), as well as
an original result, \cite[Lemma 2.5]{Pre23}, regarding preservation of descent
morphisms (Lemma \ref{lem:preserve.desc}). We also recall results from
\cite{Cre99} and \cite{Luc18} regarding effective descent morphisms of
categorical structures as our starting point (Theorems \ref{thm:int.lec} and
\ref{thm:fln.desc}).

We finish this preliminary chapter with Section \ref{sect:fib.split.opfib},
where we give some remarks about the descent theory with respect to the
\textit{bifibration of split opfibrations} \( \CAT(-,\Cat) \colon \Cat^\op
\to \CAT \). Chapter \ref{chap:fib-descent} studies descent theory with
respect to this pseudofunctor, which is an important example of a bifibration
that does not satisfy the Beck-Chevalley condition.

\section{Lax descent category}
\label{sect:lax.desc}

Throughout this work, we let \( \CAT \) be the 2-category of (large)
categories, functors and natural transformations, and we let \( i \colon \Cat
\to \CAT \) be the full sub-2-category of small categories.

The present definition of lax descent category follows the approach of
\cite[Section 1]{Luc21} of considering the 2-dimensional limit of a
pseudofunctor \( \Delta_3 \to \CAT \), as opposed to the approach via a
2-functor \( \Delta_\str \to \CAT \) carried out in \cite{Luc22}, where \(
\Delta_\str \) is a strict replacement of \( \Delta_3 \). 

To fix the notation, we briefly recall the definition of the truncated
cosimplicial diagram \( \Delta_3 \) and of a pseudofunctor. We define the
category \( \Delta_3 \) to be generated by the following diagram
\begin{equation*}
  \begin{tikzcd}[column sep=large]
    1 \ar[r,shift left=3mm,"d_1"]
      \ar[r,shift right=3mm,"d_0",swap]
    & 2 \ar[r,shift left=3mm,"d_2"]
        \ar[r,"d_1" description]
        \ar[l,"s_0" description]
        \ar[r,shift right=3mm,"d_0",swap]
    & 3
  \end{tikzcd}
\end{equation*}
with relations
\begin{align*}
  s_0 \circ d_1 &= \id_1, & d_2 \circ d_1 &= d_1 \circ d_1, \\
  s_0 \circ d_0 &= \id_1, & d_0 \circ d_0 &= d_1 \circ d_0, \\
  && d_2 \circ d_0 &= d_0 \circ d_1.
\end{align*}

Let \( \cat C \) be a category. A \textit{pseudofunctor} \( F \colon \cat C
\to \CAT \) consists of
\begin{itemize}[noitemsep,label=--]
  \item
    a function \( \ob F \colon \ob \cat C \to \ob \CAT \),
  \item
    a function \( F_{x,y} \colon \cat C(x,y) \to \CAT(Fx,Fy) \), for each
    pair of objects \( x, y \) in \( \cat C \),
  \item
    a natural isomorphism \( \e{F}_x \colon \id_{Fx} \to F(\id_x) \) for each
    object \( x \) in \( \cat C \),
  \item
    a natural isomorphism \( \m{F}_{f,g} \colon Fg \circ Ff \to F(g \circ f)
    \) for each pair of morphisms \( f \colon x \to y \), \( g \colon y \to z
    \) in \( \cat C \),
\end{itemize}
such that the following diagrams commute
\begin{equation*}
  \begin{tikzcd}
    Ff \ar[r,"\e{F}_y \cdot Ff"] \ar[rd,equal]
    & F(\id_y) \circ Ff \ar[d,"\m{F}_{f,\id_y}"] \\
    & Ff
  \end{tikzcd}
  \quad
  \begin{tikzcd}
    Ff \ar[r,"Ff \cdot \e{F}_x"] \ar[rd,equal]
    & Ff \circ F(\id_x) \ar[d,"\m{F}_{\id_x,f}"] \\
    & Ff   
  \end{tikzcd}
  \quad
  \begin{tikzcd}
    Fh \circ Fg \circ Ff \ar[r,"\m{F}_{g,h} \cdot Ff"]  
                         \ar[d,"Fh \cdot \m{F}_{f,g}",swap]
    & F(h \circ g) \circ Ff \ar[d,"\m{F}_{f,h \circ g}"] \\
    Fh \circ F(g \circ f) \ar[r,"\m{F}_{g \circ f, h}",swap]
    & F(h \circ g \circ f)
  \end{tikzcd}
\end{equation*}
for all morphisms \( f \colon x \to y \), \( g \colon y \to z \) and \( h
\colon z \to w \).

For a pseudofunctor \( F \colon \Delta_3 \to \CAT \), we write the underlying
diagram as
\begin{equation*}
  \begin{tikzcd}[column sep=large]
    F1 \ar[r,shift left=3mm,"d^F_1"]
      \ar[r,shift right=3mm,"d^F_0",swap]
    & F2 \ar[r,shift left=3mm,"d^F_2"]
        \ar[r,"d^F_1" description]
        \ar[l,"s^F_0" description]
        \ar[r,shift right=3mm,"d^F_0",swap]
    & F3
  \end{tikzcd}
\end{equation*}
and we define the following natural isomorphisms:
\begin{align*}
  \upsilon^F_1 
    &= {\m{F}_{d_1,s_0}}^{-1} \circ \e{F}_1 \colon \id \to s^F_0d^F_1,
    & \theta^F_{01} &= {\m{F}_{d_1,d_1}}^{-1} \circ \m{F}_{d_1,d_2} 
                       \colon d^F_2 d^F_1 \to d^F_1 d^F_1, \\
  \upsilon^F_0 
    &= {\m{F}_{d_0,s_0}}^{-1} \circ \e{F}_1 \colon \id \to s^F_0 d^F_0,
    & \theta^F_{02} &= {\m{F}_{d_0,d_2}}^{-1} \circ \m{F}_{d_1,d_0} 
                       \colon d^F_2d^F_0 \to d^F_0d^F_1, \\
    && \theta^F_{12} &= {\m{F}_{d_0,d_1}}^{-1} \circ \m{F}_{d_0,d_0} 
                       \colon d^F_1d^F_0 \to d^F_0d^F_0.
\end{align*}

The \textit{lax descent category} of a pseudofunctor \( F \colon \Delta_3 \to
\CAT \) is a category \( \Desc(F) \) whose objects, called \textit{lax
$F$-descent data}, are pairs \( (x,\phi) \) where \( x \) is an object in \(
F1 \) and \( \phi \colon d^F_1(x) \to d^F_0(x) \) is a morphism in \( F2
\) satisfying the following \textit{reflexivity} condition
\begin{equation}
  \label{eq:refl.desc}
  \begin{tikzcd}
    & x \ar[ld,"\upsilon^F_1",swap]  
        \ar[rd,"\upsilon^F_0"] \\
    s^F_0d^F_1(x) \ar[rr,"s^F_0(\phi)",swap]
      && s^F_0d^F_0(x) 
  \end{tikzcd}
\end{equation}
and \textit{transitivity} condition
\begin{equation}
  \label{eq:trans.desc}
  \begin{tikzcd}
    & d^F_1d^F_1(x) \ar[rr,"d^F_1(\phi)"]
    && d^F_1d^F_0(x) \ar[rd,"\theta^F_{12}"] \\
    d^F_2d^F_1(x) \ar[ru,"\theta^F_{01}"] 
                  \ar[rd,"d^F_2(\phi)",swap]
    &&&& d^F_0d^F_0(x) \\
    & d^F_2d^F_0(x) \ar[rr,"\theta^F_{02}",swap]
    && d^F_0d^F_1(x) \ar[ru,"d^F_0(\phi)",swap]
  \end{tikzcd}
\end{equation}

Given lax \(F\)-descent data \( (x,\phi) \), \( (y,\psi) \), a morphism \(
\omega \colon (x,\phi) \to (y,\psi) \) in \( \Desc(F) \) of lax \(F\)-descent
data consists of a morphism \( \omega \colon x \to y \) such that the
following diagram commutes:
\begin{equation*}
  \begin{tikzcd}
    d_1^F(x) \ar[d,"d_1^F(\omega)",swap]
             \ar[r,"\phi"] 
    & d_0^F(x) \ar[d,"d_0^F(\omega)"] \\
    d_1^F(y) \ar[r,"\psi",swap]
    & d_0^F(y)
  \end{tikzcd}
\end{equation*}

\subsection{Universal property} 

Associated to the lax descent category, we have a forgetful functor \(
X \colon \Desc(F) \to F1 \) given on objects by \( (x,\phi) \mapsto x \),
and a natural transformation \( \Phi \colon d_1^F X \to d_0^F X \) given by \(
\Phi_{(x,\phi)} = \phi \). This pair \( (X,\Phi) \) makes the following
diagrams commute
\begin{equation*}
  \begin{tikzcd}
    & X \ar[ld,"\upsilon^F_1 \cdot X",swap]  
        \ar[rd,"\upsilon^F_0 \cdot X"] \\
    s^F_0d^F_1 X \ar[rr,"s^F_0 \cdot \Phi",swap]
      && s^F_0d^F_0 X
  \end{tikzcd}
\end{equation*}
\begin{equation*}
  \begin{tikzcd}
    & d^F_1d^F_1 X \ar[rr,"d^F_1 \cdot \Phi"]
    && d^F_1d^F_0 X \ar[rd,"\theta^F_{12} \cdot X"] \\
    d^F_2d^F_1 X \ar[ru,"\theta^F_{01} \cdot X"] 
                  \ar[rd,"d^F_2 \cdot \Phi",swap]
    &&&& d^F_0d^F_0 X \\
    & d^F_2d^F_0 X \ar[rr,"\theta^F_{02} \cdot X",swap]
    && d^F_0d^F_1 X \ar[ru,"d^F_0 \cdot \Phi",swap]
  \end{tikzcd}
\end{equation*}
which is just a restatement of the conditions \eqref{eq:refl.desc} and
\eqref{eq:trans.desc}.

If \( (Y,\Psi) \) is a pair where \( Y \colon \cat A \to F1 \) is a functor
and \( \Psi \colon d_1^F Y \to d_0^F Y \) is a natural transformation
satisfying
\begin{equation}
  \label{eq:glob.refl.desc}
  \begin{tikzcd}
    & Y \ar[ld,"\upsilon^F_1 \cdot Y",swap]  
        \ar[rd,"\upsilon^F_0 \cdot Y"] \\
    s^F_0d^F_1 Y \ar[rr,"s^F_0 \cdot Y",swap]
      && s^F_0d^F_0 Y
  \end{tikzcd}
\end{equation}
\begin{equation}
  \label{eq:glob.trans.desc}
  \begin{tikzcd}
    & d^F_1d^F_1 Y \ar[rr,"d^F_1 \cdot \Psi"]
    && d^F_1d^F_0 Y \ar[rd,"\theta^F_{12} \cdot Y"] \\
    d^F_2d^F_1 Y \ar[ru,"\theta^F_{01} \cdot Y"] 
                  \ar[rd,"d^F_2 \cdot \Psi",swap]
    &&&& d^F_0d^F_0 Y \\
    & d^F_2d^F_0 Y \ar[rr,"\theta^F_{02} \cdot Y",swap]
    && d^F_0d^F_1 Y \ar[ru,"d^F_0 \cdot \Psi",swap]
  \end{tikzcd}
\end{equation}
then there is a unique \( G \colon \cat A \to \Desc(F) \) such that 
\begin{equation}
  \label{eq:pre.desc.fact}
  \begin{tikzcd}
    \cat A \ar[rr,"Y"] \ar[rd,"G",swap] && F1 \\
    & \Desc(F) \ar[ru,"X",swap]
  \end{tikzcd}
\end{equation}
commutes, and \( \Psi = \Phi \cdot G \).

Let \( (Z,\Xi) \) be another pair where \( Z \colon \cat A \to F1 \) is a
functor and \( \Xi \colon d_1^F Z \to d_0^F Z \) is a natural transformation
satisfying \eqref{eq:glob.refl.desc} and \eqref{eq:glob.trans.desc}, and let
\( H \colon \cat A \to \Desc(F) \) be the unique functor such that \( Z = XH
\) and \( \Xi = \Phi \cdot H \). For any natural transformation \( \Gamma
\colon Y \to Z \) such that
\begin{equation*}
  \begin{tikzcd}
    d_1^F Y \ar[d,"d_1^F \cdot \Gamma",swap]
            \ar[r,"\Psi"] 
    & d_0^F Y \ar[d,"d_0^F \cdot \Gamma"] \\
    d_1^F Z \ar[r,"\Xi",swap] 
    & d_0^F Z
  \end{tikzcd}
\end{equation*}
commutes, there is a unique \( \hat \Gamma \colon G \to H \) such that \( X
\cdot \hat \Gamma = \Gamma \).

\section{Effective descent morphisms}
\label{sect:eff.desc}

Let \( \cat C \) be a category with pullbacks, and \( p \colon e \to b \) be a
morphism in \( \cat C \). The \textit{kernel pair} of \( p \), given by the
pullback
\begin{equation}
  \label{eq:ker.pair}
  \begin{tikzcd}
    p \times_b p \ar[r,"d_1"] \ar[d,"d_0",swap]
                 \ar[rd,"\ulcorner",phantom,very near start]
    & e \ar[d,"p"] \\
    e \ar[r,"p",swap] & b
  \end{tikzcd}
\end{equation}
induces an \textit{equivalence relation} internal to \( \cat C \), given by
the following diagram 
\begin{equation}
  \label{eq:eqp}
  \begin{tikzcd}
    p \times_b p \times_b p
      \ar[r,shift left=2mm] \ar[r,shift right=2mm] \ar[r]
    & p \times_b p \ar[r,shift left=2mm,"d_1"] 
                   \ar[r,shift right=2mm,"d_0",swap]
    & e, \ar[l]
  \end{tikzcd}
\end{equation}
which we denote by \( \Ker(p) \colon \Delta_3^\op \to \cat C \). 

For any pseudofunctor \( F \colon \cat C^\op \to \CAT \), we let \( f^* =
Ff \) for every morphism \(f\) in \( \cat C \), and \( \Ff_p = F
\circ \Ker(p)^\op \). We write
\begin{equation}
  \label{eq:cat.descent}
  \Desc_F(p) = \Desc(\Ff_p)
\end{equation}
for the category of lax descent data of \( \Ff_p \). Moreover, we observe that
the kernel pair \eqref{eq:ker.pair} induces a pair \( (p^* \colon Fb \to
Fe,\,\, \Omega \colon d^*_1 p^* \to d^*_0 p^*) \) satisfying
\eqref{eq:glob.refl.desc} and \eqref{eq:glob.trans.desc}, where \( \Omega =
{\m{F}_{p,d_0}}^{-1} \circ \m{F}_{p,d_1} \). Thus, by the universal property
\eqref{eq:pre.desc.fact}, we obtain a unique functor \( \mathcal K^p_F \colon
Fb \to \Desc_F(p) \) such that the following diagram commutes
\begin{equation}
  \label{eq:desc.fact}
  \begin{tikzcd}
    Fb \ar[rr,"p^*"] \ar[rd,"\mathcal K^p_F",swap] && Fe \\
    & \Desc_F(p) \ar[ru,"\mathcal U^p_F",swap]
  \end{tikzcd}
\end{equation}
and \( \Omega = \Phi \cdot p^* \), where \( (\cat U^p_F,\Phi) \) is the pair
associated to the lax descent category of \( \Ff_p \). We call
\eqref{eq:desc.fact} the \textit{$F$-descent factorization} of \( p \).

We now reach the most important definition in this work. A morphism \( p \) in
a category \( \cat C \) with pullbacks is said to be 
\begin{itemize}[label=--,noitemsep]
  \item
    an \textit{almost $F$-descent morphism} if \(\mathcal K^p_F\) is faithful,
  \item
    an \textit{$F$-descent morphism} if \(\mathcal K^p_F\) is fully faithful,
  \item
    an \textit{effective $F$-descent morphism} if \(\mathcal K^p_F\) is an
    equivalence.
\end{itemize}

\subsection{Beck-Chevalley condition}

The \textit{Beck-Chevalley condition}, introduced in \cite{BR70}, underlies
the relationship between monadicity and descent theory. We will give a brief
remark on this topic; we recall that a pseudofunctor \( F \colon \cat C^\op
\to \CAT \) is said to be a \textit{bifibration} if, for every morphism \( f
\) in \( \cat C \), the functor \( f^* \) has a left adjoint, which we denote
by \( f_! \adj f^* \), and whose unit and counit are denoted by \( \eta^f \)
and \( \epsilon^f \).

In this context, a bifibration is said to satisfy the \textit{Beck-Chevalley
condition} if, for all pullback squares in \( \cat C \)
\begin{equation*}
  \label{eq:com.sq}
  \begin{tikzcd}
    w \ar[r,"h"] \ar[d,"k",swap]
      \ar[rd,"\ulcorner",phantom,very near start]
      & x \ar[d,"f"] \\
    y \ar[r,"g",swap] & z
  \end{tikzcd}
\end{equation*}
the following natural transformation 
\begin{equation*}
  \begin{tikzcd}
    h_!k^* \ar[r,"h_!k^*\eta^g"]
    & h_!k^*g^*g_! \ar[r,"h_! \theta g_!"]
    & h_!h^*f^*g_! \ar[r,"\epsilon^h f^*g_!"]
    & f^*g_!
  \end{tikzcd}
\end{equation*}
is a natural isomorphism, where \( \theta \colon k^*g^* \to h^*f^* \) is the
induced isomorphism by the commutative square \eqref{eq:com.sq}.

\begin{theorem}[Bénabou-Roubaud \cite{BR70}]
  \label{thm:br}
  Let \( F \colon \cat C^\op \to \CAT \) be a bifibration satisfying the
  Beck-Chevalley condition. For a morphism \( p \colon e \to b \) in \( \cat
  C \), we write \( T^p \) for the monad induced by the adjunction \( p_! \adj
  p^* \). 

  The \(F\)-descent factorization \eqref{eq:desc.fact} of \(p\) is equivalent
  to the Eilenberg-Moore factorization of \( p^* \):
  \begin{equation}
    \label{eq:em.fact}
    \begin{tikzcd}
      Fb \ar[rr,"p^*"] \ar[rd,"\mathcal K^p",swap] && Fe \\
      & T^p\dash \Alg \ar[ru,"\mathcal U^p",swap]
    \end{tikzcd}
  \end{equation}
  so, in particular, we have an equivalence of categories 
  \begin{equation*}
    \Desc_F(p) \eqv T^p\dash \Alg,
  \end{equation*}
  and the following are equivalent:
  \begin{enumerate}[label=(\roman*),noitemsep]
    \item
      \label{enum:p.eff.desc}
      \( p \) is an effective \(F\)-descent morphism (resp. \(F\)-descent
      morphism).
    \item
      \label{enum:pstar.monadic}
      \( p^* \) is monadic (resp. premonadic).
  \end{enumerate}
\end{theorem}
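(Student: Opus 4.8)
The plan is to show that the $F$-descent factorization and the Eilenberg--Moore factorization of $p^*$ agree, by exhibiting an equivalence $\Desc_F(p) \simeq T^p\dash\Alg$ commuting with the functors down from $Fb$ and up to $Fe$. First I would unwind what a $T^p$-algebra is: since $T^p = p^* p_!$ with multiplication $p^*\epsilon^{p}p_!$ and unit $\eta^{p}$, a $T^p$-algebra is an object $x \in Fb$ together with a structure map $\xi \colon p^* p_! x \to x$ satisfying the usual unit and associativity laws. On the other side, an object of $\Desc_F(p)$ is a pair $(y,\phi)$ with $y \in Fe$ and $\phi \colon d_1^* y \to d_0^* y$ in $F(p\times_b p)$ subject to the reflexivity and transitivity conditions \eqref{eq:refl.desc}, \eqref{eq:trans.desc}. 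The Beck--Chevalley isomorphism applied to the pullback square \eqref{eq:ker.pair} gives $p^* p_! \cong (d_0)_! d_1^*$ (as endofunctors associated to that square), and this is exactly the comparison that converts a descent datum $\phi$ into an algebra structure $\xi$ and back: transpose $\phi \colon d_1^* y \to d_0^* y$ across the adjunction $(d_0)_! \dashv d_0^*$ to get $(d_0)_! d_1^* y \to y$, then precompose with the Beck--Chevalley iso to land in $p^* p_! y \to y$.

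Next I would check that under this correspondence the reflexivity condition \eqref{eq:refl.desc} matches the unit law of the algebra, and the transitivity condition \eqref{eq:trans.desc} matches the associativity law. This is where the bookkeeping is heaviest: one must track how the coherence isomorphisms $\upsilon^F_i$, $\theta^F_{ij}$ of the pseudofunctor $\Ff_p = F\circ\Ker(p)^\op$ correspond, via Beck--Chevalley, to the unit $\eta^p$, counit $\epsilon^p$ and the monad structure of $T^p$; the simplicial identities for $\Ker(p)$ (the relations generating $\Delta_3$) are precisely what make the two sets of axioms line up. I would do this on objects first, then observe the same transposition is natural, so it extends to a functor $\Desc_F(p) \to T^p\dash\Alg$; the inverse is built by the reverse transposition, and the two composites are identities (or canonically isomorphic), giving the equivalence.

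Finally, I would verify the compatibility with the factorizations: the comparison functor $\mathcal K^p_F \colon Fb \to \Desc_F(p)$ sends $x$ to $(p^* x, \Omega_x)$ where $\Omega = {\m{F}_{p,d_0}}^{-1}\circ\m{F}_{p,d_1}$, and one checks this transports under the equivalence to the Eilenberg--Moore comparison $x \mapsto (p^* x, p^*\epsilon^p_x)$ — again a consequence of the Beck--Chevalley identity, now for the square exhibiting $d_i$ as pullbacks of $p$. Likewise the forgetful functors $\mathcal U^p_F$ and $\mathcal U^p$ correspond. The equivalence $\Desc_F(p)\simeq T^p\dash\Alg$ then forces $\mathcal K^p_F$ to be an equivalence (resp.\ fully faithful) exactly when $\mathcal K^p$ is, which is the definition of $p^*$ being monadic (resp.\ premonadic), proving \ref{enum:p.eff.desc} $\Leftrightarrow$ \ref{enum:pstar.monadic}. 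The main obstacle is the middle paragraph: verifying that the intricate web of pseudofunctoriality constraints on $\Ff_p$ is carried by Beck--Chevalley onto the monad axioms for $T^p$, without which neither the well-definedness of the functor nor the matching of the two comparison functors goes through.
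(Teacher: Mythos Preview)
The paper does not give its own proof of this theorem: it is stated with attribution to \cite{BR70} and followed only by a remark pointing to the generalizations in \cite{Luc18} and \cite{Luc21}. There is therefore no proof in the paper to compare your proposal against.

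That said, your plan is the standard argument and is correct in outline. The key step is precisely the one you identify: the Beck--Chevalley isomorphism for the kernel-pair square \eqref{eq:ker.pair} gives $(d_0)_! d_1^* \cong p^* p_!$, so transposing a descent datum $\phi \colon d_1^* y \to d_0^* y$ across $(d_0)_! \dashv d_0^*$ yields a $T^p$-algebra structure on $y$, and conversely. Your assessment of where the work lies is also accurate: matching reflexivity with the unit law is straightforward (it uses the Beck--Chevalley condition for the square $s_0 \circ d_i = \id$), while matching transitivity with associativity requires tracking the Beck--Chevalley isomorphisms for the three faces of the triple pullback and their interaction with the pseudofunctor coherence cells $\theta^F_{ij}$. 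One point worth making explicit in an actual write-up is that the transposition is not just a bijection on objects but respects morphisms of descent data and $T^p$-algebra homomorphisms, so that the equivalence is genuinely one of categories; you gesture at this with ``the same transposition is natural'' but it deserves a sentence.
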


This result was generalized in \cite[Theorem 7.4]{Luc18}, via the study of
commutativity of bilimits, and the main result of \cite{Luc21} (Theorem 4.7)
confirms that \ref{enum:p.eff.desc} \( \implies \) \ref{enum:pstar.monadic},
for \textit{all} bifibrations (that is, not necessarily satisfying the
Beck-Chevalley condition).

\section{Basic bifibration}
\label{sect:basic.bifib}

Let \( \cat C \) be a category with pullbacks. For each morphism \( f \colon x
\to y \), we have a functor \( f_! \colon \cat C \comma x \to \cat C \comma y
\) given on objects by \( g \mapsto f \circ g \). For each morphism \( h
\colon z \to y \), we consider the following pullback diagram:
\begin{equation*}
  \begin{tikzcd}
    f^*(z) \ar[r,"\epsilon^f_h"] \ar[d,swap,"f^*(h)"] 
           \ar[rd,"\ulcorner",phantom,very near start]
    & z \ar[d,"h"] \\
    x \ar[r,"f",swap] & y
  \end{tikzcd}
\end{equation*}
We observe that \( \epsilon^f_h \colon f \circ f^*(h) \to h \) is a morphism
in \( \cat C \comma y \), whose universal property borne out of the pullback
diagram guarantees that the assignment \( f^* \colon \cat C \comma y \to \cat
C \comma x \) defines a functor right adjoint to \( f_! \).

Together with the canonical isomorphisms \( \id_x^* \iso \id_{\cat C \comma
x} \) and \( f^*g^* \iso (g \circ f)^* \), we obtain the \textit{basic
bifibration}
\begin{align*}
  \cat C \comma - \colon \cat C^\op &\to \CAT\\
  x &\mapsto \cat C \comma x \\
  f \colon x \to y &\mapsto f^* \colon \cat C \comma y \to \cat C \comma x
\end{align*}
which provides the context for our study of descent theory in Chapters
\ref{chap:enriched}, \ref{chap:internal-multi}, and \ref{chap:enriched-multi}.
In this setting, we write \( \Desc(p) \) instead of \( \Desc_{\cat C \comma
-}(p) \) for the category of lax descent data, and we say ``(effective/almost)
descent morphism'' instead of ``(effective/almost) \((\cat C \comma
-)\)-descent morphism''.

When \( \cat C \) is a category with finite limits, it can be shown (see
\cite[{}2.4]{JT94}, \cite[Corollary 0.3.5]{Cre99}, \cite[Theorem 3.4]{JST04},
\cite[Proposition 2.1]{PL23}) via Beck's monadicity theorem that
\begin{itemize}[label=--,noitemsep]
  \item
    \( p \) is an almost descent morphism \( \iff \) \( p \) is a
    pullback-stable epimorphism,
  \item
    \( p \) is a descent morphism \( \iff \) \( p \) is a pullback-stable
    regular epimorphism.
\end{itemize}

While it is true that descent morphisms are effective for descent when \( \cat
C \) is Barr-exact \cite{Bar71} or locally cartesian closed, in general,
effective descent morphisms are challenging to describe. For instance, we note
the characterization of \cite{RT94} for \( \cat C = \Top \), or the
characterization of \cite{Cre99} for \( \cat C = \CatV \), for suitable
categories \( \cat V \). 

Nevertheless, the study of effective descent morphisms can be approached
directly, by studying the (essential) image of (fully faithful) comparison
functors \( \mathcal K^p \) for descent morphisms \( p \). Thus, the following
elementary observation regarding the (essential) image of \( \mathcal K^p \)
is of particular interest for our work.

\begin{proposition}[{\cite[Corollary 2.3]{PL23}}]
  \label{prop:comp.ess.surj}
  The comparison functor \( \mathcal K^p \) is essentially surjective if and
  only if, for all descent data \( (a,\gamma) \), there exists a morphism \( f
  \colon w \to b \) such that \( p^*f \iso a \) in \( \cat C \comma x \), and
  \begin{equation}
    \label{eq:img.cond}
    \epsilon^p_f \circ \gamma = \epsilon^p_f \circ \epsilon^p_{p \circ a}.
  \end{equation}
\end{proposition}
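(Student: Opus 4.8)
The plan is to unwind essential surjectivity of $\mathcal K^p$ using the explicit description of $\Desc(p)$ and of the $(\cat C\comma-)$-descent factorization~\eqref{eq:desc.fact}, and then to recognise the two clauses of the statement as, respectively, the ``underlying object'' part and the ``gluing compatibility'' part of an isomorphism of descent data. First I would record the reformulation: writing $\mathcal K^p(f)=(p^* f,\Omega_f)$ for $f\colon w\to b$, where $\Omega_f$ is the component at $f$ of the canonical comparison $\Omega={\m{F}_{p,d_0}}^{-1}\circ\m{F}_{p,d_1}\colon d_1^* p^*\to d_0^* p^*$ (the coherence isomorphism relating the two ways of forming the pullback of $p\circ f$ along $p$), the functor $\mathcal K^p$ is essentially surjective precisely when every descent datum $(a,\gamma)$ admits, for some $f$, an isomorphism $\mathcal K^p(f)\to(a,\gamma)$ in $\Desc(p)$; and, by the definition of morphisms in $\Desc(p)$, such an isomorphism is exactly an isomorphism $\omega\colon p^* f\to a$ in $\cat C\comma x$ with $d_0^*(\omega)\circ\Omega_f=\gamma\circ d_1^*(\omega)$. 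So the statement reduces to showing that the existence of such a compatible $\omega$ is equivalent to the existence of $f$ with $p^* f\iso a$ together with~\eqref{eq:img.cond}.

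For the forward implication, $p^* f\iso a$ is immediate from $\omega$, and to extract~\eqref{eq:img.cond} I would post-compose the coherence square $d_0^*(\omega)\circ\Omega_f=\gamma\circ d_1^*(\omega)$ with the relevant pullback projections. The key point is that, for the basic bifibration, the constraints $\m{F}_{p,d_i}$ (hence $\Omega_f$) are the canonical comparison isomorphisms between iterated pullbacks, so they interact with the counits $\epsilon^p_{-}$ through the universal properties of the defining pullback squares, together with the triangle identities for $p_!\adj p^*$; carrying this out, $\Omega_f$ and $\omega$ are absorbed into the counits and one is left with exactly $\epsilon^p_f\circ\gamma=\epsilon^p_f\circ\epsilon^p_{p\circ a}$, consistently with the fact that $\omega$ does not occur in~\eqref{eq:img.cond}.

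For the converse, assume $f$ is given with an isomorphism $\omega_0\colon p^* f\to a$ in $\cat C\comma x$ and with~\eqref{eq:img.cond}. Transporting $\gamma$ along $\omega_0$ produces a descent datum $\gamma_0$ on $p^* f$ (reflexivity and transitivity transport along isomorphisms), and it suffices to prove $(p^* f,\gamma_0)\iso\mathcal K^p(f)$ in $\Desc(p)$. Transporting~\eqref{eq:img.cond} along $\omega_0$ (using naturality of $\epsilon^p_{-}$ and functoriality of $p\circ-$) gives $\epsilon^p_f\circ\gamma_0=\epsilon^p_f\circ\epsilon^p_{p\circ p^* f}$, while the canonical datum $\Omega_f$ satisfies the same identity by a direct computation with pullback universal properties; hence $\gamma_0$ and $\Omega_f$ agree after composition with the relevant counit data. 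I would then argue that this composition is monic on morphisms $d_1^*(p^* f)\to d_0^*(p^* f)$ lying over the kernel pair $p\times_b p$: up to canonical isomorphism, $d_0^*(p^* f)$ is a pullback along $p$, so a morphism into it is determined jointly by its structure component over $p\times_b p$ and by its composite with the pullback counit, and that structure component is forced. Therefore $\gamma_0=\Omega_f$, and $\omega_0$ is itself the sought isomorphism of descent data.

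The step I expect to be the real obstacle is precisely this bookkeeping with canonical isomorphisms: identifying $d_0^*(a)$, $d_1^*(a)$ and $p^*(p\circ a)$ coherently, and verifying that the pseudofunctoriality constraints $\m{F}_{p,d_i}$ of the basic bifibration are compatible with the counits $\epsilon^p_{-}$ in exactly the way needed to turn the coherence square of an isomorphism of descent data into~\eqref{eq:img.cond}, and conversely. The reflexivity and transitivity of $\gamma$ contribute nothing beyond ensuring that $\gamma_0$ remains in $\Desc(p)$; the substance lies entirely in relating the descent datum $\gamma$ to the counit $\epsilon^p_f$ of the adjunction $p_!\adj p^*$.
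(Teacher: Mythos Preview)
Your approach is correct but takes a genuinely different route from the paper's in the converse direction. You argue via the universal property of the pullback defining $p^*f$: since $p^*f$ is a pullback of $f$ along $p$, a morphism into it (over $e$) is determined by its composite with $\epsilon^p_f$, so once both $\gamma_0$ and the canonical datum agree after composing with $\epsilon^p_f$, they must be equal. The paper instead never invokes this ``monicness''; it brings in the \emph{unit} $\eta^p$ of $p_!\adj p^*$ and dispatches the converse with a single equational chain
\[
  \gamma
  = p^*\epsilon^p_f \circ \eta^p_{p^*f} \circ \gamma
  = p^*\epsilon^p_f \circ p^*\gamma \circ \eta^p_{p^*(p\circ p^*f)}
  = p^*\epsilon^p_f \circ p^*\epsilon^p_{p\circ a} \circ \eta^p_{p^*(p\circ p^*f)}
  = p^*\epsilon^p_f,
\]
using in turn a triangle identity, naturality of $\eta^p$, the hypothesis~\eqref{eq:img.cond} pushed through $p^*$, and the other unit law. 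This sidesteps entirely the coherence bookkeeping you flag as the main obstacle: there is no need to track the Beck--Chevalley identifications between $d_i^*(p^*f)$ and $p^*(p\circ p^*f)$, nor to argue that any composition is jointly monic. Your argument buys a more geometric picture (everything is a statement about maps into a pullback), while the paper's buys brevity and makes transparent that the content is purely the adjunction $p_!\adj p^*$; in particular, the paper works directly in the monadic picture where $\gamma\colon p^*(p\circ a)\to a$, so the type-checking of~\eqref{eq:img.cond} is immediate and your translation layer between the $d_i^*$-description and the monadic one is never needed.
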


\begin{proof}
  We begin by noting that \( \mathcal K^pf = (p^*f, p^*_{\epsilon^p_f}) \) is
  a lax descent datum satisfying \eqref{eq:img.cond}, by naturality.

  Conversely, if \( (p^*f,\gamma) \) satisfies \eqref{eq:img.cond}, then
  \begin{equation*}
    \gamma = p^*\epsilon^p_f \circ \eta^p_{p^*f} \circ \gamma
           = p^*\epsilon^p_f \circ p^*\gamma  
                           \circ \eta^p_{p^*(p \circ p^*f)}
           = p^*\epsilon^p_f \circ p^*\epsilon^p_{p \circ a}
                           \circ \eta^p_{p^*(p \circ p^*f)}
           = p^*\epsilon^p_f,
  \end{equation*}
  hence \( (p^*f, \gamma) = \mathcal K^pf \).
\end{proof}

\begin{remark}
  We should point out that Proposition \ref{prop:comp.ess.surj} is often
  implicitly used in the study of effective descent morphisms (for instance,
  \cite[Proposition 3.2.4]{Cre99}). Moreover, it can be shown that this result
  also holds in the general context for a pseudofunctor \( F \colon \cat C^\op
  \to \CAT \), so its applicability in descent arguments does not rely on the
  Beck-Chevalley condition.
\end{remark}

Another fruitful strategy, undertaken by both \cite{RT94} and \cite{Cre99},
and justified by the following Proposition \ref{prop:full.subcat.desc}, is to
suitably embed the category whose effective descent morphisms we wish to study
in a larger one in which those are well-understood. 

\begin{proposition}[{\cite[{}2.7]{JT94}}]
  \label{prop:full.subcat.desc}
  Let \( U \colon \cat C \to \cat D \) be a fully faithful,
  pullback-preserving functor between categories with pullbacks, and let \( p
  \colon x \to y \) be a morphism such that \( Up \) is an effective descent
  morphism. Then \(p\) is an effective descent morphism if and only if, for
  all pullback diagrams of the form
  \begin{equation}
    \label{eq:pb.diag.desc}
    \begin{tikzcd}
      Uv \ar[r] \ar[d] 
         \ar[rd,"\ulcorner",phantom,very near start]
      & w \ar[d] \\
      Ux \ar[r,"Up",swap] & Uy
    \end{tikzcd}
  \end{equation}
  we have \( w \iso Uz \) for some object \(z\) of \( \cat C \).
\end{proposition}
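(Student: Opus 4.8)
The plan is to reduce everything to the descent factorization and the universal property of the lax descent category, exploiting the full faithfulness of $U$. Since $Up$ is assumed to be an effective descent morphism, the comparison functor $\mathcal K^{Up} \colon \cat D \comma Uy \to \Desc(Up)$ is an equivalence. The key observation is that $U$ induces a functor $\Desc(p) \to \Desc(Up)$ (because $U$ preserves the kernel pair of $p$, hence the simplicial diagram $\Ker(p)$, and hence the whole lax descent diagram), and moreover that this induced functor, together with $U \colon \cat C \comma y \to \cat D \comma Uy$, fits into a square with the two comparison functors $\mathcal K^p$ and $\mathcal K^{Up}$ that commutes up to the canonical isomorphisms. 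So first I would set up this square carefully and check it commutes.

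Next, since $U$ is fully faithful and pullback-preserving, the functor $\cat C \comma y \to \cat D \comma Uy$ is fully faithful, and likewise the induced $\Desc(p) \to \Desc(Up)$ is fully faithful (a morphism of descent data over $p$ is just a morphism in $\cat C \comma y$ compatible with the structure maps, and $U$ being fully faithful on $\cat C$ makes it fully faithful on these comma categories and on the categories of descent data built from them). Given that $\mathcal K^{Up}$ is an equivalence and the square commutes, a diagram chase shows $\mathcal K^p$ is automatically fully faithful (this is essentially Proposition \ref{prop:full.subcat.desc}'s hypothesis telling us $p$ is already a descent morphism, which also follows from the characterization of descent morphisms as pullback-stable regular epimorphisms being reflected by $U$ — but the cleaner route is the diagram chase). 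Therefore the only thing at stake is \emph{essential surjectivity} of $\mathcal K^p$.

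Now I would characterize essential surjectivity of $\mathcal K^p$ via its counterpart for $\mathcal K^{Up}$. Let $(a,\gamma)$ be descent data for $p$ in $\cat C$. Its image $(Ua, U\gamma)$ is descent data for $Up$, so by effectiveness of $Up$ there is a bundle $w \to Uy$ in $\cat D$ with $\mathcal K^{Up}(w \to Uy) \iso (Ua, U\gamma)$; unwinding, this means exactly that the pullback of $w \to Uy$ along $Up$ is isomorphic to $Ua$ over $Ux$ — i.e. we are in the situation of the pullback diagram \eqref{eq:pb.diag.desc} with $Uv = $ (the domain of the pullback) $\iso Ua$. The hypothesis on $U$ then gives $w \iso Uz$ for some $z$ in $\cat C$. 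It remains to promote $z$ (viewed as a bundle over $y$ via full faithfulness of $U$, which reflects the map $w \to Uy$ to a map $z \to y$) to an \emph{isomorphism} $\mathcal K^p z \iso (a,\gamma)$ in $\Desc(p)$: we have $p^* z \iso a$ over $x$ because $U$ reflects this isomorphism from $Up^* Uz \iso Uw\times_{Uy} \ldots \iso Ua$; and the descent-datum structure maps agree because $U$ is faithful and they agree after applying $U$ (where they agree by construction of the isomorphism $\mathcal K^{Up}(w) \iso (Ua,U\gamma)$). So $\mathcal K^p$ is essentially surjective, hence an equivalence.

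The main obstacle I anticipate is bookkeeping the coherence isomorphisms: $U$ only preserves pullbacks up to canonical isomorphism, so the claim that "$U$ induces a functor on descent data and the comparison square commutes" requires tracking the pseudofunctoriality constraints $\m{F}_{f,g}$ and the induced isomorphisms $\theta^F_{ij}$ through the construction, and verifying the reflexivity and transitivity conditions \eqref{eq:refl.desc}–\eqref{eq:trans.desc} are transported correctly. The converse direction (if $p$ is effective descent then every such $w$ is in the image of $U$) is easier: given the pullback \eqref{eq:pb.diag.desc}, the object $Uv \iso$ (pullback) carries canonical descent data for $Up$; but one checks this descent data is the $U$-image of descent data for $p$ (since $Uv \iso U(\text{something})$... actually here one uses that $\mathcal K^p$ essentially surjective produces a bundle over $y$ whose $U$-image has the same pullback as $w$, and then full faithfulness of $\cat D \comma Uy$ restricted appropriately, together with $Up$ being a descent morphism so $\mathcal K^{Up}$ is fully faithful, forces $w \iso Uz$). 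I would present the forward direction first as the substantive content and the converse as a short complement.
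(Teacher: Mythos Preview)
The paper does not supply its own proof of this proposition: it is quoted from \cite[2.7]{JT94} and stated without argument, so there is nothing in the paper to compare your proposal against directly.

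That said, your approach is the standard one and is correct. Setting up the square
\[
\begin{tikzcd}
\cat C \comma y \ar[r,"\mathcal K^p"] \ar[d,"U",swap] & \Desc(p) \ar[d,"U"] \\
\cat D \comma Uy \ar[r,"\mathcal K^{Up}",swap] & \Desc(Up)
\end{tikzcd}
\]
with both vertical arrows fully faithful (because $U$ is fully faithful and preserves the kernel diagram) and the bottom row an equivalence is exactly the right reduction; from there $\mathcal K^p$ is fully faithful automatically, and essential surjectivity is precisely the condition in the statement once one observes that every descent datum $(a,\gamma)$ for $p$ produces, via its $U$-image and the equivalence $\mathcal K^{Up}$, a pullback square of the displayed form with $v$ the domain of $a$. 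Your anticipated obstacle---tracking the pseudofunctoriality constraints so that $U$ really induces a functor on descent categories commuting with the comparison functors---is genuine but routine.

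Your sketch of the converse is somewhat garbled in the parenthetical, but the underlying idea is right: given such a pullback square, the bundle $w \to Uy$ determines descent data $(Uv,\gamma)$ for $Up$; since $U$ is fully faithful and pullback-preserving, this lifts to descent data $(v,\gamma')$ for $p$; effectiveness of $p$ then yields $z \to y$ with $\mathcal K^p z \iso (v,\gamma')$, and applying $U$ together with the fact that $\mathcal K^{Up}$ is an equivalence gives $Uz \iso w$. It would be worth writing this out more cleanly than the parenthetical suggests.
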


Throughout our study of effective descent morphisms in categorical structures,
we have found the following particular instance of Proposition
\ref{prop:full.subcat.desc} to be particularly useful:

\begin{corollary}[{\cite[Corollary 2.5]{PL23}}]
  \label{cor:eff.desc.iso}
  Let \( U \colon \cat C \to \cat D \) be a fully faithful,
  pullback-preserving functor between categories with pullbacks. If for every
  effective descent morphism \( g \colon Ux \to z \) there exists an
  isomorphism \( z \iso Uy \), then \( U \) reflects effective descent
  morphisms.
\end{corollary}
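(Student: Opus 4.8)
The plan is to derive this as a direct corollary of Proposition~\ref{prop:full.subcat.desc}. Let $p \colon x \to y$ be a morphism in $\cat C$ and suppose $Up$ is an effective descent morphism in $\cat D$; we must show $p$ is an effective descent morphism in $\cat C$. By Proposition~\ref{prop:full.subcat.desc}, it suffices to verify that for every pullback diagram of the form \eqref{eq:pb.diag.desc}, namely a pullback of $Up \colon Ux \to Uy$ along an arbitrary morphism $w \to Uy$ in $\cat D$, the object $w$ lies in the essential image of $U$, i.e.\ $w \iso Uz$ for some $z$ in $\cat C$.

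First I would observe that in such a pullback square, the projection $w \to Ux$ is itself obtained by pulling back $Up$, hence is an effective descent morphism in $\cat D$, since effective descent morphisms are pullback-stable (this is part of the general theory; pullback-stability of effective descent morphisms follows from the Beck--Chevalley-type behaviour of the descent factorization, and holds for the basic bifibration of any category with pullbacks). So we have an effective descent morphism $g \colon Ux \to w$ --- wait, the direction is $w \to Ux$; more precisely the pulled-back arrow has codomain $Ux$ and domain $w$. To apply the hypothesis of the Corollary, which concerns effective descent morphisms \emph{out of} objects of the form $Ux$, I would instead argue as follows: the hypothesis says that whenever $g \colon Ux \to z$ is an effective descent morphism in $\cat D$, then $z \iso Uy'$ for some $y'$. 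But in \eqref{eq:pb.diag.desc} the relevant arrow points into $Ux$, not out of it. The resolution is that the top arrow of the pullback square, $Uv \to w$, is a pullback of $Up$, and is therefore an effective descent morphism; but again its codomain is $w$, not of the form $U(\,\cdot\,)$ a priori. So the cleaner route: the morphism $w \to Uy$ in \eqref{eq:pb.diag.desc} composes with nothing useful directly, so instead I apply the hypothesis to the pullback projection $\pi \colon w \to Ux$ --- no, wrong variance again. Let me reconsider: the correct application is that $w \to Ux$ is a \emph{pullback-stable} effective descent morphism's pullback, hence effective descent with \emph{codomain} $Ux$; the Corollary's hypothesis is about effective descent morphisms with \emph{domain} $Ux$. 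These do not match, so one must use that effective descent morphisms are also sent to pullbacks covariantly: actually the point is simply that $w$ is the domain of an effective descent morphism $w \to Ux$ is not what we have either.

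Reconsidering, the honest plan is: in \eqref{eq:pb.diag.desc}, consider the arrow $q \colon w \to Uy$ (the right-hand vertical map). This need not be effective descent. But the pullback of $Up$ along $q$ has apex $Uv$ and the top map $Uv \to w$. Now I claim $Uv \to w$ is an effective descent morphism: indeed it is the pullback of $Up$ along $q$, and $Up$ is effective descent, and effective descent morphisms are stable under pullback. Hence $w$ is the codomain of an effective descent morphism whose domain is $Uv$, an object of the form $U(\text{something})$. This is precisely the situation addressed by the hypothesis of the Corollary --- reading it with $x$ replaced by $v$ and $g = (Uv \to w)$ --- so we conclude $w \iso Uz$ for some $z$ in $\cat C$. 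This verifies the condition of Proposition~\ref{prop:full.subcat.desc}, and therefore $p$ is an effective descent morphism, as desired.

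The main obstacle, and the one subtle point to get right, is the pullback-stability of effective descent morphisms, which must be invoked to recognise $Uv \to w$ as effective descent; this is standard but should be cited or recalled (it follows from the compatibility of the $F$-descent factorization with pullback, valid for the basic bifibration). A secondary care-point is matching variances correctly when instantiating the Corollary's hypothesis: one applies it not to $p$ or to $w \to Ux$ but to the top edge $Uv \to w$ of the pullback square, whose domain $Uv$ is manifestly in the image of $U$. Once these are pinned down, the proof is a one-line invocation of Proposition~\ref{prop:full.subcat.desc}, so I would keep the write-up short.
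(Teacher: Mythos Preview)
Your eventual argument is correct and is exactly the paper's proof: use pullback-stability of effective descent morphisms to conclude that the top edge $Uv \to w$ of \eqref{eq:pb.diag.desc} is effective descent, apply the hypothesis with $g = (Uv \to w)$ to get $w \iso Uz$, and invoke Proposition~\ref{prop:full.subcat.desc}. The only issue is presentational: the several false starts about variance should be deleted, leaving the final paragraph as the entire proof.
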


\begin{proof}
  We recall that effective descent morphisms are stable under pullback. Thus,
  if \eqref{eq:pb.diag.desc} is a pullback square, and \( Up \) is an
  effective descent morphism, then so is \( Uv \to w \) by pullback-stability.
  By hypothesis, there exists an isomorphism \( w \iso Uy \), whence we
  conclude that \( p \) is effective for descent by Proposition
  \ref{prop:full.subcat.desc}.
\end{proof}

As one of the byproducts of the study of commutativity of bilimits carried out
in \cite{Luc18}, Lucatelli Nunes provides a description of the effective
descent morphisms for the bilimit of a diagram of categories with pullbacks and
pullback-preserving functors, in terms of the (effective) descent morphisms of
the categories in the underlying diagram. A particularly important consequence
is that this provides a second, widely applicable approach to the study of
effective descent morphisms, as exhibited by the following Propositions:

\begin{proposition}[{\cite[Theorem 1.6, Corollary 9.6]{Luc18}}]
  \label{prop:pspb.descent}
  If we have a pseudopullback diagram of categories with pullbacks and
  pullback-preserving functors
  \begin{equation*}
    \begin{tikzcd}
      \cat A \ar[r,"F"] \ar[d,"G",swap]
             \ar[rd,"\iso" description,phantom]
        & \cat B \ar[d,"H"] \\
      \cat C \ar[r,"K",swap] & \cat D
    \end{tikzcd}
  \end{equation*}
  and a morphism \( f \) in \( \cat A \) such that
  \begin{itemize}[label=--,noitemsep]
    \item
      \( Ff \) and \( Gf \) are effective descent morphisms, and
    \item
      \( KGf \iso HFf \) is a descent morphism,
  \end{itemize}
  then \( f \) is an effective descent morphism.
\end{proposition}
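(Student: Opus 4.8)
The plan is to exhibit the entire $F$-descent factorization of $f$ as a pseudopullback of the $F$-descent factorizations of $Ff$, of $Gf$, and of $HFf \iso KGf$, and then to finish with an elementary lemma about maps between pseudopullback squares in $\CAT$.

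First I would unwind the pseudopullback. An object of $\cat A$ is a triple $(b',c',\sigma')$ with $\sigma'\colon Hb' \iso Kc'$ and a morphism a compatible pair; hence, for every object $b$ of $\cat A$, the slice $\cat A \comma b$ is canonically, and pseudonaturally in $b$, the pseudopullback of $\cat B \comma Fb$ and $\cat C \comma Gb$ over $\cat D \comma HFb \iso \cat D \comma KGb$, and the induced functors between these slices still preserve pullbacks since $F$, $G$, $H$, $K$ do. Because $F$ and $G$ preserve pullbacks, $F\circ\Ker(f)\iso\Ker(Ff)$ and $G\circ\Ker(f)\iso\Ker(Gf)$, while $H\circ F\circ\Ker(f)\iso K\circ G\circ\Ker(f)$; feeding this into the previous observation shows that the pseudofunctor $\Ff_f\colon\Delta_3\to\CAT$ is, pointwise, the pseudopullback of $\Ff_{Ff}$ and $\Ff_{Gf}$ over $\Ff_{HFf}\iso\Ff_{KGf}$.

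Next, the lax descent category is a (weighted) bilimit over $\Delta_3$, and bilimits commute with bilimits, in particular with pseudopullbacks; this is exactly the content of \cite[Theorem~1.6, Corollary~9.6]{Luc18}. Applying it, the pointwise pseudopullback of pseudofunctors above is carried to a pseudopullback of categories, $\Desc(f) \eqv \Desc(Ff) \times_{\Desc(KGf)} \Desc(Gf)$. Moreover, since pullbacks in $\cat A$ are computed componentwise, the functor $f^*$ together with its canonical $2$-cell $\Omega$ decomposes, as a cone, into $(Ff)^*$, $(Gf)^*$ and $(HFf)^*$ with their $2$-cells; by the uniqueness clause in the universal property of the lax descent category, the comparison functor $\mathcal K^f$ is, up to equivalence, precisely the canonical map of pseudopullback cones induced by $\mathcal K^{Ff}$, $\mathcal K^{Gf}$ and $\mathcal K^{HFf}$.

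It then remains to prove: given a map of pseudopullback squares in $\CAT$, if the two functors between the ``side'' categories are equivalences and the functor between the ``base'' categories is fully faithful, then the induced functor on apexes is an equivalence. This is a direct check on the description of a pseudopullback $X\times_Z Y$ (objects $(x,y,\zeta)$ with $\zeta\colon px \iso qy$): faithfulness and fullness of the apex functor follow from those of the two side functors, transporting the compatibility isomorphisms along the pseudonaturality constraints (the base functor plays no role here); essential surjectivity is where full-faithfulness of the base functor is used, since after lifting objects along the two (essentially surjective) side functors one must lift an isomorphism $w(px)\iso w(qy)$ back to an isomorphism $px\iso qy$ in $Z$, and a fully faithful functor both creates such a morphism and reflects its invertibility. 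Applying this with side functors $\mathcal K^{Ff}$, $\mathcal K^{Gf}$ (equivalences, since $Ff$ and $Gf$ are effective descent morphisms) and base functor $\mathcal K^{HFf}\eqv\mathcal K^{KGf}$ (fully faithful, since $KGf$ is a descent morphism) yields that $\mathcal K^f$ is an equivalence, i.e.\ $f$ is an effective descent morphism. The routine part is this last lemma; the conceptual input is the commutativity of the two bilimits, and the only real obstacle I anticipate is bookkeeping: pinning down the pseudonaturality constraints precisely enough that the identification of $\mathcal K^f$ with the canonical comparison between pseudopullbacks is coherent rather than merely objectwise, since this is what licenses the reduction to the elementary lemma.
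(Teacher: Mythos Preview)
The paper does not supply its own proof of this proposition: it is stated as a citation to \cite[Theorem~1.6, Corollary~9.6]{Luc18} and used as a black box thereafter. Your proposal is essentially a reconstruction of the argument behind that citation, and it is correct: the slice categories of a pseudopullback of categories with pullbacks are again pseudopullbacks of slices, hence the basic-bifibration diagram $\Ff_f$ is a pointwise pseudopullback of $\Ff_{Ff}$, $\Ff_{Gf}$ over $\Ff_{HFf}$; lax descent categories, being bilimits, commute with this pseudopullback; and the elementary lemma you state about maps of pseudopullback squares (side functors equivalences, base functor fully faithful $\Rightarrow$ apex functor an equivalence) is exactly right and finishes the argument. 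The bookkeeping worry you flag is genuine but standard, and is precisely what \cite[Theorem~1.6]{Luc18} packages.
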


\begin{proposition}[{\cite[Theorem 9.2]{Luc18}}]
  \label{prop:pseq.descent}
  If we have a pseudoequalizer diagram of categories with pullback and
  pullback-preserving functors
  \begin{equation*}
    \begin{tikzcd}
      \cat A \ar[r,"F"] 
        & \cat B \ar[r,shift left,"G"]
                 \ar[r,shift right,"H",swap]
        & \cat C
    \end{tikzcd}
  \end{equation*}
  and a morphism \(f\) in \( \cat A \) such that
  \begin{itemize}[label=--,noitemsep]
    \item
      \( Ff \) is an effective descent morphism, and
    \item
      \( GFf \iso HFf \) is a descent morphism,
  \end{itemize}
  then \(f\) is an effective descent morphism.
\end{proposition}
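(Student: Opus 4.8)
The plan is to reduce the statement to Proposition~\ref{prop:pspb.descent} by presenting the pseudoequalizer as a pseudopullback. The device is the $2$-categorical version of the classical fact that the equalizer of $G,H\colon\cat B\to\cat C$ is the pullback of their graph morphisms: the pseudoequalizer $\cat A$ of $G$ and $H$ is equivalent, as a category, to the pseudopullback
\begin{equation*}
  \begin{tikzcd}
    \cat P \ar[r] \ar[d] \ar[rd,"\iso" description,phantom]
      & \cat B \ar[d,"{\langle \id_{\cat B},H\rangle}"] \\
    \cat B \ar[r,"{\langle \id_{\cat B},G\rangle}",swap] & \cat B\times\cat C,
  \end{tikzcd}
\end{equation*}
and the equivalence $\cat A\eqv\cat P$ carries both projections $\cat P\to\cat B$ (up to isomorphism) to the leg $F\colon\cat A\to\cat B$, and the tautological $2$-cell of the pseudopullback to the canonical isomorphism $GF\iso HF$. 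This is a standard fact about pseudolimits; granting it, together with the observation that an equivalence between categories with pullbacks preserves and reflects pullbacks and lax descent data, hence preserves and reflects effective descent morphisms, it suffices to establish the claim for $\cat P$ in place of $\cat A$.

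It then remains to verify the hypotheses of Proposition~\ref{prop:pspb.descent} for the square above. The product $\cat B\times\cat C$ has pullbacks, computed componentwise, and each graph functor $\langle\id_{\cat B},G\rangle$, $\langle\id_{\cat B},H\rangle$ is pullback-preserving because its components $\id_{\cat B}$ and $G$, respectively $\id_{\cat B}$ and $H$, are. The image in $\cat P$ of the given morphism $f$ of $\cat A$ has both projections to $\cat B$ isomorphic to $Ff$, which is an effective descent morphism by hypothesis; so the two ``leg'' conditions of Proposition~\ref{prop:pspb.descent} hold. For the remaining condition I would argue that the common composite $\langle\id_{\cat B},G\rangle(Ff)=(Ff,GFf)$ is a descent morphism in $\cat B\times\cat C$: since the comma categories, kernel pairs and lax descent categories of a product category all split as products, the comparison functor $\mathcal K^{(u,v)}$ is isomorphic to $\mathcal K^u\times\mathcal K^v$, so $(u,v)$ is a descent morphism precisely when $u$ and $v$ both are. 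Here $Ff$ is an effective descent morphism, hence in particular a descent morphism, and $GFf\iso HFf$ is a descent morphism by hypothesis; thus $(Ff,GFf)$ is a descent morphism, and Proposition~\ref{prop:pspb.descent} yields that $f$ is an effective descent morphism.

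The one point that genuinely needs care is the choice of pseudopullback presentation. The more obvious presentation of $\cat A$ as the pseudopullback of $\langle G,H\rangle\colon\cat B\to\cat C\times\cat C$ along the diagonal $\Delta\colon\cat C\to\cat C\times\cat C$ does \emph{not} work: there the second leg out of the apex is isomorphic to $GF$, so Proposition~\ref{prop:pspb.descent} would require $GFf$ to be an \emph{effective} descent morphism, which is strictly more than the hypothesis grants. The graph presentation is exactly what relocates both legs into $\cat B$ --- where the strong hypothesis lives --- while demanding only that $(Ff,GFf)$ be a \emph{descent} morphism. Everything else, namely checking that the equivalence $\cat A\eqv\cat P$ respects the projections and the $2$-cells and that descent morphisms of a product category are componentwise, is routine bookkeeping of the sort already tacit in the statement of Proposition~\ref{prop:pspb.descent}.
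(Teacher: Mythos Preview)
Your reduction is correct and is a neat way to derive the pseudoequalizer statement from the pseudopullback one. The presentation of the pseudoequalizer as the pseudopullback of the graph functors $\langle\id_{\cat B},G\rangle$ and $\langle\id_{\cat B},H\rangle$ does exactly what you claim, the graph functors preserve pullbacks because their components do, and the componentwise description of descent data in a product category gives that $(Ff,GFf)$ is a descent morphism in $\cat B\times\cat C$. Your remark that the diagonal presentation fails is also on point: it would force $GFf$ to be \emph{effective} for descent, which is not assumed.

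As for comparison: the paper does not prove this proposition; it quotes it from \cite[Theorem 9.2]{Luc18}. There, both Proposition~\ref{prop:pspb.descent} and Proposition~\ref{prop:pseq.descent} are obtained as instances of a single result on commutativity of bilimits with the descent construction, rather than by reducing one to the other. Your argument is thus a genuinely different route: instead of invoking the general bilimit machinery twice, you invoke it once (for pseudopullbacks) and then perform a concrete reduction. What this buys you is a short, self-contained derivation of the pseudoequalizer case once the pseudopullback case is in hand; what the approach in \cite{Luc18} buys is uniformity --- the same argument handles a broad class of weighted bilimits at once, of which these two propositions are just the shapes needed in this thesis.
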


To apply Proposition \ref{prop:pspb.descent}, the following result is useful:
\begin{lemma}[{\cite[Lemma 2.5]{Pre23}}]
  \label{lem:preserve.desc}
  Let \( H \colon \cat B \to \cat D \) be a functor between categories with
  finite limits. If \( H \) preserves coequalizers, reflects pullbacks, and
  has a fully faithful left adjoint \( L \colon \cat D \to \cat B \), then \(H\)
  preserves descent morphisms.
\end{lemma}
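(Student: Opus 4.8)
The plan is to use the characterization recalled just above: since $\cat B$ and $\cat D$ have finite limits, the descent morphisms are exactly the pullback-stable regular epimorphisms. So it suffices to show that if $p \colon x \to y$ is a pullback-stable regular epimorphism in $\cat B$, then $Hp$ is one in $\cat D$. First I would record what the hypotheses buy us: $H$, being a right adjoint (to $L$), preserves all finite limits, in particular pullbacks; $H$, preserving coequalizers, preserves regular epimorphisms (a regular epimorphism being a coequalizer); and $L$ fully faithful is equivalent to the unit $\eta \colon \id_{\cat D} \Rightarrow HL$ of $L \adj H$ being an isomorphism.

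The core step is to fix an arbitrary $g \colon z \to Hy$ in $\cat D$ and exhibit the pullback of $Hp$ along $g$ with a regular-epi projection. Let $\bar g \colon Lz \to y$ be the transpose of $g$ (explicitly $\bar g = \epsilon_y \circ Lg$, with $\epsilon$ the counit), and form the pullback $P := x \times_y Lz$ in $\cat B$ along $p$ and $\bar g$; its projection $q \colon P \to Lz$ is a regular epimorphism since $p$ is a pullback-stable regular epimorphism. Applying $H$ and using that $H$ preserves pullbacks, $HP = Hx \times_{Hy} HLz$ with projection $Hq \colon HP \to HLz$. The triangle identity together with naturality of $\eta$ gives $H\bar g \circ \eta_z = g$; hence, transporting the cospan $(Hp, H\bar g)$ along the isomorphism $\eta_z \colon z \iso HLz$, this square is identified with the pullback of $Hp$ along $g$, the projection becoming $\eta_z^{-1} \circ Hq$. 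Since $H$ preserves regular epimorphisms, $Hq$, hence also this projection, is a regular epimorphism. As $g$ was arbitrary, $Hp$ is a pullback-stable regular epimorphism, i.e. a descent morphism.

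The only delicate point is the bookkeeping: checking $H\bar g \circ \eta_z = g$ and that the transported square genuinely is the pullback of $Hp$ along the correct cospan — routine diagram chasing with the triangle identities and naturality, but the step one has to be careful with. I would also stress that the argument must be run for an arbitrary $g$ (not merely the pullback along $\id_{Hy}$) precisely so that one concludes preservation of pullback-\emph{stable} regular epimorphisms, not just of regular epimorphisms. (Incidentally, in this route the hypothesis that $H$ reflects pullbacks is not strictly needed, since $H$ already preserves them as a right adjoint; it would be the relevant hypothesis for a variant argument that instead transports pullbacks from $\cat D$ back to $\cat B$.)
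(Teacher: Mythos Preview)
Your proof is correct, and in fact it is a slightly different --- and arguably more economical --- route than the one the paper takes. The paper starts from the pullback of $Hh$ along an arbitrary $\phi \colon z \to Hy$ \emph{in $\cat D$}, then uses the unit isomorphism to rewrite that square as $H$ applied to a square in $\cat B$, and invokes the hypothesis that $H$ \emph{reflects} pullbacks to conclude that this $\cat B$-square is itself a pullback; pullback-stability of $h$ then gives a regular epimorphism in $\cat B$, which $H$ pushes back down. You instead form the pullback directly in $\cat B$ (along the transpose $\bar g \colon Lz \to y$), and use that $H$, as a right adjoint, \emph{preserves} pullbacks to identify the result with the desired $\cat D$-pullback after transport along $\eta_z$. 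Your parenthetical remark is spot on: in your direction the ``reflects pullbacks'' hypothesis is genuinely unused, whereas the paper's argument needs it. What the paper's route buys is perhaps a closer match to the intended applications (Remark~\ref{rem:fln.gen}), where reflection of pullbacks is easy to verify; what yours buys is a weaker hypothesis list.
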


\begin{proof}
  Let \( h \colon x \to y \) be a descent morphism in \( \cat B \), and we
  consider the following pullback diagram:
  \begin{equation*}
    \begin{tikzcd}
      (Hh)^*z \ar[r,"\epsilon_\phi"] \ar[d,"(Hh)^*\phi",swap]
              \ar[rd,"\ulcorner",phantom,very near start]
        & z \ar[d,"\phi"] \\
      Hx \ar[r,"Hh",swap] & Hy
    \end{tikzcd}
  \end{equation*}
  Since the unit \( \id \to HL \) is an isomorphism, the following is a
  pullback diagram as well:
  \begin{equation*}
    \begin{tikzcd}
      HL(Hh)^*z \ar[r,"HL\epsilon_\phi"] \ar[d,"H\psi^\sharp",swap]
                \ar[rd,"\ulcorner",phantom,very near start]
        & HLz \ar[d,"H\phi^\sharp"] \\
      Hx \ar[r,"Hh",swap] & Hy
    \end{tikzcd}
  \end{equation*}
  where \( \phi^\sharp \colon Lz \to y \) is obtained from \( \phi \) via the
  hom-isomorphism \( \cat D(z,Hy) \iso \cat B(Lz,y) \), and, likewise, \(
  \psi^\sharp \colon L(Hh)^*z \to x \) is obtained from \( \psi = (Hh)^*\phi \).
 
  Since \( H \) reflects pullbacks, we conclude \( L\epsilon_\phi \) is a
  regular epimorphism. This property is preserved by \(H\), as it preserves
  coequalizers. Thus, we conclude that \( Hh \) is a pullback-stable regular
  epimorphism, as desired.
\end{proof}

\begin{remark}
  \label{rem:fln.gen}
  The applications of Lemma \ref{lem:preserve.desc} we have in mind are 
  \begin{itemize}[label=--,noitemsep]
    \item
      the underlying object-of-objects functor \( (-)_0 \colon \CatV \to \cat
      V \),
    \item
      the canonical fibration \( \FamV \to \Set \),
    \item
      and the underlying object-of-objects functor \( (-)_0 \colon \CatTV \to
      \cat V \).
  \end{itemize}
  Each functor has fully faithful left and right adjoints, and therefore
  satisfies the hypotheses of Lemma~\ref{lem:preserve.desc}. Hence, we
  conclude that each functor preserves descent morphisms.

  Thanks to the first functor, we can obtain the conclusion of \cite[Theorem
  9.11]{Luc18} for \( \cat V \) a lextensive category such that the functor \(
  - \pt 1 \colon \Set \to \cat V \)\footnote{This is the left adjoint to $
  \cat V(1,-) \colon \cat V \to \Set $}, is fully faithful, without assuming
  \( \cat V \) has a (regular epi, mono)-factorization system, using precisely
  the same proof. 

  The second functor plays an important role in Chapter \ref{chap:enriched} in
  obtaining effective descent morphisms in \( \VCat \), in the more general
  setting of a cartesian monoidal category \( \cat V \) with finite
  limits; this is Theorem \ref{thm:desc.vcat}.

  The third functor plays a similar role in Chapter \ref{chap:enriched-multi},
  to obtain effective descent morphisms in a suitable category of generalized
  enriched multicategories. The statement of this result is given by Theorem
  \ref{thm:desc.tvcat}.
\end{remark}

\subsection{Descent theory in categorical structures} 

In the context of effective descent morphisms in categorical structures, the
results of Le Creurer \cite{Cre99} are the cornerstone upon which we obtain
our own.  Indeed, since enriched categorical structures can be embedded into
an internal setting, under suitable conditions, the general strategy is to
first study the effective descent morphisms for internal categorical
structures directly (such results are given by Theorems \ref{thm:int.lec} and
\ref{thm:int.eff.desc}), and then apply Propositions
\ref{prop:full.subcat.desc} and \ref{prop:pspb.descent} to study whether the
embedding reflects the effective descent morphisms back to the enriched
setting.

\begin{theorem}[{\cite[Corollary 3.3.1]{Cre99}}]
  \label{thm:int.lec}
  Let \( \cat V \) be a category with finite limits, and \( p \colon \cat C
  \to \cat D \) be a functor of categories internal to \( \cat V \). If
  \begin{itemize}[noitemsep,label=--]
    \item
      \( p_0 \colon \cat C_0 \to \cat D_0 \) is an effective descent morphism,
    \item
      \( p_1 \colon \cat C_1 \to \cat D_1 \) is an effective descent morphism,
    \item
      \( p_2 \colon \cat C_2 \to \cat D_2 \) is a descent morphism, and
    \item
      \( p_3 \colon \cat C_3 \to \cat D_3 \) is an almost descent morphism,
  \end{itemize}
  then \( p \) is an effective descent morphism in \( \CatV \).
\end{theorem}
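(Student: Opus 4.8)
The plan is to exhibit the category $\CatV$ of internal categories in $\cat V$ as a suitable pseudolimit of copies of $\cat V$ and $\cat C \comma -$-style functors, so that Proposition~\ref{prop:pspb.descent} (or an iterated version of it) applies. Concretely, an internal category consists of objects $\cat C_0, \cat C_1$ together with source, target, identity and composition morphisms, subject to the usual axioms; the objects $\cat C_2$ and $\cat C_3$ of composable pairs and triples are computed as pullbacks over $\cat C_0$. I would organize the data so that the forgetful assignments $\cat C \mapsto \cat C_n$ ($n = 0,1,2,3$) become the legs of a pseudopullback (or finite bilimit) diagram of categories with pullbacks and pullback-preserving functors, whose vertex is $\CatV$. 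The key formal input is that these reindexing functors $(-)_n \colon \CatV \to \cat V$ preserve pullbacks — which holds because pullbacks in $\CatV$ are computed componentwise (finite limits in $\CatV$ are created by the underlying-graph functor) — and that the comparison functors appearing as the other edges of the diagram are likewise pullback-preserving.

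First I would set up the bilimit presentation carefully: the truncation functor to reflexive graphs realizes $\CatV$ as the category of algebras for a finitary monad on $\RGrph(\cat V)$, and the object-of-$n$-chains functors factor through this. The cleanest route is to present $\CatV$ as a pseudopullback along the span built from $(-)_1$, $(-)_2$, $(-)_3$ and the structural maps (with $(-)_0$ recovered via the identity or source map as a retract of $(-)_1$), so that a functor $p$ in $\CatV$ is sent to the tuple $(p_0, p_1, p_2, p_3)$ in the factors. Then I would invoke Proposition~\ref{prop:pspb.descent}: once $p_1$ and the relevant pullback legs are effective descent morphisms in $\cat V$, and the morphisms that land in the "base" corners of the bilimit are descent morphisms, the conclusion is that $p$ is effective for descent in the vertex $\CatV$. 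The hypotheses of the theorem — $p_0, p_1$ effective descent, $p_2$ descent, $p_3$ almost descent — are precisely calibrated so that each edge of the diagram receives a morphism with exactly the strength (effective descent / descent / almost descent) that Proposition~\ref{prop:pspb.descent} demands at that position; in particular the appearance of $p_3$ as merely almost descent reflects that $\cat C_3$ enters only through a faithfulness requirement on a comparison functor rather than fully-faithfulness.

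The main obstacle I anticipate is not any single descent-theoretic step but rather the bookkeeping needed to present $\CatV$ as an honest (pseudo)pullback of the right shape, and to track how the associativity and unit axioms of an internal category translate into the equalizer/pullback conditions defining the bilimit — a naive presentation tends to require a bilimit over a larger diagram (involving $\cat C_3$ for associativity and $\cat C_0$ for units) rather than a plain pseudopullback square, so one must either iterate Proposition~\ref{prop:pspb.descent} or appeal to its generalization to arbitrary finite bilimits as in \cite[Theorem 1.6]{Luc18}. A secondary technical point is verifying that all the structural functors in the diagram genuinely preserve pullbacks; this is where Lemma~\ref{lem:preserve.desc} and Remark~\ref{rem:fln.gen} do their work, since the functor $(-)_0 \colon \CatV \to \cat V$ (and hence, with care, the other reindexing functors) has fully faithful adjoints on both sides. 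Once the diagram is in place and the preservation checks are discharged, the final deduction is a direct application of the cited results, with no further computation.
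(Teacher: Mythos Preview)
Your bilimit approach is close in spirit to one of the two routes the paper takes for the \emph{generalization} to $T$-categories (Section~\ref{sect:bilim}), but there is a crucial difference that makes your version a genuine gap rather than an alternative proof. The paper's bilimit argument presents $\CatTV$ as a $2$-equalizer over a category $\ModSV$ of models of a finite-limit sketch, and then the descent conditions on $p_0,\dots,p_3$ are shown to make $p$ effective for descent \emph{in $\ModSV$} by invoking Le~Creurer's own \cite[Proposition~3.2.4]{Cre99} on effective descent for models of essentially algebraic theories --- of which Theorem~\ref{thm:int.lec} is already a corollary. So the bilimit route does not furnish an independent proof; it is a reduction to (a mild generalization of) the very result you are asked to prove. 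Your proposal to extract the graded list ``effective descent / descent / almost descent'' directly from Proposition~\ref{prop:pspb.descent} or~\ref{prop:pseq.descent} cannot work as stated: those results impose conditions vertex-by-vertex on a pseudopullback or pseudoequalizer, and there is no slot in them for an ``almost descent'' hypothesis. The gradation is not a formal feature of the bilimit shape.

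The independent argument --- Le~Creurer's original, reproduced for general $T$ in Section~\ref{sect:direct} --- is direct and explains exactly where each hypothesis enters. One first checks that $p$ is a descent morphism in $\CatV$ (Lemmas~\ref{lem:int.al.desc} and~\ref{lem:int.desc}, using that $p_1$ is a pullback-stable regular epimorphism and $p_2$ a pullback-stable epimorphism). For essential surjectivity of $\mathcal K^p$, given descent data one uses effective descent of $p_0,p_1$ in $\cat V$ to produce $w_0,w_1$ and pullback squares as in~\eqref{eq:desc.pb}; the objects $w_2,w_3$ are then forced as pullbacks, and one obtains squares~\eqref{eq:pb.pairs},~\eqref{eq:pb.triples}. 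Now the role of the weaker hypotheses becomes transparent: $p_2$ being a descent morphism makes $h_2$ a regular epimorphism, which is exactly what is needed to \emph{define} composition on $w$ by factoring through the coequalizer; $p_3$ being merely almost descent makes $h_3$ an epimorphism, which suffices to \emph{verify} associativity and the remaining axioms by cancellation. This is the mechanism producing the cascade of conditions, and it is what your sketch is missing.
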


\begin{remark}
  \label{rem:chains}
  Let \( \cat C \) be a category internal to \( \cat V \). Since Theorem
  \ref{thm:int.lec} is stated in terms of various epimorphic conditions on the
  objects on composable tuples of morphisms, it is convenient for \( \cat C_n
  \) to denote the object of \(n\)\textit{-chains} of \( \cat C \), therefore
  allowing ``\(n\)-chain'' to be synonymous with ``composable \(n\)-tuple of
  morphisms''.

  In this vein, let \( \cat W \) be a monoidal category, and let \( \cat D \)
  be an enriched \( \cat W \)-category. For objects \( x_i \) on \( \cat D \)
  for \( i = 0,\, 1,\, 2,\, 3 \), we let
  \begin{equation*}
    \cat D(x_0,x_1,x_2) = \cat D(x_1,x_2) \otimes \cat D(x_0,x_1),
    \quad\text{and}\quad
    \cat D(x_0,x_1,x_2,x_3) = \cat D(x_1,x_2,x_3) \otimes \cat D(x_0,x_1). 
  \end{equation*}
  Likewise, if \( F \colon \cat W \to \cat X \) is a lax monoidal functor, we
  write
  \begin{equation*}
    \m F \colon (F_!\cat D)(x_0,x_1,x_2) \to F(\cat D(x_0,x_1,x_2))
  \end{equation*}
  for the ``shortened'' version of the comparison morphism for the tensor
  product.
\end{remark}

As an example of a reflection result, we have the following consequence of
\cite[Theorem 9.11]{Luc18} and Theorem \ref{thm:int.lec} (see also Remark
\ref{rem:fln.gen}):

\begin{theorem}[{\cite[Theorem 9.11]{Luc18}}]
  \label{thm:fln.desc}
  Let \( \cat V \) be a lextensive category, and assume that \( - \pt 1 \colon
  \Set \to \cat V \) is fully faithful. An enriched \( \cat V \)-functor \( F
  \colon \cat C \to \cat D \) such that
  \begin{itemize}[noitemsep,label=--]
    \item
      \( F_0 \) is surjective,
    \item
      \( F_1 \pt 1 \colon \sum_{x_i} \cat C(x_0,x_1) \to \sum_{y_i} \cat
      D(y_0,y_1) \) is an effective descent morphism,
    \item
      \( F_2 \pt 1 \colon \sum_{x_i} \cat C(x_0,x_1,x_2) \to \sum_{y_i} \cat
      D(y_0,y_1,y_2) \) is a descent morphism,
    \item
      \( F_3 \pt 1 \colon \sum_{x_i} \cat C(x_0,x_1,x_2,x_3) \to \sum_{y_i}
      \cat D(y_0,y_1,y_2,y_3) \) is an almost descent morphism,
  \end{itemize}
  is an effective descent morphism in \( \VCat \).
\end{theorem}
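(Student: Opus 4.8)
The plan is to deduce the statement by transporting the problem into \(\CatV\) along the enrichment embedding \(J \colon \VCat \to \CatV\) and there invoking Le~Creurer's criterion, Theorem~\ref{thm:int.lec}. Since \(\cat V\) is lextensive, \(J\) sends a \(\cat V\)-category \(\cat C\) to the internal category with object of objects \((\ob\cat C)\pt 1 = \sum_{x} 1\), object of morphisms \(\sum_{x_0,x_1}\cat C(x_0,x_1)\), and, more generally, object of \(n\)-chains \((J\cat C)_n = \sum_{x_0,\dots,x_n}\cat C(x_0,\dots,x_n)\) in the notation of Remark~\ref{rem:chains} (with \(\cat C(x_0) = 1\)); the composition and identities are assembled, using extensivity, from those of \(\cat C\). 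First I would record the two formal facts needed to invoke Corollary~\ref{cor:eff.desc.iso}: \(J\) is fully faithful (extensivity together with the full faithfulness of \(-\pt 1\) let one read off a function \(\ob\cat C \to \ob\cat D\) and a family of maps on hom-objects from an internal functor \(J\cat C \to J\cat D\)), and \(J\) preserves pullbacks (extensivity identifies a pullback of \(\cat V\)-categories with the componentwise pullback of the corresponding internal categories).

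The main obstacle is the reflection step: \(J\) reflects effective descent morphisms. By Corollary~\ref{cor:eff.desc.iso} it suffices to show that whenever \(g \colon J\cat C \to \cat E\) is an effective descent morphism in \(\CatV\), the internal category \(\cat E\) is isomorphic to \(J\cat D\) for some \(\cat V\)-category \(\cat D\). The input is Lemma~\ref{lem:preserve.desc}, applied to the underlying object-of-objects functor \((-)_0 \colon \CatV \to \cat V\) (which has fully faithful left and right adjoints, the discrete and indiscrete internal category functors, cf.\ Remark~\ref{rem:fln.gen}): since an effective descent morphism is in particular a descent morphism and \((-)_0\) preserves descent morphisms, \(g_0\) is a pullback-stable regular epimorphism out of the coproduct \(\sum_x 1\). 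Using lextensivity to analyse such quotients, and the full faithfulness of \(-\pt 1\) to recognise the quotient again as a coproduct of copies of \(1\) indexed by a genuine set, one concludes \(\cat E_0 \iso (\ob\cat D)\pt 1\); and, still by lextensivity, the object of morphisms and the remaining internal category structure of \(\cat E\) are likewise forced to arise from hom-objects \(\cat D(y_0,y_1)\), so that \(\cat E \iso J\cat D\). This reproduces the proof of \cite[Theorem~9.11]{Luc18}, with Lemma~\ref{lem:preserve.desc} replacing the (regular epi, mono)-factorization system assumed there; the delicate point, which I would not spell out in full, is precisely that the ``familial'' (coproduct-indexed) shape of the objects built from \(\cat V\)-categories is intrinsic and survives the passage to descent data.

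Finally, given \(F \colon \cat C \to \cat D\) satisfying the hypotheses of the theorem, I would check that \(JF \colon J\cat C \to J\cat D\) satisfies the hypotheses of Theorem~\ref{thm:int.lec}. Its component on \(n\)-chains is exactly \((JF)_n = F_n \pt 1 \colon \sum_{x_i}\cat C(x_0,\dots,x_n) \to \sum_{y_i}\cat D(y_0,\dots,y_n)\), so for \(n = 1, 2, 3\) the required conditions (effective descent, descent, almost descent in \(\cat V\)) are precisely the hypotheses. For \(n = 0\), \((JF)_0 = F_0 \pt 1\); since \(F_0\) is surjective and \(-\pt 1\) preserves coproducts, \((JF)_0\) is the coproduct, over the decomposition \((\ob\cat D)\pt 1 = \sum_y 1\), of the maps \((F_0^{-1}(y))\pt 1 \to 1\), each of which is split epic (pick a point of the nonempty fibre) hence effective for descent, and a coproduct of effective descent morphisms over a coproduct decomposition of the codomain is again effective for descent by extensivity. (Alternatively, the footnoted refinement \cite[Lemma~A.3]{PL23} makes the condition on \((JF)_0\) redundant.) Theorem~\ref{thm:int.lec} then yields that \(JF\) is an effective descent morphism in \(\CatV\), and by the reflection of the previous paragraph \(F\) is an effective descent morphism in \(\VCat\).
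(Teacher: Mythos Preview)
Your reflection step via Corollary~\ref{cor:eff.desc.iso} has a genuine gap, at precisely the point you flag as delicate. The implication ``$g_0$ is a descent morphism out of a coproduct of terminals, hence $\cat E_0$ is again such a coproduct'' is false under the stated hypotheses on $\cat V$. Take $\cat V$ to be the presheaf topos on the walking arrow $2 = (\cdot \to \cdot)$: it is lextensive, and since $2$ is connected, $-\pt 1 \colon \Set \to \cat V$ is fully faithful. Let $F$ be the presheaf with values $2$ and $1$ at the two objects (restriction map the unique $2 \to 1$); the evident componentwise surjection $\alpha \colon 2 \pt 1 \to F$ is effective for descent since $\cat V$ is a topos, yet $F$ is not of the form $S \pt 1$. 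Passing to discrete internal categories, $\dsc(\alpha) \colon \dsc(2 \pt 1) \to \dsc(F)$ has every chain component equal to $\alpha$, so it is effective for descent in $\CatV$ by Theorem~\ref{thm:int.lec}; its domain lies in the essential image of $J$ but its codomain does not. Thus the hypothesis of Corollary~\ref{cor:eff.desc.iso} simply fails for $J \colon \VCat \to \CatV$, and your argument does not go through.

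The paper does not argue this way. As in the proofs of Theorem~\ref{thm:vcat.famv.cat.refl} and Lemma~\ref{lem:lem.eff.desc.refl.tvcat} (the latter specialising to the present statement when $T = \id$), the reflection is obtained from Proposition~\ref{prop:pspb.descent} applied to the pseudopullback
\[
\begin{tikzcd}
\VCat \ar[r,"J"] \ar[d] \ar[rd,"\iso" description,phantom]
& \CatV \ar[d,"(-)_0"] \\
\Set \ar[r,swap,"-\pt 1"] & \cat V.
\end{tikzcd}
\]
If $JF$ is effective for descent, then $(JF)_0 = F_0 \pt 1$ is a descent morphism in $\cat V$ by Lemma~\ref{lem:preserve.desc} (this is exactly where Remark~\ref{rem:fln.gen} enters, replacing the factorisation system of \cite{Luc18}); and $-\pt 1$ reflects descent morphisms, since an empty fibre of $F_0$ would make $0 \to 1$ a pullback of $F_0 \pt 1$, which cannot be a regular epimorphism. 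Hence $F_0$ is surjective, so effective for descent in $\Set$, and Proposition~\ref{prop:pspb.descent} applies. This never asks whether an arbitrary effective-descent codomain of a $J$-object is again a $J$-object. Your final paragraph, verifying the chain conditions of Theorem~\ref{thm:int.lec} for $JF$, is correct and is how the argument concludes.
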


\section{Bifibration of split opfibrations}
\label{sect:fib.split.opfib}

Let \( \bicat A \) be a 2-category with 2-pullbacks and lax codescent
objects\footnote{This is the notion dual to \textit{lax descent object}, which
can be described in any 2-category, via the universal property in Subsection
2.1.1.}.  Just like in Section \ref{sect:basic.bifib}, we consider the kernel
pair \( \Ker(p) \) \eqref{eq:eqp} of a morphism \( p \colon e \to b \) in \(
\bicat A \). Since \( \bicat A \) has lax codescent objects, there is a unique
\( K^{\Ker(p)} \colon \CoDesc(\Ker(p)) \to b \) making the triangle in Diagram
\eqref{eq:lax.codesc.fact} commute:
\begin{equation}
  \label{eq:lax.codesc.fact}
  \begin{tikzcd}
    p \times_b p \times_b p
      \ar[r,shift left=2mm] \ar[r,shift right=2mm] \ar[r]
    & p \times_b p \ar[r,shift left=2mm,"d_1"] 
                   \ar[r,shift right=2mm,"d_0",swap]
    & e \ar[l] \ar[rr,"p"] \ar[rd] && b \\
    &&& \CoDesc(\Ker(p)) \ar[ur,"K^{\Ker(p)}",swap,dashed]
  \end{tikzcd}
\end{equation}

\begin{lemma}[{\cite[Lemma 1.1]{LPS23}}]
  \label{lem:lax.desc.preserve}
  If a 2-functor \( F \colon \bicat A^\op \to \CAT \) preserves lax descent
  objects, then a morphism \( p \colon e \to b \) is of effective
  \(F\)-descent (\(F\)-descent) if and only if \( F(K^{\Ker(p)}) \) is an
  equivalence (fully faithful).
\end{lemma}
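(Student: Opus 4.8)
The plan is to identify the functor \( F(K^{\Ker(p)}) \) with the comparison functor \( \mathcal K^p_F \) of the \(F\)-descent factorization \eqref{eq:desc.fact}; once this is done, the equivalence of the two characterizations is immediate. To set this up, note that Diagram \eqref{eq:lax.codesc.fact} exhibits \( \CoDesc(\Ker(p)) \) as the lax codescent object of \( \Ker(p) \colon \Delta_3^\op \to \bicat A \), equivalently as the lax descent object of \( \Ker(p)^\op \colon \Delta_3 \to \bicat A^\op \); write \( q \colon e \to \CoDesc(\Ker(p)) \) for its universal leg and \( \sigma \) for the accompanying structural \(2\)-cell, so that \( K^{\Ker(p)} \) is by construction the unique \(1\)-cell with \( K^{\Ker(p)} \circ q = p \) (the compatibility \(2\)-cell being trivial, as \( p d_0 = p d_1 \)). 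Since \(F\) is a \(2\)-functor it is strict, so its structural isomorphisms are identities and the \(2\)-cell \( \Omega \) of \eqref{eq:desc.fact} is an identity; and since \(F\) preserves lax descent objects, applying \(F\) to the universal cocone of \eqref{eq:lax.codesc.fact} produces a universal cone exhibiting \( F(\CoDesc(\Ker(p))) \) as the lax descent category \( \Desc(\Ff_p) = \Desc_F(p) \), with \( F(q) \) corresponding to the forgetful functor \( \mathcal U^p_F \colon \Desc_F(p) \to Fe \) and \( F(\sigma) \) to the natural transformation \( \Phi \).

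Next I would check that \( F(K^{\Ker(p)}) \) is the comparison functor. From \( K^{\Ker(p)} \circ q = p \) and \(2\)-functoriality we obtain \( \mathcal U^p_F \circ F(K^{\Ker(p)}) = F(q) \circ F(K^{\Ker(p)}) = F\bigl( K^{\Ker(p)} \circ q \bigr) = F(p) = p^* \), and whiskering the trivial compatibility \(2\)-cell similarly gives \( \Phi \cdot F(K^{\Ker(p)}) = \Omega \). By the uniqueness clause in the universal property of the lax descent category (Section \ref{sect:lax.desc}), which is precisely what pins down \( \mathcal K^p_F \) in \eqref{eq:desc.fact}, the functor \( F(K^{\Ker(p)}) \) agrees with \( \mathcal K^p_F \) up to the canonical equivalence \( \Desc_F(p) \eqv F(\CoDesc(\Ker(p))) \). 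Since equivalences of categories both preserve and reflect the properties of being an equivalence and of being fully faithful, \( \mathcal K^p_F \) is an equivalence (resp.\ fully faithful) precisely when \( F(K^{\Ker(p)}) \) is; and by definition \(p\) is an effective \(F\)-descent (resp.\ \(F\)-descent) morphism precisely when \( \mathcal K^p_F \) is an equivalence (resp.\ fully faithful), which is the claim.

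The real content lies in transporting the \(2\)-dimensional universal property of the lax codescent object through the contravariant \(2\)-functor \(F\) and recognizing the resulting cone as the defining cone of \( \Desc_F(p) \), while keeping track of the \(1\)- and \(2\)-cell data and of the variance of \( \bicat A^\op \) (which reverses \(1\)-cells but not \(2\)-cells). The strictness of \(F\) eliminates all coherence bookkeeping, so once the universal legs are matched up the identification \( F(K^{\Ker(p)}) \eqv \mathcal K^p_F \) is forced by uniqueness, and no further computation is needed.
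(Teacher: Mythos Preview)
Your proposal is correct and follows essentially the same approach as the paper: apply \(F\) to the lax codescent factorization \eqref{eq:lax.codesc.fact}, use preservation of lax descent objects to identify \(F(\CoDesc(\Ker(p)))\) with \(\Desc_F(p)\), and conclude that \(F(K^{\Ker(p)})\) coincides with the comparison functor \(\mathcal K^p_F\). The paper's proof is a one-line sketch (``from which our result follows''), and your write-up simply fills in the details the paper leaves implicit --- the variance bookkeeping, the strictness of \(F\), and the use of uniqueness to pin down the identification.
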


\begin{proof}
  When such a 2-functor \(F\) is composed with Diagram
  \eqref{eq:lax.codesc.fact}, we obtain
  \begin{equation}
    \label{eq:lax.desc.fact}
    \begin{tikzcd}
      Fb \ar[rr,"Fp"] \ar[rd,"F(K^{\Ker(p)})",swap]
        && Fe \ar[r,shift left=2mm,"Fd_1"]
              \ar[r,shift right=2mm,"Fd_0",swap]
        & F(p \times_b p) \ar[l] \ar[r,shift left=2mm]
                          \ar[r] \ar[r,shift right=2mm]
        & F(p \times_b p \times_b p) \\
      & F(\CoDesc(\Ker(p))), \ar[ur]
    \end{tikzcd}
  \end{equation}
  and we observe that \( F(\CoDesc(\Ker(p))) \eqv \Desc(F(\Ker(p))) \), from
  which our result follows.
\end{proof}

Naturally, the representable 2-functors \( \bicat A(-,a) \colon \bicat A^\op
\to \CAT \) for each object \(a\) preserve lax descent objects. The
bifibrations \( F \), \( F_D \) of split, respectively discrete,
opfibrations are obtained by composing \( \CAT(-,\Cat) \), respectively  \(
\CAT(-,\Set) \), with the inclusion \( \Cat \to \CAT \). 

We remark that these bifibrations do not satisfy the Beck-Chevalley condition,
as the conclusion of the Bénabou-Roubaud theorem \cite{BR70} does not hold
for \( F \) nor \( F_D \). Indeed, \cite[Remark 7]{Sob04} gives an
example of a functor \( p \) such that \( F_Dp \) is monadic, but \( p \)
is not an effective \( F_D \)-descent morphism. 

We provide another example, given in \cite[Remark 3.3]{LPS23}: let \( p \colon
1 \to b \) be a functor, where \( 1 \) is the terminal category. The functor
\( Fp = \CAT(p,\Cat) \) is monadic if and only if \( p \) is
(essentially) surjective, but, as a consequence of Theorem
\ref{thm:desc.fib.opfib}, \(p\) is an effective \( F \)-descent morphism
if and only if \(p\) is an equivalence.

\chapter{Enriched $\cat V$-functors}
\label{chap:enriched}

Effective descent morphisms for the category \( \VCat \) of \( \cat V
\)-categories were studied in \cite[Section~5]{CH04} when \( \cat V \) is a
(co)complete symmetric monoidal closed thin category, and in \cite[Theorem
9.11]{Luc18}, when \( \cat V \) is a lextensive, cartesian monoidal category
such that the copower functor \( - \pt 1 \colon \Set \to \cat V \) is fully
faithful. Despite the different approaches to the problem, in both works, the
conditions for a \( \cat V \)-functor \(F\) to be an effective descent
morphism in \( \VCat \) are expressed in terms of surjectivity of \(F\) in
chains of hom-objects.

The goal of this chapter is to prove that the same conditions remain
sufficient for a \( \cat V \)-functor to be effective for descent when \( \cat
V \) is a cartesian monoidal category with finite limits, placing the results
of \cite[Section 5]{CH04} (when \( \cat V \) is cartesian monoidal) and
\cite{Luc18} on common ground. We shall prove that if a \( \cat V \)-functor
\( F \) is
\begin{itemize}[noitemsep,label=--]
  \item
    an effective descent morphism on hom-objects,
  \item
    a descent morphism on 2-chains of hom-objects, 
  \item
    an almost descent morphism on 3-chains of hom-objects,
\end{itemize}
in a suitable sense, then \( F \) is an effective descent morphism in \( \VCat
\) (Theorem \ref{thm:desc.vcat}). 

There are two key ideas for this result: if \( \cat V \) is a category with
finite limits, then
\begin{itemize}[noitemsep,label=--]
  \item 
    \( \FamV \) is a lextensive category, and \( - \pt 1 \colon \Set \to \FamV
    \) is fully faithful (Proposition \ref{prop:famv.cat.suits}), so that we
    may apply Theorem \ref{thm:fln.desc} to obtain a description of the
    effective descent morphisms in \( \FamVCat \), and
  \item
    the direct image \( \eta_! \colon \VCat \to \FamVCat \) of \( \eta \colon
    \cat V \to \FamV \) reflects effective descent morphisms.
\end{itemize}
Thus, we conclude that the composite functor reflects effective descent
morphisms as well. We go over the first idea in Section \ref{sect:prop.fam},
where we review the central properties of the free coproduct completion of a
category.  The second idea is obtained via Proposition
\ref{prop:pspb.descent}, so, in Section \ref{sect:embed.vcat.famvcat} we study
the relevant pseudopullbacks and their preservation by the enrichment
2-functor \( \cat V \mapsto \VCat \).

Via the description of the effective descent morphisms in \( \FamVCat \) given
by Lucatelli Nunes's result (Theorem \ref{thm:fln.desc}), we can state the
conditions for a \( \cat V \)-functor \(F\) to be an effective descent
morphism in \( \VCat \) in terms of (effective, almost) descent conditions on
the underlying morphisms of \(F\) on chains of hom-objects. Since these refer
to morphisms in the category \( \FamV \), this motivates the study of
(effective) descent morphisms in the free coproduct completion of a category,
which is carried out in Section \ref{sect:fam.desc}. 

We highlight the relationship between the results of \cite{CH04} and
\cite{Luc18} in Section \ref{sect:famdesc.ex}, where we apply our results on
(effective) descent morphisms in \( \FamV \) to study effective descent
morphisms in \( \VCat \) for special families of categories \( \cat V \).
Among such categories, we draw our attention to the category \( \CHaus \) of
compact Hausdorff spaces and the category \( \Stn \) of Stone spaces, giving a
description of effective descent \( \CHaus \)-functors and effective descent
\( \Stn \)-functors.

\section{Properties of the free coproduct completion}
\label{sect:prop.fam}

Let \( \cat V \) be a category. The \textit{free coproduct cocompletion} of \(
\cat V \), denoted \( \FamV \), consists of
\begin{itemize}[label=--,noitemsep]
  \item
    objects which given by set-indexed families \( (X_j)_{j\in J} \) of objects \(
    X_j \) in \( \cat V \),
  \item
    morphisms \( (X_j)_{j \in J} \to (Y_k)_{k \in K} \) which are given by a
    function \( f \colon J \to K \), and a set-indexed family of morphisms \(
    (\phi_j \colon X_j \to Y_{fj})_{j \in J} \), with \( \phi_j \) in \( \cat
    V \),
\end{itemize}
with suitable identities and composition law. It may also be obtained via the
Grothendieck construction~\cite{Gro71} of the pseudofunctor \( \Set^\op \to
\CAT \) given by \( X \mapsto \cat V^X \) on objects and \( f \mapsto f^* \)
on morphisms, whose fibration we denote by \( P \colon \Fam(\cat V) \to \Set
\).

We recall the following properties:

\begin{proposition}[{\cite[Lemma 3.2]{Pre23}}]
  \label{prop:famv.cat.suits}
  Let \( \cat V \) be a category.
  \begin{enumerate}[label=(\alph*),noitemsep]
    \item
      \label{enum:fam.ext}
      \( \FamV \) is extensive.
    \item
      \label{enum:fam.conn}
      If \( \cat V \) has a terminal object, \( - \pt 1 \colon \Set \to \FamV
      \) is fully faithful.
    \item
      \label{enum:fam.lex}
      If \( \cat V \) has finite limits, so does \( \FamV \).
  \end{enumerate}
\end{proposition}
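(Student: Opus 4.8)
The plan is to verify the three items by working directly with the description of $\FamV$ recalled above, exploiting that the fibration $P \colon \FamV \to \Set$ presents every relevant construction in $\FamV$ as built ``on indices'' from a construction in $\Set$ and ``on objects'' from a construction in $\cat V$. Throughout I write an object of $\FamV$ as $(X_j)_{j \in J}$ and a morphism $(X_j)_{j \in J} \to (Y_k)_{k \in K}$ as a pair $(f,\phi)$ with $f \colon J \to K$ in $\Set$ and $\phi = (\phi_j \colon X_j \to Y_{f j})_{j \in J}$ in $\cat V$. Coproducts in $\FamV$ are given by disjoint unions of indexing sets, with the families simply juxtaposed and the empty family as initial object; crucially, these exist with no hypothesis on $\cat V$, since a coproduct coprojection $(X_j)_{j \in J} \to (X_j)_{j\in J} + (Y_k)_{k\in K}$ has identity components on objects.

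For item \ref{enum:fam.ext} I would use the standard criterion (Carboni--Lack--Walters): a category with finite coproducts in which pullbacks along coproduct coprojections exist is extensive exactly when those coproducts are disjoint (the pullback of the two coprojections is initial) and universal (stable under pullback along arbitrary morphisms). Because coprojections in $\FamV$ are identities on objects, the pullback of a coprojection along \emph{any} morphism $(W_l)_{l \in L} \to (X_j)_{j\in J} + (Y_k)_{k\in K}$ — say with index component $h \colon L \to J \sqcup K$ — exists and is computed purely on indices as the restriction $(W_l)_{l \in h^{-1}(J)}$; no limit in $\cat V$ is involved. Disjointness and universality for $\FamV$ then reduce to the evident corresponding facts for $\Set$ (which is extensive), and $\FamV \comma 0 \simeq 1$ is immediate since the only family over the empty family is the empty one. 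Hence $\FamV$ is extensive.

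For item \ref{enum:fam.conn}, when $\cat V$ has a terminal object $1$ the one-object family $(1)$ is terminal in $\FamV$ (a morphism into it consists of the unique index map together with unique maps into $1$), so $\FamV$ has a terminal object and the copower $S \pt 1 = \coprod_{s\in S} 1$ is the $S$-indexed constant family $(1)_{s\in S}$. A morphism $(1)_{s \in S} \to (1)_{t \in T}$ is then a function $S \to T$ together with, for each $s$, a morphism $1 \to 1$ in $\cat V$, which is forced to be the identity; thus $\FamV(S \pt 1, T \pt 1) \iso \Set(S,T)$ naturally and compatibly with identities and composition, so $- \pt 1$ is fully faithful. For item \ref{enum:fam.lex}, assuming $\cat V$ has finite limits, I would exhibit a terminal object, binary products and equalizers directly: the terminal object is $(1)$; the product of $(X_j)_{j\in J}$ and $(Y_k)_{k\in K}$ is $(X_j \times Y_k)_{(j,k) \in J \times K}$; and the equalizer of $(f,\phi),(g,\psi) \colon (X_j)_{j\in J} \to (Y_k)_{k\in K}$ is $(E_j)_{j \in J'}$ where $J' = \{\, j \in J : f j = g j \,\}$ and $E_j$ is the equalizer in $\cat V$ of $\phi_j, \psi_j \colon X_j \to Y_{f j} = Y_{g j}$, with the evident inclusion into $(X_j)_{j\in J}$. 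In each case the universal property unwinds, via the definition of morphisms in $\FamV$, to that of the corresponding limit in $\cat V$ together with the corresponding limit ($J \times K$, respectively $J' \hookrightarrow J$) in $\Set$. Having a terminal object, binary products and equalizers, $\FamV$ has all finite limits. (Alternatively one may invoke the standard fact that the total category of a fibration over a finitely complete base, with finitely complete fibres and finite-limit-preserving reindexing, is finitely complete, applied to $P \colon \FamV \to \Set$ with fibres $\cat V^J$.)

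The only genuinely fiddly point is item \ref{enum:fam.ext}: one must set up the comparison so that it is a bona fide equivalence — checking that the decomposition of an object (and morphism) of $\FamV \comma (A+B)$ induced by partitioning its index set is functorial in both variables, and that the two round-trips return data canonically isomorphic, rather than literally equal, to the originals. Everything else amounts to unwinding the definition of morphisms in $\FamV$ and quoting the corresponding finite-limit or extensivity statements for $\Set$ and $\cat V$.
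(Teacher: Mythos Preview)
Your proposal is correct. The main difference from the paper is one of emphasis rather than substance: for \ref{enum:fam.ext} and \ref{enum:fam.conn} the paper simply cites the literature (\cite{CLW93} and \cite{BJ01} respectively), whereas you sketch the arguments directly; for \ref{enum:fam.lex} the paper takes as its primary route the fibred-category construction you mention parenthetically --- build the limit of indices in $\Set$, pull back along the limit cone into the fibre $\cat V^{\lim(P\cat A)}$, and take the limit there --- while your primary route is the explicit terminal/product/equalizer description. Your approach has the advantage of being concrete and self-contained; the paper's has the advantage of working uniformly for arbitrary (not just finite) limit shapes and of making the later manipulations with $\FamV$ (e.g.\ computing kernel pairs of covers) fall out of the same template.
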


\begin{proof}
  Property \ref{enum:fam.ext} is well-known, and is present in
  \cite[Proposition 2.4]{CLW93}, for instance. Moreover, it was verified that
  \ref{enum:fam.conn} holds in \cite[Proposition 6.2.1]{BJ01}. Property
  \ref{enum:fam.lex} is also well-known; see \cite{Gra66, Her99, BJ01}.
  Nevertheless, to illustrate the mechanics of \( \FamV \), we will revisit
  the arguments. Recall that

  \begin{itemize}[label=--,noitemsep]
    \item
      \( \Set \) has all (finite) limits,
    \item
      \( \cat V^X \) has all finite limits, given componentwise in \( \cat V
      \),
    \item
      the change-of-base functor \( f^* \colon \cat V^Y \to \cat V^X \)
      preserves finite limits, for every function \( f \colon X \to Y \), 
  \end{itemize}
  so, if \( \cat A \colon \cat J \to \FamV \) is a finite diagram, with \(
  \cat A_j = (P\cat A_j, x_j) \), we consider the limit cone \( \lambda_j
  \colon \lim(P\cat A) \to P\cat A j \), and we define a diagram
  \begin{align*}
    \Phi \colon \cat J &\to \cat V^{\lim(P\cat A)} \\
                     j &\mapsto \lambda_j^*(x_j)
  \end{align*}
  and since \( \cat V^{\lim(P\cat A)} \) has finite limits, \( \lim \Phi \)
  exists. To verify that \( \lim \cat A \iso (\lim P\cat A, \lim \Phi) \),
  given a cone \( (\gamma_j \colon b \to P\cat A_j, \zeta_j \colon w \to
  \gamma_j^*x_j) \), we let \( \omega \colon b \to \lim P\cat A \) be the
  unique function such that \( \gamma_j = \lambda_j \circ \omega \), and we
  observe that \( \omega^*(\lim \Phi) \iso \lim \omega^*\Phi \), since \(
  \omega^* \) preserves limits. Then, there exists a unique \( \xi \colon w
  \to \omega^*(\lim \Phi) \) such that \( \zeta_j = \m {\cat
  V^{-}}_{\lambda_j, \omega} \circ \omega^*\phi_j \circ \xi \), where \(
  \phi_j \colon \lim \Phi \to x_j \) is the limit cone of \( \Phi \), and \(
  \m {\cat V^{-}}_{\lambda_j,\omega} \colon \omega^* \lambda_j^* \iso
  (\lambda_j \circ \omega)^* \) is the natural comparison isomorphism of the
  pseudofunctor.

  Indeed, we have \( (\gamma_j,\zeta_j) = (\lambda_j,\phi_j) \circ
  (\omega,\xi) \), and if \( (\gamma_j,\zeta_j) = (\lambda_j,\phi_j) \circ
  (\theta,\chi) \), then \( \gamma_j = \lambda_j \circ \theta \) which
  confirms \( \theta = \omega \), and \( \zeta_j = \m{\cat
  V^-}_{\lambda_j,\omega} \circ \omega^*\phi_j \circ \chi \), confirming \(
  \chi = \xi \).
\end{proof}

In particular, it follows that
\begin{corollary}[{\cite[p. 10]{Pre23}}]
  \label{cor:famv.catfamv.refl}
  The functor \( \FamVCat \to \CatFamV \) reflects effective descent
  morphisms.
\end{corollary}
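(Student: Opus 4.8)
The plan is to reduce the statement to the reflection result of \cite[Theorem 9.11]{Luc18}: for a lextensive category \( \cat W \) such that the copower functor \( - \pt 1 \colon \Set \to \cat W \) is fully faithful, the canonical embedding \( \cat W \dash \Cat \to \Cat(\cat W) \) reflects effective descent morphisms. Applying this with \( \cat W \) taken to be \( \FamV \) yields exactly the claim, so the work consists of checking that \( \FamV \) meets the two hypotheses. Both are furnished directly by Proposition \ref{prop:famv.cat.suits}: part \ref{enum:fam.ext} gives that \( \FamV \) is extensive, part \ref{enum:fam.lex} gives that \( \FamV \) has finite limits (using the standing assumption that \( \cat V \) does), so \( \FamV \) is lextensive; and part \ref{enum:fam.conn} gives that \( - \pt 1 \colon \Set \to \FamV \) is fully faithful, since \( \cat V \) in particular has a terminal object.

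The one point that needs a little care is that the original statement of \cite[Theorem 9.11]{Luc18} additionally assumes a (regular epi, mono)-factorization system on \( \cat W \), which \( \FamV \) need not possess. As recorded in Remark \ref{rem:fln.gen}, this hypothesis is redundant: the factorization system is used only to guarantee that the underlying object-of-objects functor \( (-)_0 \colon \Cat(\cat W) \to \cat W \) preserves descent morphisms, and this instead follows from Lemma \ref{lem:preserve.desc}, since for \( \cat W = \FamV \) the functor \( (-)_0 \colon \CatFamV \to \FamV \) preserves coequalizers, reflects pullbacks, and admits a fully faithful left adjoint (the discrete internal category functor). Thus the reflection result applies verbatim to \( \FamV \), and I would conclude that \( \FamVCat \to \CatFamV \) reflects effective descent morphisms.

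I do not expect any genuine obstacle: the substantive content has already been packaged into Proposition \ref{prop:famv.cat.suits} and into the generalization of \cite[Theorem 9.11]{Luc18} supplied by Remark \ref{rem:fln.gen}, so the argument is a brief assembly of those ingredients. The only step distinguishing this corollary from a bare citation is the passage from the factorization-system version to the general version of \cite[Theorem 9.11]{Luc18} — which, again, is precisely what Remark \ref{rem:fln.gen} addresses.
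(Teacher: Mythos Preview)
Your proposal is correct and matches the paper's proof: the paper simply invokes Proposition~\ref{prop:famv.cat.suits} to verify that \( \FamV \) is lextensive with \( - \pt 1 \) fully faithful, and then applies the reflection result \cite[Theorem~9.11]{Luc18} (recorded as Theorem~\ref{thm:fln.desc}). Your explicit treatment of the factorization-system hypothesis via Remark~\ref{rem:fln.gen} is a point the paper handles implicitly, since Theorem~\ref{thm:fln.desc} is already stated without that hypothesis.
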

\begin{proof}
  Since \( \FamV \) is lextensive and \( - \pt 1 \colon \Set \to \FamV \) is
  fully faithful by Proposition \ref{prop:famv.cat.suits}, we may apply
  Theorem \ref{thm:fln.desc}.
\end{proof}

This result was the original motivation to study the problem of whether \(
\VCat \to \FamVCat \) reflects effective descent morphisms as well. In this
direction, we recall from \cite{Web07} that the inclusion \( \cat V \to \FamV
\) is a 2-cartesian natural transformation:

\begin{proposition}[{\cite[5.15 Proposition]{Web07}}]
  \label{prop:unit.cart}
  For any functor \( F \colon \cat V \to \cat W \), the following diagram
  \begin{equation}
    \label{eq:unit.cart}
    \begin{tikzcd}
      \cat V \ar[d,"F",swap] \ar[r,"\eta"]
             \ar[rd,"\ulcorner",phantom,very near start]
        & \FamV \ar[d,"\Fam(F)"] \\
      \cat W \ar[r,"\eta",swap]
        & \FamW 
    \end{tikzcd}
  \end{equation}
  is a 2-pullback.
\end{proposition}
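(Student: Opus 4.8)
The plan is to compare $\cat V$, equipped with the functors $F$ and $\eta$, against the $2$-pullback (iso-comma object) $\cat P$ of the cospan $\cat W \xrightarrow{\eta} \FamW \xleftarrow{\Fam(F)} \FamV$, and to show that the induced comparison functor is an equivalence. Working with the explicit model of $\FamW$ by families, functions between indexing sets, and componentwise morphisms — in which composition is strictly functorial — the square \eqref{eq:unit.cart} commutes on the nose, since applying $F$ componentwise to the one-element family $\eta v$ returns $\eta(Fv)$ exactly. Thus $\cat P$ has as objects the triples $(w, x, \phi)$ with $w$ in $\cat W$, $x$ in $\FamV$, and $\phi \colon \eta w \iso \Fam(F)(x)$ an isomorphism of $\FamW$, as morphisms the pairs $(g, h)$ with $\Fam(F)(h) \circ \phi = \phi' \circ \eta(g)$, and the comparison functor $C \colon \cat V \to \cat P$ sends $v$ to $(Fv, \eta v, \id)$ and $f$ to $(Ff, \eta f)$.

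The one input I would isolate is the behaviour of isomorphisms in free coproduct completions: a morphism $(X_j)_{j \in J} \to (Y_k)_{k \in K}$ of $\FamW$, given by $u \colon J \to K$ together with $(\psi_j \colon X_j \to Y_{uj})_{j \in J}$, is invertible exactly when $u$ is a bijection and every $\psi_j$ is invertible. Consequently an isomorphism $\phi \colon \eta w \iso (Y_k)_{k \in K}$ out of a one-element family forces $K$ to be a singleton $\{k_0\}$ and produces an isomorphism $w \iso Y_{k_0}$ in $\cat W$. Feeding this into an object $(w, x, \phi)$ of $\cat P$ with $x = (X_j)_{j \in J}$: the isomorphism $\phi \colon \eta w \iso (F X_j)_{j \in J}$ makes $J$ a singleton $\{j_0\}$ and supplies an isomorphism $w \iso F X_{j_0}$; pairing this isomorphism with the canonical reindexing isomorphism $x \iso \eta(X_{j_0})$ in $\FamV$ yields an isomorphism $(w, x, \phi) \iso C(X_{j_0})$ in $\cat P$, the compatibility with $\phi$ being precisely the observation just made. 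Hence $C$ is essentially surjective.

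For fullness and faithfulness, a morphism $C(v) \to C(v')$ in $\cat P$ is a pair $(g \colon Fv \to Fv',\, h \colon \eta v \to \eta v')$ subject to $\Fam(F)(h) = \eta(g)$; writing $h$ as the morphism of one-element families determined by a map $h_\ast \colon v \to v'$ of $\cat V$, this equation says precisely that $g = F(h_\ast)$, so $f \mapsto (Ff, \eta f)$ is a bijection from $\cat V(v, v')$ onto $\cat P(C(v), C(v'))$ and $C$ is fully faithful. Therefore $C$ is an equivalence and \eqref{eq:unit.cart} is a $2$-pullback; in fact a small variant of the same bookkeeping, using that $\Fam(F)\,\eta = \eta\, F$ strictly, identifies even the strict pullback of the cospan with $\cat V$. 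I do not expect a genuine obstacle — the statement is an instance of Weber's principle that the $2$-monad $\Fam$ is a local right adjoint, so that its unit $\eta$ is a cartesian pseudonatural transformation — the only point demanding care is verifying that the reindexing isomorphism chosen in the essential-surjectivity step is honestly compatible with the structural isomorphism $\phi$, which is exactly why one works throughout with the strict, componentwise model of $\FamW$ rather than with the Grothendieck-construction presentation and its coherence isomorphisms.
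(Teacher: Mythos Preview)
The paper does not supply its own proof of this proposition: it is stated with attribution to \cite[5.15 Proposition]{Web07} and immediately followed by a remark applying it in the special case \( \cat W \eqv 1 \). Your direct verification is therefore not being compared against anything in the text; it simply fills in the omitted argument, and it does so correctly.

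One terminological point worth tightening: in the paper's usage (as the remark invoking \cite{JS93} after the proposition makes clear), ``2-pullback'' means the strict \( \Cat \)-enriched pullback, not the iso-comma object, which is the pseudopullback. Your argument first establishes that \( \cat V \) is equivalent to the iso-comma category \( \cat P \), which yields the pseudopullback property, and only at the end do you observe that the strict pullback is already (isomorphic to) \( \cat V \). Since the statement asserts the strict version, it would be cleaner to lead with that: the strict pullback consists of pairs \( (w,x) \) with \( \eta w = \Fam(F)(x) \) on the nose, which forces the indexing set of \( x \) to be the chosen singleton and \( w = Fx \), so the strict pullback is literally \( \cat V \). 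Your essential-surjectivity argument via isomorphisms in \( \FamW \) is then what one would use to deduce the pseudopullback statement from the strict one, or alternatively to show directly that \( \Fam(F) \) is an isofibration so that \cite{JS93} applies.
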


In particular, we note there is a unique functor \( ! \colon \cat V \to 1 \),
and \( \Fam(!) \colon \FamV \to \Fam(1) \eqv \Set \) is the fibration
associated to \( \FamV \). Thus, by \cite{JS93}, \eqref{eq:unit.cart} is a
pseudopullback when \( \cat W \eqv 1 \). 

The last preliminary result we shall need is the following reformulation of
\cite[Proposition 6.1.5]{BJ01}:

\begin{proposition}
  \label{thm:a.fam.c.when}
  Let \( \cat A \) be a category with coproducts. If \( \cat C \) is a full
  subcategory of \( \cat A \) such that
  \begin{enumerate}[label=(\roman*),noitemsep]
    \item
      \label{enum:ess.surj}
      for all objects \( a \) of \( \cat A \), there exists a set \( J \) and
      objects \( c_j \) of \( \cat C \) for each \( j \in J \) such that \( a
      \iso \sum_{j \in J} c_j \),
    \item
      \label{enum:ff}
      for all objects \( x \), \( d_k \) of \( \cat C \) for each \( k \in K
      \), any morphism \( x \to \sum_{k \in K} d_k \) factors uniquely via
      \( \iota_k \colon d_k \to \sum_{k \in K} d_k \) for some \( k \in K \),
  \end{enumerate}
  then we have an equivalence \( \FamC \eqv \cat A \). 
\end{proposition}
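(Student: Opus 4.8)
The plan is to exhibit an explicit functor $E \colon \FamC \to \cat A$ and show it is an equivalence. On objects, $E$ sends a family $(c_j)_{j \in J}$ to a chosen coproduct $\sum_{j \in J} c_j$ in $\cat A$ (which exists by hypothesis); on a morphism consisting of a function $f \colon J \to K$ together with morphisms $(\phi_j \colon c_j \to d_{fj})_{j \in J}$, it sends it to the copairing $[\iota_{fj} \circ \phi_j]_{j \in J} \colon \sum_j c_j \to \sum_k d_k$. Functoriality is a routine verification via the universal property of coproducts in $\cat A$; equivalently, $E$ is the (essentially unique) coproduct-preserving extension of the inclusion $\cat C \hookrightarrow \cat A$ along the canonical $\cat C \to \FamC$.

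Essential surjectivity of $E$ is immediate from hypothesis \ref{enum:ess.surj}: any object $a$ of $\cat A$ is isomorphic to $\sum_{j \in J} c_j = E\bigl((c_j)_{j \in J}\bigr)$ for a suitable family of objects of $\cat C$.

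The substance is full faithfulness, and this is where hypothesis \ref{enum:ff} is used. Fix families $(c_j)_{j \in J}$ and $(d_k)_{k \in K}$. The universal property of the coproduct $\sum_j c_j$ in $\cat A$ gives a bijection
\[
  \cat A\Bigl( \sum_{j \in J} c_j,\ \sum_{k \in K} d_k \Bigr) \iso \prod_{j \in J} \cat A\Bigl( c_j,\ \sum_{k \in K} d_k \Bigr).
\]
Each $c_j$ is an object of $\cat C$, so hypothesis \ref{enum:ff} says precisely that every morphism $c_j \to \sum_k d_k$ factors as $\iota_k \circ \psi$ for a unique index $k \in K$ and a unique $\psi \colon c_j \to d_k$ in $\cat C$; since $\cat C$ is full in $\cat A$, this yields a bijection $\cat A\bigl(c_j, \sum_k d_k\bigr) \iso \coprod_{k \in K} \cat C(c_j, d_k)$. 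Substituting, we obtain
\[
  \cat A\Bigl( \sum_{j} c_j,\ \sum_{k} d_k \Bigr) \iso \prod_{j \in J} \coprod_{k \in K} \cat C(c_j, d_k),
\]
and the right-hand side is, by the definition of $\FamC$, exactly the hom-set $\FamC\bigl( (c_j)_{j \in J}, (d_k)_{k \in K} \bigr)$ (an element is a choice, for each $j$, of an index $fj \in K$ and a morphism $\phi_j \colon c_j \to d_{fj}$). One then checks that the composite of these bijections agrees with the action of $E$ on hom-sets, so $E$ is fully faithful, hence an equivalence $\FamC \eqv \cat A$.

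I expect the main obstacle to be a careful reading of hypothesis \ref{enum:ff}: one must confirm that ``factors uniquely via $\iota_k$ for some $k$'' genuinely produces a bijection with the \emph{disjoint} union $\coprod_k \cat C(c_j, d_k)$ — in particular that the index $k$ is itself uniquely determined, so that distinct indices contribute genuinely distinct morphisms even when several $d_k$ coincide, and that the degenerate cases behave correctly (e.g.\ $K = \emptyset$, forcing $\cat A(c_j, \sum_k d_k)$ to be empty, or $x$ an initial object of $\cat C$). The remaining work — functoriality of $E$ and the identification of the composite bijection with $E$ on morphisms — is bookkeeping with the universal property of coproducts.
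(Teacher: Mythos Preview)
Your proposal is correct and follows essentially the same approach as the paper: both define the coproduct functor $\sum \colon \FamC \to \cat A$, obtain essential surjectivity from \ref{enum:ess.surj}, and deduce full faithfulness by identifying the hom-sets via the chain $\cat A(\sum_j c_j,\sum_k d_k) \iso \prod_j \cat A(c_j,\sum_k d_k) \iso \prod_j \coprod_k \cat C(c_j,d_k)$, with \ref{enum:ff} supplying the second bijection. The paper additionally notes that these implications are in fact equivalences (the functor is essentially surjective iff \ref{enum:ess.surj} and fully faithful iff \ref{enum:ff}), but this is not required for the proposition as stated.
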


Under the above hypotheses, we conclude that \( \cat A \) is an extensive
category, and that \( \cat C \) (essentially) consists of the
\textit{connected} objects of \( \cat A \) -- that is, those objects \( a \in
\cat A \) such that \( \cat A(a,-) \) preserves coproducts.

\begin{proof}
  We have a coproduct functor 
  \begin{equation}
    \label{eq:coprod.funct}
    \sum \colon \FamC \to \cat A
  \end{equation}
  defined on objects by \( (c_j)_{j \in J} \mapsto \sum_{j \in J} c_j \). We
  observe that \eqref{eq:coprod.funct} is essentially surjective if and only
  if~\ref{enum:ess.surj} holds.

  On morphisms, \eqref{eq:coprod.funct} is given by the composite
  \begin{align*}
    \FamC((c_j)_{j\in J},(d_k)_{k \in K})
      &\iso \prod_{j \in J} \sum_{k \in K} \cat C(c_j,d_k) \\
      &\iso \prod_{j \in J} \sum_{k \in K} \cat A(c_j,d_k) \\
      &\to \prod_{j \in J} \cat A\big(c_j, \sum_{k \in K} d_k\big) \\
      &\iso \cat A\big(\sum_{j \in J} c_j, \sum_{k \in K} d_k \big)
  \end{align*}
  where \( \cat A(c_j,d_k) \to \cat A\big(c_j,\sum_{k \in K} d_k\big) \) is
  given by \( \cat A(c_j,\iota_k) \). We observe that \ref{enum:ff} holds if
  and only if
  \begin{equation*}
    \sum_{k \in K} \cat A(x,d_k) \to \cat A\big(x,\sum_{k \in K} d_k\big) 
  \end{equation*}
  is an isomorphism, so we conclude that \eqref{eq:coprod.funct} is fully
  faithful if and only if \ref{enum:ff} holds.
\end{proof}

One consequential application of Proposition \ref{thm:a.fam.c.when} is that it
allows us to reduce the study of (effective, almost) descent morphisms in \(
\FamV \) to the study of \textit{covers}, that is, morphisms in \( \FamV \) of
the form \( \phi \colon (X_j)_{j\in J} \to Y \).

\begin{lemma}
  \label{lem:e.fameconn}
  Let \( \cat E \) be a pullback-stable class of morphisms in \( \FamV \) that
  is closed under coproducts (as a full subcategory of \( [2, \cat V] \)). We
  have an equivalence of categories \( \cat E \eqv \Fam(\cat E_\conn) \),
  where \( \cat E_\conn \subseteq \cat E \) is the subclass of covers in \(
  \cat E \); that is, those morphisms of the form \( \phi \colon (X_i)_{j \in
  J} \to Y \).
\end{lemma}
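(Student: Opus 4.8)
The plan is to deduce this directly from Proposition~\ref{thm:a.fam.c.when}, applied with ambient category $\cat A = \cat E$ regarded as a full subcategory of the arrow category $[2,\FamV]$ of $\FamV$, and $\cat C = \cat E_\conn$. First I would observe that $[2,\FamV]$ has all coproducts, computed componentwise, since $\FamV$ does (being a free coproduct completion); the hypothesis that $\cat E$ is closed under coproducts then says precisely that $\cat E$ has coproducts and that the full inclusion $\cat E \hookrightarrow [2,\FamV]$ creates them, so coproducts in $\cat E$ may be computed in $[2,\FamV]$. It then remains to verify conditions \ref{enum:ess.surj} and \ref{enum:ff} of Proposition~\ref{thm:a.fam.c.when}.

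For \ref{enum:ess.surj}: given $\phi \in \cat E$, write $\phi \colon (X_j)_{j\in J} \to (Y_k)_{k\in K}$, so that its codomain is the coproduct $\sum_{k\in K} Y_k$ of the connected objects $Y_k$. Since $\FamV$ is extensive (Proposition~\ref{prop:famv.cat.suits}\ref{enum:fam.ext}), pulling $\phi$ back along the coproduct injections $\iota_k \colon Y_k \to \sum_k Y_k$ yields morphisms $\phi_k \colon \iota_k^*\phi \to Y_k$ with $\phi \iso \sum_{k\in K}\phi_k$ in $[2,\FamV]$. Each $\phi_k$ has connected codomain, hence is a cover, and each $\phi_k$ lies in $\cat E$ because $\cat E$ is pullback-stable (the base-change morphisms being the $\iota_k$). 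Thus every object of $\cat E$ is a coproduct of objects of $\cat E_\conn$.

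For \ref{enum:ff}: let $x \colon (X_j)_j \to X$ and $d_k \colon (D_{k,i})_i \to D_k$ be covers in $\cat E$, and let $(u,v)\colon x \to \sum_k d_k$ be a morphism in $\cat E$, i.e.\ a commutative square in $\FamV$ with $v \colon X \to \sum_k D_k$ on codomains and $u$ on domains. Since $X$ is a connected object of $\FamV$ and $\sum_k D_k = (D_k)_{k\in K}$, the morphism $v$ factors uniquely as $v = \iota_\ell \circ v'$ for a unique $\ell \in K$ and $v' \colon X \to D_\ell$. Inspecting index functions in the equation $(\sum_k d_k)\circ u = v \circ x = \iota_\ell \circ v' \circ x$ forces the index function of $u$ to land in the $\ell$-summand, so $u$ factors — uniquely, as coproduct injections in $\FamV$ are monic — through the injection of the $\ell$-th summand of the domain; one then checks, using that $\iota_\ell$ is monic, that the resulting square commutes, so that $(u,v)$ factors uniquely through the coproduct injection $d_\ell \to \sum_k d_k$ in $[2,\FamV]$, and this factorization lies in $\cat E$ by fullness. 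This establishes \ref{enum:ff}, and Proposition~\ref{thm:a.fam.c.when} yields the equivalence $\cat E \eqv \Fam(\cat E_\conn)$.

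The step I expect to be the main obstacle is the bookkeeping in \ref{enum:ess.surj}: one must make sure extensivity of $\FamV$ genuinely delivers the decomposition $\phi \iso \sum_k \phi_k$ at the level of the arrow category (and not merely of domains), and that pullback-stability of $\cat E$ is being invoked precisely along the injections $\iota_k$ — this is exactly what puts each piece $\phi_k$ back into $\cat E$. Everything else is the routine combinatorics of morphisms in a free coproduct completion.
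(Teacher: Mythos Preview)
Your proposal is correct and follows essentially the same approach as the paper: both proofs apply Proposition~\ref{thm:a.fam.c.when} with $\cat A = \cat E$ and $\cat C = \cat E_\conn$, verifying \ref{enum:ess.surj} by pulling back along the coproduct injections (using pullback-stability of $\cat E$) and \ref{enum:ff} by the index-function bookkeeping of morphisms in $\FamV$. The paper is slightly more explicit about the pullback computation and the index calculation $f(g(i)) = k$, but the content is the same.
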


\begin{proof}
  If \( (f,\psi) \colon (X_j)_{j \in J} \to (Y_k)_{k \in K} \) is in \( \cat E
  \), we consider the following pullback
  \begin{equation*}
    \begin{tikzcd}
      (X_j)_{j \in f^*k} \ar[r,"\psi|_k"] \ar[d]
             \ar[rd,"\ulcorner",phantom,very near start]
      & Y_k \ar[d] \\
      (X_j)_{j \in J} \ar[r,"{(f,\psi)}",swap]
      & (Y_k)_{k \in K}
    \end{tikzcd}
  \end{equation*}
  for each \( k \in K \). By pullback stability, we find that \( \psi|_k \in
  \cat E_\conn \) for all \(k \in K\), and \( \psi \iso \sum_{k \in K} \psi|_k
  \).

  If \( \phi \colon (V_i)_{i \in I} \to W \) is in \( \cat E_\conn \), and we
  have a commutative square
  \begin{equation}
    \label{eq:sq.fam.e}
    \begin{tikzcd}
      (V_i) \ar[d,"\phi",swap] \ar[r,"{(g,\chi)}"]
        & (X_j)_{j \in J} \ar[d,"{(f,\psi)}"] \\
      W \ar[r,"{(k,\omega)}",swap] & (Y_k)_{k \in K}
    \end{tikzcd}
  \end{equation}
  then \( f(g(i)) = k \), so that \( g(i) \in f^*k \) for all \( i \in I \),
  and hence \eqref{eq:sq.fam.e} factors uniquely as
  \begin{equation*}
    \begin{tikzcd}
      (V_i) \ar[d,"\phi",swap] \ar[r,"{(g,\chi)}"]
        & (X_j)_{j \in f^*k} \ar[r,"\iota_k"] \ar[d,"\psi|_k"]
        & (X_j)_{j \in J} \ar[d,"{(f,\psi)}"] \\
      W \ar[r,"\omega",swap] 
        & Y_k \ar[r,"\iota_k",swap]
        & (Y_k)_{k \in K}
    \end{tikzcd}
  \end{equation*}
  so we may apply Proposition \ref{thm:a.fam.c.when}.
\end{proof}

\section{Embedding $ \VCat \to \FamVCat $}
\label{sect:embed.vcat.famvcat}

In \cite[Corollary 3.8]{FL22}, the authors have shown that the enrichment
2-functor \( (-)\dash\Cat \colon \Bicat \to 2\dash\CAT \) preserves all
weighted, connected 2-limits, where \( \Bicat \) is the 2-category of
bicategories, pseudofunctors and icons. Denoting by \( \SymCat \) the
2-category of symmetric monoidal categories, monoidal functors and monoidal
natural transformations, we obtain the following corollary:

\begin{proposition}[{\cite[Corollary 3.8]{FL22}}]
  \label{prop:enrich.lim}
  The enrichment 2-functor \( \SymCat \to \CAT \) preserves pseudopullbacks.
\end{proposition}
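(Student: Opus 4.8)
The plan is to obtain this as a corollary of \cite[Corollary 3.8]{FL22} by factoring the enrichment $2$-functor through $\Bicat$. The basic device is the delooping (suspension) dictionary: a monoidal category $\cat V$ gives rise to a one-object bicategory $\Sigma\cat V$, with hom-category $\cat V$ and horizontal composition the tensor product; a monoidal functor $\cat V \to \cat W$ is the same thing as a pseudofunctor $\Sigma\cat V \to \Sigma\cat W$, and a monoidal natural transformation is the same thing as an icon between the corresponding pseudofunctors (the identity-on-objects clause in the definition of icon being vacuous, since the bicategories involved have a single object). Moreover, a category enriched in $\Sigma\cat V$ is precisely a $\cat V$-category, and the same holds for functors and natural transformations, all naturally in $\cat V$. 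Hence, writing $(-)_0 \colon 2\dash\CAT \to \CAT$ for the forgetful $2$-functor, the enrichment $2$-functor on $\SymCat$ is canonically isomorphic to the composite
\[
  \SymCat \xrightarrow{\ \Sigma\ } \Bicat \xrightarrow{\ (-)\dash\Cat\ } 2\dash\CAT \xrightarrow{\ (-)_0\ } \CAT,
\]
in which $\Sigma$ forgets the symmetry, landing in the full sub-$2$-category of $\Bicat$ on the one-object bicategories. A pseudopullback is the pseudolimit of a cospan, hence a connected weighted $2$-limit, so by \cite[Corollary 3.8]{FL22} the middle arrow $(-)\dash\Cat$ preserves pseudopullbacks; it then remains to show that $\Sigma$ and $(-)_0$ do so as well, and the statement follows by composing these three facts.

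For $(-)_0$ this is immediate: the underlying category of the pseudopullback of a cospan of $2$-categories has as objects the triples consisting of an object on each side together with an isomorphism between their images, and as morphisms the compatible pairs of $1$-cells — exactly the pseudopullback of the underlying categories; alternatively, $(-)_0$ is $2$-continuous, admitting a right $2$-adjoint. For $\Sigma$, given monoidal functors $F \colon \cat V \to \cat X$ and $G \colon \cat W \to \cat X$, let $\cat Q$ be their pseudopullback in $\SymCat$, whose objects are triples $(v,w,\phi)$ with $\phi \colon Fv \iso Gw$ in $\cat X$ and whose structure is componentwise; I would verify that $\Sigma\cat Q$ has the universal property of the pseudopullback of $\Sigma\cat V \to \Sigma\cat X \leftarrow \Sigma\cat W$ in $\Bicat$. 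For a test bicategory $H$, a pseudofunctor $H \to \Sigma\cat Q$ unravels, via the delooping dictionary, into a pair of pseudofunctors $H \to \Sigma\cat V$ and $H \to \Sigma\cat W$ together with a family of isomorphisms of $\cat X$ indexed by the $1$-cells of $H$, natural in $2$-cells and compatible with composition and units — which is exactly an invertible icon between the two composites $H \to \Sigma\cat X$ (these agree on objects automatically, $\Sigma\cat X$ having only one); the correspondence on icons is the evident one. Equivalently: the full sub-$2$-category of $\Bicat$ on the one-object bicategories is closed under pseudopullback, the hom-category of the resulting one-object bicategory being the pseudopullback of the hom-categories.

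The step I expect to be the real content is this last one, that $\Sigma$ preserves pseudopullbacks, i.e.\ that the pseudopullback in $\Bicat$ of a cospan of one-object bicategories is again one-object. A priori it need not be: its objects would morally be the internal equivalences $\star \to \star$ of $\Sigma\cat X$, namely the $\otimes$-invertible objects of $\cat X$. What prevents this, and makes the argument work, is precisely that the $2$-cells of $\Bicat$ are \emph{icons}, which are identity on objects; this rigidity collapses the comparison $2$-cell onto a single object and reduces it to the family of isomorphisms in $\cat X$ above. Everything else — the delooping bookkeeping and the triviality about $(-)_0$ — is routine, and the same argument in fact yields the sharper statement with $\CAT$ replaced by $2\dash\CAT$.
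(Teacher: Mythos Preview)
Your proposal is correct and is essentially what the paper does, only spelled out in full. The paper's entire proof is the single sentence ``Pseudopullbacks are $\Cat$-connected limits,'' invoking \cite[Corollary 3.8]{FL22} for the enrichment $2$-functor $\Bicat \to 2\dash\CAT$ and leaving the passage from that statement to the one about $\SymCat \to \CAT$ implicit. You make this passage explicit via the factorisation $\SymCat \xrightarrow{\Sigma} \Bicat \xrightarrow{(-)\dash\Cat} 2\dash\CAT \xrightarrow{(-)_0} \CAT$, and correctly identify the $\Sigma$-step---closure of one-object bicategories under pseudopullback in $\Bicat$, hinging on icons being identity-on-objects---as the place where something actually needs checking. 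Both arguments appeal to the same cited result for the middle leg; yours simply fills in what the paper treats as routine bookkeeping.
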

\begin{proof}
  Pseudopullbacks are \( \Cat \)-connected limits.
\end{proof}

It is known that a morphism \( p \colon a \to b \) in a 2-category is fully
faithful if and only if \( p \comma p \eqv 2 \pow a \)\footnote{\( f \comma g
\) is the \textit{comma object} of two morphisms \(f,g\) in a 2-category \(
\bicat A \). \( \cat C \pow x \) is the \textit{power object} of an object
\(x\) in \( \bicat A \) by a category \( \cat C \).}. However, it was shown in
\cite{FL22} that powers and comma objects are \textit{not} \( \Cat
\)-connected limits. Despite this, we can still obtain the following result:

\begin{lemma}[{\cite[Lemma 2.2]{Pre23}}]
  \label{lem:enrich.ff}
  The enrichment 2-functor \( \SymCat \to \CAT \) preserves fully faithful
  functors.
\end{lemma}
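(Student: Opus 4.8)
The plan is to reduce this to the elementary observation that a (strong) monoidal functor whose underlying functor is fully faithful induces a fully faithful change-of-base functor, and to recognize that fully faithful \(1\)-cells of \(\SymCat\) are exactly the monoidal functors that are fully faithful on underlying categories.

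First I would check that a \(1\)-cell \(p\colon \cat V \to \cat W\) is fully faithful in the \(2\)-category \(\SymCat\) if and only if its underlying functor is fully faithful. The implication we need is formal: the forgetful \(2\)-functor \(\SymCat \to \CAT\) is corepresentable, being corepresented by the free symmetric monoidal category \(\cat X\) on one object, since strong monoidal functors \(\cat X \to \cat V\) are (up to coherent isomorphism) objects of \(\cat V\) and monoidal natural transformations between them are morphisms of \(\cat V\); and a representable \(2\)-functor sends fully faithful \(1\)-cells to fully faithful functors, because \(\SymCat(\cat X,-)\) applied to \(p\) is by definition one of the functors \(\SymCat(\cat X,\cat V) \to \SymCat(\cat X,\cat W)\) whose full faithfulness is demanded by ``\(p\) fully faithful''. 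Hence the underlying functor of \(p\) is fully faithful. (The converse is a routine direct check and will not be needed.)

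Next I would show that if the underlying functor of \(p\) is fully faithful, then the change-of-base functor \(p_\ast\colon \VCat \to \cat W\dash\Cat\) is fully faithful. Recall that \(p_\ast \cat A\) has the objects of \(\cat A\), hom-objects \(p(\cat A(a,a'))\), and composition and units assembled from those of \(\cat A\) together with the (invertible) monoidal constraints of \(p\), while \(p_\ast F\) has the object-assignment of \(F\) and hom-components \(p(F_{a,a'})\). Faithfulness of \(p_\ast\) is immediate from faithfulness of \(p\). For fullness, let \(H\colon p_\ast\cat A \to p_\ast\cat B\) be a \(\cat W\)-functor, say acting on objects by some map \(h\); since \(p\) is fully faithful, each hom-component \(H_{a,a'}\colon p(\cat A(a,a')) \to p(\cat B(ha,ha'))\) equals \(p(G_{a,a'})\) for a unique morphism \(G_{a,a'}\) of \(\cat V\). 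Let \(G\) agree with \(h\) on objects and with \(G_{a,a'}\) on homs. Then \(G\) is a \(\cat V\)-functor: its compatibility with composition and units is a family of equations between morphisms of \(\cat V\) which, upon applying \(p\) and using naturality of the monoidal constraints of \(p\), become the functoriality axioms already satisfied by \(H\); since these constraints are isomorphisms and \(p\) is faithful, the equations hold in \(\cat V\). Hence \(p_\ast G = H\). Finally, since a functor between (possibly large) categories is a fully faithful \(1\)-cell of \(\CAT\) exactly when it is fully faithful in the usual sense (evaluate the relevant representable at the terminal category), we conclude that the enrichment \(2\)-functor carries the fully faithful \(1\)-cell \(p\) to a fully faithful \(1\)-cell \(p_\ast\) of \(\CAT\).

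The main obstacle is the fullness part of the second step: verifying that the componentwise lift \(G\) of \(H\) along \(p\) is genuinely a \(\cat V\)-functor. This is precisely where full faithfulness of \(p\) (rather than mere faithfulness) is used, together with the invertibility of the monoidal comparison maps of \(p\), which is what lets the \(\cat W\)-functoriality axioms for \(H\) be pulled back along \(p\); the remaining steps are purely formal.
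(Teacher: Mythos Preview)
Your proposal is correct, and the core argument---your second step, lifting a \(\cat W\)-functor \(H\) componentwise along \(p\) and verifying the lift is a \(\cat V\)-functor using invertibility of the monoidal constraints together with full faithfulness of \(p\)---is exactly what the paper does. The paper writes out the unit and composition diagrams explicitly, but the content is the same.

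The one difference is your first step. The paper simply begins ``Let \(F \colon \cat V \to \cat W\) be a fully faithful monoidal functor'' and proceeds, implicitly identifying the \(2\)-categorical notion of fully faithful \(1\)-cell in \(\SymCat\) with the underlying-functor notion. You make this reduction explicit via corepresentability of the forgetful \(2\)-functor \(\SymCat \to \CAT\). This is a genuine clarification: the preamble to the lemma in the paper discusses fully faithful morphisms in a \(2\)-category (via comma objects and powers), so the \(2\)-categorical reading of the statement is the natural one, yet the paper's proof silently switches to the underlying-functor reading. Your extra paragraph closes that small gap; otherwise the two proofs coincide.
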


\begin{proof}
  Let \( F \colon \cat V \to \cat W \) be a fully faithful monoidal functor,
  and let  \( \cat C \), \( \cat D \) be \( \cat V \)-categories. A \( \cat W
  \)-functor \( \Psi \colon F_!\cat C \to F_!\cat D \) consists of 
  \begin{itemize}[noitemsep,label=--]
    \item
      A function \( \ob \Psi \colon \ob \cat C \to \ob \cat D \) on objects,
    \item
      A morphism \( \Psi_{x,y} \colon F\cat C(x,y) \to F\cat D(\Psi x,\Psi y) \)
      in \( \cat W \) for each pair \( x,y \in \ob \cat C \).
  \end{itemize}
  We claim that we have a \( \cat V \)-functor \( \Phi \colon \cat C \to \cat
  D \) given \( \ob \Phi = \ob \Psi \) and \( \Phi_{x,y} \) is the unique
  morphism \( \cat C(x,y) \to \cat D(\Psi x, \Psi y) \) such that \(
  F\Phi_{x,y} = \Psi_{x,y} \) for all \( x,y \in \ob \cat C \), by full
  faithfulness of \(F\).

  Recalling that \( \un{F_!\cat X} = F\un{\cat X} \circ \e F \) and \(
  \cp{F_!\cat X} = F\cp{\cat X} \circ \m F \) for any \( \cat V \)-category \(
  \cat X \), we note that the following diagrams commute
  \begin{equation*}
    \begin{tikzcd}
      & I \ar[d,"\e F"] \\
      & FI \ar[rd,"F\un{\cat D}"] \ar[ld,"F\un{\cat C}",swap] \\ 
      F\cat C(x,y) \ar[rr,"F\Phi_{x,y}",swap]
      && F\cat D(\Phi x,\Phi y)
    \end{tikzcd}
    \quad
    \begin{tikzcd}[column sep=large]
      (F_!\cat C)(x, y, z)
        \ar[r,"(F\Phi)_{x,y,z}"]
        \ar[d,"\m F",swap]
      & (F_!\cat D)(\Phi x, \Phi y, \Phi z)
        \ar[d,"\m F"] \\
      F(\cat C(x, y, z))
        \ar[r,"F(\Phi_{x,y,z})"]
        \ar[d,"F\cp{\cat C}",swap]
      & F(\cat D(\Phi x, \Phi y, \Phi z))
        \ar[d,"F\cp{\cat D}"] \\
      F\cat C(x, z)
        \ar[r,"F\Phi_{x,z}",swap]
      & F\cat D(\Phi x, \Phi z)
    \end{tikzcd}
  \end{equation*}
  since \(\Psi\) is a \( \cat W \)-functor. Since \( \e F \) and \( \m F \)
  are invertible, and \(F\) is fully faithful, we deduce that \( \Phi \) must
  be a \( \cat V \)-functor, as desired.
\end{proof}

\begin{lemma}[{\cite[p. 9]{Pre23}}]
  \label{lem:key.pspb}
  Let \( \cat V \) be a category with finite limits. We have a pseudopullback
  diagram 
  \begin{equation}
    \label{eq:key.pspb}
    \begin{tikzcd}
      \VCat \ar[r,"\eta_!"] \ar[d] 
            \ar[rd,"\iso" description,phantom]
      & \FamVCat \ar[d] \\
      \Set \ar[r] & \Set\dash \Cat
    \end{tikzcd}
  \end{equation}
  of categories with pullbacks and pullback-preserving functors.
\end{lemma}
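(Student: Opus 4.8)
The plan is to exhibit the square \eqref{eq:key.pspb} as a pseudopullback directly, by unpacking what an object of the pseudopullback is and matching it with a $\cat V$-category. Recall that the pseudopullback of the cospan $\Set \to \Set\dash\Cat \leftarrow \FamVCat$ has as objects triples $(S, \cat C, \alpha)$ where $S$ is a set, $\cat C$ is an $\FamV$-category, and $\alpha$ is an isomorphism in $\Set\dash\Cat$ between the discrete $\Set$-category on $S$ and the underlying $\Set$-category of $\cat C$ (obtained by applying $\Fam(!)\colon \FamV \to \Set$, equivalently $\Fam$ of the unique functor $\cat V \to 1$, to the hom-objects). First I would observe that such an isomorphism $\alpha$ forces the underlying $\Set$-category of $\cat C$ to be \emph{discrete}, i.e.\ all hom-objects of $\cat C$ must lie in the connected component $1 \in \Fam(1) \eqv \Set$; equivalently, each hom-object $\cat C(x,y)$ of the $\FamV$-category $\cat C$ is a \emph{single-index family}, hence (up to the equivalence $\cat V \simeq \FamV_\conn$ of connected objects, cf.\ Proposition~\ref{thm:a.fam.c.when}) an object of $\cat V$. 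One then checks that the composition and identity morphisms of $\cat C$, which a priori live in $\FamV$, automatically restrict to $\cat V$: the identity $I \to \cat C(x,x)$ and the composition $\cat C(y,z)\times\cat C(x,y) \to \cat C(x,z)$ are morphisms between connected objects (here I use that $\cat V$ has a terminal object so $I = 1$ is connected, and that finite products of connected objects are connected in an extensive category — which holds in $\FamV$ by Proposition~\ref{prop:famv.cat.suits}\ref{enum:fam.ext}), hence lie in the full subcategory $\cat V$. This produces a $\cat V$-category, and one verifies the assignment is functorial, giving a functor from the pseudopullback to $\VCat$, inverse (up to iso) to the canonical comparison functor induced by $\eta_!$ and the underlying-set functor.

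Alternatively — and this is probably the cleaner route to write — I would appeal to Proposition~\ref{prop:enrich.lim} (the enrichment 2-functor $\SymCat \to \CAT$ preserves pseudopullbacks, since these are $\Cat$-connected limits) applied to the square \eqref{eq:unit.cart} of Proposition~\ref{prop:unit.cart} with $\cat W = 1$. Indeed, Proposition~\ref{prop:unit.cart} says
\begin{equation*}
  \begin{tikzcd}
    \cat V \ar[d,"!",swap] \ar[r,"\eta"] & \FamV \ar[d,"\Fam(!)"] \\
    1 \ar[r,"\eta",swap] & \Fam(1)
  \end{tikzcd}
\end{equation*}
is a 2-pullback, and since $\cat W = \Fam(1) \eqv \Set$ is such that the comparison is a pseudopullback by \cite{JS93} (as noted in the excerpt right after Proposition~\ref{prop:unit.cart}), this is in fact a pseudopullback of symmetric monoidal categories. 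Here all four categories are cartesian monoidal (using that $\cat V$ has finite limits, hence finite products), and the functors are strict monoidal. Applying the enrichment 2-functor yields a pseudopullback
\begin{equation*}
  \begin{tikzcd}
    \VCat \ar[d] \ar[r,"\eta_!"] & \FamVCat \ar[d] \\
    1\dash\Cat \ar[r] & \Fam(1)\dash\Cat
  \end{tikzcd}
\end{equation*}
in $\CAT$, and identifying $1\dash\Cat \eqv \Set$ and $\Fam(1)\dash\Cat \eqv \Set\dash\Cat$ gives exactly \eqref{eq:key.pspb}.

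Finally I would check the decoration ``of categories with pullbacks and pullback-preserving functors'': $\VCat$, $\FamVCat$, $\Set$, $\Set\dash\Cat$ all have pullbacks since the base categories have finite limits (for $\VCat$ this uses that $\cat V$ has finite limits — the enriched-category construction over a finitely complete cartesian base has pullbacks computed on objects and hom-objects), and the functors $\eta_!$, the underlying-set functors, and $\Set \to \Set\dash\Cat$ all preserve pullbacks because they are computed pointwise on objects and hom-objects from pullback-preserving functors between the bases ($\eta\colon \cat V \to \FamV$ preserves finite limits by Proposition~\ref{prop:famv.cat.suits}, and $\cat V \to 1$ trivially does). The main obstacle is bookkeeping rather than conceptual: one must be careful that the equivalence $\Fam(1) \eqv \Set$ is tracked coherently through the enrichment 2-functor so that the resulting square is genuinely \eqref{eq:key.pspb} and not merely equivalent to it in an uncontrolled way; using that the enrichment 2-functor preserves the relevant \emph{strict} structure (it acts functorially on strict monoidal functors and sends the terminal monoidal category to $\Set$ on the nose) keeps this manageable.
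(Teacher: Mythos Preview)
Your second approach is exactly the paper's proof: observe that the base square (Proposition~\ref{prop:unit.cart} with $\cat W = 1$) is a 2-pullback, promote it to a pseudopullback via \cite{JS93} because $\FamV \to \Set$ is an isofibration, and then apply Proposition~\ref{prop:enrich.lim}. Your first (direct) approach and the closing verification that the square lives among categories with pullbacks and pullback-preserving functors are correct as well, but the paper does not spell these out.
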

\begin{proof}
  We observe that \eqref{eq:key.pspb} is the composite of the enrichment
  2-functor with the diagram
  \begin{equation*}
    \begin{tikzcd}
      \cat V \ar[r,"\eta"] \ar[d]
             \ar[rd,"\ulcorner",phantom,very near start]
      & \FamV \ar[d] \\
      1 \ar[r] & \Set
    \end{tikzcd}
  \end{equation*}
  which is a 2-pullback by \cite[Proposition 5.15]{Web07}, and since \( \FamV
  \to \Set \) is an (iso)fibration, it is in fact a pseudopullback by
  \cite{JS93}. Now the result follows by Proposition \ref{prop:enrich.lim}.
\end{proof}

\begin{theorem}[{\cite[Lemma 3.1]{Pre23}}]
  \label{thm:vcat.famv.cat.refl}
  If \( \cat V \) has finite limits, \( \eta_! \colon \VCat \to \FamVCat \)
  reflects effective descent morphisms.
\end{theorem}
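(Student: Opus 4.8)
The plan is to feed the pseudopullback square \eqref{eq:key.pspb} of Lemma \ref{lem:key.pspb} into Proposition \ref{prop:pspb.descent}. Write $V = (-)_0 \colon \VCat \to \Set$ for its left vertical leg (the object-set functor), $W \colon \FamVCat \to \Set\dash\Cat$ for its right vertical leg (change-of-base along the fibration $P \colon \FamV \to \Set$), and $K \colon \Set \to \Set\dash\Cat$ for its bottom leg. Unwinding the construction in Lemma \ref{lem:key.pspb}, $K$ sends a set $X$ to the chaotic (indiscrete) $\Set$-category on $X$, and the square commutes via the evident isomorphism $W \circ \eta_! \iso K \circ V$. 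Given a $\cat V$-functor $F \colon \cat C \to \cat D$ with $\eta_! F$ an effective descent morphism in $\FamVCat$, Proposition \ref{prop:pspb.descent} will conclude that $F$ is an effective descent morphism in $\VCat$ as soon as we check that $VF$ is an effective descent morphism in $\Set$ and that $W\eta_! F \iso K V F$ is a descent morphism in $\Set\dash\Cat$; the remaining hypothesis, that $\eta_! F$ be of effective descent, is given.

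For $VF$: an effective descent morphism is in particular a descent morphism, hence (since $\FamVCat$ has finite limits, by Proposition \ref{prop:famv.cat.suits}) a regular epimorphism, so $\eta_! F$ is a regular epimorphism in $\FamVCat$. The functor $(-)_0 \colon \cat W\dash\Cat \to \Set$ is left adjoint to the chaotic-category functor, hence preserves coequalizers and so regular epimorphisms; therefore $VF = (\eta_! F)_0 = F_0$ is a regular epimorphism of sets, i.e. a surjection, and thus an effective descent morphism in $\Set$.

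For $W\eta_! F$: since $\eta_!\cat C$ has object set $\ob\cat C$ and all of its hom-families are singletons, pushing it along $P$ produces the chaotic $\Set$-category on $\ob\cat C$, so indeed $W\eta_! F \iso K(F_0)$ with $F_0$ the surjection obtained above. It then suffices to observe that $K$ carries surjections to descent morphisms: identifying $\Set\dash\Cat$ with $\Cat(\Set)$, the components of $K(F_0)$ on objects, morphisms and composable $n$-chains are the cartesian powers $F_0$, $F_0\times F_0$, $F_0^{\times 3}$ and $F_0^{\times 4}$, all of which are surjections and hence effective descent morphisms in $\Set$, so by Theorem \ref{thm:int.lec} the functor $K(F_0)$ is an effective descent morphism in $\Cat$, in particular a descent morphism. (Alternatively, $F_0$ is a split epimorphism and $K$ preserves sections, so $K(F_0)$ is a split epimorphism and thus of effective descent.)

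With both conditions in hand, Proposition \ref{prop:pspb.descent} yields that $F$ is an effective descent morphism in $\VCat$; as $F$ was an arbitrary $\cat V$-functor with $\eta_! F$ effective for descent, $\eta_!$ reflects effective descent morphisms. The substantive input here is what we are quoting --- that \eqref{eq:key.pspb} is a genuine pseudopullback (via Weber's cartesianness result \cite{Web07}, the Joyal--Street comparison \cite{JS93} along the isofibration $\FamV \to \Set$, and preservation of connected weighted $2$-limits by the enrichment $2$-functor) and Lucatelli Nunes's pseudopullback descent criterion, Proposition \ref{prop:pspb.descent}. Granting these, the remaining work is light; the only point that needs care is recognising the bottom leg $K$ of \eqref{eq:key.pspb} as the chaotic-category functor, since that is precisely what makes the ``descent in $\Set\dash\Cat$'' hypothesis come out true.
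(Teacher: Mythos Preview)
Your proof is correct and follows the same overall architecture as the paper's: feed the pseudopullback \eqref{eq:key.pspb} into Proposition~\ref{prop:pspb.descent} and verify the two side conditions. The difference is in how those side conditions are checked. The paper argues that the \emph{right} vertical leg $W \colon \FamVCat \to \Set\dash\Cat$ preserves descent morphisms---because $\FamV \to \Set$ has fully faithful left and right adjoints, so by Lemma~\ref{lem:enrich.ff} the same holds for $W$, placing it within the scope of Lemma~\ref{lem:preserve.desc}---and thus $W(\eta_!F)$ is a descent morphism; from this the surjectivity of $F_0$ is extracted. You instead work with the \emph{bottom} leg: you identify $K$ concretely as the indiscrete-category functor, obtain $F_0$ surjective directly via the left-adjoint object functor, and then verify by hand (or via the split-epimorphism observation) that $K(F_0)$ is of descent in $\Set\dash\Cat \eqv \Cat$. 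Your route is more elementary, sidestepping Lemma~\ref{lem:preserve.desc} entirely; the paper's route is more uniform with the later arguments in Chapters~\ref{chap:internal-multi} and~\ref{chap:enriched-multi}, where that lemma is reused for the analogous functors $(-)_0 \colon \CatTV \to \cat V$.
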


\begin{proof}
  Let \( F \colon \cat C \to \cat D \) be a \( \cat V \)-functor such that \(
  \eta_!F \) is an effective descent \( \FamV \)-functor. Since \( \FamV \to
  \Set \) has fully faithful left and right adjoints, the same holds for \(
  \FamVCat \to \Set\dash \Cat \), since enrichment is a 2-functor which
  preserves fully faithful functors by Lemma \ref{lem:enrich.ff}.

  Thus, \( \FamVCat \to \Set\dash \Cat \) maps \( \eta_!F \) to a descent
  functor, which, in turn, is reflected along \( \Set \to \Set \dash \Cat \)
  to its underlying function on objects, which must be surjective. Since
  surjections are effective descent morphisms in \( \Set \), we may apply
  Proposition \ref{prop:pspb.descent} to conclude that \(F\) is an effective
  descent functor.
\end{proof}

Now, by applying Lucatelli Nunes's criteria for the effective descent
morphisms (Theorem \ref{thm:fln.desc}) to Theorem
\ref{thm:vcat.famv.cat.refl}, we obtain (see also Lemma \ref{lem:e.fameconn}):

\begin{theorem}[{\cite[Theorem 3.3]{Pre23}}]
  \label{thm:desc.vcat}
  If \( \cat V \) is a category with finite limits, and \( F \colon \cat C \to
  \cat D \) is a \( \cat V \)-functor such that 
  \begin{enumerate}[label=(\Roman*),noitemsep]
    \item
      \label{enum:vcat.eff.desc.sings}
      \( F \colon (\cat C(x_0,x_1))_{x_i \in F^*y_i} \to \cat D(y_0,y_1) \) is
      an effective descent morphism, 
    \item
      \label{enum:vcat.desc.pairs}
      \( F \times F \colon (\cat C(x_0,x_1,x_2))_{x_i \in F^*y_i} \to \cat
      D(y_0, y_1, y_2) \) is a descent morphism, and
    \item
      \label{enum:vcat.al.desc.trips}
      \( F \times F \times F \colon (\cat C(x_0, x_1, x_2, x_3))_{x_i \in
      F^*y_i} \to \cat D(y_0, y_1, y_2,y_3) \) is an almost descent morphism
  \end{enumerate}
  in the category \( \FamV \) for all \( y_0,\,y_1,\,y_2,\,y_3 \in \cat V \),
  then \(F\) is an effective descent morphism in \( \VCat \).
\end{theorem}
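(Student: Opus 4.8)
The plan is to transport the problem to the free coproduct completion and invoke Lucatelli Nunes's criterion there. By Theorem~\ref{thm:vcat.famv.cat.refl}, since \( \cat V \) has finite limits, \( \eta_!\colon \VCat \to \FamVCat \) reflects effective descent morphisms, so it suffices to prove that \( \eta_! F \) is an effective descent morphism in \( \FamVCat \). For this I would apply Theorem~\ref{thm:fln.desc} with the enriching category taken to be \( \FamV \), which is legitimate because \( \FamV \) is lextensive and \( - \pt 1 \colon \Set \to \FamV \) is fully faithful (Proposition~\ref{prop:famv.cat.suits}), its cartesian structure being the one for which \( \eta \) is strong monoidal.

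The first step is to understand \( \eta_! \) on hom-objects and chains. Since \( \eta\colon \cat V \to \FamV \) is strong monoidal for the cartesian structures, \( \eta_!\cat C \) has hom-object \( \eta(\cat C(x_0,x_1)) \) and, by induction, object of \( n \)-chains \( \eta(\cat C(x_0,\dots,x_n)) \), while \( \eta_! F \) agrees with \( F \) on objects and is \( \eta \) applied to the corresponding component of \( F \) on chains of hom-objects. Hence the morphism of \( \FamV \)
\[
  \sum_{x_i}\eta\big(\cat C(x_0,\dots,x_n)\big) \longrightarrow \sum_{y_i}\eta\big(\cat D(y_0,\dots,y_n)\big)
\]
whose descent behaviour Theorem~\ref{thm:fln.desc} tests has underlying function on index sets \( \ob\cat C^{\,n+1} \to \ob\cat D^{\,n+1} \) induced by \( F \), with fibre \( F^*y_0 \times \dots \times F^*y_n \) over \( (y_0,\dots,y_n) \). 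By extensivity of \( \FamV \), pulling this morphism back along the coprojection of the summand \( \eta(\cat D(y_0,\dots,y_n)) \) yields precisely the cover
\[
  \big(\cat C(x_0,\dots,x_n)\big)_{x_i \in F^* y_i} \longrightarrow \cat D(y_0,\dots,y_n)
\]
occurring in conditions \ref{enum:vcat.eff.desc.sings}--\ref{enum:vcat.al.desc.trips}.

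Now I would invoke Lemma~\ref{lem:e.fameconn}. The classes of effective descent, descent and almost descent morphisms in \( \FamV \) are each pullback-stable and closed under small coproducts, so by that lemma a morphism of \( \FamV \) lies in one of these classes exactly when each of its cover-restrictions (the pullbacks along the coprojections of its codomain) does. Applying this for \( n = 1,2,3 \) shows that conditions \ref{enum:vcat.eff.desc.sings}, \ref{enum:vcat.desc.pairs}, \ref{enum:vcat.al.desc.trips} are exactly the hypotheses Theorem~\ref{thm:fln.desc} imposes on the \( 1 \)-chain, \( 2 \)-chain and \( 3 \)-chain morphisms of \( \eta_! F \). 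The only remaining hypothesis of that theorem is that \( (\eta_! F)_0 = F_0 \) be surjective; this is forced by \ref{enum:vcat.eff.desc.sings} applied with \( y_0 = y_1 = y \), since if \( y \) were not in the image of \( F_0 \) then \( F^* y = \emptyset \) and \ref{enum:vcat.eff.desc.sings} would assert that the morphism \( 0 \to \eta(\cat D(y,y)) \) out of the initial object is an effective descent morphism in \( \FamV \), hence a regular epimorphism --- impossible, as its codomain is a nonempty (indeed connected) singleton family.

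With all four hypotheses verified, Theorem~\ref{thm:fln.desc} gives that \( \eta_! F \) is an effective descent morphism in \( \FamVCat \), and Theorem~\ref{thm:vcat.famv.cat.refl} then lets us conclude that \( F \) is an effective descent morphism in \( \VCat \). The step demanding the most care is the second paragraph: identifying, via extensivity of \( \FamV \), the ``global'' chain-of-hom-object morphisms of \( \eta_! F \) appearing in Theorem~\ref{thm:fln.desc} with the fibrewise covers of hypotheses \ref{enum:vcat.eff.desc.sings}--\ref{enum:vcat.al.desc.trips}, and checking that the three descent classes are genuinely closed under coproducts so that Lemma~\ref{lem:e.fameconn} applies; the rest is a direct chain of citations.
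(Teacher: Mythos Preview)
Your proposal is correct and follows essentially the same approach as the paper, which states the theorem as an immediate consequence of applying Theorem~\ref{thm:fln.desc} (Lucatelli Nunes's criteria, applied to \( \FamV \)) together with the reflection result Theorem~\ref{thm:vcat.famv.cat.refl}, with Lemma~\ref{lem:e.fameconn} mediating between the global coproduct morphisms and the fibrewise covers. Your write-up simply unpacks more of the details (the identification of the chain morphisms via extensivity, and the derivation of surjectivity of \( F_0 \) from condition~\ref{enum:vcat.eff.desc.sings}) than the paper does explicitly.
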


Unlike Le Creurer's and Lucatelli Nunes's results, which describe the
effective descent morphisms in \( \CatV \) and \( \VCat \) in terms of
morphisms in \( \cat V \), the description we provide for the effective
descent morphisms of \( \VCat \) (Theorem \ref{thm:desc.vcat}) relies on
understanding (effective, almost) descent morphisms in \( \FamV \), prompting
their study. 

\section{Familial descent and effective descent morphisms}
\label{sect:fam.desc}

By Lemma \ref{lem:e.fameconn}, we learn that when studying (effective, almost)
descent morphisms in \( \FamV \), it is enough to the consider covers \( \phi
\colon (X_j)_{j \in J} \to Y \).

There is not much to say about (pullback-stable) epimorphisms in this
generality; they are, tautologically, the jointly epimorphic covers (preserved
by pullbacks). 

Regarding (effective) descent morphisms, it is useful to compute the kernel
pair of a cover \( \phi \colon (X_j)_{j \in J} \to Y \); we use the
construction described in Proposition \ref{prop:famv.cat.suits}. For each
\(j,k \in J\), we have a pullback diagram
\begin{equation}
  \label{eq:fam.ker}
  \begin{tikzcd}
    \phi_j \times_Y \phi_k \ar[r,"\delta_{1,j,k}"] \ar[d,"\delta_{0,j,k}",swap]
      \ar[rd,"\ulcorner",phantom, very near start]
      & X_j \ar[d,"\phi_j"] \\
    X_k \ar[r,"\phi_k",swap] & Y
  \end{tikzcd}
\end{equation}
so the kernel pair of \( \phi \) is given by
\begin{equation*}
  \begin{tikzcd}
    (\phi_j \times_Y \phi_k)_{j,k \in J\times J}
      \ar[r,"{(d_1,\delta_1)}",shift left]
      \ar[r,"{(d_0,\delta_0)}",shift right,swap]
    & (X_j)_{j \in J}
  \end{tikzcd}
\end{equation*}
where \( d_i \colon J \times J \to J \) discards the \(i\)th component, for \(
i = 0,\,1 \).

We also define a category \( \cat D_J \) with set of objects \( J + J^2 \),
and for each \( j,k \in J \), two different morphisms \( (j,k) \to j \) and \(
(j,k) \to k \).

For each cover \( \phi \colon (X_j)_{j \in J} \to Y \), we define a diagram \(
K_\phi \colon \cat D_J \to \cat V \), mapping
\begin{itemize}[label=--,noitemsep]
  \item
    \( (j,k) \to j \) to \( \delta_{1,j,k} \colon \phi_j \times_Y \phi_k \to
    X_j \)
  \item
    \( (j,k) \to k \) to \( \delta_{0,j,k} \colon \phi_j \times_Y \phi_k \to
    X_k \),
\end{itemize}

We recall that in a category with finite limits, regular epimorphisms are the
coequalizers of their kernel pairs. With that in mind, we may obtain the
following result:

\begin{lemma}[{\cite[Lemma 4.1]{Pre23}}]
  \label{lem:famv.reg.epi}
  A cover \( \phi \) is a (pullback-stable) regular epimorphism if and only if
  \( \colim K_\phi \iso Y \) (and the colimit is stable).
\end{lemma}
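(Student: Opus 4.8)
The plan is to lean on the fact recalled just above, that in a finitely complete category a morphism is a regular epimorphism exactly when it is the coequalizer of its own kernel pair. Since \( \FamV \) is finitely complete by Proposition \ref{prop:famv.cat.suits}, this applies to the cover \( \phi \colon (X_j)_{j \in J} \to Y \), whose kernel pair is the parallel pair \( (\phi_j \times_Y \phi_k)_{(j,k) \in J^2} \rightrightarrows (X_j)_{j \in J} \) computed above from the pullbacks \eqref{eq:fam.ker}. Thus \( \phi \) is a regular epimorphism if and only if \( Y \), equipped with the cocone \( (\phi_j)_{j \in J} \), is the coequalizer of this parallel pair in \( \FamV \); I will show this is equivalent to \( Y \) with the same cocone being a colimit of \( K_\phi \) in \( \cat V \). (We may assume \( J \neq \emptyset \), the case \( J = \emptyset \) being degenerate.)

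The first step is to identify the cocones on both sides. For any object \( (Z_\ell)_{\ell \in L} \) of \( \FamV \), a morphism \( (X_j)_{j \in J} \to (Z_\ell)_{\ell \in L} \) coequalizing the kernel pair consists of a function \( \bar q \colon J \to L \) and morphisms \( q_j \colon X_j \to Z_{\bar q j} \) of \( \cat V \) satisfying \( \bar q \circ d_1 = \bar q \circ d_0 \) and \( q_j \circ \delta_{1,j,k} = q_k \circ \delta_{0,j,k} \) for all \( j,k \in J \). Because \( d_1 \) and \( d_0 \) delete the two coordinates of \( J^2 \), the first equation forces \( \bar q \) to be constant, say with value \( \ell_0 \); the remaining data \( (q_j \colon X_j \to Z_{\ell_0})_{j \in J} \) with \( q_j \delta_{1,j,k} = q_k \delta_{0,j,k} \) is exactly a cocone under \( K_\phi \colon \cat D_J \to \cat V \) with vertex \( Z_{\ell_0} \) (the legs at the objects \( (j,k) \) being forced, with their compatibility equations collapsing to the displayed one). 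Under this correspondence the tautological cocone \( \phi \) goes to the cocone \( (\phi_j)_{j \in J} \) with vertex \( Y \), and, reading off the universal properties from this bijection of cocone categories in both directions, \( \phi \) is the coequalizer of its kernel pair in \( \FamV \) precisely when \( Y \) with \( (\phi_j)_{j \in J} \) is initial among cocones under \( K_\phi \) in \( \cat V \); that is, precisely when \( \colim K_\phi \) exists and the canonical comparison \( \colim K_\phi \to Y \) is an isomorphism. This is the first assertion.

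For the pullback-stable version, observe that a pullback of the cover \( \phi \) along an arbitrary morphism \( g = (g_m)_{m \in M} \colon (W_m)_{m \in M} \to Y \) decomposes, fibrewise over \( M \), as the coproduct \( \sum_{m \in M} \phi'_m \) of the covers \( \phi'_m \colon (X_j \times_Y W_m)_{j \in J} \to W_m \) obtained by pulling \( \phi \) back along \( g_m \colon W_m \to Y \); and in an extensive category, which \( \FamV \) is by Proposition \ref{prop:famv.cat.suits}, a coproduct of regular epimorphisms is again a regular epimorphism. Hence \( \phi \) is a pullback-stable regular epimorphism if and only if \( \psi^*\phi \) is a regular epimorphism for every morphism \( \psi \colon W \to Y \) with \( W \) an object of \( \cat V \). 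Since pullbacks compose, the squares \eqref{eq:fam.ker} defining the kernel pair of \( \psi^*\phi \) are obtained from those of \( \phi \) by base change along \( \psi \), so \( K_{\psi^*\phi} \iso W \times_Y K_\phi \); by the first part, \( \psi^*\phi \) is a regular epimorphism if and only if \( \colim(W \times_Y K_\phi) \iso W \). Therefore \( \phi \) is a pullback-stable regular epimorphism if and only if \( \colim K_\phi \iso Y \) and this colimit is preserved by every pullback functor \( W \times_Y (-) \) along a morphism \( W \to Y \) of \( \cat V \), which is the asserted stability.

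The step I expect to be the main obstacle is the bookkeeping in the cocone identification of the second paragraph — in particular, checking carefully that forcing the index component to be constant really pins the coequalizer's vertex down to a connected object, so that the universal property in \( \FamV \) and the universal property in \( \cat V \) match on the nose — together with the routine but somewhat fiddly verifications that \( K_{\psi^*\phi} \iso W \times_Y K_\phi \) and that coproducts of regular epimorphisms are regular epimorphisms in the extensive category \( \FamV \), which are what legitimise the reduction to pullbacks along connected codomains.
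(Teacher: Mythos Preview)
Your proposal is correct and follows essentially the same approach as the paper: both establish the bijection between cocones under the kernel pair in \( \FamV \) and pairs consisting of an index together with a cocone under \( K_\phi \) in \( \cat V \), using that the index function \( J \to L \) is forced to be constant; and both reduce pullback-stability to pullbacks along connected codomains by decomposing a general pullback as a coproduct over the indexing set. The paper phrases the cocone correspondence as the isomorphism \( [\rightrightarrows, \FamV](\ker \phi, \Delta_{(Z_k)_{k \in K}}) \iso \sum_{k \in K} [\cat D_J, \cat V](K_\phi, \Delta_{Z_k}) \), but the content is identical to your explicit unpacking.
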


\begin{proof}
  We shall assume \(J\) is non-empty throughout. We have a natural
  isomorphism
  \begin{equation*}
    [\rightrightarrows, \FamV](\ker \phi, \Delta_{(Z_k)_{k \in K}})
    \iso \sum_{k \in K} [\cat D_J, \cat V](K_\phi, \Delta_{Z_k}),
  \end{equation*}
  and to verify this, note that the composite of \( \ker \phi \) with \( \FamV
  \to \Set \) is given by
  \begin{equation*}
    \begin{tikzcd}
      J \times J \ar[r,shift left,"d_1"]
                 \ar[r,shift right,"d_0",swap]
      & J,
    \end{tikzcd}
  \end{equation*}
  whose coequalizer is terminal. Since the data for a natural transformation
  \( \ker \phi \to \Delta_{(Z_k)_{k \in K}} \) consists of a morphism \(
  (f,\psi) \colon (X_j)_{j \in J} \to (Z_k)_{k \in K} \) such that 
  \begin{equation*}
    (f,\chi) \circ (d_1,\delta_1) = (f,\psi) \circ (d_0,\delta_0),
  \end{equation*}
  there exists a unique \(k \in K\) such that \(fj = k \) for all \( j \in J
  \), and \( \chi_j \circ \delta_{1,i,j} = \chi_i \circ \delta_{0,i,j} \)
  for all \( i,j \in J \). This data corresponds to a unique natural
  transformation \( K_\phi \to \Delta_{Z_k} \).

  So, if \( K_\phi \) has a colimit, then
  \begin{align*}
    \FamV(\colim K_\phi,(Z_k)_{k \in K}) 
      &\iso \sum_{k \in K} \cat V(\colim K_\phi,Z_k) \\
      &\iso \sum_{k \in K} [\cat D_J, \cat V](K_\phi,\Delta_{Z_k}) \\
      &\iso [\rightrightarrows, \FamV](\ker \phi, \Delta_{(Z_k)_{k \in K}}),
  \end{align*}
  hence \( \ker \phi \) has a colimit and \( \colim \ker \phi \iso \colim
  K_\phi \). Conversely, if \( \ker \phi \) has a colimit, then its underlying
  set is the coequalizer of
  \begin{equation*}
    \begin{tikzcd}
      J \times J \ar[r,shift left,"d_1"]
                 \ar[r,shift right,"d_0",swap]
      & J,
    \end{tikzcd}
  \end{equation*}
  which is the terminal object, hence \( \ker \phi \) is connected and can be
  identified with an object of \( \cat V \). Thus,
  \begin{align*}
    \cat V(\colim(\ker \phi), Z)
      &\iso \FamV(\colim(\ker \phi), Z) \\
      &\iso [\rightrightarrows, \FamV](\ker \phi, \Delta_Z) \\
      &\iso [\cat D_J, \cat V](K_\phi, \Delta_Z),
  \end{align*}
  so we conclude that \( \colim \ker \phi \iso \colim K_\phi \).

  If the colimit of \( K_\phi \) is stable under pullback, for each \(j \in J
  \) we write
  \begin{equation*}
    \begin{tikzcd}
      V_j \ar[d,"\phi_j^*(\omega)",swap] \ar[r,"\psi_j"] 
          \ar[rd,"\ulcorner",very near start, phantom]
      & Z \ar[d,"\omega"] \\
      X_j \ar[r,"\phi_j",swap]
      & Y
    \end{tikzcd}
  \end{equation*}
  for the pullback of \( \phi_j \) and a morphism \( \omega \colon Z \to Y \)
  for each \(j \in J\), so that \( Z \iso \colim K_\psi \), hence \( Z \iso
  \colim \ker \psi \). So, if we have a morphism \( \xi \colon (W_l)_{l \in L}
  \to Y \), we can do this procedure for \( \xi_l \) for each \( l \in L \),
  and then take the coproduct of the results, confirming the pullback
  stability of the colimit of \( \ker \phi \).

  Conversely, if the colimit of \( \ker \phi \) is stable under pullback, for
  any morphism \( \omega \colon Z \to Y \), we may consider the pullback
 \begin{equation*}
    \begin{tikzcd}
      (V_j) \ar[d,"\phi_j^*(\omega)",swap] \ar[r,"\psi"] 
          \ar[rd,"\ulcorner",very near start, phantom]
      & Z \ar[d,"\omega"] \\
      X_j \ar[r,"\phi_j",swap]
      & Y
    \end{tikzcd}
  \end{equation*}
  and since \( Y \iso \colim \ker \psi \iso \colim K_\psi \), we immediately
  conclude that the colimit of \( K_\phi \) is stable under pullback.
\end{proof}

To understand effective descent morphisms in \( \FamV \), we recall that
lax descent data for a cover \( \phi \colon (X_j)_{j \in J} \to Y \) consists
of
\begin{itemize}[label=--,noitemsep]
  \item
    a morphism \( (p,\pi) \colon (W_k)_{k \in K} \to (X_j)_{j \in J} \) in
    \( \FamV \),
  \item
    and a morphism \( (\gamma,\Gamma) \colon D_1^*(p,\pi) \to D_0^*(p, \pi) \)
    in \( \FamV \comma (\phi_i \times_Y \phi_j)_{i,j \in J \times J} \)
\end{itemize}
satisfying reflexivity \eqref{eq:refl.desc} and transitivity
\eqref{eq:trans.desc} conditions, namely,
\begin{itemize}[label=--,noitemsep]
  \item
    \( \nu_0 = S_0^*(\gamma, \Gamma) \circ \nu_1 \),
  \item
    \( \theta_{01} \circ D_1^*(\gamma,\Gamma) \circ \theta_{12}
        = D_0^*(\gamma,\Gamma) \circ \theta_{02} 
                               \circ D_2^*(\gamma,\Gamma) \),
\end{itemize}

Since \( \FamV \to \Set \) preserves pullbacks, we recover descent data \(
(p,\pi) \) for the unique morphism \( J \to 1 \), implying that \( K \iso I
\times J \) for some set \(I\), and, under this isomorphism, we have \(p \iso
d_0 \colon I \times J \to J \). Moreover, we note that the underlying \( \Set
\)-pullbacks of \( D_i \colon (\phi_j \times_Y \phi_k)_{j,k \in J\times J} \to
(X_j)_{j \in J} \) for \( i = 0, 1 \) are given by
\begin{equation*}
  \begin{tikzcd}
    K \times J \ar[d,"p \times \id",swap] \ar[r,"d_1"]
      \ar[rd,"\ulcorner",very near start,phantom]
    & K \ar[d,"p"] \\ 
    J \times J \ar[r,"d_1",swap]
    & J
  \end{tikzcd}
  \qquad
  \begin{tikzcd}
    J \times K \ar[d,"\id \times p",swap] \ar[r,"d_0"]
      \ar[rd,"\ulcorner",very near start,phantom]
    & K \ar[d,"p"] \\ 
    J \times J \ar[r,"d_0",swap]
    & J
  \end{tikzcd}
\end{equation*}
and since \( (\id \times p) \circ \gamma = p \times \id \), we deduce that \(
\gamma \) is isomorphic to the function
\begin{align*}
  I \times J \times J &\to J \times I \times J \\
  (i,j,k) &\mapsto (j,i,k),
\end{align*}
and the reflexivity and transitivity conditions are given in components by 
\begin{equation*}
  \label{eq:comp.refl}
  \begin{tikzcd}
    & W_{ij} \ar[ld,"\nu_{1,j}",swap] 
             \ar[rd,"\nu_{0,j}"] \\
    \sigma^*_{0,j}\delta_{1,j,j}^*(\pi_{i,j})
      \ar[rr,"\sigma^*_{0,j}(\Gamma_{i,j,j})",swap]
    && \sigma^*_{0,j}(\delta_{0,j,j}^*(\pi_{i,j})
  \end{tikzcd}
\end{equation*}
\begin{equation}
  \label{eq:comp.trans}
  \begin{tikzcd}[column sep=small]
    & \delta_{1,j,k,l}^*(\delta_{1,j,l}^*(\pi_{i,j}))
      \ar[rr,"\delta_{1,j,k,l}^*(\Gamma_{i,j,l})"]
    && \delta_{1,j,k,l}^*(\delta_{0,j,l}^*(\pi_{i,l}))
      \ar[rd,"\theta_{01,j,k,l}(\pi_{i,l})"] \\
    \delta_{2,j,k,l}^*(\delta_{1,j,l}^*(\pi_{i,j}))
      \ar[ur,"\theta_{12,j,k,l}(\pi_{i,j})"]
      \ar[dr,"\delta_{2,j,k,l}^*(\Gamma_{i,j,k})",swap]
    &&&& \delta_{0,j,k,l}^*(\delta_{0,j,l}^*(\pi_{i,l})) \\
    & \delta_{2,j,k,l}^*(\delta_{0,j,l}^*(\pi_{i,k})) 
      \ar[rr,"\theta_{02,j,k,l}(\pi_{i,k})",swap]
    && \delta_{0,j,k,l}^*(\delta_{1,j,l}^*(\pi_{i,k})) 
      \ar[ru,"\delta^*_{0,j,k,l}(\Gamma_{i,k,l})",swap]
  \end{tikzcd}
\end{equation}
for each \( i \in I \), \( j,k,l \in J \). This observation allows us to prove
the following result:

\begin{lemma}[{\cite[Lemma 4.2]{Pre23}}]
  \label{lem:desc.fam.conn.desc}
  Let \( \phi \colon (X_j)_{j \in J} \to Y \) be a cover in \( \FamV \). We
  have an equivalence 
  \begin{equation}
    \label{eq:desc.fam.conn.desc}
    \Desc(\phi) \eqv \Fam(\Desc_\conn(\phi)),
  \end{equation}
  where \( \Desc_\conn(\phi) \) is the full subcategory of \( \Desc(\phi) \)
  consisting of the connected objects.
\end{lemma}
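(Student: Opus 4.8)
The plan is to deduce the equivalence \eqref{eq:desc.fam.conn.desc} from Proposition \ref{thm:a.fam.c.when}, applied with $\cat A = \Desc(\phi)$ and $\cat C \subseteq \Desc(\phi)$ the full subcategory of those lax descent data whose underlying morphism $(p,\pi) \colon (W_k)_{k\in K} \to (X_j)_{j\in J}$ has $p \colon K \to J$ a bijection. Granting the two hypotheses of that proposition, its conclusion also identifies $\cat C$ with the full subcategory of connected objects of $\Desc(\phi)$, so that $\cat C = \Desc_\conn(\phi)$ and $\Fam(\cat C) \eqv \Desc(\phi)$ is exactly \eqref{eq:desc.fam.conn.desc}. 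Before checking the hypotheses I would first record that $\Desc(\phi)$ has coproducts, computed on underlying data as in the slice $\FamV \comma (X_j)_{j\in J}$ and with descent isomorphism the coproduct of those of the summands. This is legitimate because $\FamV$ is extensive (Proposition \ref{prop:famv.cat.suits}\ref{enum:fam.ext}), so the pullback functors $D_0^*$, $D_1^*$, $S_0^*$ and the projections from the triple pullbacks all preserve coproducts; since the reflexivity \eqref{eq:refl.desc} and transitivity \eqref{eq:trans.desc} conditions are equations between morphisms out of such coproducts, they hold for a coproduct of data precisely when they hold for each summand. In particular the forgetful functor $\Desc(\phi) \to \FamV \comma (X_j)_{j\in J}$ preserves coproducts.

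For hypothesis \ref{enum:ess.surj} of Proposition \ref{thm:a.fam.c.when} I would invoke the analysis preceding the lemma: since $\FamV \to \Set$ preserves pullbacks, the underlying family of any lax descent datum has $K \iso I \times J$ for some set $I$ with $p \iso d_0$, the component $\gamma$ is forced to be the swap $(i,j,k) \mapsto (j,i,k)$, and the reflexivity/transitivity conditions take the componentwise form \eqref{eq:comp.refl}, \eqref{eq:comp.trans}, indexed separately by $i \in I$. Restricting $\pi$, $\Gamma$ and these conditions to $\{i\}\times J$ for each $i \in I$ therefore yields a datum $c_i \in \cat C$, and by the coproduct description above the original datum is isomorphic to $\sum_{i\in I} c_i$.

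Hypothesis \ref{enum:ff} is the step I expect to require the most care. The key point is that sending a datum to its indexing set $I$ is functorial, defining a functor $\Desc(\phi) \to \Set$: on underlying data a morphism of descent data is a map $I \times J \to I' \times J$ over $J$, hence of the form $(i,j) \mapsto (h_j(i),j)$, and compatibility with the two swap isomorphisms forces $h_j$ to be independent of $j$, so the map is $g \times \id_J$ for a unique $g \colon I \to I'$. Granting this, if $c \in \cat C$ has singleton indexing set and $d_k \in \cat C$ for $k \in K'$, then any morphism $\omega \colon c \to \sum_{k\in K'} d_k$ induces $g \colon 1 \to \sum_{k\in K'} 1 = K'$ picking out a unique $k_0$; since coproduct injections in $\FamV$, and hence in the slice $\FamV \comma (X_j)_{j\in J}$, are monic by extensivity, $\omega$ factors uniquely through $\iota_{k_0}$ on underlying data, and the resulting factor is again a morphism of descent data because the relevant square commutes after postcomposition with $D_0^*(\iota_{k_0})$, which is a monomorphism as $D_0^*$ preserves coproducts. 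Uniqueness of both $k_0$ and the factor then follow from functoriality of $I$ and from $\iota_{k_0}$ being monic in $\Desc(\phi)$.

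With both hypotheses verified, Proposition \ref{thm:a.fam.c.when} gives $\Fam(\cat C) \eqv \Desc(\phi)$ and identifies $\cat C$ with the full subcategory of connected objects, i.e.\ $\cat C = \Desc_\conn(\phi)$, which is \eqref{eq:desc.fam.conn.desc}. The only genuine obstacles are the bookkeeping identifying the global conditions \eqref{eq:refl.desc}--\eqref{eq:trans.desc} with their componentwise forms \eqref{eq:comp.refl}--\eqref{eq:comp.trans}, so that restriction to and coproduct over the index $i$ behave as claimed, and the short computation with the swap isomorphism establishing functoriality of $I$; everything else reduces to extensivity of $\FamV$ and to Proposition \ref{thm:a.fam.c.when}.
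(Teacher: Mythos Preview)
Your proposal is correct and follows essentially the same approach as the paper: both apply Proposition~\ref{thm:a.fam.c.when} with $\cat C$ the descent data having $p$ bijective, verify hypothesis~\ref{enum:ess.surj} by restricting the componentwise conditions \eqref{eq:comp.refl}--\eqref{eq:comp.trans} to each $i \in I$, and verify hypothesis~\ref{enum:ff} via the computation that compatibility with the swap isomorphism $\gamma$ forces the function $h$ (your $h_j$) to be constant. Your presentation is slightly more explicit in two places the paper leaves implicit---the existence of coproducts in $\Desc(\phi)$ via extensivity, and the argument that the factored map is again a morphism of descent data using that $D_0^*(\iota_{k_0})$ is monic---but these are elaborations rather than differences in strategy.
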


\begin{proof}
  If \( (p,\pi) \colon (W_{i,j})_{i,j \in I \times J} \to (X_j)_{j \in J} \)
  and \( (\gamma,\Gamma) \) is a lax descent datum for \( \phi \) as given
  above, then for each \(i \in I\), we define \( W_{i,-} = (W_{i,j})_{j \in J}
  \), as well as a morphism \( (\iota_i, \id) \colon W_{i,-} \to (W_{i,j})_{i,j
  \in I \times J} \), where \( \iota_i(j) = (i,j) \).

  We note that the composites \( (p, \pi) \circ (\iota_i, \id) = (\id,
  \pi_{i,-}) \), and \( (\id,\Gamma_{i,-.-}) \colon D_1^*(\id, \pi_{i,-}) \to
  D_0^*(\id, \pi_{i,-}) \) constitute a lax descent datum for \( \phi \), for
  each \( i \in I \). Indeed, Diagrams \eqref{eq:comp.refl} and
  \eqref{eq:comp.trans} commute for each fixed \( i \in I \), confirming
  reflexivity and transitivity for each component.

  Thus, the lax descent datum \( (p,\pi) \), \( (\gamma , \Gamma) \) is the
  coproduct of the lax descent data \( (\id,\pi_{i,-}) \), \( (\id,
  \Gamma_{i,-}) \) for each \( i \in I \).

  Now, we let
  \begin{equation*}
    (\id,\xi) \colon (V_j)_{j \in J} \to (X_j)_{j \in J},
    \qquad (\id,\Xi) \colon (\id, \delta^*_1 \circ \xi) 
                     \to (\id, \delta^*_0 \circ \xi) 
  \end{equation*}
  be a connected lax descent datum, and let \( (g,\chi) \colon (V_j)_{j \in J}
  \to (W_{i,j})_{i,j \in I\times J} \) be a morphism such that \( (d_0,\pi)
  \circ (g,\chi) = (\id, \xi) \) and the following diagram
  \begin{equation*}
    \begin{tikzcd}
      D^*_1(\id, \xi) \ar[d,"{D^*_1(g,\chi)}",swap]
                      \ar[r,"{(\id,\Xi)}"]
        & D^*_0(\id, \xi) \ar[d,"{D^*_0(g,\chi)}"] \\
      D^*_1(d_0,\pi) \ar[r,"{(\gamma,\Gamma)}",swap]
        & D^*_0(d_0,\pi)
    \end{tikzcd}
  \end{equation*}
  commutes. Since \( p \circ g = \id \), we conclude that \( g(j) = (h(j),j)
  \) for a function \( h \colon J \to I \), and since \( \gamma(h(j),j,k) =
  (j,h(k),k) \) for all \(j,k \in J \), we conclude that \( h \) is constant;
  let \(i \in I\) be its value, so that \( g = \iota_i \). We obtain a
  factorization 
  \begin{equation*}
    \begin{tikzcd}
      (V_j)_{j \in J} \ar[r,"{(\id,\chi_{i,-})}"]
        & (W_{i,j})_{j \in J} \ar[r,"{(\iota_i,\id)}"]
        & (W_{i,j})_{i,j \in I \times J},
    \end{tikzcd}
  \end{equation*}
  so, we may apply Proposition \ref{thm:a.fam.c.when} to conclude our proof.
\end{proof}

In fact, by noticing that the underlying objects, morphisms and properties for
each connected descent datum lie in a slice category of a fiber of \( \FamV
\to \Set \), we deduce that:

\begin{corollary}
  The category \( \Desc_\conn(\phi) \) is the lax descent category of the
  following diagram:
  \begin{equation}
    \label{eq:fam.free.cosimp}
    \begin{tikzcd}
      \cat V^J \comma (X_j)_{j \in J}
        \ar[r,shift left=2.5mm,"D_1^*"]
        \ar[r,shift right=2.5mm,"D_0^*",swap]
      & \cat V^{J \times J} \comma (\phi_i \times \phi_j)_{i,j \in J \times J}
        \ar[r,shift left=2.5mm,"D_2^*"]
        \ar[l,"S_0^*" description]
        \ar[r,"D_1^*" description]
        \ar[r,shift right=2.5mm,"D_0^*",swap]
      & \cat V^{J \times J \times J}
          \comma (\phi_i \times_Y \phi_j \times_Y \phi_k)_{i,j,k \in J \times J
          \times J}.
    \end{tikzcd}
  \end{equation}
\end{corollary}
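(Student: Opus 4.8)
The plan is to recognise the diagram \eqref{eq:fam.free.cosimp} as a levelwise-full restriction of the pseudofunctor $\Delta_3 \to \CAT$ whose lax descent category is $\Desc(\phi)$, and then to match its lax descent data against the connected descent data already isolated in the proof of Lemma \ref{lem:desc.fam.conn.desc}. Recall that $\Desc(\phi)$ is the lax descent category of the composite of the basic bifibration $\FamV \comma -$ with $\Ker(\phi)^\op$, so its three values are the comma categories $\FamV \comma (X_j)_{j\in J}$, $\FamV \comma (\phi_i \times_Y \phi_j)_{i,j \in J \times J}$ and $\FamV \comma (\phi_i \times_Y \phi_j \times_Y \phi_k)_{i,j,k}$, with the face and degeneracy functors given by pullback along the legs of the internal equivalence relation $\Ker(\phi)$. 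The proof of Lemma \ref{lem:desc.fam.conn.desc}, through its application of Proposition \ref{thm:a.fam.c.when}, has already identified $\Desc_\conn(\phi)$ with the lax descent data whose underlying bundle $(p,\pi)$ lies over an identity function, i.e. the pairs $((\id,\xi),(\id,\Xi))$ with $\xi \colon (V_j)_{j\in J} \to (X_j)_{j\in J}$ a family over $\id_J$; so it is enough to show that such data are precisely the lax descent data of \eqref{eq:fam.free.cosimp}.

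Next I would set up the fiber-slice restriction. For a set $S$, writing $\cat V^S$ for the fiber of $P \colon \FamV \to \Set$ over $S$, the slice $\cat V^S \comma (Z_s)_{s\in S}$ embeds as a \emph{full} subcategory of $\FamV \comma (Z_s)_{s\in S}$ — namely the bundles whose structure morphism lies over $\id_S$ — because a $\FamV$-morphism compatible with two such structure morphisms is forced to lie over $\id_S$. Since the legs of $\Ker(\phi)$ have underlying functions the (co)simplicial face and degeneracy maps relating $J$, $J\times J$ and $J\times J\times J$, and since pullback in $\FamV$ along a morphism lying over a set map is computed fiberwise — hence sends a bundle over an identity to a bundle over an identity — the face and degeneracy functors of the pseudofunctor above restrict to functors between the fiber-slices, and these restrictions are exactly $D_0^*, D_1^*, D_2^*$ and $S_0^*$ of \eqref{eq:fam.free.cosimp}. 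The coherence isomorphisms $\upsilon_\bullet$ and $\theta_{\bullet\bullet}$ are assembled from the pseudofunctoriality constraints of $\FamV \comma -$, which restrict likewise, so \eqref{eq:fam.free.cosimp} is genuinely the restriction of the pseudofunctor defining $\Desc(\phi)$.

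Finally I would invoke that the lax descent category construction depends only on the underlying pseudofunctor $\Delta_3 \to \CAT$: the lax descent category of \eqref{eq:fam.free.cosimp} is then canonically the full subcategory of $\Desc(\phi)$ on those descent data $(x,\varphi)$ with $x$ a bundle over $\id_J$ and $\varphi$ a morphism over $\id_{J\times J}$, with literally the same reflexivity \eqref{eq:refl.desc} and transitivity \eqref{eq:trans.desc} equations, and by the first step this full subcategory is exactly $\Desc_\conn(\phi)$. The main obstacle is the index bookkeeping in the second step: one must verify that each leg of $\Ker(\phi)$ really does have as underlying function one of the simplicial face or degeneracy maps between the index sets $J^{\bullet}$, so that its pullback functor preserves ``being over an identity'' and lands in the next fiber-slice; once this is confirmed, the rest is the formal observation that a full, structure-compatible restriction of a pseudofunctor $\Delta_3 \to \CAT$ induces a full embedding of the associated lax descent categories.
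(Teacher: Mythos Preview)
Your proposal is correct and follows essentially the same approach as the paper: the paper offers no formal proof of this corollary beyond the one-line remark preceding it, namely that ``the underlying objects, morphisms and properties for each connected descent datum lie in a slice category of a fiber of $\FamV \to \Set$'', which is precisely the content of your fiber-slice restriction argument. Your writeup simply spells out in detail the index bookkeeping and the full-subcategory-of-descent-data observation that the paper leaves implicit.
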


\begin{remark}
  \label{rem:vk.colims}
  This corollary provides us with another point of view; let \( \cat D^+_J \)
  be the category whose set of objects is given by \( J + J^2 + J^3 \),
  containing \( \cat D_J \) as a subcategory, such that for each triple \(
  i,j,k \in J \), we have three distinct morphisms \( (i,j,k) \to (j,k) \), \(
  (i,j,k) \to (i,k) \) and \( (i,j,k) \to (i,j) \), such that the following
  diagrams commute:
  \begin{equation*}
    \begin{tikzcd}
      (i,j,k) \ar[r] \ar[d] & (i,j) \ar[d] \\
      (i,k) \ar[r] & i
    \end{tikzcd}
    \quad
    \begin{tikzcd}
      (i,j,k) \ar[r] \ar[d] & (j,k) \ar[d] \\
      (i,j) \ar[r] & j
    \end{tikzcd}
    \quad
    \begin{tikzcd}
      (i,j,k) \ar[r] \ar[d] & (i,k) \ar[d] \\
      (j,k) \ar[r] & k
    \end{tikzcd}
  \end{equation*}
  and for each \( j \in J \), a morphism \( j \to (j,j) \) such that both of
  the composites below are the identity:
  \begin{equation*}
    \begin{tikzcd}
      j \ar[r] & (j,j) \ar[r,shift left] \ar[r,shift right]
               & j
    \end{tikzcd}
  \end{equation*}

  We define a diagram \( K^+_\phi \colon \cat D^+_J \to \cat V \), extending
  \( K_\phi \), where \( (i,j,k) \mapsto \phi_i \times_Y \phi_j \times_Y
  \phi_k \), and the morphisms from objects in \( J^3 \) to \( J^2 \) are
  mapped to the respective projections, while the morphisms from \( J \) to \(
  J^2 \) are mapped to the respective diagonals \( \sigma_{0,i} \colon X_i \to
  \phi_i \times_Y \phi_i \). It can be shown that the 2-limit of the
  composite
  \begin{equation*}
    \begin{tikzcd}
      (\cat D^+_J)^\op \ar[r,"(K^+_\phi)^\op"]
        & \cat V^\op \ar[r,"\cat V \comma -"]
        & \Cat
    \end{tikzcd}
  \end{equation*}
  is equivalent to \( \Desc_\conn(\phi) \); by taking products of categories,
  we recover Diagram \eqref{eq:fam.free.cosimp}.
\end{remark}

\begin{theorem}[{\cite[Theorem 4.3]{Pre23}}]
  \label{thm:fam.eff.desc}
  Let \( \phi \colon (X_j)_{j \in J} \to Y \) be a cover in \( \FamV \).
  The following are equivalent:
  \begin{enumerate}[label=(\roman*),noitemsep]
    \item
      \label{enum:fam.eff.desc}
      \( \phi \) is an effective descent morphism.
    \item
      \label{enum:slice.conn.desc}
      We have an equivalence \( \cat V \comma Y \eqv \Desc_\conn(\phi) \).
  \end{enumerate}
\end{theorem}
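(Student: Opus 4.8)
The plan is to identify the descent-factorization comparison functor $\cat K^\phi\colon\FamV\comma Y\to\Desc(\phi)$ with the free-coproduct completion of its restriction to connected bundles, and then to invoke the fact that $\Fam(-)$ both preserves and reflects equivalences. So the proof splits into: (1) two identifications of the form ``$\Fam$ of the connected part'', one for the domain and one for the codomain of $\cat K^\phi$; (2) a compatibility check showing $\cat K^\phi\iso\Fam(\cat K^\phi_\conn)$; (3) a short formal argument about $\Fam$.

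For (1): since $Y$ is a single object of $\cat V$, the full subcategory $\cat V\comma Y\hookrightarrow\FamV\comma Y$ satisfies the hypotheses of Proposition \ref{thm:a.fam.c.when} — every bundle $((Z_k)_{k\in K},g)$ over $Y$ is the coproduct $\sum_{k\in K}(Z_k,g_k)$ of $\cat V$-bundles, and any morphism from a $\cat V$-bundle into such a coproduct factors uniquely through a coprojection — so $\FamV\comma Y\eqv\Fam(\cat V\comma Y)$, the connected objects being precisely those of $\cat V\comma Y$. For the codomain I would just quote Lemma \ref{lem:desc.fam.conn.desc}, giving $\Desc(\phi)\eqv\Fam(\Desc_\conn(\phi))$.

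For (2), two observations suffice. First, $\cat K^\phi$ carries connected objects to connected objects: for $f\colon W\to Y$ with $W$ in $\cat V$, the pullback $\phi^*f$, computed as in Proposition \ref{prop:famv.cat.suits}, is the $J$-indexed family $(W\times_Y X_j)_{j\in J}$ over $(X_j)_{j\in J}$ with underlying index function $\id_J$, which is exactly the shape ($I=1$) of a connected lax descent datum; this defines $\cat K^\phi_\conn\colon\cat V\comma Y\to\Desc_\conn(\phi)$ as a restriction of $\cat K^\phi$. Second, $\cat K^\phi$ preserves coproducts: $\FamV$ is extensive by Proposition \ref{prop:famv.cat.suits}\ref{enum:fam.ext}, so pullback functors in $\FamV$ preserve coproducts, and as the transition morphism of the lax descent datum $\cat K^\phi f$ is assembled from the pullback-coherence isomorphisms, one gets $\cat K^\phi\big(\sum_k f_k\big)\iso\sum_k\cat K^\phi f_k$ naturally in the $f_k$. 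Together these say that, under the two equivalences of step (1), $\cat K^\phi\iso\Fam(\cat K^\phi_\conn)$. Finally, $\Fam(-)\colon\Cat\to\Cat$ is a $2$-functor, hence preserves equivalences, and it reflects them as well: if $\Fam(G)$ is an equivalence then $G$ is faithful, full and essentially surjective, each verified by restricting to one-element families and using that the connected objects of a category of families are exactly the singletons. Consequently $\cat K^\phi$ is an equivalence if and only if $\cat K^\phi_\conn$ is, i.e.\ if and only if $\cat V\comma Y\eqv\Desc_\conn(\phi)$; this is the equivalence \ref{enum:fam.eff.desc} $\Leftrightarrow$ \ref{enum:slice.conn.desc}.

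The delicate point is the compatibility claim in step (2): that the abstractly defined $\cat K^\phi$ — obtained from the universal property \eqref{eq:pre.desc.fact} of the lax descent category, as in the construction of \eqref{eq:desc.fact} — really is the family-completion of its restriction to connected bundles. Conceptually this is nothing more than ``extensivity means pullbacks commute with coproducts,'' but making it precise requires matching the universal-property description of $\cat K^\phi$ against the component-wise formulas for connected lax descent data recorded just before Lemma \ref{lem:desc.fam.conn.desc} (the componentwise reflexivity and transitivity conditions, e.g.\ \eqref{eq:comp.trans}), and that matching is where the bookkeeping concentrates. Everything else is formal.
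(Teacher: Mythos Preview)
Your proposal is correct and follows essentially the same route as the paper's proof: identify $\FamV\comma Y\eqv\Fam(\cat V\comma Y)$ via Proposition~\ref{thm:a.fam.c.when}, invoke Lemma~\ref{lem:desc.fam.conn.desc} for the codomain, observe that $\mathcal K^\phi$ restricts to connected objects so that $\mathcal K^\phi\eqv\Fam(\mathcal K^\phi_\conn)$, and conclude by the fact that $\Fam$ preserves and reflects equivalences. The only cosmetic difference is that the paper cites the 2-cartesianness of the unit $\eta\colon\cat V\to\FamV$ (Proposition~\ref{prop:unit.cart}) for the reflection of equivalences, whereas you give the direct singleton-family argument; both are fine.
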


\begin{proof}
  First, we observe that \( \Fam(\cat V \comma Y) \eqv \FamV \comma Y \),
  since for any morphism \( \phi \colon (W_j)_{i \in J} \to Y \), we have \(
  \phi = \prod_{j \in J} \phi_j \) as objects in \( \FamV \comma Y \), and 
  if we have a commutative triangle
  \begin{equation*}
    \begin{tikzcd}
      W \ar[rr,"{(f,\omega)}"]
        \ar[rd,"\psi",swap]
      && (X_j)_{j \in J} \ar[ld,"\phi"] \\
      & Y
    \end{tikzcd}
  \end{equation*}
  then there exists a unique \(j \in J\) (given by \(f\)) factoring \(
  (f,\omega) = (\iota_j,\id) \circ \omega \) uniquely, so we may apply
  Theorem~\ref{thm:a.fam.c.when}.

  Since the comparison functor \( \mathcal K^{\Ker(\phi)} \colon \FamV \comma
  Y \to \Desc(\phi) \) preserves connected objects, we obtain an equivalence
  \begin{equation}
    \label{eq:k.fam.k.conn}
    \mathcal K^{\Ker(\phi)} \eqv \Fam(\mathcal K_\conn^{\Ker(\phi)}),
  \end{equation}
  where \( \mathcal K_\conn^{\Ker(\phi)} \colon \cat V \comma Y \to
  \Desc_\conn(\phi) \) is the restriction of \( \mathcal K^{\Ker(\phi)} \) to
  the connected objects.

  We have \ref{enum:fam.eff.desc} \( \implies \) \ref{enum:slice.conn.desc},
  since we have \eqref{eq:k.fam.k.conn}, and \( \Fam \) reflects equivalences,
  as the unit is 2-cartesian. Conversely, \ref{enum:slice.conn.desc} \(
  \implies \) \ref{enum:fam.eff.desc} follows immediately by
  \eqref{eq:k.fam.k.conn}.
\end{proof}

\section{Examples}
\label{sect:famdesc.ex}

The study of descent morphisms in \( \FamV \) for certain categories \( \cat V
\) with finite limits inspired us to highlight the following specialization of
Lemma \ref{lem:famv.reg.epi}:

\begin{lemma}
  \label{lem:colim.kphi.join}
  Let \( \phi \colon (X_j)_{j \in J} \to Y \) be a cover in \( \FamV \) such
  that for all \(j \in J\), \( \phi_j \) is a monomorphism; that is, \(
  (X_j)_{j \in J} \) is a family of subobjects of \(Y\).  If the kernel pair
  of \( \phi \) has a coequalizer, then \( \colim K_\phi \iso \Join_{j \in J}
  X_j \) as a subobject of \(Y\), where \( K_\phi \) is given as in Lemma
  \ref{lem:famv.reg.epi}.
\end{lemma}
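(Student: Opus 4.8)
\emph{The plan} is to exhibit $\colim K_\phi$ explicitly as a subobject of $Y$ and then verify it has the universal property of the join.

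First I would invoke the proof of Lemma~\ref{lem:famv.reg.epi}: the hypothesis is precisely that the kernel pair $(\phi_j \times_Y \phi_k)_{j,k} \rightrightarrows (X_j)_{j \in J}$ of $\phi$ has a coequalizer, and that proof identifies this coequalizer with $\colim K_\phi$, which therefore exists in $\cat V$ and is a connected object; write $Q$ for it, with colimit injections $\lambda_j \colon X_j \to Q$ assembling into the coequalizer $q \colon (X_j)_{j \in J} \to Q$ in $\FamV$. Since the $\phi_j$ agree on the pullbacks~\eqref{eq:fam.ker}, they form a cocone under $K_\phi$ with vertex $Y$, which induces the canonical comparison $c \colon Q \to Y$ with $c \lambda_j = \phi_j$ for every $j$. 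I claim $(Q,c) \iso \Join_{j} X_j$ as subobjects of $Y$.

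Granting that $c$ is a monomorphism, the universal property is routine. The pair $(Q,c)$ is an upper bound for the subobjects $(X_j,\phi_j)$ since $\phi_j = c\lambda_j$; and it is the least one, because if $n \colon N \hookrightarrow Y$ is a mono through which every $\phi_j$ factors, then cancelling $n$ turns those factorizations into a cocone under $K_\phi$ with vertex $N$, yielding $t \colon Q \to N$ with $t\lambda_j$ the $j$-th factor, and then $nt\lambda_j = \phi_j = c\lambda_j$ forces $nt = c$ by the colimit's universal property, so $(Q,c)$ is contained in $N$.

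The main obstacle is to prove that $c$ is a monomorphism, which I would do by showing that the projections $\rho_1, \rho_2 \colon Q \times_Y Q \to Q$ of its kernel pair coincide. Since each $\phi_j = c\lambda_j$ is monic, each $\lambda_j$ is monic. A pullback pasting exhibits $(\phi_j \times_Y \phi_k)_{j,k}$ as the pullback of $q \times q$ along the regular monomorphism $Q \times_Y Q \hookrightarrow Q \times Q$; translating the fact that $q$ coequalizes the two projections of $(\phi_j \times_Y \phi_k)_{j,k} \rightrightarrows (X_j)_j$ across this square shows that the induced comparison $w \colon (\phi_j \times_Y \phi_k)_{j,k} \to Q \times_Y Q$ satisfies $\rho_1 w = \rho_2 w$, hence factors through the diagonal $\Delta_Q \colon Q \to Q \times_Y Q$. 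If $w$ is epimorphic, then so is $\Delta_Q$, and $\rho_1\Delta_Q = \rho_2\Delta_Q = \id_Q$ then forces $\rho_1 = \rho_2$, so $c$ is monic. The genuinely delicate point is thus the claim that $w$ is epimorphic, i.e.\ that the family $(X_j \times_Y X_k \to Q \times_Y Q)_{j,k}$ is jointly epimorphic in $\cat V$: since $\FamV$ need not be a regular category, this cannot be read off from a formal (regular epi, mono)-factorization, and one must instead unwind the covers and argue from the explicit description of pullbacks of covers in the extensive category $\FamV$, starting from the fact that the components $\lambda_j$ of $q$ are jointly epimorphic in $\cat V$ (as $q$ is a coequalizer). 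Granting this, $\colim K_\phi \iso \Join_j X_j$ as subobjects of $Y$; together with Lemma~\ref{lem:famv.reg.epi} this also shows that $\phi$ is a (pullback-stable) regular epimorphism precisely when $\Join_j X_j$ is all of $Y$.
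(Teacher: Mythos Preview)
Your setup is correct and matches the paper: you identify $\colim K_\phi$ with the coequalizer $Q$ of the kernel pair via Lemma~\ref{lem:famv.reg.epi}, produce the comparison $c \colon Q \to Y$, and reduce everything to showing $c$ is monic. The join argument, given that $c$ is monic, is fine.

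The gap is exactly where you flag it, and it is a real one. You need $w \colon (\phi_j \times_Y \phi_k)_{j,k} \to Q \times_Y Q$ to be epimorphic, and you propose to extract this from the fact that the $\lambda_j$ are jointly epimorphic. But joint epimorphicity of a family is not stable under pullback in a general finitely complete category, and $\FamV$ is not assumed regular here; ``unwinding the covers'' in the extensive structure of $\FamV$ does not supply the missing stability. Without some form of pullback-stability of $q$, there is no reason for the pulled-back family landing in $Q \times_Y Q$ to remain jointly epic, so your argument stalls at precisely this point.

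The paper does not try to show your $w$ is epic. Instead it \emph{uses} pullback-stability of $q$ (it writes ``which is pullback-stable by hypothesis'') and runs a conservativity argument: form the $3\times 3$ grid of pullback squares over $\xi$ and $q$ in $\FamV$; stability of $q$ makes the top and left edges regular epimorphisms onto $(\pi_i^*(X_j))_j$; a comparison of kernel pairs then forces each $\pi_i|_j \colon \pi_i^*(X_j) \to X_j$ to be an isomorphism; and since $q^*$ is conservative (being change-of-base along a pullback-stable regular epi), the kernel-pair projections $\pi_i \colon Q\times_Y Q \to Q$ are themselves isomorphisms, so $\xi$ is monic. The extra input your sketch is missing is exactly this stability of $q$; once you grant it, either the paper's conservativity route or your $w$-is-epic route goes through.
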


\begin{proof}
  Let \( \xi \colon \colim K_\phi \to Y \) be the unique morphism such that \(
  \phi_j = \xi \circ q_j \) for all \( j \in J \), where \( q \colon (X_j)_{j
  \in J} \to \colim K_\phi \) is the coequalizer, which is pullback-stable by
  hypothesis. 

  It is enough to prove that \( \xi \) is a monomorphism. We consider the
  following diagram in \( \FamV \)
  \begin{equation*}
    \begin{tikzcd}
      (X_j \land X_k)_{j,k \in J \times J} \ar[r] \ar[d]
             \ar[rd,"\ulcorner",phantom,very near start]
        & (\pi^*_0(X_j))_{j \in J} \ar[r,"\pi_0|_j"] \ar[d]
             \ar[rd,"\ulcorner",phantom,very near start]
        & (X_j)_{j \in J} \ar[d,"q"] \\
      (\pi^*_1(X_j))_{j \in J} \ar[r] \ar[d,"\pi_1|_j"]
             \ar[rd,"\ulcorner",phantom,very near start]
        & \xi \times_Y \xi \ar[r,"\pi_0"] \ar[d,"\pi_1",swap]
             \ar[rd,"\ulcorner",phantom,very near start]
        & \colim K_\phi \ar[d,"\xi"] \\
      (X_j)_{j \in J} \ar[r,"q",swap]
        & \colim K_\phi \ar[r,"\xi",swap]
        & Y
    \end{tikzcd}
  \end{equation*}
  whose squares are pullbacks. Since \( q \) is pullback-stable, it follows
  that \( (X_j \land X_k)_{j,k \in J \times J} \to (\pi^*_i(X_j))_{j \in J} \)
  is a regular epimorphism for \( i = 0,1 \). Its kernel pair is the kernel
  pair of \( (X_j \land X_k)_{j,k \in J \times J} \to (X_j)_{j \in J} \),
  hence \( \pi_i|_j \colon \pi^*_i(X_j) \iso X_j \) is an isomorphism for all
  \(j \in J \) and \( i = 0,1 \). We observe that \( q^* \) is conservative,
  so \( \pi_i \) is an isomorphism for \( i=0,1 \). But \( \pi_0, \pi_1 \) is
  the kernel pair of \( \xi \), so it must be a monomorphism.

  If \( W \) is a subobject of \(Y\) such that \( X_j \leq W \) for all \(j\),
  then the above observation (with \(W\) replacing \(Y\)) also confirms that \(
  \colim K_\phi \leq W \). Thus, \( \colim K_\phi \iso \Join_j X_j \) in the
  (thin) category of subobjects of \(Y\).
\end{proof}

Thus, if a cover \( \phi \colon (X_j)_{j\in J} \to Y \) of monomorphisms
is a descent morphism in \( \FamV \), we conclude that \( Y \iso \Join_{j \in
J} X_j \), a perspective that is helpful when \( \cat V \) is thin or regular.

\subsection{Meet semilattices}

Let \( \cat V \) be a thin category. A morphism \( \phi \colon (X_j)_{j \in J}
\to Y \) in \( \FamV \) is the assertion that ``for all \(j \in J\), we have
\( X_j \leq Y \)''. Therefore, we simply write \( (X_j)_{j \in J} \leq Y \) in
this setting.

A thin category \( \cat V \) is said to be a \textit{meet semilattice with a
top element} if \( \cat V \) is a thin category with finite limits, which are
called \textit{(finite) meets} in this context. 

\begin{lemma}[{\cite[Lemma 4.4]{Pre23}}]
  \label{lem:famv.epis.vthin}
  Let \( \cat V \) be a meet semilattice with a top element, and let \(
  (X_j)_{j \in J} \leq Y \) be a cover.
  \begin{enumerate}[label=(\alph*),noitemsep]
    \item
      \label{enum:vthin.famv.epi}
      It is an epimorphism if and only if \( J \) is non-empty.
    \item
      \label{enum:vthin.famv.al.desc}
      If it is an epimorphism, then it is pullback-stable.
    \item
      \label{enum:vthin.famv.reg}
      It is a regular epimorphism if and only if \( \Join_{j \in J} X_j \iso Y
      \).
    \item
      \label{enum:vthin.famv.desc}
      It is a pullback-stable regular epimorphism if and only if we have
      \begin{equation}
        \label{eq:distr.vthin}
        Z \iso \Join_{j \in J} Z \meet X_j.
      \end{equation}
      for all \( Z \leq Y \).
    \item
      \label{enum:vthin.famv.eff.desc}
      If it is a descent morphism, and \( \cat V \comma Y \) is (co)complete,
      then it is an effective descent morphism if, and only if, for every
      family \( (W_j)_{j \in J} \) with \( W_j \leq X_j \) for all \( j \in J
      \), satisfying
      \begin{equation*}
        W_j \meet X_i \iso X_j \meet W_i
      \end{equation*}
      for every pair \( i,j \in J \), we have
      \begin{equation*}
        X_j \meet \Join_{i \in J} W_i \iso W_j
      \end{equation*}
      for all \( j \in J \).
  \end{enumerate}
\end{lemma}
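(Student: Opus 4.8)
The plan is to dispatch the five clauses in turn, in each case reducing to structural facts about $\FamV$ recorded above; throughout I would use that in a thin $\cat V$ every morphism is a monomorphism, that pullbacks in $\FamV$ are computed fibrewise, and that $\FamV$ is extensive (Proposition \ref{prop:famv.cat.suits}). For \ref{enum:vthin.famv.epi}: a morphism out of $(X_j)_{j\in J}$ into a family $(Z_k)_{k\in K}$ consists of a function $J\to K$ together with a tuple of inequalities, so precomposing $\phi$ with a map $Y\to(Z_k)_{k\in K}$ (a single index $k$ with $Y\leq Z_k$) returns the function constant at $k$; this assignment of composites is injective in $k$ exactly when $J\neq\emptyset$, and for $J=\emptyset$ it fails, e.g.\ against $Z=(Y,Y)$. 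For \ref{enum:vthin.famv.al.desc}: the pullback of the cover $(X_j)_{j\in J}\leq Y$ along an arbitrary $(W_l)_{l\in L}\to Y$ is fibrewise the family $(X_j\meet W_l)_{(j,l)\in J\times L}\to(W_l)_{l\in L}$, a coproduct over $l$ of covers with index set $J$, each of which is an epimorphism by \ref{enum:vthin.famv.epi} since $J\neq\emptyset$; hence so is the pullback.

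For \ref{enum:vthin.famv.reg} and \ref{enum:vthin.famv.desc} I would start from the observation that, in a thin $\cat V$, the diagram $K_\phi\colon\cat D_J\to\cat V$ of Lemma \ref{lem:famv.reg.epi} has the same cocones as the discrete family $(X_j)_{j\in J}$ — all comparison triangles in $\cat D_J$ commute automatically — so $\colim K_\phi$ exists precisely when $\Join_{j\in J}X_j$ does, and then they agree as subobjects of $Y$ (which also recovers Lemma \ref{lem:colim.kphi.join} here, as every $\phi_j$ is a monomorphism). Plugging this into Lemma \ref{lem:famv.reg.epi} gives \ref{enum:vthin.famv.reg}. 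For \ref{enum:vthin.famv.desc}: a morphism is a pullback-stable regular epimorphism iff all its pullbacks are regular epimorphisms, and by \ref{enum:vthin.famv.al.desc} together with extensivity the pullbacks of $\phi$ are coproducts of the covers $(X_j\meet Z)_{j\in J}\to Z$ for $Z\leq Y$, each of which is a regular epimorphism iff $Z\iso\Join_{j\in J}(Z\meet X_j)$ by \ref{enum:vthin.famv.reg}; since $Z=Y$ recovers $\Join_jX_j\iso Y$, the stability condition is exactly \eqref{eq:distr.vthin}.

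The substance is \ref{enum:vthin.famv.eff.desc}. By Theorem \ref{thm:fam.eff.desc} and \eqref{eq:k.fam.k.conn}, $\phi$ is an effective descent morphism iff the connected comparison functor $\mathcal K_\conn^{\Ker(\phi)}\colon\cat V\comma Y\to\Desc_\conn(\phi)$ is an equivalence. In a thin $\cat V$ both are posets: the source is the poset of subobjects $Z\leq Y$, and, unwinding the component conditions \eqref{eq:refl.desc}, \eqref{eq:trans.desc} (see \eqref{eq:comp.trans}), an object of $\Desc_\conn(\phi)$ is a family $(W_j)_{j\in J}$ with $W_j\leq X_j$ carrying the structure inequality over each overlap $X_i\meet X_j$, which — since the index set runs over all ordered pairs $(i,j)$, hence forces the inequality in both directions — exists precisely when $W_j\meet X_i\iso X_j\meet W_i$ for all $i,j$; thus $\Desc_\conn(\phi)$ is the poset of such \emph{matching families} ordered pointwise, and $\mathcal K_\conn^{\Ker(\phi)}$ sends $Z$ to $(X_j\meet Z)_{j\in J}$. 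The hypothesis that $\phi$ is a descent morphism says exactly that $\mathcal K_\conn^{\Ker(\phi)}$ is fully faithful (equivalently, by \ref{enum:vthin.famv.desc}, that \eqref{eq:distr.vthin} holds), so effectiveness is equivalent to essential surjectivity. If a matching family $(W_j)$ equals $(X_j\meet Z)$ for some $Z\leq Y$, then by \eqref{eq:distr.vthin} we have $Z\iso\Join_j(Z\meet X_j)=\Join_jW_j$ — the join exists because $\cat V\comma Y$ is cocomplete — so the only possible preimage of an arbitrary matching family is $\Join_jW_j$, and $\mathcal K_\conn^{\Ker(\phi)}(\Join_jW_j)\iso(W_j)$ precisely when $X_j\meet\Join_iW_i\iso W_j$ for all $j$. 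Hence $\mathcal K_\conn^{\Ker(\phi)}$ is essentially surjective iff this holds for every matching family, which is the claim.

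I expect the only delicate step to be the identification of $\Desc_\conn(\phi)$ in \ref{enum:vthin.famv.eff.desc}: one must check carefully that, in the thin case, a connected lax descent datum carries no information beyond the matching condition $W_j\meet X_i\iso X_j\meet W_i$, and that a morphism of such data is simply the pointwise order relation. Everything else — the fibrewise (co)limit bookkeeping in $\FamV$ and the translation through \eqref{eq:distr.vthin} — is routine, and once $\Desc_\conn(\phi)$ is pinned down the equivalence with the stated join condition is immediate.
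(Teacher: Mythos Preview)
Your proposal is correct and follows essentially the same route as the paper: parts \ref{enum:vthin.famv.epi}--\ref{enum:vthin.famv.desc} are handled by the same reductions to Lemma~\ref{lem:famv.reg.epi} and fibrewise pullback computations, and part \ref{enum:vthin.famv.eff.desc} proceeds, as in the paper, by identifying $\Desc_\conn(\phi)$ with the poset of matching families and reducing to essential surjectivity of $\mathcal K_\conn^{\Ker(\phi)}$ via Theorem~\ref{thm:fam.eff.desc}. Your argument for \ref{enum:vthin.famv.eff.desc} is in fact slightly more explicit than the paper's: you use \eqref{eq:distr.vthin} to show that any preimage $Z$ of a matching family must equal $\Join_j W_j$, which cleanly gives both directions of the ``if and only if'', whereas the paper records the computation $X_j\meet\Join_i W_i \iso \Join_i X_j\meet W_i \iso \Join_i W_j\meet X_i \iso W_j$ without isolating this uniqueness step.
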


\begin{proof}
  If \( (X_j)_{j \in J} \leq Y \), then for all \( j \in J \), \( X_j \leq Y
  \) is an epimorphism. Thus, we conclude that \( (X_j)_{j \in J} \leq Y \) is
  an epimorphism if and only if the underlying function \( J \to 1 \) is an
  epimorphism, which is the case if and only if \( J \) is non-empty,
  confirming \ref{enum:vthin.famv.epi}.

  If \( (X_j)_{j \in J} \) is an epimorphism, then for all \( Z \leq Y \), we
  have \( (X_j \meet Z)_{j \in J} \leq Z \), which is still an epimorphism, as
  \( J \) is non-empty, proving \ref{enum:vthin.famv.al.desc}.

  We also have \ref{enum:vthin.famv.reg} as a consequence of Lemma
  \ref{lem:famv.reg.epi}, and the condition for pullback-stability is
  precisely \eqref{eq:distr.vthin}, giving \ref{enum:vthin.famv.desc}. 

  Let \( \phi \colon (X_j)_{j \in J} \leq Y \) be a pullback-stable regular
  epimorphism. We have that \( \phi \) is an effective descent morphism if and
  only if \( \mathcal K^{\Ker(\phi)}_\conn \colon \cat V \comma Y \to
  \Desc_\conn(\phi) \) is essentially surjective, by
  Theorem~\ref{thm:fam.eff.desc}.

  We highlight that connected descent data for \( \phi \) consists of a family
  of subobjects \( W_j \leq X_j \) indexed by \( j \in J \) such that \( W_j
  \land X_i \iso X_j \land W_i \) for all \(i,j \in J \); the reflexivity and
  transitivity conditions are automatically satisfied, as \( \cat V \) is
  thin. 

  Thus, \( \mathcal K^{\Ker(\phi)}_\conn \) is essentially surjective if and
  only if for all connected descent data \( (W_j)_{j \in J} \leq (X_j)_{j \in
  J} \), there exists \( Z\leq Y \) such that \( X_j \meet Z \iso W_j \).
  Given that \( \cat V \comma Y \) is (co)complete, we let \( Z \iso \Join_{j
  \in J} W_j \). We have
  \begin{equation*}
    X_j \land Z  \iso \Join_{i \in J} X_j \land W_i
                 \iso \Join_{i \in J} W_j \land X_i \iso W_j,
  \end{equation*}
  which concludes our proof.
\end{proof}

A bounded meet semilattice \( \cat V \) is said to be a \textit{Heyting
lattice}\footnote{Also known as \textit{implicative semilattices} \cite{Nem65}
and \textit{Brouwerian semilattices} \cite{Köh81}.} if \( \cat V \) is
cartesian closed; that is, the functor \( A \meet - \) has a right adjoint
functor for all objects \( A \). As a corollary, we obtain:

\begin{corollary}[{\cite[Corollary 4.5]{Pre23}}]
  If \( \cat V \) is a Heyting semilattice, regular epimorphisms in \( \FamV
  \) are pullback-stable.
\end{corollary}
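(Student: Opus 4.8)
The plan is to reduce the statement to the case of covers, where Lemma~\ref{lem:famv.epis.vthin} does essentially all the work, and then to supply the one extra ingredient the Heyting structure provides: the distributivity of binary meets over arbitrary joins.

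First I would record the reduction to covers. Since \( \FamV \) is extensive (Proposition~\ref{prop:famv.cat.suits}\ref{enum:fam.ext}), every morphism of \( \FamV \) splits, by pulling back along the coprojections of its codomain, as a coproduct of covers; a morphism is a regular epimorphism exactly when each of these components is one (coequalizers commute with coproducts, and in an extensive category coproducts are disjoint and universal), and a coproduct of pullback-stable regular epimorphisms is again a pullback-stable regular epimorphism by universality of coproducts. Hence it is enough to prove that every cover \( \phi\colon (X_j)_{j\in J}\to Y \) which is a regular epimorphism is pullback-stable. Alternatively, this reduction can be phrased via Lemma~\ref{lem:e.fameconn} applied to the pullback-stable, coproduct-closed class of pullback-stable regular epimorphisms, once one checks that its connected members are precisely the regular-epimorphism covers.

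Next, for such a cover I would use that \( \cat V \) is thin: \( \phi \) amounts to a family \( (X_j)_{j\in J}\leq Y \), and since \( \phi \) is a regular epimorphism its kernel pair admits a coequalizer, so by Lemma~\ref{lem:famv.reg.epi} (equivalently Lemma~\ref{lem:colim.kphi.join}, every arrow in a thin category being a monomorphism) the join \( \Join_{j\in J}X_j \) exists and equals \( Y \); this is Lemma~\ref{lem:famv.epis.vthin}\ref{enum:vthin.famv.reg}. By Lemma~\ref{lem:famv.epis.vthin}\ref{enum:vthin.famv.desc}, to conclude that \( \phi \) is pullback-stable it then suffices to verify \( Z\iso\Join_{j\in J}(Z\meet X_j) \) for every \( Z\leq Y \).

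Finally I would invoke the Heyting hypothesis. For each object \( Z \) the functor \( Z\meet(-)\colon\cat V\to\cat V \) is left adjoint to the Heyting implication \( Z\Rightarrow(-) \), hence it preserves every colimit that exists, in particular arbitrary joins. Since \( \Join_{j\in J}X_j \) exists and equals \( Y \), the join \( \Join_{j\in J}(Z\meet X_j) \) exists and equals \( Z\meet\Join_{j\in J}X_j=Z\meet Y=Z \), the last equality because \( Z\leq Y \). This is exactly the distributive law required above, so \( \phi \) is a pullback-stable regular epimorphism, and the reduction of the second paragraph finishes the proof. I expect the main (though still routine) obstacle to be the extensivity reduction of the second paragraph --- the componentwise characterization of regular epimorphisms and the universality of coproducts in \( \FamV \); everything else is a direct assembly of Lemmas~\ref{lem:famv.reg.epi}, \ref{lem:colim.kphi.join} and~\ref{lem:famv.epis.vthin} with the single observation that meets distribute over arbitrary joins in a Heyting semilattice, which is where the hypothesis on \( \cat V \) is used.
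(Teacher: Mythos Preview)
Your proof is correct and follows essentially the same approach as the paper: invoke Lemma~\ref{lem:famv.epis.vthin}\ref{enum:vthin.famv.desc} and observe that the Heyting structure makes \( Z\meet - \) a left adjoint, hence join-preserving, so \eqref{eq:distr.vthin} holds automatically. The paper's proof is a single line because the reduction to covers is treated as ambient context for all of Section~\ref{sect:fam.desc} (established at its outset via Lemma~\ref{lem:e.fameconn}); your explicit extensivity argument for that reduction is a welcome elaboration rather than a different route.
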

\begin{proof}
  Since \( Z \meet - \) preserves joins, \eqref{eq:distr.vthin} is always
  satisfied.
\end{proof}

\begin{corollary}
  If \( \cat V \) is a (co)complete lattice, pullback-stable regular
  epimorphisms in \( \FamV \) are effective for descent.
\end{corollary}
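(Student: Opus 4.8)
The plan is to reduce to the case of a cover and then feed everything into the criterion of Lemma~\ref{lem:famv.epis.vthin}\ref{enum:vthin.famv.eff.desc}. Let $q$ be a pullback-stable regular epimorphism in $\FamV$, that is, a descent morphism. The class $\cat E$ of descent morphisms is pullback-stable and, in the extensive category $\FamV$, closed under coproducts, so Lemma~\ref{lem:e.fameconn} lets us write $q \iso \sum_{k \in K} \phi_k$ with each $\phi_k \colon (X_j)_{j \in J_k} \to Y_k$ a cover; each $\phi_k$ arises as a pullback of $q$ and is therefore itself a descent morphism. Since, in an extensive category, the kernel pair, the slice $\FamV \comma (-)$, and hence the comparison functor of a coproduct all decompose as the corresponding coproduct/product, effective descent morphisms are stable under coproducts in $\FamV$; thus it is enough to prove that a cover $\phi \colon (X_j)_{j \in J} \to Y$ which is a descent morphism is effective for descent.

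For such a $\phi$, observe that $\cat V \comma Y$ is again a (co)complete lattice, so Lemma~\ref{lem:famv.epis.vthin}\ref{enum:vthin.famv.eff.desc} applies: $\phi$ is effective for descent iff, for every family $(W_j)_{j \in J}$ with $W_j \leq X_j$ and $W_j \meet X_i \iso X_j \meet W_i$ for all $i,j \in J$, one has $X_j \meet \Join_{i \in J} W_i \iso W_j$ for each $j$. Fix such a family and an index $j$. Since $\cat V$ is cartesian closed, the functor $X_j \meet -$ is a left adjoint, hence preserves arbitrary joins (this is exactly the ingredient already used in the preceding corollary), so
\begin{equation*}
  X_j \meet \Join_{i \in J} W_i
    \;\iso\; \Join_{i \in J} (X_j \meet W_i)
    \;\iso\; \Join_{i \in J} (W_j \meet X_i)
    \;\iso\; W_j \meet \Join_{i \in J} X_i ,
\end{equation*}
the middle isomorphism being the descent-datum condition. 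Finally $\phi$ is a regular epimorphism, so $\Join_{i \in J} X_i \iso Y$ by Lemma~\ref{lem:famv.epis.vthin}\ref{enum:vthin.famv.reg} (equivalently by Lemma~\ref{lem:colim.kphi.join}); as $W_j \leq X_j \leq Y$, the right-hand side collapses to $W_j$, and the criterion holds.

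The only genuine content beyond the bookkeeping of the reduction is the infinite distributive law $X_j \meet \Join_i W_i \iso \Join_i (X_j \meet W_i)$, and this is the step I expect to be the crux: it is precisely where the Heyting (cartesian closed) structure of $\cat V$ enters, and it is what would break down for a non-distributive complete lattice. Everything else is just Lemma~\ref{lem:e.fameconn} (to descend to covers) combined with the explicit effective-descent criterion of Lemma~\ref{lem:famv.epis.vthin}.
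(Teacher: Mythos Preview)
There is a genuine gap: you write ``Since $\cat V$ is cartesian closed, the functor $X_j \meet -$ is a left adjoint, hence preserves arbitrary joins'', but the hypothesis of the present corollary is only that $\cat V$ is a (co)complete \emph{lattice}. The Heyting (cartesian closed) assumption belongs to the \emph{preceding} corollary, not this one; with that extra hypothesis your argument is correct, and what you have actually written is a proof of Corollary~\ref{cor:comp.heyt.lat} rather than of the statement under consideration.

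The paper's own proof is the single sentence ``For all $Y$, $\cat V \comma Y$ is cocomplete'': it merely records that the cocompleteness hypothesis of Lemma~\ref{lem:famv.epis.vthin}\ref{enum:vthin.famv.eff.desc} is met, and the computation inside the proof of that lemma --- the step $X_j \land \Join_i W_i \iso \Join_i (X_j \land W_i)$ --- already uses the infinite distributive law tacitly. Without distributivity the statement in fact fails. In the finite (hence complete) lattice $N_5=\{0,a,b,c,1\}$ with $0<a<b<1$, $0<c<1$, $b\land c=0$ and $a\lor c=1$, the cover $(b,c)\to 1$ is a pullback-stable regular epimorphism (one checks $Z=(Z\land b)\lor(Z\land c)$ for every $Z\in N_5$), yet $(W_1,W_2)=(a,c)$ is a descent datum not in the image of the comparison functor, since $(a\lor c)\land b=b\neq a$. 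So the distributive law you introduced is not a convenience but a necessity: the corollary does not hold for an arbitrary complete lattice.
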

\begin{proof}
  For all \( Y \), \( \cat V \comma Y \) is cocomplete.
\end{proof}

Combining both of the previous results yields:

\begin{corollary}[{\cite[Corollary 4.6]{Pre23}}]
  \label{cor:comp.heyt.lat}
  If \( \cat V \) is a (co)complete Heyting (semi)lattice, regular
  epimorphisms in \( \FamV \) are effective for descent.
\end{corollary}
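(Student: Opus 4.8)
The plan is to chain the two corollaries that immediately precede this statement. First I would observe that a (co)complete Heyting (semi)lattice is in particular a Heyting semilattice: the functor $Z \meet -$ admits a right adjoint for every object $Z$, hence preserves arbitrary joins, so the distributivity identity \eqref{eq:distr.vthin} of Lemma \ref{lem:famv.epis.vthin} holds for every $Z \leq Y$. By the corollary stating that regular epimorphisms in $\FamV$ are pullback-stable for Heyting semilattices, it follows that any regular epimorphism $\phi \colon (X_j)_{j \in J} \to Y$ in $\FamV$ is pullback-stable.

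Second, a (co)complete Heyting lattice is a (co)complete lattice, so $\cat V \comma Y$ is (co)complete for every $Y$ --- precisely the hypothesis of the corollary asserting that pullback-stable regular epimorphisms in $\FamV$ are effective for descent. That corollary in turn invokes part \ref{enum:vthin.famv.eff.desc} of Lemma \ref{lem:famv.epis.vthin}: the family $(W_j)_{j \in J}$ of connected descent data over $\phi$ is glued by the join $\Join_{j \in J} W_j$, available by cocompleteness of $\cat V \comma Y$, while the reflexivity and transitivity conditions are vacuous since $\cat V$ is thin. Combining the two steps, an arbitrary regular epimorphism in $\FamV$ is pullback-stable, hence effective for descent.

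There is no real obstacle here: all of the content has already been isolated in Lemma \ref{lem:famv.epis.vthin} and the two preceding corollaries, and the only point to verify is that the hypothesis ``(co)complete Heyting (semi)lattice'' simultaneously entails ``Heyting semilattice'' and ``(co)complete lattice'', which is immediate from the definitions. A self-contained argument would just apply parts \ref{enum:vthin.famv.desc} and \ref{enum:vthin.famv.eff.desc} of Lemma \ref{lem:famv.epis.vthin} in sequence, together with these observations.
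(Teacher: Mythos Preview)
Your proposal is correct and matches the paper's approach exactly: the paper's proof consists of the single sentence ``Combining both of the previous results yields:'', referring to the two corollaries you invoke. Your elaboration of why a (co)complete Heyting (semi)lattice satisfies both hypotheses is accurate and more detailed than what the paper writes, but the underlying argument is identical.
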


So, we recover one implication of \cite[Theorem 5.4]{CH04}:

\begin{theorem}[{\cite[Theorem 4.7]{Pre23}}]
  \label{thm:eff.desc.vcat}
  Let \( \cat V \) be a (co)complete Heyting lattice, and let \( F \colon \cat
  C \to \cat D \) be a \( \cat V \)-functor. If 
  \begin{equation}
    \label{eq:vcat.join}
    \Join_{x_i \in F^*y_i} \cat C(x_0, x_1,x_2) \iso \cat D(y_0, y_1, y_2) 
  \end{equation}
  for all \( y_0, y_1, y_2 \in \cat D \), then \(F\) is an effective descent
  \( \cat V \)-functor.
\end{theorem}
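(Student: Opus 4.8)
The plan is to reduce everything to Theorem~\ref{thm:desc.vcat}: it suffices to check that, for all $y_0,y_1,y_2,y_3\in\cat D$, the $\cat V$-functor $F$ gives an effective descent morphism on $1$-chains of hom-objects, a descent morphism on $2$-chains, and an almost descent morphism on $3$-chains, all inside $\FamV$. Since a (co)complete Heyting lattice is in particular a meet-semilattice with top and has finite limits, Lemma~\ref{lem:famv.epis.vthin} and Proposition~\ref{prop:famv.cat.suits} apply; moreover, by Corollary~\ref{cor:comp.heyt.lat} (together with the fact that effective descent, resp.\ descent, morphisms are always regular, resp.\ pullback-stable regular, epimorphisms), the classes of effective descent morphisms, descent morphisms and regular epimorphisms in $\FamV$ all coincide, and by Lemma~\ref{lem:famv.epis.vthin}\ref{enum:vthin.famv.reg} a cover $(X_j)_{j\in J}\to Y$ lies in this common class exactly when $\Join_{j\in J}X_j\iso Y$. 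Likewise, by Lemma~\ref{lem:famv.epis.vthin}\ref{enum:vthin.famv.epi}--\ref{enum:vthin.famv.al.desc}, an almost descent cover is precisely one whose index set $J$ is non-empty. (As usual, I assume $\cat V$ non-degenerate, i.e.\ $\bot\neq\top$.)

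First I would observe that $F$ is surjective on objects. Applying the hypothesis \eqref{eq:vcat.join} with $y_0=y_1=y_2=y$, and using that the unit morphism $I\to\cat D(y,y)$ forces $\cat D(y,y)=I=\top$, we get $\Join_{x\in F^{*}y}\cat C(x,x,x)\iso\cat D(y,y,y)=\top$; but $\cat C(x,x,x)=\cat C(x,x)\meet\cat C(x,x)=\top$ for each such $x$, and the empty join is $\bot\neq\top$, so necessarily $F^{*}y\neq\emptyset$. Hence every cover occurring in conditions \ref{enum:vcat.eff.desc.sings}--\ref{enum:vcat.al.desc.trips} of Theorem~\ref{thm:desc.vcat} has index set $\prod_i F^{*}y_i\neq\emptyset$, so condition~\ref{enum:vcat.al.desc.trips} holds automatically.

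Condition~\ref{enum:vcat.desc.pairs} is, after the translations above, \emph{literally} the hypothesis: $F\times F\colon(\cat C(x_0,x_1,x_2))_{x_i\in F^{*}y_i}\to\cat D(y_0,y_1,y_2)$ is a descent morphism in $\FamV$ iff $\Join_{x_i\in F^{*}y_i}\bigl(\cat C(x_1,x_2)\meet\cat C(x_0,x_1)\bigr)\iso\cat D(y_1,y_2)\meet\cat D(y_0,y_1)$, which is \eqref{eq:vcat.join}. For condition~\ref{enum:vcat.eff.desc.sings} I would deduce the $1$-chain identity from the $2$-chain one by specialising $y_2:=y_1$: the right-hand side becomes $\cat D(y_1,y_1)\meet\cat D(y_0,y_1)=\cat D(y_0,y_1)$, while on the left each summand $\cat C(x_1,x_2)\meet\cat C(x_0,x_1)\leq\cat C(x_0,x_1)$, and taking $x_2=x_1$ (legitimate since $F^{*}y_1\neq\emptyset$) yields the summand $\cat C(x_1,x_1)\meet\cat C(x_0,x_1)=\cat C(x_0,x_1)$; hence $\Join_{x_i\in F^{*}y_i}\bigl(\cat C(x_1,x_2)\meet\cat C(x_0,x_1)\bigr)=\Join_{x_0\in F^{*}y_0,\,x_1\in F^{*}y_1}\cat C(x_0,x_1)$. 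Therefore $\Join_{x_0\in F^{*}y_0,x_1\in F^{*}y_1}\cat C(x_0,x_1)\iso\cat D(y_0,y_1)$, so $F$ is a regular epimorphism, hence an effective descent morphism, on $1$-chains. With \ref{enum:vcat.eff.desc.sings}, \ref{enum:vcat.desc.pairs} and \ref{enum:vcat.al.desc.trips} verified, Theorem~\ref{thm:desc.vcat} gives that $F$ is an effective descent morphism in $\VCat$.

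The only mildly delicate point is the translation layer at the start --- confirming that in a (co)complete Heyting lattice the three flavours of descent morphism in $\FamV$ collapse to ``regular epimorphism'' and hence to ``$\Join_j X_j\iso Y$'', and that the covers appearing in Theorem~\ref{thm:desc.vcat} are genuinely covers of the families of subobjects $(\cat C(\dots))_{x_i\in F^{*}y_i}$ inside the relevant $\cat D(\dots)$. But this is precisely what Corollary~\ref{cor:comp.heyt.lat} and Lemma~\ref{lem:famv.epis.vthin} provide, so beyond the short join computation in the previous paragraph there is no new work to do.
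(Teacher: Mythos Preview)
Your proof is correct and follows essentially the same approach as the paper's: verify conditions \ref{enum:vcat.eff.desc.sings}--\ref{enum:vcat.al.desc.trips} of Theorem~\ref{thm:desc.vcat} by first extracting surjectivity on objects from the diagonal instance of \eqref{eq:vcat.join}, then reading off \ref{enum:vcat.desc.pairs} directly, and finally specialising $y_1=y_2$ to obtain \ref{enum:vcat.eff.desc.sings} via Corollary~\ref{cor:comp.heyt.lat}. The only minor difference is presentational: the paper bounds the join from below using $\cat C(x_0,x_1,x_2)\leq\cat C(x_0,x_1)$ and then uses $F_{x_0,x_1}\leq\cat D(y_0,y_1)$ for the other inequality, whereas you compute the join exactly by picking out the $x_2=x_1$ summands; your explicit non-degeneracy assumption is also a welcome clarification.
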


\begin{proof}
  By hypothesis, \ref{enum:vcat.desc.pairs} is satisfied, and since \(F\) is
  surjective on objects (consider \eqref{eq:vcat.join} with \( y_0 = y_1 = y_2
  \)), it follows by Lemma~\ref{lem:famv.epis.vthin} that condition
  \ref{enum:vcat.al.desc.trips} holds. Moreover, if we consider
  \eqref{eq:vcat.join} with \( y_1 = y_2 \), so that \( \cat D(y_1,y_2) \iso 1
  \), we have
  \begin{equation*}
    \cat D(y_0,y_1) 
      \iso \Join_{x_i \in F^*y_i} \cat C(x_0,x_1,x_2)
      \leq \Join_{x_i \in F^*y_i} \cat C(x_0,x_1),
  \end{equation*}
  and since \( \cat C(x_0,x_1) \leq \cat D(y_0,y_1) \) for all \( x_i \in
  F^*y_i \), we conclude that \( (\cat C(x_0,x_1))_{x_i\in F^*y_i} \leq \cat
  D(y_0,y_1) \) is a regular epimorphism in \( \FamV \), which is effective
  for descent by Corollary \ref{cor:comp.heyt.lat}, guaranteeing
  \ref{enum:vcat.eff.desc.sings}. Thus, Theorem \ref{thm:desc.vcat} can be
  applied to conclude that \(F\) is effective for descent.
\end{proof}

\begin{remark}
  \label{rem:link}
  As alluded to in the Introduction, Theorem \ref{thm:desc.vcat} confirms
  the link between the idea of ``chain-surjectivity'' conditions of
  \cite{Cre99} and the ``chain-surjectivity'' of \cite{JS02b}, as evidenced by
  Theorem~\ref{thm:eff.desc.vcat}.
\end{remark}

\subsection{Regular categories}

The same ideas work here, if we employ the (regular epi, mono)-factorization
system of a regular category.

\begin{lemma}[{\cite[Lemma 4.8]{Pre23}}]
  \label{lem:vreg.famv}
  Let \( \cat V \) be a regular category, and let \( \phi \colon (X_j)_{j \in
  J} \to Y \) be a cover. For each \( j \in J \), we consider the (regular
  epi,mono)-factorization of \( \phi_j \), given by
  \begin{equation}
    \label{eq:fam.fact}
    \begin{tikzcd}
      X_j \ar[r,"\pi_j"] & M_j \ar[r,"\iota_j"] & Y,
    \end{tikzcd}
  \end{equation}
  where \( \pi_j \) is a descent morphism, and \( \iota_j \) is a
  monomorphism for all \(j \in J \). Thus, we consider the cover \( \iota
  \colon (M_j)_{j \in J} \to Y \).

  \begin{enumerate}[label=(\alph*),noitemsep]
    \item
      \label{enum:vreg.al.desc}
      \( \phi \) is a (pullback-stable) epimorphism if and only if \( \iota \)
      is a (pullback-stable) epimorphism.
    \item
      \label{enum:vreg.desc}
      \( \phi \) is a (pullback-stable) regular epimorphism if and only if \(
      \iota \) is a (pullback-stable) regular epimorphism.
    \item
      \label{enum:vreg.eff.desc}
      If \( \pi_j \) is an effective descent morphism for all \(j \in J\),
      then \( \phi \) is effective for descent if and only if \( \iota \)
      is effective for descent.
  \end{enumerate}
\end{lemma}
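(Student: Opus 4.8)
The plan is to lift the factorization $\phi_j=\iota_j\circ\pi_j$ to a factorization of the cover $\phi$ itself inside $\FamV$. Write $\tilde\pi\colon(X_j)_{j\in J}\to(M_j)_{j\in J}$ for the morphism over $\id_J$ whose $j$-th component is $\pi_j$ — equivalently, the coproduct $\coprod_{j\in J}\pi_j$ in $\FamV$ of the maps of connected objects $\pi_j\colon X_j\to M_j$ — so that $\phi=\iota\circ\tilde\pi$. Since $\cat V$ is regular, each $\pi_j$ is a pullback-stable regular epimorphism, hence a descent morphism, and under the hypothesis of \ref{enum:vreg.eff.desc} it is moreover an effective descent morphism; by extensivity of $\FamV$ (Proposition~\ref{prop:famv.cat.suits}\ref{enum:fam.ext}) the coproduct $\tilde\pi$ inherits each of these properties in $\FamV$, since all three classes are detected componentwise on coproducts in an extensive category (for effective descent: $\FamV\comma(M_j)_j\eqv\prod_j(\FamV\comma M_j)$ and the descent categories decompose compatibly, using Lemma~\ref{lem:desc.fam.conn.desc} and that $\Fam$ reflects equivalences). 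Finally, $(\text{regular epi},\text{mono})$-factorizations are pullback-stable in $\cat V$, so any pullback of $\phi$ in $\FamV$ is, componentwise over its index set, a cover with a factorization of the same shape; hence the pullback-stable strengthening of each clause below follows from the non-stable one by the same argument applied to the pulled-back cover.

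For \ref{enum:vreg.al.desc}, a cover is an epimorphism in $\FamV$ exactly when its components are jointly epimorphic, and precomposing each component with an epimorphism does not disturb this; as each $\pi_j$ is an epimorphism, $\{\iota_j\pi_j\}_j$ is jointly epimorphic iff $\{\iota_j\}_j$ is. For \ref{enum:vreg.desc}, I would use Lemma~\ref{lem:famv.reg.epi} and compare $\colim K_\phi$ with $\colim K_\iota$: the morphisms $\pi_j$ and $\pi_j\times_Y\pi_k$ (the latter a composite of base changes of the regular epimorphisms $\pi_j,\pi_k$, hence again a regular epimorphism in the regular category $\cat V$) assemble into a natural transformation $K_\phi\Rightarrow K_\iota$ that is a pointwise regular epimorphism and compatible with the canonical cocones to $Y$. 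Precomposing an arbitrary cocone on $K_\iota$ with this transformation and invoking $\colim K_\phi\iso Y$ produces a factorization through $Y$; conversely, a cocone on $K_\phi$ has each component factoring through $\pi_j$ (since $\ker\pi_j$ lies inside $\ker\phi_j$, which the cocone already equalizes), and the induced cocone on $K_\iota$ factors through $\colim K_\iota\iso Y$. Thus $\colim K_\phi\iso Y$ iff $\colim K_\iota\iso Y$, with uniqueness in each case supplied by joint epimorphy of the $\phi_j$.

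For \ref{enum:vreg.eff.desc}, since $\tilde\pi$ is an effective descent morphism in $\FamV$ and $\phi=\iota\circ\tilde\pi$, the closure of effective descent morphisms under composition gives that $\iota$ effective for descent implies $\phi$ effective for descent, and the cancellation property (if $g\circ f$ and $f$ are effective for descent, so is $g$) gives the converse. Alternatively, and avoiding the general cancellation lemma, I would use Theorem~\ref{thm:fam.eff.desc}: pulling connected descent data back along the family $(\pi_j)_j$ (and the gluing back along the $\pi_j\times_Y\pi_k$) defines a functor $\Desc_\conn(\iota)\to\Desc_\conn(\phi)$ through which the comparison functor $\cat V\comma Y\to\Desc_\conn(\phi)$ factors as the composite with $\cat V\comma Y\to\Desc_\conn(\iota)$; the effective descent of each $\pi_j$ and each $\pi_j\times_Y\pi_k$ makes this functor an equivalence, so the two comparison functors are equivalences together, which is exactly \ref{enum:vreg.eff.desc}.

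The main difficulty is \ref{enum:vreg.eff.desc}. On the first route it lies in confirming that effective descent morphisms in $\FamV$ are detected componentwise on coproducts — i.e.\ that $\tilde\pi=\coprod_j\pi_j$ is effective for descent — which is where the descent-category decomposition and Lemma~\ref{lem:desc.fam.conn.desc} do the work; on the second route it lies in checking that the reflexivity and transitivity cocycle conditions for $\iota$-descent data descend correctly from those for $\phi$-descent data along the $\pi_j$, which is precisely the point at which the effective descent hypothesis on the $\pi_j$ (and, via base change, on the $\pi_j\times_Y\pi_k$) is consumed. Parts \ref{enum:vreg.al.desc} and \ref{enum:vreg.desc} are routine once Lemma~\ref{lem:famv.reg.epi} is in hand.
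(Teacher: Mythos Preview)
Your proposal is correct and the strategy matches the paper's: factor $\phi = \iota \circ (\id_J,\pi)$ in $\FamV$, note that $(\id_J,\pi)$ is a coproduct of descent morphisms and hence a descent morphism in $\FamV$ (effective descent under the hypothesis of \ref{enum:vreg.eff.desc}), and conclude by composition and cancellation. The paper treats all three parts uniformly in this way, citing \cite[Propositions~1.3 and~1.5]{JST04} for \ref{enum:vreg.al.desc} and \ref{enum:vreg.desc} and \cite[Theorem~4.5]{JT97} (the BED property of the basic bifibration) for \ref{enum:vreg.eff.desc}; your first route for \ref{enum:vreg.eff.desc} is exactly this, and the cancellation ``$g\circ f$ and $f$ effective descent $\Rightarrow$ $g$ effective descent'' that you invoke is precisely what BED supplies. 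Your argument for \ref{enum:vreg.desc} via Lemma~\ref{lem:famv.reg.epi} and the comparison of $\colim K_\phi$ with $\colim K_\iota$ is sound but is a hands-on re-derivation of the descent-morphism cancellation that the paper gets in one line from the literature; likewise your separate treatment of pullback-stability is subsumed once one works directly with the classes of (almost) descent morphisms, which are pullback-stable by definition. Your alternative route for \ref{enum:vreg.eff.desc} via $\Desc_\conn$ and Theorem~\ref{thm:fam.eff.desc} is not in the paper.
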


\begin{proof}
  The factorization \eqref{eq:fam.fact} gives a factorization \( \phi = \iota
  \circ (\id, \pi) \) in \( \FamV \), and since \( \pi_j \) is a descent
  morphism for all \(j\), \( (\id, \pi) \) is a coproduct of descent
  morphisms, therefore it is a descent morphism in \( \FamV \). Thus, we
  obtain \ref{enum:vreg.al.desc} and \ref{enum:vreg.desc} by composition and
  cancellation \cite[Propositions 1.3 and 1.5]{JST04}.

  Moreover, if \( \pi_j \) is effective for descent for all \(j \in J \), then
  so is \( (\id,\pi) \), and \ref{enum:vreg.eff.desc} follows by \cite[Theorem
  4.5]{JT97}, as the basic bifibration respects the BED (see \cite[{}4.4,
  4.6]{JT97}).
\end{proof}

Thus, the study of (effective) descent covers in \( \FamV \) can be reduced
to the study of (effective) descent covers of monomorphisms (and effective
descent morphisms in \( \cat V \)). When applied to the study of effective
descent morphisms in \( \VCat \), we obtain:

\begin{theorem}[{\cite[Theorem 4.9]{Pre23}}]
  \label{thm:vcat.desc.reg}
  Let \( \cat V \) be a regular category, let \( F \colon \cat C \to \cat D \)
  be a \( \cat V \)-functor, and consider the hom-covers
  \begin{align*}
    F = (F_{x_0,x_1})_{x_i \in F^*y_i} 
      &\colon (\cat C(x_0,x_1))_{x_i \in F^*y_i} \to \cat D(y_0,y_1)  \\
    F \times F = (F_{x_0,x_1,x_2})_{x_i \in F^*y_i} 
      &\colon (\cat C(x_0, x_1, x_2))_{x_i \in F^*y_i} 
        \to \cat D(y_0,y_1,y_2) \\
    F \times F \times F = (F_{x_0,x_1,x_2,x_3})_{x_i \in F^*y_i} 
      &\colon (\cat C(x_0,x_1,x_2, x_3))_{x_i \in F^*y_i} 
        \to \cat D(y_0,y_1,y_2,y_3) 
  \end{align*}
  and their respective (regular epi, mono)-factorizations
  \begin{align*}
    F_{x_0,x_1} &= I_{x_0,x_1} \circ P_{x_0,x_1}, \\
    F_{x_0,x_1,x_2} &= I_{x_0,x_1,x_2} \circ P_{x_0,x_1,x_2}, \\
    F_{x_0,x_1,x_2,x_3} &= I_{x_0,x_1,x_2,x_3} 
                            \circ P_{x_0,x_1,x_2,x_3}.
  \end{align*}
  If
  \begin{enumerate}[label=(\alph*),noitemsep]
    \item
      \label{enum:vfn.eff.desc.i}
      \( P_{x_0,x_1} \) is an effective descent morphism for all \( x_0,
      x_1 \),
    \item
      \label{enum:vfn.eff.desc.ii}
      \( (I_{x_0,x_1})_{x_i \in F^*y_i} \) is an effective descent
      morphism,
    \item
      \label{enum:vfn.desc}
      \( (I_{x_0,x_1,x_2})_{x_i \in F^*y_i} \) is a descent morphism, and
    \item
      \label{enum:vfn.al.desc}
      \( (I_{x_0,x_1,x_2,x_3})_{x_i \in F^*y_i} \) is an almost descent
      morphism,
  \end{enumerate}
  then \(F\) is an effective descent morphism in \( \VCat \).
\end{theorem}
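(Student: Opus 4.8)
The plan is to reduce to Theorem \ref{thm:desc.vcat} by pushing the factorization hypotheses through Lemma \ref{lem:vreg.famv}. A regular category has finite limits, so by Proposition \ref{prop:famv.cat.suits} so does \( \FamV \), and hence descent (resp. almost descent) morphisms in \( \FamV \) are exactly the pullback-stable regular epimorphisms (resp. pullback-stable epimorphisms), and Lemma \ref{lem:vreg.famv} applies. Recall that Theorem \ref{thm:desc.vcat} requires, for every choice of objects \( y_0, y_1, y_2, y_3 \) of \( \cat D \), that the hom-cover \( F \) be an effective descent morphism in \( \FamV \), that \( F \times F \) be a descent morphism, and that \( F \times F \times F \) be an almost descent morphism. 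So I would fix such \( y_i \) once and for all; the three covers in play are precisely the ones displayed in the statement, each with index set \( \prod_i F^*y_i \) and with the (regular epi, mono)-factorization written there.

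For the hom-cover \( F \), write its factorization \( F = (I_{x_0,x_1})_{x_i} \circ (\id, (P_{x_0,x_1})_{x_i}) \). Hypothesis \ref{enum:vfn.eff.desc.i} says that each \( P_{x_0,x_1} \) is an effective descent morphism in \( \cat V \), so the hypothesis of Lemma \ref{lem:vreg.famv}\ref{enum:vreg.eff.desc} is met and that lemma tells us that \( F \) is effective for descent in \( \FamV \) if and only if the cover of images \( (I_{x_0,x_1})_{x_i} \) is; by hypothesis \ref{enum:vfn.eff.desc.ii}, it is. This is condition \ref{enum:vcat.eff.desc.sings} of Theorem \ref{thm:desc.vcat}.

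For \( F \times F \) and \( F \times F \times F \) I would instead use the ``purely factorizational'' parts \ref{enum:vreg.desc} and \ref{enum:vreg.al.desc} of Lemma \ref{lem:vreg.famv}, which do not require the middle factor to be effective for descent. Indeed, since \( \cat V \) is regular, each \( P_{x_0,x_1,x_2} \) and each \( P_{x_0,x_1,x_2,x_3} \), being a regular epimorphism, is automatically a pullback-stable regular epimorphism, hence a descent morphism; thus the middle factor of each factorization is a coproduct of descent morphisms in \( \FamV \) and therefore a descent morphism, and composition and cancellation yield that \( F \times F \) is a descent morphism in \( \FamV \) exactly when \( (I_{x_0,x_1,x_2})_{x_i} \) is --- hypothesis \ref{enum:vfn.desc} --- and that \( F \times F \times F \) is an almost descent morphism exactly when \( (I_{x_0,x_1,x_2,x_3})_{x_i} \) is --- hypothesis \ref{enum:vfn.al.desc}. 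These are conditions \ref{enum:vcat.desc.pairs} and \ref{enum:vcat.al.desc.trips}.

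Since the \( y_i \) were arbitrary, all three hypotheses of Theorem \ref{thm:desc.vcat} are satisfied, and that theorem concludes that \( F \) is an effective descent morphism in \( \VCat \). I do not expect a real obstacle here: the argument is a straightforward translation along Lemma \ref{lem:vreg.famv}, and the only point worth stating is that in a regular category the regular-epi half of a (regular epi, mono)-factorization is automatically a descent morphism, which is what makes the unconditional parts of Lemma \ref{lem:vreg.famv} applicable to \( F \times F \) and \( F \times F \times F \) without any extra assumption on the \( P \)'s.
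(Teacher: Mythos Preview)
Your proposal is correct and follows essentially the same route as the paper: apply Lemma~\ref{lem:vreg.famv} to each hom-cover to convert the hypotheses on the image-covers \(I\) (together with \ref{enum:vfn.eff.desc.i} for the effective case) into the three conditions of Theorem~\ref{thm:desc.vcat}, and then invoke that theorem. Your write-up simply spells out in more detail why the non-effective parts of Lemma~\ref{lem:vreg.famv} apply without any hypothesis on the \(P\)'s, namely that in a regular category the regular-epi factor is automatically a descent morphism.
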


\begin{proof}
  By Lemma \ref{lem:vreg.famv}, 
  \begin{itemize}[noitemsep,label=--]
    \item
      conditions \ref{enum:vfn.eff.desc.i} and \ref{enum:vfn.eff.desc.ii}
      together guarantee \ref{enum:vcat.eff.desc.sings},
    \item
      conditions \ref{enum:vfn.desc} and \ref{enum:vfn.al.desc} respectively
      guarantee \ref{enum:vcat.desc.pairs} and \ref{enum:vcat.al.desc.trips},
  \end{itemize}
  so that Theorem \ref{thm:desc.vcat} can be applied.
\end{proof}

The above list of conditions can be further reduced if \( \cat V \) satisfies
extra properties. For instance, if \( \cat V \) is Barr-exact, or locally
cartesian closed, then descent morphisms are effective descent morphisms, so
condition \ref{enum:vfn.eff.desc.i} is redundant.

More specifically, a \( \CHaus \)-functor \( F \colon \cat C \to \cat D \)
satisfying \ref{enum:vfn.eff.desc.ii}, \ref{enum:vfn.desc} and
\ref{enum:vfn.al.desc} is an effective descent morphism in \( \CHaus \dash
\Cat \), as \( \CHaus \) is an Barr-exact category \cite{MR20}. Moreover, it
is shown therein that \( \Stn \) is a regular category, thus if a \( \Stn
\)-functor satisfies all of the hypotheses of Theorem \ref{thm:vcat.desc.reg},
then it is an effective descent \( \Stn \)-functor.

\chapter{Generalized internal multicategory functors}
\label{chap:internal-multi}

A \textit{multicategory} is a categorical structure which models the notion of
``multimorphisms'': morphisms which map from a (possibly empty) finite string
of inputs to a single output. The quintessential example of such a structure
is the multicategory \( \Vect \) of vector spaces over some field \( \mathbb F
\), and \textit{multilinear maps}. A multilinear map \( f \colon (V_1, \ldots,
V_n) \to W \) has a finite string \( V_1, \ldots, V_n \) of vector spaces as
the domain, and a vector space \( W \) as codomain. It consists of a function
\begin{equation*}
  f \colon V_1 \times \ldots \times V_n \to W 
\end{equation*}
which is linear in each component:
\begin{equation*}
  f(v_1,\ldots,v_i+\lambda w,\ldots,v_n)
    = f(v_1,\ldots,v_i,\ldots,v_n) + \lambda f(v_1,\ldots,w,\ldots,v_n),
\end{equation*}
where \( v_j \in V_j \) for all \(j \in \{1,\ldots, n\}\), \( w \in V_i \) for
each \( i \in \{1, \ldots, n\} \), and \( \lambda \in \mathbb F \). In case \(
n = 0 \), a multilinear map \(f \colon () \to W \) is simply a vector \( f \in
W \), or equivalently, a linear map \( f \colon \mathbb F \to W \). In case \(
n = 1 \), a multilinear map \(f \colon (V_1) \to W \) is an ordinary linear
map.

As is the case with categories, multicategories also have an adequate
\textit{composition operation}. In the case of \( \Vect \), if we have a
finite string of multilinear maps \( g_1, \ldots, g_n \) given by
\begin{equation*}
  g_j \colon (U_{j1}, \ldots, U_{jk_j}) \to V_j,
\end{equation*}
then we have the composite multilinear map
\begin{equation*}
  f \circ (g_1, \ldots, g_n) 
    \colon (U_{11}, \ldots, U_{1k_1}, \ldots, U_{n1}, \ldots, U_{nk_n})
    \to W
\end{equation*}
whose underlying function is given by \( f \circ (g_1 \times \ldots \times
g_n) \). Of course, the identity linear map \( \id_W \colon (W) \to W \) is a
(multi)linear map, and these satisfy suitable associativity and unit laws.
Naturally, if we consider the multilinear maps whose domain is a string of
length 1, we precisely recover the ordinary category of vector spaces and
linear maps.

The notion of multicategory can be traced back to \cite[p. 103]{Lam69}, where
it was developed for the purpose of studying deductive systems in logic, and
it has since found applications in algebraic topology and higher category
theory.  A comprehensive introduction to these categorical structures is given
in \cite{Lei04}. 

The composition operation of \( \Vect \) carries an underlying structure on
the domains, given by \textit{concatenation} of strings, as does the identity
multilinear map, \textit{casting} each vector space as a string of vector
spaces of length 1. These operations are well modeled by the multiplication
and unit natural transformations for the \textit{free monoid monad} \( (-)^*
\) on \( \Set \).  Indeed, the multicategory of vector spaces may be described
by a span of functions
\begin{equation*}
  \begin{tikzcd}
    \{ \text{set of vector spaces} \}^*
    & \{ \text{multilinear maps} \} \ar[l,"\text{domain}",swap]
                                    \ar[r,"\text{codomain}"] 
    & \{ \text{set of vector spaces} \}
  \end{tikzcd}
\end{equation*}
and the identity and composition operation, as well as the associativity and
unit laws, can be described diagrammatically as well, via the monad structure
of \( (-)^* \), and its properties. Let \( \Vect_0 \) be the set of vector
spaces, and \( \Vect_1 \) be the set of multilinear maps. We obtain the set \(
\Vect_2 \) of ``multicomposable'' pairs of multilinear maps via the pullback
\begin{equation*}
  \begin{tikzcd}
    \Vect_2 \ar[r] \ar[d] \ar[dr,"\ulcorner",phantom,very near start]
    & \Vect_1 \ar[d,"\text{domain}"] \\
    \Vect_1^* \ar[r,"\text{codomain}^*",swap]
    & \Vect_0^*
  \end{tikzcd}
\end{equation*}
so that the composition operation is given by
\begin{equation*}
  \begin{tikzcd}
    && \Vect_2 \ar[dd,dashed,no head,
                   "\text{composition}" description,shorten=-1mm] 
               \ar[rd] \ar[ld] \\
    & \Vect_1^* \ar[ld,"\text{domain}^*" description] 
                \ar[rd,"\text{codomain}^*" description]
    && \Vect_1  \ar[ld,"\text{domain}" description] 
                \ar[rd,"\text{codomain}" description] \\
    \Vect_0^{**} \ar[ddr,"\text{concatenation}",swap]
    && \Vect_0^* \ar[d,dashed,shorten=-1mm] 
    && \Vect_0   \ar[ddl,equal] \\
    && \Vect_1  \ar[ld,"\text{domain}" description] 
                \ar[rd,"\text{codomain}" description] \\
    & \Vect_0^* && \Vect_0 \\
  \end{tikzcd}
\end{equation*}
and the identity maps are given by
\begin{equation*}
  \begin{tikzcd}[column sep=huge]
    \Vect_0 \ar[d,"\text{cast}",swap] 
    & \Vect_0 \ar[l,equal] \ar[r,equal] 
              \ar[d,"\text{identity}" description,shorten=-1mm] 
    & \Vect_0 \ar[d,equal]\\
    \Vect_0^* 
    & \Vect_1 \ar[l,"\text{domain}" description]
              \ar[r,"\text{codomain}" description] 
    & \Vect_0
  \end{tikzcd}
\end{equation*}

This diagrammatic description lends itself to the ``internalization'' of the
notion of multicategory to any category \( \cat V \) with pullbacks, provided
we also replace the free monoid monad on \( \Set \) by an arbitrary monad
\(T=(T,m,e) \) on \( \cat V \). Incidentally, this also allows for the
``shape'' of the domain to be more general than ``finite strings''. Indeed,
these ideas gave rise to \textit{\(T\)-catégories}, first defined in
\cite{Bur71}, and later studied by \cite{Her00} when \(T\) is a
\textit{cartesian} monad. In these works, generalized multicategories are
defined to be monads in the bicategory \( \Span_T(\cat V) \).

The main theme of this thesis is to obtain a unified perspective on the
effective descent morphisms in generalized categorical structures, and the
purpose of this chapter, covering the work done in \cite{PL23}, is to provide
an understanding of these morphisms in the category \( \CatTV \) of
\textit{T-categories} internal to \( \cat V \). We will undertake two
approaches.

Our first approach to the study of effective descent morphisms in \( \CatTV \)
can be summed up in four steps:
\begin{itemize}[label=--,noitemsep]
  \item
    we construct \( \CatTV \) as a 2-equalizer of a diagram of categories of
    essentially algebraic theories internal to \( \cat V \),
  \item
    we recall from \cite[Proposition 3.2.4]{Cre99} that effective descent
    morphisms in essentially algebraic theories internal to \( \cat V \) can
    be described via descent conditions on the underlying data,
  \item
    we recall the description of effective descent morphisms of a
    pseudoequalizer (isoinserter) via Proposition \ref{prop:pseq.descent},
  \item
    we confirm that the embedding of \( \CatTV \) into the associated
    pseudoequalizer reflects effective descent morphisms.
\end{itemize}
Section \ref{sect:refl.tgrph} illustrates the tools and techniques used for
the construction of \( \CatTV \) in a simpler \(T\)-structure, that of
\textit{reflexive} \(T\)\textit{-graphs}, and the full construction is carried
out in Section \ref{sect:cat.tv}. Afterwards, Section \ref{sect:bilim} is
devoted to confirming that effective descent morphisms are reflected along the
embedding of \( \CatTV \) into the associated pseudoequalizer.

In Section \ref{sect:direct}, we provide a second method to obtain a
description of the effective descent functors of \( T \)-categories.  Here, we
employ the ideas of \cite{Cre99} to extend his results to our setting, by
directly studying the ``sketch'' of these generalized multicategories.

\section{Reflexive $T$-graphs}
\label{sect:refl.tgrph}

Let \(T=(T,m,e)\) be a monad on a category \( \cat V \) with pullbacks. For
the purpose of studying effective descent functors of \(T\)-categories, we
obtain sharper results by describing \( \CatTV \) as a 2~-~equalizer of a
suitable diagram of categories. Before providing such a description, we will
first consider a simpler \(T\)-structure as a guiding example. 

For a pointed endofunctor \( T=(T,e) \) on \( \cat V \), a \textit{reflexive}
\(T\)\textit{-graph} \( x \) consists of
\begin{itemize}[noitemsep,label=--]
  \item
    an object \( x_0 \) of \textit{objects},
  \item
    an object \( x_1 \) of \textit{arrows},
  \item
    a \textit{domain} morphism \( d_1 \colon x_1 \to Tx_0 \), 
  \item
    a \textit{codomain} morphism \( d_0 \colon x_1 \to x_0 \),
  \item
    a \textit{loop} morphism \( s_0 \colon x_0 \to x_1 \),
\end{itemize}
which must satisfy \( d_0 \circ s_0 = \id \) and \( d_1 \circ s_0 = e \). We
note that this data can be organized in the following diagram:
\begin{equation}
  \label{eq:refl.tgraph}
  \begin{tikzcd}
    x_0 \ar[r,"s_0",shift left] \ar[rd,"e",swap]
    & x_1 \ar[l,"d_0",shift left] \ar[d,"d_1"] \\
    & Tx_0
  \end{tikzcd}
\end{equation}
Moreover, a morphism of reflexive \(T\)-graphs \( f \colon x \to y \)
consists of
\begin{itemize}[noitemsep,label=--]
  \item
    an \textit{object} morphism \( f_0 \colon x_0 \to y_0 \),
  \item
    an \textit{arrow} morphism \( f_1 \colon x_1 \to y_1 \),
\end{itemize}
satisfying \( d_0 \circ f_1 = f_0 \circ d_0 \), \( d_1 \circ f_1 = Tf_0 \circ
d_1 \) and \( f_1 \circ s_0 = s_0 \circ f_0 \). These form a category \(
\RGrphTV \), with componentwise composition and identities. We observe that
these are the \textit{T-graphes pointés} of~\cite{Bur71}. 

We take this opportunity to remark that reflexive \(T\)-graphs allow us to
draw conclusions about the descent theory of categorical structures:

\begin{lemma}[{\cite[Lemma A.3]{PL23}}]
  \label{lem:refl.graph}
  Let \( \cat E \) be a class of epimorphisms in \( \RGrphTV \) such that
  \begin{itemize}[label=--,noitemsep]
    \item
      \( \cat E \) contains all split epimorphisms,
    \item
      \( \cat E \) is closed under composition,
    \item
      if \( g \circ f, f \in \cat E \), then \(g \in \cat E\) (right
      cancellation),
  \end{itemize}
  and let \( f \colon x \to y  \) be a morphism of reflexive \(T\)-graphs. If
  \( f_1 \in \cat E \), then \( f_0 \in \cat E \). 
\end{lemma}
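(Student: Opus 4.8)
The plan is to leverage reflexivity: the loop morphism \( s_0 \) supplies a section for the codomain morphism \( d_0 \) on both \( x \) and \( y \), and the three closure properties imposed on \( \cat E \) are then precisely what is needed to push \( f_0 \) into \( \cat E \) via a short two-step diagram chase.

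First I would observe that in any reflexive \( T \)-graph the codomain morphism \( d_0 \colon x_1 \to x_0 \) is a split epimorphism, since the defining identity \( d_0 \circ s_0 = \id_{x_0} \) exhibits \( s_0 \) as a section; as \( \cat E \) contains all split epimorphisms, both \( d_0 \colon x_1 \to x_0 \) and \( d_0 \colon y_1 \to y_0 \) lie in \( \cat E \). Next, since \( f_1 \in \cat E \) and \( \cat E \) is closed under composition, the composite \( d_0 \circ f_1 \colon x_1 \to y_0 \) lies in \( \cat E \). The compatibility of \( f \) with the codomain morphisms gives \( d_0 \circ f_1 = f_0 \circ d_0 \) (with \( d_0 \colon x_1 \to x_0 \) on the right-hand side), so \( f_0 \circ d_0 \in \cat E \); since \( d_0 \in \cat E \), the right-cancellation hypothesis applied to the pair \( (f_0, d_0) \) yields \( f_0 \in \cat E \), as desired.

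There is no genuine obstacle here: the only insight is that reflexivity turns the codomain face maps into split epimorphisms, after which the closure hypotheses on \( \cat E \) do all the work; in particular the argument uses neither the domain morphisms \( d_1 \) nor the relation \( f_1 \circ s_0 = s_0 \circ f_0 \). The reason for isolating the statement at this level of abstraction is that it then applies verbatim to \( \cat E \) the class of (effective, almost) descent morphisms in any category \( \cat V \) with finite limits — such a class contains the split epimorphisms and enjoys the stated composition and cancellation properties — so that the hypothesis on the object component \( p_0 \) in Le Creurer's criterion becomes redundant, as claimed in the introduction.
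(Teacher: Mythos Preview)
Your proof is correct and follows essentially the same line as the paper's: observe that \( d_0 \) is split epic via \( s_0 \), compose with \( f_1 \), use the naturality square \( d_0 \circ f_1 = f_0 \circ d_0 \), and apply right cancellation. Your added remarks about which parts of the structure are actually used and why the abstraction is chosen are accurate and match the paper's subsequent commentary.
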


\begin{proof}
  In any reflexive \(T\)-graph \(x\), the codomain morphism \( d_0 \colon x_1
  \to x_0 \) is a split epimorphism since \( d_0 \circ s_0 = \id \), so that
  \( d_0 \in \cat E \). If \( f_1 \in \cat E \), then we have \( d_0 \circ f_1
  = f_0 \circ d_0 \in \cat E \) by closure under composition, and \( f_0 \in
  \cat E \) by right cancellation.
\end{proof}

Of particular interest are the classes \( \cat E \) given by the (effective,
almost) descent morphisms, which satisfy each of the properties. Taking \( T =
\id \), we observe that the condition that \( p_0 \) is an effective descent
morphism is redundant in Theorem \ref{thm:int.lec}, and can be omitted.

Returning to our main point, we observe that the category \( \RGrphTV \) can
be described by a (2-)equalizer of diagram categories as well: we consider the
graph
\begin{equation*}
  \cat G= \begin{tikzcd}
            x_0 \ar[r,"s_0",shift left] \ar[rd,"e_0",swap]
            & x_1 \ar[l,"d_0",shift left] \ar[d,"d_1"] \\
            & x'_0
          \end{tikzcd}
\end{equation*}
with relations \( d_1 \circ s_0 = e_0 \) and \( d_0 \circ s_0 = \id \), and we
consider the diagram category \( [\cat G, \cat V] \), together with
functors
\begin{equation*}
  \begin{tikzcd}
    \cat V & {[\cat G,\cat V]} \ar[l,"x_0^*",swap]
                               \ar[r,"e_0^*"]
           & {[2,\cat V]}
  \end{tikzcd}
\end{equation*}
induced by the inclusions \( x_0 \to \cat G \) and \( (x_0 \xrightarrow{e_0}
y_0) \to \cat G \), where \( 2 = (\cdot \to \cdot) \).

We recall that any natural transformation \( \phi \colon F \to G \) of
functors \( \cat C \to \cat D \) is precisely determined by a functor \(
\phi^\sharp \colon \cat C \to [2,\cat D] \), which satisfies \( \ev_0 \circ
\phi^\sharp = F \) and \( \ev_1 \circ \phi^\sharp = G \), where \( \ev_j
\colon [\cat J, \cat D] \to \cat D \) is the evaluation functor. 

For example, if we take the point \( e \colon \id \to T \) of the endofunctor
\( T \), \( e^\sharp \colon \cat V \to [2, \cat V]\) is a functor which
satisfies \( \ev_0 \circ e^\sharp = \id \) and \( \ev_1 \circ e^\sharp = T \).
With this notation, we obtain the following statement:

\begin{lemma}
  We have a 2-equalizer diagram
  \begin{equation}
    \begin{tikzcd}
      \RGrphTV \ar[r]
      & {[\cat G, \cat V]} \ar[r,"e^*_0",shift left]
                           \ar[r,"e^\sharp \circ x_0^*",swap,shift right]
      & {[2, \cat V]}
    \end{tikzcd}
  \end{equation}
\end{lemma}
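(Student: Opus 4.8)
The plan is to exhibit the canonical forgetful functor \( J \colon \RGrphTV \to [\cat G, \cat V] \) — which sends a reflexive \( T \)-graph \( x \) to the diagram with \( x_0' = Tx_0 \), \( e_0 = e_{x_0} \), and the remaining data and relations as in \eqref{eq:refl.tgraph} — and to check directly that \( J \), together with the evident identity \( e_0^* \circ J = e^\sharp \circ x_0^* \circ J \), enjoys the universal property of the 2-equalizer. Since 2-equalizers in \( \CAT \) are computed as ordinary equalizers — the 2-dimensional part of the universal property being automatic once the 1-dimensional part holds, by a componentwise argument on 2-cells carried out below — this reduces to showing that \( J \) is injective on objects and on morphisms, that its image on objects is exactly \( \{ D \in [\cat G, \cat V] : e_0^* D = (e^\sharp \circ x_0^*) D \} \), and that its image on morphisms is exactly \( \{ f : e_0^* f = (e^\sharp \circ x_0^*) f \} \).

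For objects: a diagram \( D \in [\cat G, \cat V] \) is a choice of objects \( x_0, x_1, x_0' \) of \( \cat V \) and morphisms \( s_0 \colon x_0 \to x_1 \), \( d_0 \colon x_1 \to x_0 \), \( d_1 \colon x_1 \to x_0' \), \( e_0 \colon x_0 \to x_0' \) with \( d_1 \circ s_0 = e_0 \) and \( d_0 \circ s_0 = \id \). The object \( e_0^* D \) of \( [2, \cat V] \) is the arrow \( x_0 \xrightarrow{e_0} x_0' \), while \( (e^\sharp \circ x_0^*) D = e^\sharp(x_0) \) is the arrow \( x_0 \xrightarrow{e_{x_0}} Tx_0 \); so \( e_0^* D = (e^\sharp \circ x_0^*) D \) holds precisely when \( x_0' = Tx_0 \) and \( e_0 = e_{x_0} \), in which case the remaining data and the relation \( d_1 \circ s_0 = e_0 \) become exactly those of a reflexive \( T \)-graph. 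This identifies the objects in the image of \( J \) with the equalizing objects.

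For morphisms (and 2-cells): a morphism \( f \colon D \to D' \) of \( \cat G \)-diagrams coming from reflexive \( T \)-graphs \( x, y \) consists of \( f_0 \colon x_0 \to y_0 \), \( f_1 \colon x_1 \to y_1 \), \( f_0' \colon Tx_0 \to Ty_0 \) with \( f_1 \circ s_0 = s_0 \circ f_0 \), \( f_0 \circ d_0 = d_0 \circ f_1 \), \( f_0' \circ d_1 = d_1 \circ f_1 \), and \( f_0' \circ e_{x_0} = e_{y_0} \circ f_0 \). Now \( e_0^* f \) is the commuting square with vertical legs \( f_0 \) and \( f_0' \), whereas \( (e^\sharp \circ x_0^*) f = e^\sharp(f_0) \) is the naturality square of \( e \) at \( f_0 \), with vertical legs \( f_0 \) and \( Tf_0 \); hence \( e_0^* f = (e^\sharp \circ x_0^*) f \) iff \( f_0' = Tf_0 \). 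Under this identification the equation \( f_0' \circ e_{x_0} = e_{y_0} \circ f_0 \) is automatic (naturality of \( e \)), and the other three equations are exactly the axioms for \( (f_0, f_1) \) to be a morphism of reflexive \( T \)-graphs; since composites and identities agree componentwise, \( J \) is injective on morphisms with image the equalizing morphisms. Finally, for any \( K, L \colon \cat A \to \RGrphTV \) and any natural transformation \( \theta \colon JK \Rightarrow JL \) with \( e_0^* \theta = (e^\sharp \circ x_0^*) \theta \), each component satisfies \( e_0^*(\theta_a) = (e^\sharp \circ x_0^*)(\theta_a) \), hence lies in the image of \( J \), so \( \theta = J\bar\theta \) for a unique \( \bar\theta \colon K \Rightarrow L \); this yields the 2-dimensional universal property.

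I expect the only genuinely delicate point to be keeping track of the identification \( e^\sharp \colon \cat V \to [2, \cat V] \) and the ensuing constraint \( f_0' = Tf_0 \) on morphisms. It is worth flagging that this constraint is a real restriction: \( \RGrphTV \) is \emph{not} a full subcategory of \( [\cat G, \cat V] \) unless \( e \) is monic, which is precisely why one must equalize on morphisms as well as on objects — so a naive full-subcategory description would not suffice, but the 2-equalizer captures exactly the right morphisms.
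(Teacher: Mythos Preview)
Your proof is correct and follows essentially the same approach as the paper: both arguments amount to unwinding the 2-equalizer in \( \CAT \) to identify its objects and morphisms with those of \( \RGrphTV \). The paper's proof is a single sentence stating that one obtains ``the full subcategory of \( [\cat G, \cat V] \) whose diagrams are of the form \eqref{eq:refl.tgraph}'', whereas you carry out the verification explicitly on objects, morphisms, and 2-cells.

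Your final remark is worth noting: you correctly observe that the constraint \( f_0' = Tf_0 \) on morphisms is genuine, so that \( \RGrphTV \) is \emph{not} in general the full subcategory on the equalizing objects (it would be if each \( e_{x_0} \) were epic, since naturality gives \( f_0' \circ e_{x_0} = Tf_0 \circ e_{x_0} \)). The paper's phrase ``full subcategory'' is therefore imprecise; your treatment of the morphism level is the more careful one.
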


\begin{proof}
  From the condition imposed by the 2-equalizer in \( \CAT \) we obtain the
  full subcategory of \( [\cat G, \cat V] \) whose diagrams are of the form
  \eqref{eq:refl.tgraph}.
\end{proof}

\section{Internal $T$-categories}
\label{sect:cat.tv}

Let \( T=(T,m,e) \) be a monad on a category \( \cat V \) with finite limits.
Recall that \(T\) is said to be \textit{cartesian} if \(T\) preserves
pullbacks and
\begin{equation*}
  \begin{tikzcd}
    x \ar[d,"f",swap] \ar[r,"e_x"]
      \ar[rd,"\ulcorner",phantom,very near start]
      & Tx \ar[d,"Tf"] \\
    y \ar[r,"e_y",swap] & Ty
  \end{tikzcd}
  \qquad
  \begin{tikzcd}
    TTx \ar[r,"m_x"] \ar[d,"TTf",swap]
        \ar[rd,"\ulcorner",phantom,very near start]
      & Tx \ar[d,"Tf"] \\
    TTy \ar[r,"m_y",swap] & Ty
  \end{tikzcd}
\end{equation*}
are pullback squares for all \(f \colon x \to y \). 

The category \( \CatTV \) of \textit{T-categories} was defined
diagrammatically in \cite[{}I.1]{Bur71} for general monads \(T\), and this is
the definition we will use throughout this chapter. However, Burroni also
observed that the category of \(T\)-categories can equivalently be defined as
the category of monads for the (proarrow) equipment \( \Span_T(\cat V) \) of
\(T\)\textit{-spans}, for \(T\) a cartesian monad; indeed, this is precisely
how \(T\)-categories were defined in \cite{Her00}. 

Here, we shall verify that \( \CatTV \) can be given via a 2-equalizer
involving the category of \( \cat V \)-models for a finite limit sketch \(
\cat S \). Its underlying graph is given by
\begin{equation}
  \label{almost.cosimp}
  \begin{tikzcd}[row sep=large, column sep=large]
    x_0 \ar[r,shift left,"s_0"]
        \ar[rd,"e_0",swap]
    & x_1 \ar[r,shift left=0.6em,"s_0" description,near start]
          \ar[r,shift left=1.2em,"s_1",near start]
          \ar[l,"d_0",shift left]
          \ar[d,"d_1",swap]
          \ar[rd,"e_1" description] 
    & x_2 \ar[l,"d_0" description,swap,near start]
          \ar[l,shift left=0.6em,"d_1",near start]
          \ar[d,"d_2",swap]
    & x_3 \ar[l,shift left=0.6em,"d_2"]
          \ar[l,"d_1" description]
          \ar[l,shift right=0.6em,"d_0",swap] 
          \ar[d,"d_3"] \\
    & x'_0 \ar[r,"s'_0",shift left]
    & x'_1 \ar[d,"d'_1",swap]
           \ar[l,"d'_0",shift left]
    & x'_2 \ar[l,shift right,"d'_0",swap]
           \ar[l,shift left,"d'_1"] 
           \ar[d,"d'_2"] \\
    && x''_0  \ar[lu,"m_0"] 
              \ar[r,"s''_0",shift left]
    & x''_1 \ar[lu,"m_1" description] \ar[l,"d_0''",shift left]
  \end{tikzcd}
\end{equation}
with the following relations\footnote{We point out the resemblance of these
relations with the cosimplicial identities.}
\begin{itemize}[noitemsep,label=--]
  \item
    \( s_1 \circ s_0 = s_0 \circ s_0 \colon x_0 \to x_2 \),
  \item
    \( d_{1+i} \circ s_i = e_i \colon x_i \to x'_i \), 
  \item
    \( d_i \circ s_j = \id \colon x_i \to x_i \), 
  \item
    \( d_2 \circ s_0 = s_0' \circ d_1 \colon x_1 \to x'_1 \),
  \item
    \( d_0 \circ s_1 = s_0 \circ d_0 \colon x_1 \to x_1 \),
  \item
    \( d'_0 \circ s'_0 = \id \colon x'_0 \to x'_0 \),
  \item
    \( d_{1+i} \circ d_{1+i} = m_i \circ d'_{1+i} \circ d_{2+i} \colon x_{2+i}
    \to x'_i\), 
  \item
    \( d_{1+i} \circ d_0 = d_0' \circ d_{2+i} \colon x_{2+i} \to x_i \),  
  \item
    \( d'_j \circ d_{2+i} = d_{1+i} \circ d_j \colon x_{2+i} \to x'_i \), 
  \item
    \( d_0 \circ d_1 = d_0 \circ d_0 \colon x_2 \to x_0 \),
  \item
    \( d_j \circ d_{1+i} = d_i \circ d_j \colon x_3 \to x_1 \),
  \item
    \( d'_1 \circ d_0' = d_0'' \circ d_2' \colon x_2' \to x_0'' \),
  \item
    \( d'_0 \circ d'_1 = d'_0 \circ d'_0 \colon x'_2 \to x_0' \),
\end{itemize}
and limit cones
\begin{equation}
  \label{eq:pb.squares}
  \begin{tikzcd}
    x_2 \ar[r,"d_0"] \ar[d,"d_2",swap] 
        \ar[rd,"\ulcorner",phantom,very near start]
      & x_1 \ar[d,"d_1"] \\
    x'_1 \ar[r,"d'_0",swap] & x'_0
  \end{tikzcd}
  \qquad
  \begin{tikzcd}
    x_3 \ar[r,"d_0"] \ar[d,"d_3",swap] 
        \ar[rd,"\ulcorner",phantom,very near start]
      & x_2 \ar[d,"d_2"] \\
    x'_2 \ar[r,"d'_0",swap] & x'_1
  \end{tikzcd}
  \qquad
  \begin{tikzcd}
    x_2' \ar[r,"d'_0"] \ar[d,"d'_2",swap] 
         \ar[rd,"\ulcorner",phantom,very near start]
      & x_1' \ar[d,"d'_1"] \\
    x''_1 \ar[r,"d''_0",swap] & x''_0
  \end{tikzcd}
\end{equation}
with \( i=0,1 \) and \( j \leq i \). We let \( \ModSV \) be the category of
\(\cat V \)-models for the sketch \( \cat S \). Moreover, abusing notation, we
will also denote by \( \cat S \) the category freely generated by the
underlying graph of \( \cat S \) modulo the given relations.

\begin{remark}[Objects of \(n\)-chains]
  As in Remark \ref{rem:chains}, it is convenient to denote
  \(x_2\) and \( x_3 \) to be the objects of \textit{2-chains} and
  \textit{3-chains} of morphisms respectively, for an internal
  (\(T\)-)category \( x \).
\end{remark}

\begin{lemma}[{\cite[Lemma 3.1]{PL23}}]
  \label{lem:cat.tv.eq}
  If \(T\) preserves pullbacks, then we have a 2-equalizer diagram
  \begin{equation}
    \label{eq:cat.tv.eq}
    \begin{tikzcd}
      \CatTV \ar[r]
      & \ModSV \ar[r,shift left,"d^1"] 
               \ar[r,shift right,"d^0",swap]
      & {[\cat S_T,\cat V]} \times {[2, \cat V]}
    \end{tikzcd}
  \end{equation}
  where the functors \( d^1 \), \( d^0 \) are respectively given by
  \begin{equation*}
    \begin{tikzcd}[column sep=large]
      \ModSV \ar[r] 
      & {[\cat S, \cat V]} 
          \ar[r,"{(I^*_T,\,I^*_{d'_1})}"]
      & {[\cat S_T, \cat V]} \times {[2, \cat V]} \\
      \ModSV \ar[r] 
      & {[\cat S, \cat V]} 
          \ar[r,"{(I^*_{s_0,d_0},I^*_{d_1})}"]
      & {[\cat S_{s_0,d_0}, \cat V]} \times {[2, \cat V]} 
          \ar[r,"{(T,m,e)_! \times T_!}"]
      & {[\cat S_T, \cat V]} \times {[2, \cat V]} 
    \end{tikzcd}
  \end{equation*}
  and \( I_T \colon \cat S_T \to \cat S \), \( I_{d'_1} \colon 2 \to \cat S \), 
  \( I_{s_0,d_0} \colon \cat S_{s_0,d_0} \to \cat S \) and \( I_{d_1} \colon 2
  \to \cat S \) are the subcategories of \( \cat S \) respectively determined
  by the subgraphs
  \begin{equation*}
    \begin{tikzcd}
      x_0 \ar[d,shift left,"s_0"]
           \ar[r,"e_0"]
      & x'_0 \ar[d,shift left,"s'_0"]
      & x''_0 \ar[d,shift left,"s''_0"] \ar[l,"m_0",swap] \\
      x_1 \ar[u,"d'_0",shift left]
          \ar[r,"e_1",swap]
      & x'_1 \ar[u,"d'_0",shift left]
      & x''_1 \ar[u,"d''_0",shift left]
              \ar[l,"m_1"]
    \end{tikzcd},
    \qquad
    \begin{tikzcd}
      x'_1 \ar[r,"d'_1"] & x''_0
    \end{tikzcd},
    \qquad
    \begin{tikzcd}
      x_0 \ar[r,shift left,"s_0"]
      & x_1 \ar[l,"d'_0",shift left]
    \end{tikzcd},
    \qquad
    \begin{tikzcd}
      x_1 \ar[r,"d_1"] & x'_0
    \end{tikzcd},
  \end{equation*}
  the functor \( T_! \colon [2, \cat V] \to [2, \cat V] \) is given by the
  direct image of \(T\), and \( (T,m,e)_! \colon [\cat S_{s_0,d_0}, \cat V]
  \to [\cat S_T, \cat V] \) is given by
  \begin{equation*}
    \begin{tikzcd}
      a \ar[d,"f",shift left] \\ b \ar[u,"g",shift left]
    \end{tikzcd}
    \quad \mapsto \quad
    \begin{tikzcd}
      a \ar[d,"f",shift left] 
        \ar[r,"e_a"]
      & Ta \ar[d,"Tf",shift left] 
      & TTa \ar[d,"TTf",shift left] 
            \ar[l,"m_a",swap] \\ 
      b \ar[u,"g",shift left]
        \ar[r,"e_b",swap]
      & Tb \ar[u,"Tg",shift left]
      & TTb \ar[u,"TTg",shift left] \ar[l,"m_b"]
    \end{tikzcd}.
  \end{equation*}

  Moreover, if \(T\) is cartesian, then \( \CatTV \) has pullbacks, and the
  inclusion \( \CatTV \to \Mod(\cat S, \cat V) \) preserves them.
\end{lemma}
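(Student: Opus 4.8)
The plan is to unwind, in parallel, the defining (2-)universal property of the equalizer \eqref{eq:cat.tv.eq} and Burroni's diagrammatic definition of $\CatTV$ from \cite[I.1]{Bur71}, and to check that the two descriptions coincide; the pullback assertion will then follow from the fact that $\cat V$-models of a finite limit sketch have pullbacks computed ``pointwise''. First I would spell out a $\cat V$-model of $\cat S$ concretely: it is the data of the objects and morphisms drawn in \eqref{almost.cosimp}, subject to the listed relations, with the three squares \eqref{eq:pb.squares} sent to pullbacks in $\cat V$; a morphism of models is a natural transformation. Then I would compute $d^0$ and $d^1$ on such a model $M$: the functor $d^1 = (I_T^{*}, I_{d'_1}^{*})$ merely records the subdiagram of $M$ indexed by $\cat S_T$ together with the arrow $M d'_1$, whereas $d^0$ records the $\cat S_T$-shaped diagram obtained by applying $(T,m,e)_{!}$ to the span $(M s_0, M d_0)$, together with the arrow $T(M d_1)$.

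Next I would observe, exactly as in the preceding lemma for reflexive $T$-graphs, that a model $M$ lies in the (2-)equalizer precisely when $d^0 M = d^1 M$, and that this condition forces $M x'_i = T M x_i$, $M x''_i = T^{2} M x_i$, $M e_i = e_{M x_i}$, $M m_i = m_{M x_i}$, the primed (resp.\ double-primed) faces and degeneracies to be the $T$- (resp.\ $T^{2}$-)images of the unprimed ones, and $M d'_1 = T(M d_1)$; the same computation on morphisms shows that a model morphism $h$ with $d^0 h = d^1 h$ has primed components equal to the $T$-images of its unprimed components. Here is where the hypothesis enters: since $T$ preserves pullbacks, the pullback cone defining $M x'_2$ coincides with $T$ applied to the pullback cone defining $M x_2$, so $M x'_2 \cong T(M x_2)$, and similarly $M x_3$ becomes the object of composable triples. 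Consequently the data of a model in the equalizer reduces to a $T$-graph $(M d_1 \colon M x_1 \to T M x_0,\ M d_0 \colon M x_1 \to M x_0)$ with a unit $M s_0$ and a composition $M d_1 \colon M x_2 \to M x_1$, and the relations of $\cat S$ become exactly the unit and associativity laws of a $T$-category in the sense of \cite{Bur71}: e.g.\ $d_{1}\circ d_{1} = m_0 \circ d'_1 \circ d_2$ is the law computing the domain of a composite through $m$, the $x_3$-relations give associativity, and the $s_0, s_1$-relations give unitality. This identifies the 2-equalizer with $\CatTV$, compatibly with the inclusion into $\ModSV$.

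For the final assertion I would argue that $\ModSV$ has pullbacks, computed componentwise in $\cat V$ (a componentwise pullback of models is again a model, since limits commute with limits, so the designated cones are still sent to limits). Given a cospan of $T$-categories $M \to P \leftarrow N$, form the componentwise pullback $Q$ in $\ModSV$; because $T$ is cartesian, hence preserves pullbacks, one gets $Q x'_i = T M x_i \times_{T P x_i} T N x_i \cong T(Q x_i)$ and likewise $Q x''_i \cong T^{2}(Q x_i)$, while naturality of $e$ and $m$ identifies $Q e_i$ with $e_{Q x_i}$ and $Q m_i$ with $m_{Q x_i}$; thus $Q$ again satisfies $d^0 Q = d^1 Q$, i.e.\ $Q$ is a $T$-category. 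Hence $\CatTV$ has pullbacks and $\CatTV \to \ModSV$ preserves them.

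The main obstacle is the first part: carefully matching the many relations of $\cat S$ — once the equalizer condition has collapsed the primed objects to $T$-images — with the precise unit and associativity axioms of Burroni's $T$-categories, and matching the pullback cones \eqref{eq:pb.squares} with the objects of composable $2$- and $3$-chains, which is exactly where ``$T$ preserves pullbacks'' is indispensable. Everything else (the computation of $d^0, d^1$ on objects and morphisms, and the pointwise-limit argument in $\ModSV$) is routine bookkeeping.
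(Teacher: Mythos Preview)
Your proposal is correct and follows essentially the same line as the paper: unwind the equalizer condition to see that it forces $Mx'_i = T(Mx_i)$, $Mx''_i = T^2(Mx_i)$, and the primed structure maps to be the $T$-images of the unprimed ones, then use that $T$ preserves pullbacks to identify $Mx'_2 \cong T(Mx_2)$, so that the resulting data and relations are exactly Burroni's. The only minor difference is in the final assertion: the paper observes that when $T$ is cartesian both $d^0$ and $d^1$ preserve pullbacks, so \eqref{eq:cat.tv.eq} is a 2-equalizer in the 2-category of categories with pullbacks and pullback-preserving functors, from which the conclusion follows abstractly; your direct verification that the componentwise pullback in $\ModSV$ of a cospan of $T$-categories is again a $T$-category is precisely the concrete content of that abstract statement.
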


\begin{proof}
  The objects of the 2-equalizer are precisely those diagrams of the form
  \begin{equation}
    \label{eq:cat.tv.diag}
    \begin{tikzcd}[row sep=large, column sep=large]
      x_0 \ar[r,shift left,"s_0"]
          \ar[rd,"e_{x_0}",swap]
      & x_1 \ar[r,shift left=0.6em,"s_0" description,near start]
            \ar[r,shift left=1.2em,"s_1",near start]
            \ar[l,"d_0",shift left]
            \ar[d,"d_1",swap]
            \ar[rd,"e_{x_1}" description] 
      & x_2 \ar[l,"d_0" description,swap,near start]
            \ar[l,shift left=0.6em,"d_1",near start]
            \ar[d,"d_2",swap]
      & x_3 \ar[l,shift left=0.6em,"d_2"]
            \ar[l,"d_1" description]
            \ar[l,shift right=0.6em,"d_0",swap] 
            \ar[d,"d_3"] \\
      & Tx_0 \ar[r,"Ts_0",shift left]
      & Tx_1 \ar[d,"Td_1",swap]
             \ar[l,"Td_0",shift left]
      & x'_2 \ar[l,shift right,"d'_0",swap]
             \ar[l,shift left,"d'_1"] 
             \ar[d,"d'_2"] \\
      && TTx_0  \ar[lu,"m_{x_0}"]
      & TTx_1 \ar[lu,"m_{x_1}" description] \ar[l,"TTd_0"]
    \end{tikzcd}
  \end{equation}
  satisfying the relations imposed by \( \cat S \), such that the following
  squares
  \begin{equation}
    \label{eq:pb.sqs.2}
    \begin{tikzcd}
      x_2 \ar[r,"d_0"] \ar[d,"d_2",swap] 
          \ar[rd,"\ulcorner",phantom,very near start]
        & x_1 \ar[d,"d_1"] \\
      Tx_1 \ar[r,"Td_0",swap] & Tx_0
    \end{tikzcd}
    \qquad
    \begin{tikzcd}
      x_3 \ar[r,"d_0"] \ar[d,"d_3",swap] 
          \ar[rd,"\ulcorner",phantom,very near start]
        & x_2 \ar[d,"d_2"] \\
      Tx_2 \ar[r,"Td_0",swap] & Tx_1
    \end{tikzcd}
    \qquad
    \begin{tikzcd}
      x'_2 \ar[r,"d'_0"] \ar[d,"d'_2",swap] 
           \ar[rd,"\ulcorner",phantom,very near start]
      & Tx_1 \ar[d,"Td_1"] \\
      TTx_1 \ar[r,"TTd_0",swap] & TTx_0
    \end{tikzcd}
  \end{equation}
  are pullback diagrams; compare \eqref{eq:cat.tv.diag} with \cite[Figure
  1]{Bur71}. Since \(T\) preserves pullbacks, the rightmost pullback diagram
  in \eqref{eq:pb.sqs.2} can be replaced by
  \begin{equation*}
    \begin{tikzcd}
      Tx_2 \ar[r,"Td_0"] \ar[d,"Td_2",swap] 
           \ar[rd,"\ulcorner",phantom,very near start]
      & Tx_1 \ar[d,"Td_1"] \\
      TTx_1 \ar[r,"TTd_0",swap] & TTx_0
    \end{tikzcd}
  \end{equation*}

  When \(T\) is cartesian, both \( d^1 \) and \( d^0 \) preserve pullbacks,
  hence \eqref{eq:cat.tv.eq} can be seen as a 2-equalizer in the category of
  categories with pullbacks and pullback-preserving functors.
\end{proof}

\section{Effective descent morphisms via bilimits}
\label{sect:bilim}

Let \( T = (T,m,e) \) be a cartesian monad on a category \( \cat V \) with
pullbacks. We begin by observing that:

\begin{lemma}[{\cite[Lemma A.1]{PL23}}]
  \label{lem:pb.class}
  Let \( \cat P \) be a pullback-stable class of morphisms of \( \cat V \).
  \(T\) creates such morphisms in its essential image.
\end{lemma}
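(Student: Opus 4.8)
The plan is to read ``\(T\) creates such morphisms in its essential image'' as the statement that, for a morphism \(f\colon x\to y\) of \(\cat V\), we have \(f\in\cat P\) whenever \(Tf\in\cat P\), together with the fact that this \(f\) is recovered from \(Tf\) by a canonical construction; so that, among the morphisms of \(\cat V\) lying in the essential image of the endofunctor \(T\) (those isomorphic in \([2,\cat V]\) to some \(Tf\)), the ones belonging to \(\cat P\) are precisely those whose preimage \(f\) belongs to \(\cat P\). The whole strategy is to exhibit \(f\) as a pullback of \(Tf\) and then invoke pullback-stability of \(\cat P\).

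The only input we need is that \(T=(T,m,e)\) is cartesian. By definition this includes that for every \(f\colon x\to y\) the naturality square of the unit
\begin{equation*}
  \begin{tikzcd}
    x \ar[d,"f",swap] \ar[r,"e_x"]
      \ar[rd,"\ulcorner",phantom,very near start]
      & Tx \ar[d,"Tf"] \\
    y \ar[r,"e_y",swap] & Ty
  \end{tikzcd}
\end{equation*}
is a pullback. Hence \(f\) (together with \(e_x\)) is the pullback of \(Tf\) along \(e_y\colon y\to Ty\), and if \(Tf\in\cat P\) then \(f\in\cat P\) at once, since \(\cat P\) is pullback-stable. Because a pullback is determined up to unique isomorphism, this square also pins down the lift: the morphism of \(\cat V\) witnessing that a given \(Tf\in\cat P\) lies in the essential image of \(T\) is, up to isomorphism, precisely the pullback \(e_y^{*}(Tf)\) of \(Tf\) along the unit, which is the ``creation'' aspect of the statement. (Taking \(\cat P\) to be the pullback-stable class of isomorphisms records the companion fact that \(T\) reflects isomorphisms.)

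I do not expect a genuine obstacle here: the lemma is a formal consequence of cartesianness, and the only care needed is in unwinding exactly what ``creates \(\cat P\)-morphisms in its essential image'' is meant to assert and checking that the single square above delivers all of it. In the intended applications \(\cat P\) is instantiated as the class of effective descent, descent, or almost descent morphisms of \(\cat V\) — each pullback-stable — so that, whenever one of the iterated components \(T^{k}f_i\) occurring in the sketch-theoretic presentation of \(\CatTV\) is known to belong to one of these classes, the underlying morphism \(f_i\) is forced to belong to it as well, which is how this lemma feeds into the bilimit analysis of effective descent morphisms in \(\CatTV\).
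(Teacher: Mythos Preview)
Your argument via the unit naturality square correctly establishes that \(Tf\in\cat P\) implies \(f\in\cat P\), and this is indeed half of what the lemma asserts. But you have overlooked the other half: the paper's proof also concludes \(TTf\in\cat P\), using the fact that the naturality square for the multiplication
\begin{equation*}
  \begin{tikzcd}
    TTx \ar[r,"m_x"] \ar[d,"TTf",swap]
        \ar[rd,"\ulcorner",phantom,very near start]
      & Tx \ar[d,"Tf"] \\
    TTy \ar[r,"m_y",swap] & Ty
  \end{tikzcd}
\end{equation*}
is a pullback as well, exhibiting \(TTf\) as the pullback of \(Tf\) along \(m_y\). This is not merely decorative: in the application (Theorem~\ref{thm:desc.cattv}) one needs that \(Tp_1\) being effective for descent forces both \(p_1\) and \(TTp_1\) to be effective for descent, since the sketch \(\cat S\) involves the components \(p''_0,\,p''_1\) corresponding to \(TTp_0,\,TTp_1\). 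Your closing remark about ``iterated components \(T^kf_i\)'' suggests you are only thinking of descending from \(T^k\) to the base, whereas one must also ascend from \(T\) to \(T^2\).

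The fix is immediate and uses exactly the same mechanism you already employed, so this is a small omission rather than a wrong approach; just add the multiplication square alongside the unit square.
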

\begin{proof}
  Let \( f \) be a morphism such that \( Tf \in \cat P \).  Since the
  naturality squares for \(m\) and \(e\) at \( f \) are pullbacks, we conclude
  that \( TTf,\, f \in \cat P \).
\end{proof}

We now consider the sketch \( \cat S \) constructed in Section
\ref{sect:cat.tv}.

\begin{lemma}[{\cite[Proposition 4.2]{PL23}}]
  \label{lem:sketch.ess.alg}
  Let \( p \colon x \to y \) be a morphism in \( \ModSV \). If
  \begin{itemize}[noitemsep,label=--]
    \item
      \( p_0, p'_0, p''_0, p_1, p'_1, p''_1 \) are effective descent
      morphisms,
    \item
      \( p_2, p'_2 \) are descent morphisms, and
    \item
      \( p_3 \) is an almost descent morphism,
  \end{itemize}
  then \(p\) is an effective descent morphism.
\end{lemma}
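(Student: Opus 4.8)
The statement concerns the basic bifibration on $\ModSV$, so the plan is to show that the comparison functor $\mathcal K^p \colon \ModSV \comma y \to \Desc(p)$ is an equivalence, by proving separately that it is fully faithful (equivalently, that $p$ is a descent morphism) and essentially surjective. The whole argument is a generalization — from the cosimplicial-type diagram of an internal category to the sketch $\cat S$ carrying the monad $T$ — of Le Creurer's proof of \cite[Proposition 3.2.4]{Cre99} (see Theorem \ref{thm:int.lec}). The enabling observation is that $\ModSV$ has finite limits computed pointwise over $\cat S$: a pointwise limit of $\cat S$-models is again a model, since the three finite cones \eqref{eq:pb.squares} are preserved by limits. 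Hence the inclusion into $[\cat S,\cat V]$ and every evaluation functor $\ev_c \colon \ModSV \to \cat V$ at an object $c$ of $\cat S$ preserve finite limits, so each $\ev_c$ sends the kernel pair of $p$ to that of $p_c$ and respects the pullback functors, inducing functors $\Desc(p) \to \Desc_{\cat V}(p_c)$ compatible with the comparison and forgetful functors. In particular every descent datum $(z \xrightarrow{a} x,\gamma)$ for $p$ restricts to descent data $(a_c,\gamma_c)$ for each component $p_c$, compatibly with the structure arrows of $\cat S$; here $c$ ranges over the nine objects $x_0,x_1,x_2,x_3,x_0',x_1',x_2',x_0'',x_1''$.

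For full faithfulness: a morphism of $\cat S$-models over $y$ is determined by its components at the six objects $x_0,x_1,x_0',x_1',x_0'',x_1''$, the remaining three being forced pullback apexes, and on these components $\mathcal K^p$ is essentially the product of the comparison functors $\mathcal K^{p_c}$, which are equivalences by hypothesis. Compatibility of a tuple of component morphisms with the remaining structure arrows is a system of equations whose targets lie among the objects of $\cat S$, hence is reflected by faithfulness of the relevant $\mathcal K^{p_c}$ (each $p_c$ is at least an almost descent morphism). This gives injectivity and surjectivity of $\mathcal K^p$ on hom-sets, so $p$ is a descent morphism.

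For essential surjectivity I would invoke Proposition \ref{prop:comp.ess.surj}. Given $(z \xrightarrow{a} x,\gamma)$, the effective descent hypotheses let me descend the six core components: there are bundles $w_0 \to y_0$, $w_1 \to y_1$, $w_0' \to y_0'$, $w_1' \to y_1'$, $w_0'' \to y_0''$, $w_1'' \to y_1''$ in $\cat V$, unique up to unique isomorphism, with $p_c^* w_c \cong z_c$ over $x_c$ compatibly with the descent data. I then define the apex objects $w_2, w_2', w_3$ as the pullbacks dictated by the cones \eqref{eq:pb.squares}, so that $w$ is a model by construction; since pullback functors compose and $p$ is a model morphism, $p_c^* (\alpha^y)^* \cong (\alpha^x)^* p_{c'}^*$ for every structure arrow $\alpha \colon c \to c'$ of $\cat S$, and together with pullback pasting for the cones this yields $p_c^*(\alpha^y)^* w_{c'} \cong (\alpha^x)^* z_{c'}$, in particular $p_2^* w_2 \cong z_2$, $p_2'^* w_2' \cong z_2'$, $p_3^* w_3 \cong z_3$. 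Now I descend the structure maps of $w$, working with the pullback reindexing $(\alpha^y)^*$ rather than the pushforward: a structure map $\hat\alpha \colon w_c \to w_{c'}$ over $\alpha^y$ is the same as a morphism $w_c \to (\alpha^y)^* w_{c'}$ over $y_c$, and via the identifications above the structure map $\alpha^z$ of $z$ is a morphism of $\mathcal K^{p_c}$-descent data, which lifts uniquely as soon as $\mathcal K^{p_c}$ is full — this is exactly where the descent (not effective descent) hypothesis on $p_2$ and $p_2'$ is used — while the ``inner-face'' maps out of $x_3$ are assembled from the already-descended composition map through the universal property of the pullbacks $w_2$ and $w_3$. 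Finally I check that $w$ with these maps satisfies the relations of $\cat S$: each relation is an equation with target among the objects of $\cat S$, it holds after applying $p^*$ because $z$ satisfies it and $p^*\hat\alpha$ equals $\alpha^z$ up to the fixed isomorphisms, and it is therefore reflected by faithfulness of $\mathcal K^{p_{c'}}$ at the target — and it is for the relations with target $x_3$ that the almost descent hypothesis on $p_3$ is needed. Then $w \in \ModSV$, the $\hat\alpha$ assemble into a morphism $b \colon w \to y$ of models with $p^* w \cong z$ in $\ModSV \comma x$, and the image condition \eqref{eq:img.cond} holds because it holds componentwise; Proposition \ref{prop:comp.ess.surj} gives essential surjectivity, and with the previous paragraph $\mathcal K^p$ is an equivalence, i.e.\ $p$ is an effective descent morphism.

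The main obstacle is this last block: deciding which structure arrows of $\cat S$ must be descended (and with which of the full faithfulness / faithfulness properties of the $\mathcal K^{p_c}$) versus which are forced by pullback universal properties, and then pushing the Beck--Chevalley identifications through carefully enough that the descended data genuinely satisfy all the relations of $\cat S$ and that the image condition \eqref{eq:img.cond} is verified. This is a faithful but bookkeeping-heavy extension of the computation in \cite[Chapter 3]{Cre99}; the only conceptual novelty beyond Le Creurer's case is keeping track of the monad $T$ through the primed and doubly-primed parts of the sketch.
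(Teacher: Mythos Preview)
Your approach would work but takes a genuinely different and much longer route than the paper. You propose to rerun Le Creurer's componentwise argument for this particular sketch $\cat S$: descend the six basic components using the effective descent hypotheses, build $w_2,w_2',w_3$ as pullbacks, descend the remaining structure maps via fullness of $\mathcal K^{p_2}$ and $\mathcal K^{p_2'}$, and then verify the relations by faithfulness. The paper instead avoids all of this: it observes that $\ModSV$ is the category of $\cat V$-models of an essentially algebraic theory $\cat A$ whose sorts are the six objects $x_0,x_1,x_0',x_1',x_0'',x_1''$, whose total operations are the arrows among those, and whose partial operations are $d_1 \colon x_1 \times x_1' \to x_1$ and $d_1' \colon x_1' \times x_1'' \to x_1'$, with domain equations supplied by the limit cones of $\cat S$. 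Equivalently, the sketch $\overline{\cat S}$ built from $\cat A$ via the procedure in \cite[{}3.2.1]{Cre99} is Morita equivalent to $\cat S$. The lemma is then a direct citation of \cite[Proposition 3.2.4]{Cre99}, with no new computation. Your route reproves that proposition in a special case; the paper simply recognizes that the special case is already covered.

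One slip in your outline: you write that ``it is for the relations with target $x_3$ that the almost descent hypothesis on $p_3$ is needed,'' but $\cat S$ has no relation with target $x_3$. The hypothesis on $p_3$ is used for relations with \emph{source} $x_3$ (associativity), and the mechanism is faithfulness of $\mathcal K^{p_c}$ at the source $c$---equivalently, cancellation of the epimorphism $h_c \colon z_c \to w_c$---not at the target. This is consistent with how you (correctly) placed the fullness hypothesis at the source when descending structure maps, and it is exactly how the analogous step is handled in the paper's direct proof of Theorem \ref{thm:int.eff.desc}.
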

\begin{proof}
  The proof rests in describing \( \ModSV \) as the category of \( \cat V
  \)-models for a suitable essentially algebraic theory; then the result is an
  immediate consequence of \cite[Proposition 3.2.4]{Cre99}.

  Indeed, we obtain a sketch \( \overline{\cat S} \), Morita equivalent to \(
  \cat S \), from the essentially algebraic theory \( \cat A \) defined by
  \begin{itemize}[label=--,noitemsep]
    \item
      sorts \( x_0,\, x_1,\, x'_0,\, x'_1,\, x''_0,\, x''_1 \),
    \item
      total operations given by the arrows of the full subgraph of the
      underlying graph of \( \cat S \) consisting of the aforementioned sorts,
    \item
      partial operations given by \( d_1 \colon x_1 \times x'_1 \to x_1 \), \(
      d'_1 \colon x'_1 \times x''_1 \to x'_1 \), and the limit cones of \(
      \cat S \), with \( x_2 \) and \( x'_2 \) respectively replaced by \( x_1
      \times x'_1 \) and \( x'_1 \times x''_1 \), give the equations for these
      partial operations,
    \item 
      the remaining equations are given by the underlying relations of \( \cat
      S \), with \( x_2 \), \( x_3 \) and \( x'_2 \) replaced by \( x_1 \times
      x'_1 \), \( x_1 \times x'_1 \times x''_1 \) and \( x'_1 \times x''_1 \),
      respectively.
  \end{itemize}
  The sketch \( \overline{\cat S} \) constructed from \( \cat A \) via the
  procedure described in \cite[{}3.2.1]{Cre99} contains \( \cat S \) as a
  ``subsketch'', containing extra limit cones for the formal products \( x_1
  \times x'_1 \), \( x'_1 \times x''_1 \) and \( x_1 \times x'_1 \times x''_1
  \), which are used to construct the limit cones of \( x_2,\, x'_2 \) and \(
  x_3 \), as well as the (derived) partial operations and equations.

\end{proof}

Denoting by \( \PsEq(F,G) \) the \textit{pseudoequalizer} of a pair of
functors \( F,G \colon \cat A \to \cat B \), we obtain:

\begin{lemma}[{\cite[Lemma 3.3]{PL23}}]
  The induced inclusion \( \CatTV \to \PsEq(d^1,d^0) \) is full and preserves
  pullbacks.
\end{lemma}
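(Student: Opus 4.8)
The plan is to work with the identification of $\CatTV$ with the strict 2-equalizer of $d^1$ and $d^0$ furnished by Lemma~\ref{lem:cat.tv.eq}, and to analyse the comparison functor $J \colon \CatTV \to \PsEq(d^1,d^0)$ that it induces. Since that lemma exhibits $\CatTV \to \ModSV \rightrightarrows [\cat S_T,\cat V] \times [2,\cat V]$ as a 2-equalizer diagram, we have $d^1 x = d^0 x$ on the nose for every object $x$ of $\CatTV$, and $d^1 f = d^0 f$ for every morphism $f$; hence $J$ may be described explicitly as $x \mapsto (x,\id_{d^1 x})$ on objects and as the identity on underlying $\ModSV$-morphisms. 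In particular $J$ is faithful and satisfies $U \circ J = (\CatTV \hookrightarrow \ModSV)$, where $U \colon \PsEq(d^1,d^0) \to \ModSV$ is the projection forgetting the coherence isomorphism.

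Fullness is then a matter of unwinding definitions. A morphism $(x,\id_{d^1 x}) \to (y,\id_{d^1 y})$ in $\PsEq(d^1,d^0)$ between objects in the image of $J$ is, by definition, a morphism $f \colon x \to y$ in $\ModSV$ for which the square relating $d^1 f$, $d^0 f$ and the (identity) coherence isomorphisms commutes, that is, a morphism with $d^1 f = d^0 f$; but this is exactly the condition defining a morphism of the strict 2-equalizer $\CatTV$. Thus $J$ induces a bijection on hom-sets, so it is fully faithful, and in particular full. (Note the contrast with $\CatTV \hookrightarrow \ModSV$, which need not be full: passing to $\PsEq(d^1,d^0)$ reinstates precisely the equation $d^1 f = d^0 f$.)

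For preservation of pullbacks I would invoke Lemma~\ref{lem:cat.tv.eq} once more: as $T$ is cartesian, $d^1$ and $d^0$ preserve pullbacks, and $\CatTV \hookrightarrow \ModSV$ preserves pullbacks. I then claim $U$ creates pullbacks. Given a cospan in $\PsEq(d^1,d^0)$, form the pullback $P$ of its image in $\ModSV$ (which exists, pullbacks in $\ModSV$ being computed from those of $\cat V$); applying $d^1$ and $d^0$ yields pullback squares in $[\cat S_T,\cat V] \times [2,\cat V]$ of two cospans that are isomorphic via the coherence data of the original cospan, so the comparison of these limits supplies a canonical isomorphism $\pi \colon d^1 P \to d^0 P$ compatible with the projections; thus $(P,\pi)$ with the projections is a cone over the cospan in $\PsEq(d^1,d^0)$. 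Uniqueness of factorizations, and the fact that any factorization between competing cones is automatically compatible with the coherence isomorphisms, then follow from the analogous facts in $\ModSV$ together with the joint monicity of the legs of the limit cone $d^0 P$. Since $U \circ J$ preserves pullbacks and $U$ creates them, $J$ preserves pullbacks.

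The only genuinely delicate point — the ``main obstacle'', such as it is — is the verification that the underlying $\ModSV$-pullback of a cospan in $\PsEq(d^1,d^0)$ really carries a (necessarily unique) pullback structure there; this is exactly where cartesianness of $T$, through the pullback-preservation of $d^1$ and $d^0$ recorded in Lemma~\ref{lem:cat.tv.eq}, enters, the remainder being routine diagram chasing with the explicit formulas for $d^1$ and $d^0$.
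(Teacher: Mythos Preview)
Your proposal is correct and follows essentially the same approach as the paper. The paper's proof is terser---it simply notes that the $\PsEq$-morphism condition between identity-coherence objects reduces to $d^0 f = d^1 f$, which recovers $T$-category functors, and that $U \colon \PsEq(d^1,d^0) \to \ModSV$ creates pullbacks since $d^1,d^0$ preserve them while $\CatTV \to \ModSV$ preserves pullbacks---but your expanded account of how the coherence isomorphism on the $\ModSV$-pullback is manufactured from the universal property, and your parenthetical contrast with the non-full inclusion into $\ModSV$, are welcome elaborations of the same argument rather than a different route.
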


\begin{proof}
  We recall that a morphism \( f \colon (x,\zeta) \to (y,\xi) \) of \(
  \PsEq(d^1, d^0) \) is a morphism \( f \colon x \to y \) in \( \ModSV \)
  satisfying \( \xi \circ d^0 f = d^1f \circ \zeta \).  Thus, when \( \zeta \)
  and \( \xi \) are identities, we precisely obtain a \(T\)-category functor.

  Since \( d^1 \) and \( d^0 \) are pullback-preserving functors
  between categories with pullbacks, it follows that \( \PsEq(d^1,d^0) \) has
  pullbacks, and \( \PsEq(d^1,d^0) \to \ModSV \) creates them. We also note
  that \( \CatTV \to \ModSV \) preserves pullbacks as well, concluding the
  proof.
\end{proof}

\begin{lemma}[{\cite[Theorem 3.5]{PL23}}]
  \label{lem:coh.pseq}
  If a morphism \( p \colon (x,\id) \to (y,\theta) \) in \( \PsEq(d^1, d^0) \)
  is a componentwise epimorphism, then \( (y, \theta) \iso (z, \id) \) for a
  \(T\)-category \(z\).
\end{lemma}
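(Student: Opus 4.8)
The plan is to build $z$ by transporting the $\cat S$-model structure of $y$ along the isomorphism $\theta$, and to show that the hypotheses on $p$ supply exactly the rigidity needed for the transported model to land strictly inside $\CatTV$ (i.e.\ in the image of the $2$-equalizer, not merely in the pseudoequalizer). First I would exploit the defining equation of a morphism of $\PsEq(d^1,d^0)$, namely $\theta\circ d^0 p = d^1 p$, evaluated at the sketch-objects $x_0$ and $x_1$. Since $x$ is a $T$-category, $(x,\id)$ is strict, so $d^0p$ and $d^1p$ have the same component there and the equation reads $\theta|_{x_0}\circ p_0 = p_0$ and $\theta|_{x_1}\circ p_1 = p_1$; as $p_0,p_1$ are epimorphisms, $\theta$ is the identity at $x_0$ and $x_1$. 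This is the first use of the componentwise-epi hypothesis, and it is what will let $z$ agree with $y$ on the nose at the base objects.

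Next I would define $z$ to coincide with $y$ at $x_0,x_1$, to take the values $Ty_0,Ty_1,TTy_0,TTy_1$ at $x'_0,x'_1,x''_0,x''_1$ (transporting $y'_0,\dots$ along the remaining components of $\theta$, now read off as natural isomorphisms $I^*_T y\to(T,m,e)_!(\cdots)$ in the $[\cat S_T,\cat V]$-factor and $I^*_{d'_1}y\to T_!(\cdots)$ in the $[2,\cat V]$-factor), and to have $z_2,z_3,z'_2$ given by the pullbacks prescribed by the limit cones of $\cat S$; these exist, and since $T$ is cartesian $z'_2$ is canonically $Tz_2$. The transport yields an isomorphism $\lambda\colon y\to z$ in $\ModSV$, and by construction $\lambda$ satisfies $d^1\lambda\circ\theta = d^0\lambda$, so it will be an isomorphism $(y,\theta)\iso(z,\id)$ in $\PsEq(d^1,d^0)$ as soon as $(z,\id)$ makes sense, i.e.\ as soon as $z$ is a $T$-category. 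The equality of the $[\cat S_T,\cat V]$-components, $I^*_T z = (T,m,e)_!(I^*_{s_0,d_0}z)$, is then a formal consequence of the fact that conjugating a functor by a natural isomorphism onto a second functor reproduces that functor, together with the identity-at-the-base established above; and the remaining relations and limit cones of $\cat S$ hold in $z$ simply because $z$ is isomorphic, as an $\cat S$-model, to $y$.

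The genuine obstacle is the $[2,\cat V]$-strictness $I^*_{d'_1}z = T_!(I^*_{d_1}z)$, i.e.\ that the image in $z$ of $d'_1\colon x'_1\to x''_0$ equals $T$ applied to the image of $d_1\colon x_1\to x'_0$. The difficulty is that $\theta$ is an isomorphism in a \emph{product} category $[\cat S_T,\cat V]\times[2,\cat V]$, so a priori its two components carry unrelated data, whereas strictness forces the $[2,\cat V]$-component of the transport to match what the $[\cat S_T,\cat V]$-component and the cartesianness of $T$ dictate at $x'_1$ and $x''_0$. I would derive this match from the componentwise-epi hypothesis on $p$ by a diagram chase: evaluating $\theta\circ d^0p = d^1p$ in the $[2,\cat V]$-factor and in the $[\cat S_T,\cat V]$-factor at $x'_1$ (resp.\ $x''_0$) exhibits the same morphism $p'_1$ (resp.\ $p''_0$) as a composite of a common morphism with each of the two competing isomorphisms, and one cancels $p$ on the right — using the naturality of $p$, the naturality (pullback) squares of $e$ and $m$, and the relations of $\cat S$ to arrange the cancellation against a genuine epimorphism among the components of $p$. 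Once strictness is secured, $z$ lies in $\CatTV$ and $\lambda$ exhibits the required isomorphism $(y,\theta)\iso(z,\id)$.
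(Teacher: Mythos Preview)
Your proposal is correct and follows essentially the same approach as the paper: both use the componentwise-epi hypothesis to force $\theta_{T,0}=\theta_{T,1}=\id$ and to derive the compatibility relations $\theta'_{T,1}=\theta_{d,1}$ and $\theta''_{T,0}=T\theta'_{T,0}\circ\theta_{d,0}$ between the $[\cat S_T,\cat V]$- and $[2,\cat V]$-components, then transport the structure of $y$ along $\theta$ to obtain the $T$-category $z$. The paper carries out the verification of the $T$-category relations and pullback squares explicitly rather than appealing to transport-of-structure, but this is a difference of presentation, not of strategy.
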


\begin{proof}
  By hypothesis, we have \( d^0 p = \theta \circ d^1 p  \). Writing \(
  \theta = (\theta_T, \theta_d) \), we obtain equations
  \begin{align*}
    p_i &= \theta_{T,i} \circ p_i
    & Tp_1 &= \theta_{d,1} \circ p'_1 \\
    Tp_i &= \theta'_{T,i} \circ p'_i
    & Tp'_0 &= \theta_{d,0} \circ p''_0 \\
    TTp_i &= \theta''_{T,i} \circ p''_i,
  \end{align*}
  for \( i = 0,1 \) from which we deduce
  \begin{equation*}
    \theta_{T,i} = \id,
    \qquad \theta''_{T,0} = T\theta'_{T,0} \circ \theta_{d,0},
    \qquad \theta'_{T,1} = \theta_{d,1},
  \end{equation*}
  since \( p \) is a componentwise epimorphism.

  We claim \( (y,\theta) \) is isomorphic to a \(T\)-category. The
  construction of the \(T\)-category presented below is similar to \cite[Lemma
  3.4]{PL23}. This \(T\)-category has underlying reflexive \(T\)-graph 
  \begin{equation*}
    \begin{tikzcd}
      Ty_0 & \ar[l,"\theta'_{T,0}",swap] y'_0 
           & y_1 \ar[l,"d_1",swap] \ar[r,"d_0",shift left] 
           & y_0, \ar[l,"s_0",shift left]
    \end{tikzcd}
  \end{equation*}
  and we observe that
  \begin{equation*}
    \begin{tikzcd}
      y_2 \ar[r,"d_2"] \ar[d,"d'_0",swap]
        & y'_1 \ar[r,"\theta'_{T,1}"] \ar[d,"d'_0" description]
        & Ty_1 \ar[d,"Td_0"] \\
      y_1 \ar[r,"d_1",swap]
        & y'_0 \ar[r,"\theta'_{T,0}",swap]
        & Ty_0
    \end{tikzcd}
    \qquad
    \begin{tikzcd}
      y_3 \ar[r,"d_3"] \ar[d,"d'_0",swap]
        & y'_2 \ar[r,equal] \ar[d,"d'_0" description]
        & y'_2 \ar[d,"\theta'_{T,1} \circ d'_0"] \\
      y_2 \ar[r,"d_2",swap]
        & y'_1 \ar[r,"\theta'_{T,1}",swap]
        & Ty_1
    \end{tikzcd}
  \end{equation*}
  are pullback diagrams; the left squares are a pullback, and the right
  squares commute, and their parallel sides are isomorphisms, as \( \theta \)
  is an isomorphism.

  Likewise,
  \begin{equation*}
    \begin{tikzcd}
      y'_2 \ar[r,"d''_2"] \ar[d,"d'_0",swap]
        & y''_1 \ar[r,"\theta''_{T,1}"]
                \ar[d,"d'_0" description]
        & TTy_1 \ar[d,"TTd_0"] \\
      y'_1 \ar[r,"d'_1" description]
           \ar[d,"\theta_{d,1}",swap]
        & y''_0 \ar[r,"\theta''_{T,0}" description]
                \ar[d,"\theta_{d,0}" description]
        & TTy_0 \ar[d,equal] \\
      Ty_1 \ar[r,"Td'_1",swap]
        & Ty'_0 \ar[r,"T\theta'_{T,0}",swap]
        & TTy_0
    \end{tikzcd}
  \end{equation*}
  is a pullback diagram, since the top left square is a pullback, and the
  remaining squares, whose parallel sides are isomorphisms, are commutative,
  and therefore pullback diagrams.

  Now, we are left with verifying that the relations hold. It is enough to
  verify relations of morphisms with (co)domain \( Ty_0, Ty_1, TTy_0, TTy_1
  \), as the remaining hold by definition. We have
  \begin{equation*}
    (\theta'_{T,i} \circ d_{1+i}) \circ s_i 
      = \theta'_{T,i} \circ e_i
      = e_{x_i} \circ \theta_{T,i} = e_{x_i},
  \end{equation*}
  \begin{equation*}
    (\theta'_{T,1} \circ d_2) \circ s_0
      = \theta'_{T,1} \circ s'_0 \circ d_1
      = Ts_0 \circ (\theta'_{T,0} \circ d_1),
  \end{equation*}
  \begin{equation*}
    Td_0 \circ Ts_0 = T(d_0 \circ s_0) = \id,
  \end{equation*}
  \begin{align*}
    \theta'_{T,0} \circ d_1 \circ d_1
      &= \theta'_{T,0} \circ m_0 \circ d'_1 \circ d_2 \\
      &= m_{x_0} \circ \theta''_{T,0} \circ d'_1 \circ d_2 \\
      &= m_{x_0} \circ T\theta'_{T,0} \circ \theta_{d,0}
                 \circ d'_1 \circ d_2 \\
      &= m_{x_0} \circ T\theta'_{T,0} \circ Td_1
                 \circ \theta_{d,1} \circ d_2 \\
      &= m_{x_0} \circ T(\theta'_{T,0} \circ d_1)
                 \circ (\theta'_{T,1} \circ d_2),
  \end{align*}
  \begin{equation*}
    \theta'_{T,1} \circ d_2 \circ d_2
      = \theta'_{T,1} \circ m_1 \circ d'_2 \circ d_3
      = m_{x_1} \circ (\theta''_{T,1} \circ d'_2) \circ d_3,
  \end{equation*}
  \begin{equation*}
    \theta'_{T,i} \circ d_{1+i} \circ d_j
      = \theta'_{T,i} \circ d'_j \circ d_{2+i}
      = \begin{cases}
          (\theta'_{T,1} \circ d'_1) \circ d_3 & i=j=1 \\
          Td_0 \circ (\theta'_{T,i} \circ d_{2+i}) & j=0
        \end{cases}
    \qquad i,j=0,1,\quad j \leq i
  \end{equation*}
  \begin{equation*}
    T(\theta'_{T,0} \circ d_1) \circ \theta'_{T,1} \circ d'_0 
      = T\theta'_{T,0} \circ \theta_{d,0} \circ d'_1 \circ d'_0
      = \theta''_{T,0} \circ d''_0 \circ d'_2
      = TTd_0 \circ (\theta''_{T,1} \circ d'_2),
  \end{equation*}
  \begin{equation*}
    Td_0 \circ \theta'_{T,1} \circ d'_1
      = \theta'_{T,0} \circ d'_0 \circ d'_1
      = \theta'_{T,0} \circ d'_0 \circ d'_0
      = Td_0 \circ \theta'_{T,1} \circ d'_0,
  \end{equation*}
  and this concludes the proof.
\end{proof}

\begin{corollary}[{\cite[Lemma 4.4]{PL23}}]
  \label{cor:refl.eff.desc.cattv}
  The embedding \( \CatTV \to \PsEq(d^1, d^0) \) reflects effective descent
  morphisms.
\end{corollary}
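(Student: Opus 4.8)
The plan is to apply Corollary~\ref{cor:eff.desc.iso} to the embedding $U \colon \CatTV \to \PsEq(d^1,d^0)$. By the preceding lemma this functor is full and pullback-preserving, and it is faithful because composing it with the forgetful functor $\PsEq(d^1,d^0) \to \ModSV$ recovers the faithful inclusion $\CatTV \hookrightarrow \ModSV$ coming from the $2$-equalizer presentation of Lemma~\ref{lem:cat.tv.eq}; moreover both categories have pullbacks. So it remains to check the hypothesis of Corollary~\ref{cor:eff.desc.iso}: for every effective descent morphism $g \colon U(x) \to (z,\theta)$ in $\PsEq(d^1,d^0)$ with $x$ a $T$-category, I must produce a $T$-category $w$ and an isomorphism $(z,\theta) \iso (w,\id) = U(w)$.

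This is where Lemma~\ref{lem:coh.pseq} enters: it suffices to verify that $g$ is a componentwise epimorphism. First, $g$ is in particular a descent morphism, hence a pullback-stable strict epimorphism, and in particular an epimorphism in $\PsEq(d^1,d^0)$. To promote this to the statement that each of the generating components $g_0, g_1, g'_0, g'_1, g''_0, g''_1$ is an epimorphism in $\cat V$, one transports the situation through the forgetful functor $\PsEq(d^1,d^0) \to \ModSV$, which creates pullbacks, and uses the description of $\ModSV$ as the category of $\cat V$-models of the essentially algebraic theory on those six sorts obtained in the proof of Lemma~\ref{lem:sketch.ess.alg}; since effective and almost descent morphisms in $\cat V$ are in particular epimorphisms, the componentwise conditions carried by an effective descent morphism deliver exactly what Lemma~\ref{lem:coh.pseq} requires.

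Granting that, Lemma~\ref{lem:coh.pseq} yields the desired isomorphism $(z,\theta) \iso (w,\id)$ for a $T$-category $w$, and Corollary~\ref{cor:eff.desc.iso} then gives that $U$ reflects effective descent morphisms, which is the assertion of the corollary.

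The main obstacle I anticipate is precisely the middle step: relating descent-theoretic hypotheses on $g$ inside the pseudoequalizer to epimorphism conditions on its components in $\cat V$. The pseudoequalizer lies over $\cat V$ only indirectly, through $\ModSV$ and the sketch $\cat S$, and since $\cat V$ is assumed only to have finite limits -- not to be lextensive -- the behaviour of (effective) descent morphisms of $\ModSV$ at the level of individual sorts has to be argued with some care rather than read off from a clean characterization. Once the componentwise epimorphism property is in hand, the rest is a formal bookkeeping of the reflection criterion together with Lemma~\ref{lem:coh.pseq}.
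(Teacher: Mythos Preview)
Your approach is exactly the paper's: invoke Corollary~\ref{cor:eff.desc.iso} and feed it Lemma~\ref{lem:coh.pseq}. The paper's own proof is a one-liner --- it simply notes that every effective descent morphism is an epimorphism and declares the result an immediate consequence of those two statements --- so the skeleton of your argument matches.

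Where you diverge is in the middle step, and here your justification is not sound. You try to obtain the componentwise epimorphism hypothesis of Lemma~\ref{lem:coh.pseq} by appealing to the essentially algebraic description in the proof of Lemma~\ref{lem:sketch.ess.alg}, saying that ``the componentwise conditions carried by an effective descent morphism deliver exactly what Lemma~\ref{lem:coh.pseq} requires''. But Lemma~\ref{lem:sketch.ess.alg} only gives \emph{sufficient} componentwise conditions for a morphism in $\ModSV$ to be effective for descent; it does not say that every effective descent morphism satisfies them. So from the bare fact that $g$ is effective descent in $\PsEq(d^1,d^0)$ you cannot read off that its individual sort-components are epimorphisms in $\cat V$ via that lemma. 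The paper does not spell this passage out either --- it just writes ``since any effective descent morphism is an epimorphism'' and moves on --- so the concern you flag is real, but your proposed fix does not close it.
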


\begin{proof}
  Since any effective descent morphism is an epimorphism, this result is an
  immediate consequence of Lemma \ref{lem:coh.pseq} and Corollary
  \ref{cor:eff.desc.iso}.
\end{proof}

\begin{theorem}[{\cite[Theorem 4.5]{PL23}}]
  \label{thm:desc.cattv}
  A functor \( p \colon x \to y \) of \(T\)-categories is an effective descent
  morphism in \( \CatTV \), provided that
  \begin{itemize}[noitemsep,label=--]
    \item
      \( Tp_1 \) is an effective descent morphism,
    \item
      \( Tp_2 \) is a descent morphism, and
    \item
      \( p_3 \) is an almost descent morphism.
  \end{itemize}
\end{theorem}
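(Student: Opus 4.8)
The plan is to run the argument through the pseudoequalizer presentation of Sections~\ref{sect:cat.tv} and~\ref{sect:bilim}. By Corollary~\ref{cor:refl.eff.desc.cattv} the embedding $\CatTV \to \PsEq(d^1,d^0)$ reflects effective descent morphisms, so it is enough to show that the image of $p$ there is an effective descent morphism. Since $T$ is cartesian, $d^1$ and $d^0$ are pullback-preserving functors between categories with pullbacks (Lemma~\ref{lem:cat.tv.eq}), so Proposition~\ref{prop:pseq.descent} applies to the pseudoequalizer $\PsEq(d^1,d^0)\to\ModSV\rightrightarrows[\cat S_T,\cat V]\times[2,\cat V]$. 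Thus it suffices to check that (a)~the underlying morphism of $p$ in $\ModSV$ is an effective descent morphism, and (b)~its image in $[\cat S_T,\cat V]\times[2,\cat V]$ is a descent morphism; here this image is a single morphism, since $\CatTV$ is a $2$-equalizer (not a pseudoequalizer) and hence $d^1p=d^0p$.

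For (a) I would invoke Lemma~\ref{lem:sketch.ess.alg}, whose hypotheses concern the components $p_0,p_1,p_2,p_3,p'_0,p'_1,p'_2,p''_0,p''_1$ of $p$ read off the sketch model~\eqref{eq:cat.tv.diag}. For a $T$-category these satisfy $x'_0=Tx_0$, $x'_1=Tx_1$, $x''_0=TTx_0$, $x''_1=TTx_1$, and --- because $T$ preserves the pullback that defines $x_2$ --- there is a canonical isomorphism $x'_2\cong Tx_2$; correspondingly $p'_0=Tp_0$, $p'_1=Tp_1$, $p''_0=TTp_0$, $p''_1=TTp_1$, and $p'_2$ is a descent morphism exactly when $Tp_2$ is. Now I propagate the three hypotheses. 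Effective descent morphisms and descent morphisms form pullback-stable classes, so Lemma~\ref{lem:pb.class} (that $T$ creates such classes on its essential image) upgrades ``$Tp_1$ effective descent'' to ``$p_1$ and $TTp_1$ effective descent'' and ``$Tp_2$ descent'' to ``$p_2$ descent''. For the object-level components I use the cancellation argument of Lemma~\ref{lem:refl.graph}: applied to the underlying reflexive $T$-graph of $p$ it yields $p_0$ effective descent from $p_1$; and since $Td_0,TTd_0$ are split epimorphisms (with sections $Ts_0,TTs_0$) compatible with $Tp_1,TTp_1$ and $Tp_0,TTp_0$, the same two-line argument yields $Tp_0$ and $TTp_0$ effective descent from $Tp_1$ and $TTp_1$. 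Together with the hypothesis that $p_3$ is almost descent, this is precisely the list required by Lemma~\ref{lem:sketch.ess.alg}, so $p$ is an effective descent morphism in $\ModSV$.

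For (b), the coordinates of $d^1p$ lie among $p_0,p_1,Tp_0,Tp_1,TTp_0,TTp_1$, all of which we have just shown to be effective descent, hence descent, morphisms. Descent morphisms in a product of categories are the coordinatewise ones, and descent morphisms --- being pullback-stable regular epimorphisms --- in a functor category $[\cat J,\cat V]$ are the pointwise ones, since pullbacks and the coequalizers of kernel pairs that witness regular epimorphisms here are all computed pointwise. Hence $d^1p$ is a descent morphism, and Proposition~\ref{prop:pseq.descent} concludes that $p$ is an effective descent morphism in $\CatTV$.

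The crux is step (a): squeezing the long list of Lemma~\ref{lem:sketch.ess.alg} out of only three hypotheses. This is exactly where the cartesianness of $T$ is indispensable --- through Lemma~\ref{lem:pb.class}, which lets one move freely between $f$, $Tf$ and $TTf$ within a pullback-stable class, and through the identification $x'_2\cong Tx_2$ --- and where the reflexive-graph cancellation of Lemma~\ref{lem:refl.graph} does the work of passing from arrow-components to object-components. Once those reductions are in place, (b) and the assembly via Corollary~\ref{cor:refl.eff.desc.cattv} and Proposition~\ref{prop:pseq.descent} are routine.
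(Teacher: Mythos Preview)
Your proof is correct and follows essentially the same approach as the paper's own proof: reduce to the pseudoequalizer via Corollary~\ref{cor:refl.eff.desc.cattv}, verify the hypotheses of Proposition~\ref{prop:pseq.descent} by first establishing effective descent in $\ModSV$ through Lemma~\ref{lem:sketch.ess.alg} (propagating the three hypotheses via Lemma~\ref{lem:pb.class} and Lemma~\ref{lem:refl.graph}), and then checking componentwise that $d^1p=d^0p$ is a descent morphism. Your write-up is more explicit than the paper's about the identification $x'_2\cong Tx_2$ and about why pointwise descent suffices in the functor-category factor, but the structure and the key lemmas invoked are the same.
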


\begin{proof}
  By Lemma \ref{lem:pb.class}, if \( Tp_2 \) is a descent morphism, then so is
  \( p_2 \), and if \( Tp_1 \) is an effective descent morphism, then so are
  \( p_1 \) and \( TTp_1 \). Moreover, by Lemma \ref{lem:refl.graph}, we may
  also deduce that \( p_0, Tp_0 \) and \( TTp_0 \) are effective descent
  morphisms.

  These conditions, and the fact that \( p_3 \) is an almost descent morphism,
  guarantee that \(p\) is an effective descent morphism in \( \ModSV \), and
  the morphism \( d^1 p = d^0 p \) is a descent morphism, as these are
  determined componentwise. 

  Thus, we conclude that \( (p,\id) \) is an effective descent morphism in \(
  \PsEq(d^1,d^0) \) by Proposition~\ref{prop:pseq.descent}, and, by Corollary
  \ref{cor:refl.eff.desc.cattv}, so is \( p \).
\end{proof}

\section{A direct description of effective descent morphisms}
\label{sect:direct}

We return to the setting of Burroni's \textit{$ T $-catégories}, where \( \cat
V \) is any category with finite limits, and \(T\) is a monad on \( \cat V \),
not necessarily cartesian. We confirm that the arguments of Le Creurer for
effective descent morphisms of essentially algebraic theories can be applied
just as well to \textit{$T$-catégories}.

Throughout, we assume that \( p \colon x \to y \) is a functor of \( T
\)-categories.  We recall that since \(p\) is a reflexive \(T\)-graph
morphism, if \( \cat E \) is the class of (effective/almost) descent morphisms
in \( \cat V \) and \( p_1 \in \cat E \), then \( p_0 \in \cat E \) as well by
Lemma \ref{lem:refl.graph}.

\begin{lemma}[{\cite[Lemma 5.1]{PL23}}]
  \label{lem:int.al.desc}
  If \( p_1 \) is a (pullback-stable) epimorphism in \( \cat V \), then so is
  \( p \) in \( \CatTV \).
\end{lemma}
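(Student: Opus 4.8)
The plan is to reduce epimorphism-hood of the $T$-functor $p$ to epimorphism-hood of a single component and then invoke the reflexive-$T$-graph lemma. The starting observation is that a functor of $T$-categories $f$ is completely determined by its object and arrow components $(f_0,f_1)$: everything else is a list of equations these must satisfy, with no further data. Consequently, if $f_0$ and $f_1$ are both epimorphisms in $\cat V$, then $f$ is an epimorphism in $\CatTV$, since $g\circ f=h\circ f$ forces $g_0\circ f_0=h_0\circ f_0$ and $g_1\circ f_1=h_1\circ f_1$, whence $g_0=h_0$, $g_1=h_1$, i.e.\ $g=h$.

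First I would dispatch the bare epimorphism case. The functor $p$ is in particular a morphism of reflexive $T$-graphs, and the class of epimorphisms of $\cat V$ contains all split epimorphisms, is closed under composition, and admits right cancellation; so Lemma \ref{lem:refl.graph} applies and gives that $p_0$ is an epimorphism as soon as $p_1$ is. Together with the observation of the previous paragraph, this shows $p$ is an epimorphism in $\CatTV$.

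For the parenthetical refinement I would run the same argument one level up, showing that every pullback of $p$ is again an epimorphism. Let $q\colon w\to z$ be the pullback of $p$ along an arbitrary functor $k\colon z\to y$ of $T$-categories. The key point is that the object-of-arrows functor $\CatTV\to\cat V$ sends this square to the pullback of $p_1$ along $k_1$: this is immediate from the description of $\CatTV$ in Lemma \ref{lem:cat.tv.eq} when $T$ is cartesian (there $\CatTV\hookrightarrow\ModSV$ preserves pullbacks and the evaluation functors out of $\ModSV$, being computed pointwise, do as well), and in general it is valid as soon as $T$ preserves the pullback $w_0=x_0\times_{y_0}z_0$ computing the objects of $w$. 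Granting this, $q_1$ is a pullback of the pullback-stable epimorphism $p_1$, hence an epimorphism; then $q_0$ is an epimorphism by Lemma \ref{lem:refl.graph} applied to $q$; and then $q$ is an epimorphism by the first paragraph. As $k$ was arbitrary, $p$ is a pullback-stable epimorphism in $\CatTV$.

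The only step that needs genuine care — and the only place where the non-cartesian generality of this section is felt — is the claim that forming pullbacks in $\CatTV$ commutes with passing to the object of arrows; for cartesian $T$ this is automatic, and it is the condition under which the parenthetical half of the statement is to be read. Checking that a $T$-functor carries no data beyond its two components, and that split epimorphisms are epimorphisms stable under composition and right cancellation, is routine.
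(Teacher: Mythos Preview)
Your argument is correct and follows the same line as the paper's proof: a $T$-functor is determined by its components in degrees $0$ and $1$, Lemma~\ref{lem:refl.graph} upgrades the hypothesis on $p_1$ to one on $p_0$, and pullback-stability follows because pullbacks in $\CatTV$ are computed componentwise. The paper handles the last point in a single sentence (``pullbacks in $\CatTV$ are calculated componentwise''), whereas you spell out the reduction to $q_1$ being a pullback of $p_1$ and flag the dependence on $T$ preserving the relevant pullback of object-of-objects; this extra care is reasonable but not something the paper dwells on.
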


\begin{proof}
  If \(q, r \colon y \to z \) are functors such that \( q \circ p = r \circ p
  \), then \( q_i \circ p_i = r_i \circ p_i \) for \( i = 0,1 \), so \( q_i =
  r_i \), implying \( q = r \). We conclude that \(p\) is an epimorphism. 

  Since pullbacks in \( \CatTV \) are calculated componentwise, our claim
  is verified.
\end{proof}

\begin{lemma}[{\cite[Lemma 5.2]{PL23}}]
  \label{lem:int.desc}
  If \( p_1 \) is a (pullback-stable) regular epimorphism, and \( p_2 \) is a
  (pullback-stable) epimorphism in \( \cat V \), then \( p \) is a
  (pullback-stable) regular epimorphism in \( \CatTV \).
\end{lemma}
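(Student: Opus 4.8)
The plan is to exhibit $p$ as a coequalizer in $\CatTV$, adapting to the $T$-categorical setting the strategy used by Le Creurer for internal categories. The first step is to collect what the earlier results already give. Since every regular epimorphism is an epimorphism, Lemma~\ref{lem:int.al.desc} shows that $p$ is a (pullback-stable) epimorphism in $\CatTV$; and applying Lemma~\ref{lem:refl.graph} to the class $\cat E$ of (pullback-stable) regular epimorphisms of $\cat V$ — which satisfies the three stability properties required there — shows that $p_0$ is also a (pullback-stable) regular epimorphism. In particular $p_0$ and $p_1$ are each the coequalizer of their own kernel pair in $\cat V$, and $p_2$ is an epimorphism.

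Next I would form the kernel pair $\pi_1,\pi_2\colon x\times_y x\rightrightarrows x$ of $p$ in $\CatTV$; the forgetful functors to $\cat V$ on objects, arrows and $2$-chains preserve it, so its underlying data are the kernel pairs of $p_0$, $p_1$ and $p_2$. To see that $p$ is the coequalizer of this pair, take a $T$-functor $f\colon x\to z$ with $f\pi_1=f\pi_2$. On objects $f_0$ coequalizes the kernel pair of $p_0$ and on arrows $f_1$ coequalizes the kernel pair of $p_1$, so, since $p_0$ and $p_1$ are regular epimorphisms, there are unique morphisms $g_0\colon y_0\to z_0$ and $g_1\colon y_1\to z_1$ with $f_0=g_0p_0$ and $f_1=g_1p_1$. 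The crucial point is that $g=(g_0,g_1)$ is automatically a $T$-functor: compatibility with the domain, codomain and loop morphisms of $y$ is obtained by precomposing the relevant square with $p_1$ or $p_0$ and cancelling these epimorphisms, and compatibility with the composition morphism $m$ of $y$ is obtained by precomposing with $p_2$ and cancelling — this last cancellation is exactly where the hypothesis that $p_2$ is an epimorphism is used (one first observes $f_2=g_2p_2$ for the induced maps on $2$-chains, the object of $2$-chains being a pullback). Uniqueness of the factorization $f=gp$ is automatic since $p$ is an epimorphism. Hence $p$ is the coequalizer of $\pi_1,\pi_2$, i.e. a regular epimorphism in $\CatTV$.

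For the parenthetical statement, I would pull $p$ back along an arbitrary functor in $\CatTV$; as these pullbacks are computed componentwise, the resulting $p'$ has each $p'_i$ ($i=0,1,2$) a pullback of $p_i$, hence a regular epimorphism (for $i=0,1$) or an epimorphism (for $i=2$) by pullback-stability of the hypotheses, and the first part applies to $p'$.

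I expect the main obstacle to be the opening move of the second paragraph: because $T$ is not assumed cartesian, $\CatTV$ need not be finitely complete, so one must verify that the kernel pair $x\times_y x$ (and the pullbacks used for stability) genuinely exist as $T$-categories, not merely as componentwise data in $\cat V$. This is where the explicit sketch-theoretic description of $\CatTV$ from Section~\ref{sect:cat.tv} and the fact that $p$ is a morphism of reflexive $T$-graphs are brought in, reducing the construction of the needed limits to pullbacks in $\cat V$ that do exist.
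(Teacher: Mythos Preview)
Your proposal is correct and follows the paper's argument: form the kernel pair of $p$, factor a coequalizing $T$-functor through $p_0$ and $p_1$ componentwise, check that the resulting pair $(g_0,g_1)$ is a $T$-functor by cancelling the epimorphisms $p_1$ and $p_2$, and obtain pullback-stability from the componentwise computation of pullbacks. The paper does not address the existence issue you anticipate in your last paragraph --- it simply writes down the kernel pair and uses componentwise pullbacks in $\CatTV$ without comment --- so the detour through Section~\ref{sect:cat.tv} is neither taken nor needed (and would not help in the non-cartesian case anyway, since Lemma~\ref{lem:cat.tv.eq}'s pullback claim itself requires $T$ cartesian); one further caution is that plain regular epimorphisms are not in general closed under composition, so your appeal to Lemma~\ref{lem:refl.graph} is only clean in the pullback-stable case, although the conclusion that $p_0$ is a regular epimorphism still holds by a direct argument using the sections $s_0$.
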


\begin{proof}
  We consider the kernel pair of \(p\):
  \begin{equation}
    \label{eq:ker.p}
    \begin{tikzcd}
      k \ar[r,"r"] \ar[d,"s",swap] 
        \ar[rd,"\ulcorner",phantom,very near start]
      & x \ar[d,"p"] \\
      x \ar[r,"p",swap] & y
    \end{tikzcd}
  \end{equation}
  If \( q \colon x \to z \) is a functor such that \( r \circ q = s \circ q
  \), then, when \( p_i \) is a regular epimorphism for \( i=0,1 \), there
  exists a unique \( t_i \colon y_i \to z_i \) making the triangle of Diagram
  \eqref{eq:coeq.p1} commute
  \begin{equation}
    \label{eq:coeq.p1}
    \begin{tikzcd}
      k_i \ar[r,"r_i",shift left]
          \ar[r,"s_1",shift right,swap]
      & x_i \ar[r,"p_i"] \ar[rd,"q_i",swap]
      & y_i \ar[d,"t_i",dashed] \\
      && z_i
    \end{tikzcd}
  \end{equation}
  for \( i = 0, 1 \). The morphisms \( t_0, t_1 \) define a functor \( t
  \colon y \to z \) of \(T\)-categories; indeed, we note that, by the
  universal property, \( t_2 \circ (g,f) = (t_1 \circ g, Tt_1 \circ f) \),
  from which we deduce \( q_2 = t_2 \circ p_2 \). Then, the following
  calculations
  \begin{align*}
    t_1 \circ d_1 \circ p_2 &= t_1 \circ p_1 \circ d_1 
                             = q_1 \circ d_1 
                             = d_1 \circ q_2 
                             = d_1 \circ t_2 \circ p_2, \\
    d_i \circ t_1 \circ p_1 &= d_i \circ q_1 
                             = T^iq_0 \circ d_i 
                             = T^it_0 \circ T^ip_0 \circ d_i 
                             = T^it_0 \circ d_i \circ p_1
  \end{align*}
  plus the fact that \( p_1, p_2 \) are epimorphisms confirm our claim.
  Naturally, \( t \colon y \to z \) is the unique functor making the triangle
  below commute
  \begin{equation*}
    \begin{tikzcd}
      k \ar[r,"r",shift left]
          \ar[r,"s",shift right,swap]
      & x \ar[r,"p"] \ar[rd,"q",swap]
      & y \ar[d,"t",dashed] \\
      && z,
    \end{tikzcd}
  \end{equation*}
  for if \( l \colon y \to z \) were another such functor, we would deduce
  that \( l_i = t_i \) by uniqueness given at \eqref{eq:coeq.p1} for \( i = 0,
  1 \), so \( l = t \).

  Again, by componentwise calculation of pullbacks of \( \CatTV \), we
  conclude that if \( p_1 \) and \( p_2 \) are pullback-stable, then so is
  \(p\).
\end{proof}

\begin{theorem}[{\cite[Theorem 5.3]{PL23}}]
  \label{thm:int.eff.desc}
  If \(T\) preserves pullbacks, and
  \begin{itemize}[noitemsep,label=--]
    \item
      \( p_1 \colon x_1 \to y_1 \) is an effective descent morphism,
    \item
      \( p_2 \colon x_2 \to y_2 \) is a descent morphism, and
    \item
      \( p_3 \colon x_3 \to y_3 \) is an almost descent morphism
  \end{itemize}
  in \( \cat V \), then \(p\) is an effective descent morphism in \( \CatTV
  \).
\end{theorem}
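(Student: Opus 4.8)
The plan is to follow Le Creurer's treatment of internal categories \cite[Chapter 3]{Cre99}, reducing the statement to the fully faithful case together with an explicit descent construction. First I would note that, since effective descent and descent morphisms in a category with finite limits are in particular pullback-stable regular epimorphisms, the hypotheses put us in the situation of Lemma \ref{lem:int.desc}: $p_1$ is a pullback-stable regular epimorphism and $p_2$ is a pullback-stable epimorphism, so $p$ is a pullback-stable regular epimorphism in $\CatTV$, hence a descent morphism, and $\mathcal K^p$ is fully faithful. It therefore remains to prove that $\mathcal K^p$ is essentially surjective, and for this I would appeal to Proposition \ref{prop:comp.ess.surj}: given lax descent data $(a,\gamma)$ for $p$ — a $T$-category $a$ with a functor $a \to x$ and a gluing isomorphism $\gamma$ over the kernel pair of $p$ — it suffices to produce a $T$-category $w$, a functor $f \colon w \to y$, and an isomorphism $p^*f \iso a$ in $\CatTV \comma x$ obeying \eqref{eq:img.cond}. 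I would also record that $p_0$ is an effective descent morphism, by Lemma \ref{lem:refl.graph} applied to the class of effective descent morphisms.

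The construction of $w$ proceeds component by component. Since both pullbacks in $\CatTV$ and the kernel pair of $p$ are formed componentwise, the datum $(a,\gamma)$ amounts to lax descent data $(a_n,\gamma_n)$ for each $p_n$ in $\cat V$, subject to the compatibilities expressing that $a$ is a $T$-category and that its structure functor and $\gamma$ are functors of $T$-categories. As $p_1$ and $p_0$ are effective descent morphisms, $(a_1,\gamma_1)$ and $(a_0,\gamma_0)$ descend to bundles $w_1 \to y_1$, $w_0 \to y_0$; and since they are descent morphisms, full faithfulness of $\mathcal K^{p_1}$ and $\mathcal K^{p_0}$ descends the reflexive $T$-graph structure of $a$ to maps $d_0,d_1,s_0$ on $w$. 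This is the step where $T$ preserving pullbacks is used: it yields identities such as $Ta_0 \iso (Tp_0)^*(Tw_0)$ (and, below, $a_2 \iso p_2^*w_2$, $a_3 \iso p_3^*w_3$), which place the structure maps of $a$ in the images of the relevant comparison functors, so that they actually descend — e.g. the domain $a_1 \to Ta_0$ descends to $d_1 \colon w_1 \to Tw_0$ over $y_1$. I would then set $w_2 = w_1 \times_{Tw_0} Tw_1$ and let $w_3$ be the object of composable triples; pasting of pullback squares together with $T$ preserving pullbacks gives $p_2^*w_2 \iso a_2$ and $p_3^*w_3 \iso a_3$ compatibly with descent data. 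Finally, $p_2$ being a descent morphism, full faithfulness of $\mathcal K^{p_2}$ descends the composition map $a_2 \to a_1$ to $w_2 \to w_1$, and $p_0$ descends the unit $w_0 \to w_1$.

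It then remains to check that $w$ is a $T$-category, that the componentwise bundle projections assemble into a functor $f \colon w \to y$, and that $p^*f \iso a$ as lax descent data. The reflexive $T$-graph and unit identities for $w$ descend from those of $a$ along $p_0,p_1$: these being descent morphisms, the canonical projections $a_n \to w_n$ are (pullback-stable) epimorphisms, so two maps out of $w_n$ that agree after precomposition with them agree. The associativity axiom descends along $p_3$, and it is exactly here that the third hypothesis enters: $p_3$ being an almost descent morphism makes $a_3 \to w_3$ an epimorphism, so the two sides of the associativity law for $w$, which become equal when precomposed with $a_3 \to w_3$ (there they are the two sides of the associativity law for $a$), coincide. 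Functoriality of $f$ is built into the construction, since each structure map of $w$ was descended as a morphism lying over the corresponding structure map of $y$; and the isomorphisms $p_n^*w_n \iso a_n$, arranged to respect all structure, assemble to an isomorphism $p^*f \iso a$ in $\CatTV \comma x$ compatible with descent data, which is condition \eqref{eq:img.cond}. Proposition \ref{prop:comp.ess.surj} then yields the claim.

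The main obstacle I anticipate is bookkeeping rather than a single hard idea: each piece of structure of $a$ must be descended not as a bare morphism but as a morphism in the appropriate slice and lax descent category, and these descents must be kept coherent so that the reassembled $w$ is genuinely a $T$-category and $f$ genuinely a functor. The one conceptually delicate point is ensuring that the composition is descended so that the associativity law survives passage through the almost-descent morphism $p_3$, and, throughout, that every object of $n$-chains is transported correctly along the pullback functors — which is precisely what forces the hypothesis that $T$ preserves pullbacks.
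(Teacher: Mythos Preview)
Your proposal is correct and follows essentially the same strategy as the paper: use Lemma~\ref{lem:int.desc} for full faithfulness of $\mathcal K^p$, invoke Lemma~\ref{lem:refl.graph} to get $p_0$ effective for descent, descend the components $a_0,a_1$ to $w_0,w_1$, build $w_2,w_3$ as the appropriate pullbacks (using that $T$ preserves pullbacks to identify $a_n \iso p_n^* w_n$), then verify the $T$-category axioms by epimorphic cancellation along $h_0,h_1,h_2,h_3$ and conclude via Proposition~\ref{prop:comp.ess.surj}.

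The only notable difference is in how the structure maps of $w$ are produced. You phrase this as ``full faithfulness of $\mathcal K^{p_0}$, $\mathcal K^{p_1}$, $\mathcal K^{p_2}$ descends the structure maps'', whereas the paper argues more concretely: since $p_i$ is a descent morphism, the projection $h_i \colon v_i \to w_i$ is a regular epimorphism, hence the coequalizer of its kernel pair $u_i \rightrightarrows v_i$; the structure maps of $v$ commute with these kernel pairs (because the descent datum is a functor over $\Ker(p)$), so they factor uniquely through the $h_i$. Your framing is cleaner conceptually but hides a small subtlety: to descend $d_1^a \colon a_1 \to Ta_0$ via full faithfulness of $\mathcal K^{p_1}$ you must first place $Ta_0$ over $x_1$ by pulling back along $d_1^x$, and then check that this bundle lies in the essential image of $\mathcal K^{p_1}$, which is exactly where your observation $Ta_0 \iso (Tp_0)^*(Tw_0)$ and the functoriality of $p$ are used. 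The paper's coequalizer argument sidesteps this bookkeeping, at the cost of a few explicit diagram chases. Either route works.
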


\begin{proof}
  By Lemma \ref{lem:int.desc}, our hypotheses guarantee that \( \mathcal K^p
  \colon \CatTV \comma y \to \Desc(p) \) is fully faithful. Hence, our goal is
  to confirm \( \mathcal K^p \) is essentially surjective, and we shall do so
  via Proposition \ref{prop:comp.ess.surj}. 

  We consider the kernel pair \eqref{eq:ker.p} of \(p\), and we let \( (a
  \colon v \to x, \gamma \colon r \times_x a \to v) \) be a discrete fibration
  (internal to \( \CatTV \)) over \(\Ker(p)\). 

  If \( p_i \) is an effective descent morphism for \( i=0,1\), we obtain an
  equivalence \( \mathcal K^{p_i} \colon \cat V \comma y_i \to \Desc(p_i) \)
  for \( i = 0, 1 \). Thus, by Proposition \ref{prop:comp.ess.surj} there
  exist \( b_i \colon w_i \to y_i \) and \( h_i \colon v_i \to w_i \) such
  that
  \begin{equation}
    \label{eq:desc.pb}
    \begin{tikzcd}
      v_i \ar[d,"a_i",swap] \ar[r,"h_i"]
          \ar[rd,"\ulcorner",phantom,very near start]
        & w_i \ar[d,"b_i"] \\
      x_i \ar[r,"p_i",swap] & y_i
    \end{tikzcd}
  \end{equation}
  is a pullback diagram, satisfying
  \begin{equation}
    \label{eq:desc.cond}
    h_i \circ \gamma_i = h_i \circ \epsilon_{p_i \circ a_i}
  \end{equation}
  for \( i = 0, 1 \).

  We claim that \(w\) is a \(T\)-category, \( h_0, h_1 \) define a functor \(
  h \colon v \to w \), and \( b_0, b_1 \) define a functor \( b \colon w \to y
  \). To do so, we consider the kernel pairs of \( h_0 \) and \( h_1 \); given
  that \( p_0 \) and \( p_1 \) are descent morphisms, \( h_0 \) and \( h_1 \)
  are the coequalizers of their kernel pairs. Moreover, since \(T\) preserves
  kernel pairs, we obtain
  \begin{equation*}
    \begin{tikzcd}
      u_1 \ar[r,shift left] \ar[r,shift right]
          \ar[d,"d_i",swap]
        & v_1 \ar[r,"h_1"] \ar[d,"d_i"] & w_i \ar[d,"d_i",dashed] \\
      T^iu_0 \ar[r,shift left] \ar[r,shift right]
        & T^iv_0 \ar[r,"h_0",swap] & T^iw_0 
    \end{tikzcd}
  \end{equation*}
  which provides the \(T\)-graph structure of \(w\). With this, we obtain the
  following cospan of cospans
  \begin{equation}
    \label{eq:sp.of.sps}
    \begin{tikzcd}
      Tw_1 \ar[r,"Td_0"] \ar[d,"Tb_1",swap] & Tw_0 \ar[d,"b_0" description]
        & w_1 \ar[l,"d_1",swap] \ar[d,"b_1"] \\
      Ty_1 \ar[r,"Td_0"] & Ty_0
        & y_1 \ar[l,"d_1",swap] \\
      Tx_1 \ar[r,"Td_0"] \ar[u,"Tp_1"] & Tx_0 \ar[u,"p_0" description]
        & x_1 \ar[l,"d_1",swap] \ar[u,"p_1",swap]
    \end{tikzcd}
  \end{equation}
  and since \(T\) preserves pullbacks, the horizontal and vertical pullbacks
  of \eqref{eq:sp.of.sps} are, respectively
  \begin{equation}
    \label{eq:pb.of.sps}
    \begin{tikzcd}
      w_2 \ar[r,"b_2"] & y_2 & x_2 \ar[l,"p_2",swap] \\
      Tv_1 \ar[r,"Td_0"] & Tv_0 & v_1 \ar[l,"d_1",swap]
    \end{tikzcd}
  \end{equation}
  so, by commutativity of limits, the cospans \eqref{eq:pb.of.sps} have
  isomorphic pullbacks. Since the pullback of the last span defines \( v_2 \),
  we obtain the following pullback diagram:
  \begin{equation}
    \label{eq:pb.pairs}
    \begin{tikzcd}
      v_2 \ar[d,"a_2",swap] \ar[r,"h_2"]
          \ar[rd,"\ulcorner",phantom,very near start]
        & w_2 \ar[d,"b_2"] \\
      x_2 \ar[r,"p_2",swap] & y_2
    \end{tikzcd}
  \end{equation}

  Analogously, we deduce that
  \begin{equation}
    \label{eq:pb.triples}
    \begin{tikzcd}
      v_3 \ar[d,"a_3",swap] \ar[r,"h_3"]
          \ar[rd,"\ulcorner",phantom,very near start]
        & w_3 \ar[d,"b_3"] \\
      x_3 \ar[r,"p_3",swap] & y_3
    \end{tikzcd}
  \end{equation}
  is a pullback diagram as well. 
  
  If \( p_2 \) is also a descent morphism, we conclude via \eqref{eq:pb.pairs}
  that \( h_2 \) is a regular epimorphism as well. So, we may consider its
  kernel pair as well, to obtain
  \begin{equation*}
    \begin{tikzcd}
      u_0 \ar[r,shift left] \ar[r,shift right]
          \ar[d,"s_0",swap]
        & v_0 \ar[r,"h_0"] \ar[d,"s_0"] & w_0 \ar[d,"s_0",dashed] \\
      u_1 \ar[r,shift left] \ar[r,shift right]
        & v_1 \ar[r,"h_1" description] & w_1 \\
      u_2 \ar[r,shift left] \ar[r,shift right]
          \ar[u,"d_1"]
        & v_2 \ar[r,"h_2"] \ar[u,"d_1",swap] & w_2 \ar[u,"d_1",dashed,swap]
    \end{tikzcd}
  \end{equation*}
  which give the identity and composition structure morphisms for \( w \).
  Indeed, under the hypothesis \(w\) is a \(T\)-category, we can already
  conclude that \( h \colon v \to w \) is a functor.

  To prove this hypothesis, note that if \( p_3 \) is also an almost descent
  morphism, then so is \(h_3\) by~\eqref{eq:pb.triples}. Now, we note that we
  have
  \begin{align*}
    d_1 \circ s_0 \circ h_0 &= Th_0 \circ d_1 \circ s_0 
                             = Th_0 \circ e 
                             = e \circ h_0 \\
    d_0 \circ s_0 \circ h_0 &= h_0 \circ d_0 \circ s_0
                             = h_0 \\
    d_1 \circ d_1 \circ h_2 &= Th_0 \circ d_1 \circ d_1   
                             = Th_0 \circ m \circ Td_1 \circ d_2 
                             = m \circ Td_1 \circ d_2 \circ h_2 \\
    d_0 \circ d_1 \circ h_2 &= h_0\circ d_0 \circ d_1 
                             = h_0\circ d_0 \circ d_0 
                             = d_0\circ d_0 \circ h_2 \\
    d_1 \circ s_i \circ h_1 &= h_1\circ d_1 \circ s_i 
                             = h_1\circ s_0 \circ d_0 
                             = s_0\circ d_0 \circ h_1 \\
    d_1 \circ d_2 \circ h_3 &= h_1\circ d_1 \circ d_2 
                             = h_1\circ d_1 \circ d_1 
                             = d_1\circ d_1 \circ h_3
  \end{align*}
  so, by cancellation, we conclude that \( w \) is a \(T\)-category.

  Finally, we must confirm \( b_0, b_1 \) define a functor \( b \colon w \to y
  \). We recall that \( h_0, h_1, h_2 \) are epimorphisms, and we observe that
  \begin{align*}
    b_1 \circ d_1 \circ h_2 &= b_1 \circ h_1 \circ d_1
                             = p_1 \circ a_1 \circ d_1
                             = d_1 \circ p_2 \circ a_2
                             = d_1 \circ b_2 \circ h_2 \\
    d_i \circ b_1 \circ h_1 &= d_i \circ p_1 \circ a_1 
                             = T^ip_0 \circ T^ia_0 \circ d_i
                             = T^ib_0 \circ T^ih_0 \circ d_i
                             = T^ib_0 \circ d_i \circ h_1,
  \end{align*}
  confirming that the properties for a functor are satisfied, via cancellation.

  Since \eqref{eq:desc.pb} is a pullback diagram for \(i=0,1\), we obtain a
  pullback diagram
  \begin{equation*}
    \begin{tikzcd}
      v \ar[d,"a",swap] \ar[r,"h"]
          \ar[rd,"\ulcorner",phantom,very near start]
        & w \ar[d,"b"] \\
      x \ar[r,"p",swap] & y
    \end{tikzcd}
  \end{equation*}
  and we have
  \begin{equation}
    h \circ \gamma = h \circ \epsilon_{p \circ a}
  \end{equation}
  as a consequence of \eqref{eq:desc.cond} for \( i = 0,1 \). This concludes
  our proof, by Proposition \ref{prop:comp.ess.surj}.
\end{proof}

\section{Application to graded, operadic and enhanced multicategories}
\label{sect:application}

Let \( \cat V \) be a category with finite limits, and \(T\) a cartesian monad
on \( \cat V \). We recall that we have an equivalence
\begin{equation}
  \label{eq:cat.tv.slice}
  \CatTV \comma x \eqv \Cat(T_x,\cat V \comma x_0)
\end{equation}
for any internal \( (T,\cat V) \)-category \( x \), where \( T_x \) is a
cartesian monad on \( \cat V \comma x_0 \) induced by \(x\); see \cite[Section
6.2]{Lei04}. Since \( \cat C \comma x \to \cat C \) creates (effective,
almost) descent morphisms, we can obtain results about effective descent
morphisms in \( \Cat(T_x,\cat V \comma x_0) \) by studying those of \(
\Cat(T,\cat V) \).

To illustrate this, let \( \cat V \) be a category with finite limits. For
an internal category \(\cat C\) in \( \cat V \), the monad on \( \cat V \comma
\cat C_0 \) induced by the identity monad on \( \cat V \) is denoted \( \cat
C \times_{\cat C_0} - \). Via the equivalence~\eqref{eq:cat.tv.slice}, the
category of \( \cat C \)-graded categories internal to \( \cat V \) is the
category of internal functors over \( \cat C \):
\begin{equation*}
  \Cat(\cat C \times_{\cat C_0} -,\cat V \comma \cat C_0)
  \eqv \CatV \comma \cat C.
\end{equation*}
Hence, the study of effective descent functors of graded internal categories 
reduces to the study of effective descent functor of internal categories. Of
particular importance is the case \( \cat C_0 \iso 1 \); that is, when \( \cat
C \) is an internal \( \cat V \)-monoid.

Now, let \( \cat V \) be a lextensive category. The free monoid monad \(
\mathfrak M \) on \( \cat V \), given on objects by
\begin{equation*}
  X \mapsto \sum_{n \in \N} X^n,
\end{equation*}
is a cartesian monad, hence we may consider the category of
\textit{multicategories} internal to \( \cat V \), given by \( \MultiCat(\cat
V) = \Cat(\mathfrak M,\cat V) \). By Theorem \ref{thm:int.eff.desc}, a multicategory
functor \( p \colon x \to y \) internal to \( \cat V \) is an effective
descent morphism in \( \MultiCat(\cat V) \), provided that
\begin{itemize}[label=--,noitemsep]
  \item
    \(p_1 \colon x_1 \to y_1 \) is an effective descent morphism in \( \cat V
    \),
  \item
    \(p_2 \colon x_2 \to y_2 \) is a descent morphism in \( \cat V \),
  \item
    \(p_3 \colon x_3 \to y_3 \) is an almost descent morphism in \( \cat V \).
\end{itemize}
If we have an internal \( \cat V \)-operad \( \mathfrak O \) (an internal \(
\cat V \)-multicategory with terminal object-of-objects), the monad \(
T_{\mathfrak O} \) induced by the free monoid monad is given by
\begin{equation*}
  X \mapsto \sum_{n \in \N} \mathfrak O_n \times X^n,
\end{equation*}
and is cartesian as well. We define the objects of \( \Cat(T_{\mathfrak
O},\cat V) \) to be the \textit{operadic} multicategories internal to \( \cat
V \). Via \eqref{eq:cat.tv.slice}, we obtain
\begin{equation*}
  \Cat(T_{\mathfrak O},\cat V) \eqv \MultiCat(\cat V) \comma \mathfrak O,
\end{equation*}
so a morphism of operadic multicategories is effective for descent if it is so
as a morphism on the underlying internal multicategories, which we have
described above.

Finally, we consider the 2-monad \( \Fam_\fin \) on \( \Cat \), the
\textit{free finite coproduct completion} 2-monad. This is one of the central
objects of study in \cite{Web07}, where it was shown to be a cartesian
(2-)monad. Hence, we may consider \( \Fam_\fin \)-categories internal to
\( \Cat \), and we can obtain a description of the effective descent morphisms
of \( \Cat(\Fam_\fin,\Cat) \) via Theorem \ref{thm:int.eff.desc}.

Moreover, we have a cartesian 2-natural transformation \( \mathfrak S \to
\Fam_\fin \) (see \cite[Example 7.5]{Web07}), where \( \mathfrak S \) is the
free symmetric strict monoidal category 2-monad, and the objects of \(
\Cat(\mathfrak S,\Cat) \) were called \textit{enhanced symmetric
multicategories} in \cite[p. 212]{Lei04}, whose study of effective descent
functors reduces to the previous case.

By analogy, we may take the objects of \( \Cat(\Fam_\fin, \Cat) \) to be the
enhanced \textit{cocartesian} multicategories, and analogously, we have the
free finite product completion 2-monad \( \Fam_\fin^* \) on \( \Cat \),
defined on objects by \( \cat A \mapsto \Fam_\fin(\cat A^\op)^\op \), which is
also cartesian, and the objects of \( \Cat(\Fam_\fin^*,\Cat) \) may be called
enhanced \textit{cartesian} multicategories\footnote{These are related to the
\textit{multi-sorted Lawvere theories} -- see \cite{CS10}, or Subsection 5.4.2}.
 
For an ordinary functor \( F \colon \cat C \to \cat D \) of categories, we
have that
\begin{itemize}[label=--,noitemsep]
  \item
    \(F\) is an almost descent functor if \( F \) is surjective on morphisms,
  \item
    \(F\) is a descent functor if \( F \) is surjective on 2-chains,
  \item
    \(F\) is an effective descent functor if \(F\) is surjective on 3-chains,
\end{itemize}
so, for \(T\) a cartesian monad on \( \Cat \) (such as one of \( \Fam_\fin,
\Fam_\fin^*, \mathfrak S \)), a functor \( P \colon \cat X \to \cat Y \) of \(
T \)-categories is effective for descent if
\begin{itemize}[label=--,noitemsep]
  \item
    \(P_1 \colon \cat X_1 \to \cat Y_1 \) is surjective on 3-chains,
  \item
    \(P_2 \colon \cat X_2 \to \cat Y_2 \) is surjective on 2-chains,
  \item
    \(P_3 \colon \cat X_3 \to \cat Y_3 \) is surjective on morphisms.
\end{itemize}

\chapter{Generalized enriched multicategory functors}
\label{chap:enriched-multi}

Equipped with the description of effective descent morphisms in \( \CatTV \)
given in Theorem \ref{thm:int.eff.desc}, our goal is to study effective
descent morphisms in a category of enriched generalized multicategories, by
suitably embedding it into a category of internal generalized multicategories,
and studying whether the effective descent morphisms are reflected by this
embedding. In short, we aim to generalize the approach of \cite[Theorem
9.11]{Luc18} for the embedding \( \VCat \to \CatV \) to the setting of
generalized multicategorical structures.

Such an embedding is constructed via a suitable notion of
\textit{change-of-base} for generalized multicategories. This work was carried
out in \cite{PL23b} from a general point-of-view; here, we recount the details
that are relevant in the study of effective descent functors for enriched
generalized multicategories.

The notion of enriched \( (T, \cat V) \)-categories was introduced in
\cite{CT03}, under the terminology \( (T, \cat V) \)-categories, as a suitable
notion of \textit{lax algebras}. In Section \ref{sect:tvcats}, we will provide
the definition in a slightly more general setting, as done in \cite{Sea05} and
\cite{HST14} (when \( \cat V \) is a suitable quantale), as well
as~\cite{CS10}. We consider a monad \( T = (T,m,e) \) on \( \VMat \) in \(
\Equip_\lax \) -- the 2-category of \textit{equipments}, \textit{lax functors}
and \textit{icons}, and the enriched \( (T,\cat V) \)-categories are defined
as a suitable notion of lax algebra. The original setting of \cite{CT03} is
recovered when \(T\) is a \textit{normal} lax monad.

Working in the 2-category \( \Equip_\lax \), in Section
\ref{sect:base.change}, we review the lifting of the functor \( - \pt 1
\colon \Set \to \cat V \) to a functor \( - \pt 1 \colon \VMat \to \SpanV \)
in Proposition \ref{prop:eqp.adj}, and we describe the monad \( \Tt \) on \(
\VMat \), given by reflecting the monad \(T\) on \(\SpanV\) along \( - \pt 1
\). The results of \cite{PL23b} on change-of-base for generalized categorical
structures then confirm that we obtain an embedding
\begin{equation}
  \label{eq:emb.tvcat.cattv}
  - \pt 1 \colon \TtVCat \to \CatTV,
\end{equation}
under a suitable condition (Theorem \ref{prop:tvcat.sub.cattv}). 

In Section \ref{sect:eff.desc.refl}, we study the problem of reflecting
effective descent morphisms along the embedding \eqref{eq:emb.tvcat.cattv}. We
confirm that, under a second suitable condition, every effective descent
morphism is reflected (Lemma \ref{lem:lem.eff.desc.refl.tvcat}). Then, via our
description of the effective descent morphisms in \( \CatTV \), we obtain
Theorem \ref{thm:desc.tvcat}, the main result of this chapter.

We finish this chapter with Section \ref{sect:scope}, where we provide our
applications in the study of effective descent morphisms in categorical
structures, after briefly discussing whether the two extra conditions
introduced in Sections \ref{sect:base.change} and \ref{sect:eff.desc.refl}
affect the scope of the available examples.

\section{$(T,\cat V)$-categories}
\label{sect:tvcats}

Throughout this chapter, \( \cat V \) is assumed to be a lextensive, cartesian
monoidal category. We recall that \( \cat V \) is necessarily distributive;
see \cite[Proposition 4.5]{CLW93}.

The equipment of \( \cat V \)-matrices, denoted \( \VMat \), is defined in
\cite{BCSW83}; we simply recall that a \( \cat V \)-matrix \( r \) is a family
\( (r(x,y))_{x,y \in X \times Y} \) of objects in \( \cat V \) indexed by \( X
\times Y \). Such a \( \cat V \)-matrix shall be denoted as \( r \colon X \to
Y \). If \( s \colon Y \to Z \) is another \( \cat V \)-matrix, the composite
\( s \cdot r \colon X \to Z \) is given at \( x,z \) by
\begin{equation*}
  (s \cdot r)(x,z) = \sum_{y \in Y} s(y,z) \times r(x,y),
\end{equation*}
and for every set \(X\), the unit \( \cat V \)-matrix \( 1_X \colon X \to X \)
is given at \( x,y \) by the terminal object if \( x = y \), and the initial
object if \( x \neq y \). 

We let \( \Equip_\lax \) be the 2-category of \textit{equipments}, \textit{lax
functors}, and \textit{icons} \cite{Lac10a}, and we let \( T = (T,m,e) \) be a
monad on \( \VMat \) in the 2-category \( \Equip_\lax \). We remark that \( T
\) has an underlying monad on \( \Set \), and it can be shown that \(T\) is
its \textit{lax extension}, under the terminology of \cite[p. 18]{CT03},
provided we relax the condition \( T1_r = 1_{Tr} \) for \( \cat V \)-matrices
\(r\).  This was observed in \cite[Subsection~1.13]{HST14} when \( \cat V \)
is a quantale, and in \cite[Appendix B]{CS10}.

The data for such a lax monad \(T\) consists of
\begin{itemize}[label=--,noitemsep]
  \item
    a set \( TX \) for each set \(X\),
  \item
    a \( \cat V \)-matrix \( Tr \colon TX \to TY \) for each \( \cat V
    \)-matrix \(r \colon X \to Y \),
  \item
    for each set \(X\), a family of comparison morphisms 
    \begin{equation*}
      \e{T}_X \colon 1 \to (T1_X)(\mathfrak x,\mathfrak x) 
    \end{equation*}
    indexed by \( \mathfrak x \in TX \),
  \item
    for each pair of \( \cat V \)-matrices \( r \colon X \to Y \), \( s \colon
    Y \to Z \), a family of comparison morphisms
    \begin{equation*}
      \m{T}_{r,s} \colon (Ts \cdot Tr)(\mathfrak x,\mathfrak z) \to (T(s \cdot
      r))(\mathfrak x, \mathfrak z)
    \end{equation*}
    indexed by \( \mathfrak x \in TX \), \( \mathfrak z \in TZ \),
  \item
    for each \( \cat V \)-matrix \( r \colon X \to Y \), a family of morphisms
    \begin{equation*}
      e_{r,x,y} \colon r(x,y) \to (Tr)(e(x),e(y)) 
    \end{equation*} 
    indexed by \( x \in X \), \( y \in Y \),
  \item
    for each \( \cat V \)-matrix \( r \colon X \to Y \), a family of morphisms
    \begin{equation*}
      m_{r,\mathfrak x,\mathfrak y} 
        \colon (TTr)(\mathfrak x,\mathfrak y) \to
                (Tr)(m(\mathfrak x),m(\mathfrak y)) 
    \end{equation*}
    indexed by \( \mathfrak x \in TTX \), \( \mathfrak y \in TTY \).
\end{itemize}
satisfying the following coherence conditions, where we omit the indexing
elements, as well as the associator and unitor isomorphisms for convenience:
\begin{equation*}
  \begin{tikzcd}
    Tt \cdot Ts \cdot Tr 
    \ar[d,"\id \cdot \m T",swap] \ar[r,"\m T \cdot \id"]
    & T(t \cdot s) \cdot Tr \ar[d,"\m T"] \\
    Tt \cdot T(s \cdot r) \ar[r,"\m T",swap]
    & T(t \cdot s \cdot r)
  \end{tikzcd}
  \quad
  \begin{tikzcd}
    Tr \ar[r,"\id \cdot \e T"]
       \ar[rd,equal]
    & Tr \cdot T1 \ar[d,"\m T"] \\
    & Tr
  \end{tikzcd}
  \quad
  \begin{tikzcd}
    Tr \ar[r,"\e T \cdot \id"]
       \ar[rd,equal]
    & T1 \cdot Tr \ar[d,"\m T"] \\
    & Tr
  \end{tikzcd}
\end{equation*}
\begin{equation*}
  \begin{tikzcd}
    TTs \cdot TTr \ar[r,"m_s \cdot m_r"] 
                  \ar[d,"\m T",swap]
    & Ts \cdot Tr \\
    T(Ts \cdot Tr) \ar[r,"T\m T",swap]
    & TT(s \cdot r) \ar[u,swap,"m_{s \cdot r}"]
  \end{tikzcd}
  \qquad
  \begin{tikzcd}
    1_{TTx} \ar[d,"1_{m_x}",swap] \ar[r,"\e T"] 
    & T1_{Tx} \ar[r,"T\e T",swap]
    & TT1_x \ar[d,"m_{1_x}"] \\
    1_{Tx} \ar[rr,"\e T",swap]
    && T1_x
  \end{tikzcd}
\end{equation*}
\begin{equation*}
  \begin{tikzcd}
    s \cdot r \ar[r,"e_s \cdot e_r"] 
              \ar[rd,"e_{s \cdot r}",swap]
    & Ts \cdot Tr \ar[d,"\m T"] \\
    & T(s \cdot r)
  \end{tikzcd}
  \qquad
  \begin{tikzcd}
    1_x \ar[r,"1_{e_x}"]
        \ar[rd,"e_{1_x}",swap]
    & 1_{Tx} \ar[d,"\e T"] \\
    & T1_x
  \end{tikzcd}
\end{equation*}
for each 3-chain of \( \cat V \)-matrices \( r,\, s,\, t \), as well as the
following associativity and identity conditions:
\begin{equation*}
  \begin{tikzcd}
    Tr \ar[r,"Te_r"] \ar[rd,equal]
    & TTr \ar[d,"m_r"]  \\
    & Tr
  \end{tikzcd}
  \qquad
  \begin{tikzcd}
    Tr \ar[r,"e_{Tr}"] \ar[rd,equal]
    & TTr \ar[d,"m_r"]  \\
    & Tr
  \end{tikzcd}
  \qquad
  \begin{tikzcd}
    TTTr \ar[r,"m_{Tr}"] \ar[d,"Tm_r",swap]
    & TTr \ar[d,"m_r"]  \\
    TTr \ar[r,"m_r",swap] & Tr
  \end{tikzcd}
\end{equation*}
where we have also omitted the indexing elements.

An enriched \( (T,\cat V) \)-category is a quadruple \( (X,a,\upsilon,\mu) \),
where \( X \) is a set, \( a \colon TX \to X \) is a \( \cat V \)-matrix, \(
\upsilon \) is a family of morphisms \( \upsilon_x \colon 1 \to a(e(x),x) \)
indexed by \(x \in X\), and \( \mu \) is a family of morphisms
\begin{equation*}
  \mu_{x_0,x_1,x_2} \colon a(x_2,x_1,x_0) \to a(m(x_2),x_0)
\end{equation*}
indexed by \( x_i \in T^iX \), where we define
\begin{equation*}
  a(x_2,x_1,x_0) =  a(x_1,x_0) \times (Ta)(x_2,x_1) 
  \quad\text{and}\quad
  a(x_3,x_2,x_1,x_0) = a(x_2,x_1,x_0) \times (TTa)(x_3,x_2)
\end{equation*}
for \( x_i \in T^iX \). These families satisfy the following identity 
and associativity laws:
\begin{equation*}
  \begin{tikzcd}
    a(x_1,x_0) \ar[r,"\id \times ((T\upsilon)_{x_1} \circ \e T)"]
               \ar[rd,equal]
      & a(e_T(x_1),x_1,x_0) \ar[d,"\mu"] \\
    & a(x_1,x_0)
  \end{tikzcd}
  \qquad
  \begin{tikzcd}
    a(x_1,x_0) \ar[r,"\upsilon_{x_0} \times e_a"]
               \ar[rd,equal]
      & a(e_T(x_1),e(x_0),x_0) \ar[d,"\mu"] \\
    & a(x_1,x_0)
  \end{tikzcd}
\end{equation*}
\begin{equation*}
  \begin{tikzcd}
    a(x_3,x_2,x_1,x_0) \ar[r,"\id \times (T\mu \circ \m T)"] 
                       \ar[d,"\mu \times m_a",swap]
    & a((Tm)(x_3),x_1,x_0) \ar[d,"\mu"] \\
    a((mT)(x_3),m(x_2),x_0) \ar[r,"\mu",swap]
    & a((m \circ mT)(x_3),x_0)
  \end{tikzcd}
\end{equation*}

\section{Change-of-base and embedding}
\label{sect:base.change}

Let \( \cat V \) be a lextensive category. The following diagram depicts
the adjunction fundamental to our study of viewing enriched generalized
multicategories as internal generalized multicategories:
\begin{equation}
  \label{eq:set.v.adj}
  \begin{tikzcd}
    \Set \ar[r,bend left,"- \pt 1"{above,name=A}]
    & \cat V \ar[l,bend left,"\cat V(1{,}-)"{name=B,below}]
    \ar[from=A,to=B,phantom,"\adj" {anchor=center, rotate=-90}]
  \end{tikzcd}
\end{equation}
We shall denote the counit of this adjunction by \( \heps \). As was done in
\cite[Theorem 9.11]{Luc18}, we will assume for the remainder of this chapter
that \( - \pt 1 \) is fully faithful, so that \( \Set \) may be the seen as
the full subcategory of \( \cat V \) consisting of the \textit{discrete
objects}. As observed in \cite[Lemma 2.2.1]{Ver92} (when \( \cat V \) is a
presheaf category), we have:

\begin{proposition}[{\cite[Remark 7.4, Lemma 7.6]{PL23b}}] 
  \label{prop:eqp.adj}
  We have an adjunction
  \begin{equation}
    \label{eq:vmat.spanv.adj}
    \begin{tikzcd}
      \VMat \ar[r,bend left,"- \pt 1"{name=A}]
      & \SpanV \ar[l,bend left,"\cat V(1{,}-)"{name=B,below}]
      \ar[from=A,to=B,phantom,"\adj" {anchor=center, rotate=-90}]
    \end{tikzcd}
  \end{equation}
  in the 2-category \( \Equip_\lax \). Moreover, \( - \pt 1 \colon \VMat \to
  \SpanV \) is fully faithful, and the underlying adjunction on the categories
  of objects is precisely \eqref{eq:set.v.adj}.
\end{proposition}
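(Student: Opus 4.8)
The plan is to realise \( - \pt 1 \) as a normal pseudofunctor of equipments (in particular a lax functor), record its local behaviour, and then construct a right adjoint whose unit and counit are inherited from \eqref{eq:set.v.adj}. First I would make \( - \pt 1 \colon \VMat \to \SpanV \) explicit: it sends a set \(X\) to the copower \( X \pt 1 = \coprod_{x \in X} 1 \), and a \( \cat V \)-matrix \( r \colon X \to Y \) to the span
\[
  X \pt 1 \longleftarrow \coprod_{(x,y) \in X \times Y} r(x,y) \longrightarrow Y \pt 1,
\]
the legs being induced by the coproduct inclusions into \( X \pt 1 \) and \( Y \pt 1 \). Since \( \cat V \) is lextensive, hence distributive by \cite[Proposition 4.5]{CLW93}, the pullback computing the span composite \( (s \pt 1) \cdot (r \pt 1) \) splits, over the coproduct \( Y \pt 1 \), into \( \coprod_{(x,y,z)} s(y,z) \times r(x,y) \), which is canonically isomorphic to \( (s \cdot r) \pt 1 \); the unit matrix \( 1_X \) is sent strictly to the identity span on \( X \pt 1 \). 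So the comparison cells are invertible and \( - \pt 1 \) is a normal pseudofunctor of bicategories; I would then check that it preserves companions and conjoints — the companion of a function \( f \) goes to the graph of \( f \pt 1 \), and dually — so that it is a functor of equipments whose tight part is \( - \pt 1 \colon \Set \to \cat V \), as in \eqref{eq:set.v.adj}.

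Next I would record that for sets \(X, Y\) the functor \( \VMat(X,Y) \to \SpanV(X \pt 1, Y \pt 1) \) is an equivalence: by extensivity every span between the discrete objects \( X \pt 1 \) and \( Y \pt 1 \) decomposes, uniquely up to isomorphism, into its fibres over the pairs \( (x,y) \), which reconstitute a \( \cat V \)-matrix, and morphisms of such spans decompose componentwise. This is the \emph{fully faithful} assertion. For the right adjoint I would take \( \cat V(1,-) \colon \SpanV \to \VMat \) to be \( A \mapsto \cat V(1,A) \) on objects and, on a span \( A \xleftarrow{f} S \xrightarrow{g} B \), the \( \cat V \)-matrix \( (a,b) \mapsto 1 \times_{A \times B} S \), the pullback of \( (f,g) \colon S \to A \times B \) along \( (a,b) \colon 1 \to A \times B \); its comparison cells for composition and for units come from the universal property of pullbacks together with the canonical map from a coproduct of pullbacks into the pullback over a coproduct (so units are preserved only laxly), and one checks the pentagon and triangle coherences of a lax functor of equipments, its tight part being \( \cat V(1,-) \colon \cat V \to \Set \).

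Finally I would assemble the adjunction in \( \Equip_\lax \). The unit \( \eta \) has object-components the unit of \eqref{eq:set.v.adj}, which is invertible exactly because \( - \pt 1 \colon \Set \to \cat V \) is assumed fully faithful, and its loose-1-cell components are the isomorphisms \( r \iso \cat V(1,-)(r \pt 1) \) produced by the fibrewise decomposition above; the counit \( \varepsilon_A \colon \cat V(1,A) \pt 1 \to A \) is the counit \( \heps \) of \eqref{eq:set.v.adj}, with its lax-naturality cells again built from the pullback descriptions. The two triangle identities reduce, on objects and tight 1-cells, to those of \eqref{eq:set.v.adj}, and on loose 1-cells to equalities that hold after the canonical extensive decompositions, so that by construction the underlying adjunction on categories of objects is precisely \eqref{eq:set.v.adj}. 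I expect the main obstacle to be bookkeeping rather than conceptual: verifying that the comparison 2-cells of \( \cat V(1,-) \) are coherent and that \( \eta \) and \( \varepsilon \) are genuine 2-cells (icons) in \( \Equip_\lax \) satisfying the triangle identities amounts to a fairly long diagram chase, everywhere powered by distributivity and the stability and disjointness of coproducts in \( \cat V \); this is carried out in \cite[Lemma 2.2.1]{Ver92} and \cite[Lemma 7.6]{PL23b}.
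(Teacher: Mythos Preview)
Your proposal is correct and follows essentially the same approach as the paper: the paper does not give a self-contained proof but cites \cite[Remark 7.4, Lemma 7.6]{PL23b} and \cite[Lemma 2.2.1]{Ver92}, and then records exactly the explicit descriptions of \( - \pt 1 \) and \( \cat V(1,-) \) on spans and matrices that you spell out. Your sketch of the coherence verification (extensivity for the pseudofunctoriality of \( - \pt 1 \), pullbacks for the lax structure of \( \cat V(1,-) \), and the unit/counit borrowed from \eqref{eq:set.v.adj}) is precisely the content of those references.
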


Spans \( a \colon X \to Y \) in a category \( \cat V \) are denoted by a
diagram such as:
\begin{equation*}
  \begin{tikzcd}
    & M_a \ar[ld,"l_a",swap] \ar[rd,"r_a"] \\
    X && Y
  \end{tikzcd}
\end{equation*}
So, if \( a \colon X \to Y \) is a span, the \( \cat V \)-matrix \( \cat
V(1,a) \) is given at \( x \in \cat V(1,X) \), \( y \in \cat V(1,Y) \) by the
pullback
\begin{equation*}
  \begin{tikzcd}
    \cat V(1,a)(x,y) \ar[d] \ar[r] 
                      \ar[rd,"\ulcorner",phantom,very near start]
    & 1 \ar[d,"{x,y}"]\\
    M_a \ar[r,"{l_a,r_a}",swap] & X \times Y
  \end{tikzcd}
\end{equation*}
while if \( r \colon S \to T \) is a \( \cat V \)-matrix, the span \( r \pt 1
\) is given by the coproduct of
\begin{equation*}
  \begin{tikzcd}
    & r(x,y) \ar[ld] \ar[rd] \\
    1 && 1
  \end{tikzcd}
  \quad\text{indexed by}\quad
  \begin{tikzcd}
    & S \times T \ar[ld,"d_1",swap] \ar[rd,"d_0"] \\
    S && T
  \end{tikzcd}
\end{equation*}

Now, we let \( T = (T,m,e) \) be a cartesian monad on \( \cat V \). By
\cite[Proposition A.2]{Her00}, \( T \) induces a pseudomonad on \( \SpanV \),
which may be seen as a monad in \( \Equip_\lax \), which we also denote by \(
T \); see \cite[Example A.6]{CS10}.

Via \eqref{eq:vmat.spanv.adj}, the monad \( T \) on \( \SpanV \)
induces a monad \( \Tt = (\Tt, \overline m, \overline e) \) on \( \VMat \)
(see \cite[Proposition 8.1]{PL23b}), where
\begin{itemize}[label=--,noitemsep]
  \item
    \( \Tt a = \cat V(1,T(a \pt 1)) \),
  \item
    \( \overline m_a = \cat V(1,m_{a \pt 1} \circ T\heps_{T(a \pt 1)}) \)
  \item
    \( \overline e_a = \cat V(1,e_{a \pt 1}) \circ \heta_a \),
\end{itemize}
for each \( \cat V \)-matrix \( a \). This monad is the lax extension of the
monad (also denoted \( \Tt \)) induced by \( T \) on \( \Set
\) via \eqref{eq:set.v.adj}. Under suitable conditions, the category of
enriched \( (\Tt,\cat V) \)-categories is embedded in the category of internal
\( (T,\cat V) \)-categories, which are our objects of interest in this
Chapter. 

\begin{theorem}[{\cite[Lemma 9.1, Theorem 9.2]{PL23b}}]
  \label{prop:tvcat.sub.cattv}
  If \( \heps_{T(-\pt 1)} \) is a cartesian natural transformation,
  we have an adjunction
  \begin{equation}
    \label{eq:tvcat.cattv.adj}
    \begin{tikzcd}
      \TtVCat \ar[r,bend left,"- \pt 1"{name=A}]
      & \CatTV \ar[l,bend left,"\cat V(1{,}-)"{name=B,below}]
      \ar[from=A,to=B,phantom,"\adj" {anchor=center, rotate=-90}]
    \end{tikzcd}
  \end{equation}
  and the functor \( - \pt 1 \colon \TtVCat \to \CatTV \) is fully faithful.
\end{theorem}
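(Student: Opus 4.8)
The plan is to deduce the statement from the general change-of-base theorem for generalized multicategories of \cite{PL23b}; the substantive work is in checking its hypotheses in the case at hand and in locating where the cartesianness of $\heps_{T(- \pt 1)}$ is used. The first step is to promote the adjunction of Proposition~\ref{prop:eqp.adj} to an adjunction between \emph{monads} in $\Equip_\lax$. By construction, $\Tt$ is the monad on $\VMat$ reflected from $T$ along $- \pt 1 \adj \cat V(1,-)$, that is, $\Tt = \cat V(1,-)\circ T\circ(- \pt 1)$ with structure maps assembled from $m$, $e$, $\heta$ and $\heps$; the coherence diagrams displayed before the statement are exactly those asserting that this is a monad in $\Equip_\lax$. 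The counit then supplies a comparison $2$-cell $(- \pt 1)\circ\Tt \Rightarrow T\circ(- \pt 1)$ making $- \pt 1$ a morphism of monads $(\VMat,\Tt)\to(\SpanV,T)$ (in the appropriate variance), its mate makes $\cat V(1,-)$ a morphism of monads in the other direction, and the triangle identities for $\heta,\heps$ promote the adjunction to one internal to the $2$-category of monads in $\Equip_\lax$. This step is formal — it is the standard fact that $R T L$ is a monad whenever $L \adj R$, carried over verbatim to $\Equip_\lax$.

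The second step is to feed this adjunction of monads into the change-of-base theorem of \cite{PL23b}, which $2$-functorially turns a morphism of monads in $\Equip_\lax$ into a functor between the associated categories of generalized multicategories, and hence turns the adjunction into the adjunction \eqref{eq:tvcat.cattv.adj} between $- \pt 1\colon\TtVCat\to\CatTV$ and $\cat V(1,-)\colon\CatTV\to\TtVCat$. It is here that the hypothesis first enters: one must check that $- \pt 1$ really lands in $\CatTV$, i.e. that a $(\Tt,\cat V)$-category is sent not merely to a lax $T$-algebra but to a $T$-category \emph{internal to} $\cat V$, so that the pullback squares of Section~\ref{sect:cat.tv} (in the shape of \eqref{eq:pb.squares}) actually hold for the spans obtained by applying $- \pt 1$ to the $\cat V$-matrix data. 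Since those squares are built from $T$ applied to spans and from the multiplication $m$, what is needed is that $- \pt 1$ commutes with these operations up to the relevant pullbacks, and the assumption that $\heps_{T(- \pt 1)}$ is cartesian — meaning its naturality squares are pullbacks — is precisely what provides this.

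Finally, for full faithfulness of $- \pt 1\colon\TtVCat\to\CatTV$ I would argue component-wise. On carriers it is immediate: the image carrier is the discrete object $X \pt 1$ and $- \pt 1\colon\Set\to\cat V$ is fully faithful, so $\cat V(X \pt 1, Y \pt 1)\iso\Set(X,Y)$. For the hom and multimorphism data, a $(T,\cat V)$-functor between the images has components in $\cat V$ between objects assembled from $T$ applied to the spans $a \pt 1$ and $b \pt 1$; applying $\cat V(1,-)$ produces candidate components of a $(\Tt,\cat V)$-functor $X\to Y$, and one verifies — using full faithfulness of $- \pt 1$ once more, together with the cartesian hypothesis, which guarantees that $\cat V(1,-)$ reflects the pullbacks defining $\Tt a = \cat V(1,T(a \pt 1))$ so that no multimorphism data is lost — that the original functor is the image under $- \pt 1$ of this reconstruction, uniquely; faithfulness follows from the same identification (or already from full faithfulness on carriers, a morphism of generalized multicategories being determined by its carrier and hom components). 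The main obstacle throughout is precisely this careful tracking of the cartesian assumption: composition of multimorphisms in both categories is built from $T$ (resp.\ $\Tt$) applied to spans and from $m$, and these are exactly the operations at which $- \pt 1$ and $\cat V(1,-)$ could fail to commute, so one must isolate which pullbacks must be preserved and reflected and confirm that cartesianness of $\heps_{T(- \pt 1)}$ supplies all of them — this underlies both the ``lands in $\CatTV$'' claim of the second step and the fullness claim of the third.
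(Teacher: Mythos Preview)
Your proposal is correct and aligns with the paper's approach: the thesis itself does not give a self-contained proof but defers to \cite[Lemma 9.1, Theorem 9.2]{PL23b}, and your sketch---promote the equipment adjunction of Proposition~\ref{prop:eqp.adj} to an adjunction of monads in $\Equip_\lax$, apply the change-of-base machinery of \cite{PL23b}, and derive full faithfulness from that of $- \pt 1$ at the underlying level---is exactly the shape of that argument. One refinement: where you say the cartesian hypothesis is needed so that ``the pullback squares of Section~\ref{sect:cat.tv} actually hold'', the paper is more specific about \emph{which} pullback is at stake. The squares \eqref{eq:pb.squares} are definitional for any internal $T$-category; the genuine issue is that the composition $\hmu$ is defined on the coproduct $\sum_{x_i \in T^iX} a(x_2,x_1,x_0)$, and one must identify this coproduct with the object of $2$-chains of $(X,a,\upsilon,\mu)\pt 1$. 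The cartesian hypothesis on $\heps_{T(-\pt 1)}$ (via \cite[Lemma 8.3]{PL23b}) guarantees the intermediate pullback square displayed after the theorem, which yields exactly this identification---so the hypothesis is needed for $\hmu$ to have the correct domain, not merely to verify axioms after the fact.
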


The embedding functor \( - \pt 1 \colon \TtVCat \to \CatTV \) is given on an
enriched \( (\Tt,\cat V) \)-category \( (X,a,\upsilon,\mu) \) by the span
\begin{equation*}
  \begin{tikzcd}
    && M_{a \pt 1} \ar[ld,"l_{a \pt 1}",swap] \ar[rd,"r_{a \pt 1}"] \\
    & \Tt X \pt 1 \ar[ld,"\heps_{T(X\pt 1)}",swap] && X \pt 1 \\
    T(X \pt 1)
  \end{tikzcd}
\end{equation*}
with unit \( \hups \) given by
\begin{equation*}
  \begin{tikzcd}
    1 \ar[d,"\heta_x",swap] \ar[r,"\upsilon_x \pt 1"]
      & a(e(x),x) \pt 1 \ar[d,"\iota_{e(x),x}"] \\
    X \pt 1 \ar[r,dashed,"\hups",swap] 
      & M_{a \pt 1}
  \end{tikzcd}
\end{equation*}
via the universal property of the coproduct, and the composition \( \hmu \) is
given by
\begin{equation*}
  \begin{tikzcd}
    a(x_2,x_1,x_0)\pt 1
      \ar[r,"\mu_{x_2,x_1,x_0} \pt 1"]
      \ar[d]
    & a(m(x_2),x_0) \pt 1 \ar[d] \\
    \displaystyle\sum_{x_i \in T^iX} a(x_2,x_1,x_0) \pt 1
      \ar[r,dashed,swap,"\hmu"]
    & M_{a\pt 1}
  \end{tikzcd}
\end{equation*}
via the universal property of the coproduct. The hypothesis that \(
\heps_{T(- \pt 1)} \) is a cartesian natural transformation ensures that the
following diagram
\begin{equation*}
  \begin{tikzcd}
    \displaystyle\sum_{x_i \in T^iX}
          \Tt a(x_2,x_1) \pt 1 \ar[r] \ar[d,"\heps_{T(a\pt 1)}",swap]
                      \ar[rd,"\ulcorner",phantom,very near start]
    & \overline TX \pt 1 \ar[d,"\heps_{T(X \pt 1)}"] \\
    T M_{a \pt 1} \ar[r,"Tr_{a\pt 1}",swap] & T(X \pt 1)
  \end{tikzcd}
\end{equation*}
is a pullback square, by \cite[Lemma 8.3]{PL23b}, thereby guaranteeing that
\begin{equation*}
  \sum_{x_i \in T^iX} a(x_2,x_1,x_0)
\end{equation*}
is the object of 2-chains of \( (X,a,\upsilon,\mu) \pt 1 \). 

If \( (f,\phi) \colon (X,a,\upsilon,\mu) \to (Y,b,\upsilon,\mu) \) is an
enriched \( (\Tt,\cat V) \)-functor, \( (f,\phi) \pt 1 \) is given on objects
by \( f \pt 1 \colon X \pt 1 \to Y \pt 1 \), and \( \phi \pt 1 \colon M_{a \pt
1} \to M_{b \pt 1} \) on morphisms.

\section{Reflection of effective descent morphisms}
\label{sect:eff.desc.refl}

Having established all the necessary notation, we can now proceed to study the
effective descent morphisms in \( \TtVCat \), by studying whether the
embedding \( - \pt 1 \colon \TtVCat \to \CatTV \) reflects effective descent
morphisms. The key idea, developed by the next result, is that we must
guarantee that the full inclusion \( \TtVCat \to \CatTV \) consists precisely
of those internal \( (T,\cat V) \)-categories with a discrete object of
objects, which could not be guaranteed in general.

\begin{lemma}[{\cite[Lemma 10.2]{PL23b}}]
  \label{lem:counit.iso.when}
  Let \( X \) be an internal \( (T,\cat V) \)-category whose object-of-objects
  is discrete; that is, \( X_0 \iso S \pt 1 \) for a set \(S\).

  If we let \( a \) be the span in \( \cat V \) given by the underlying
  \(T\)-graph of \( X \), as depicted in \eqref{eq:span.of.x},
  \begin{equation}
    \label{eq:span.of.x}
    \begin{tikzcd}
      & X_1 \ar[ld,"d_1",swap] \ar[rd,"d_0"] \\
      TX_0 && X_0
    \end{tikzcd}
  \end{equation}
  then \( \heps_a \) is a split epimorphism. Moreover, if \( \heps_{T1} \colon
  \Tt 1 \pt 1 \to T1 \) is a monomorphism, then \( \heps_a \) is an
  isomorphism.
\end{lemma}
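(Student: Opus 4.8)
The plan is to compute $\heps_a$ explicitly as a pullback of a counit component, then produce a section of it, and finally upgrade to an isomorphism under the extra hypothesis.

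First I would unwind $\heps_a$. Applying the right adjoint $\cat V(1,-)$ of Proposition~\ref{prop:eqp.adj} to the span $a$ of~\eqref{eq:span.of.x} yields the $\cat V$-matrix $\cat V(1,a)\colon \cat V(1,TX_0)\to\cat V(1,X_0)$ whose $(\mathfrak x,x)$-component is the fibre of $(d_1,d_0)\colon X_1\to TX_0\times X_0$ over $(\mathfrak x,x)$; copowering by $1$ and using that $\cat V$ is lextensive, so that coproducts are stable under pullback, the apex of $(\cat V(1,a))\pt 1$ is $X_1\times_{TX_0}\bigl(\cat V(1,TX_0)\pt 1\bigr)$ — the ``codomain'' leg contributes nothing, since $X_0\iso S\pt 1$ is discrete and hence $\heps_{X_0}$ is invertible. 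Under this identification $\heps_a$ is exactly the pullback of the counit component $\heps_{TX_0}\colon\cat V(1,TX_0)\pt 1\to TX_0$ along the ``domain'' leg $d_1$, which is essentially the content of \cite[Lemma 8.3]{PL23b}. Consequently a section of $\heps_a$ is the same thing as a lift of $d_1$ through $\heps_{TX_0}$, and a pullback of $\heps_{TX_0}$ inherits being a monomorphism from it; so it suffices to control $\heps_{TX_0}$ together with this lift.

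Next, the split-epimorphism claim. Using the standing assumption that $\heps_{T(-\pt 1)}$ is a cartesian natural transformation, applied to $!_S\colon S\to 1$, I would exhibit $\heps_{TX_0}=\heps_{T(S\pt 1)}$ as the pullback of $\heps_{T1}\colon\Tt 1\pt 1\to T1$ along $T(!_{S\pt 1})\colon T(S\pt 1)\to T1$, so that $\heps_a$ itself becomes a pullback of $\heps_{T1}$, along the ``arity-of-domain'' composite $T(!_{S\pt 1})\circ d_1\colon X_1\to T1$. The section is then built from the $T$-category structure of $X$: the identity-assigning morphism $s_0$, together with the unit and multiplication of $T$ and the composition operation of $X$, present the domain of a general arrow as a concatenation of singletons, and since $X_0$ is discrete these singletons are global points of $TX_0$; this is what yields the required lift $X_1\to\cat V(1,TX_0)\pt 1$ of $d_1$. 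I expect this to be the main obstacle: it is precisely here that one must use that $X$ carries a $T$-category structure, not merely that $X_0$ is discrete, and pinning down the lift in $\cat V$ (as opposed to arguing only on global points, where it is immediate that $\cat V(1,\heps_a)$ is invertible by a triangle identity for $-\pt 1\dashv\cat V(1,-)$, the unit being invertible as $-\pt 1$ is fully faithful) is the delicate step.

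Finally, the ``moreover'' clause is formal: if $\heps_{T1}$ is a monomorphism, then so is its pullback $\heps_{TX_0}$, and hence so is the further pullback $\heps_a$; a morphism that is simultaneously a split epimorphism and a monomorphism is an isomorphism, giving the conclusion.
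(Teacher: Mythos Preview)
Your unwinding of $\heps_a$ as a pullback of $\heps_{TX_0}$ along $(d_1,d_0)$, the further reduction of $\heps_{TX_0}$ to a pullback of $\heps_{T1}$ via the cartesianness of $\heps_{T(-\pt 1)}$, and the deduction of the ``moreover'' clause all match the paper's proof. The divergence, and the gap, is in how you produce the section.

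You propose to build the lift $\hat d_1 \colon X_1 \to \Tt S\pt 1$ of $d_1$ from the $T$-category structure of $X$---using $s_0$, the multiplication $m$, and composition---so as to ``present the domain of a general arrow as a concatenation of singletons''. This is not a proof, and the strategy is misdirected: the composition map $X_2 \to X_1$ runs the wrong way (it \emph{collapses} composable pairs rather than decomposing arrows), so no such presentation is extractable from the data of a $T$-category. Your diagnosis that ``it is precisely here that one must use that $X$ carries a $T$-category structure, not merely that $X_0$ is discrete'' is in fact the opposite of what the paper does. The paper's argument uses only the underlying $T$-graph together with the standing hypotheses: it supplies the map $X_1 \xrightarrow{!} 1 \xrightarrow{\overline e_1 \pt 1} \Tt 1 \pt 1$ as the second leg into the cartesian-naturality pullback square at $!_S \colon S \to 1$, invokes the identity $e_1 = \heps_{T1} \circ (\overline e_1 \pt 1)$, and reads off $\hat d_1$ directly from the universal property of that pullback (diagram~\eqref{eq:epscart}); the section $\omega$ of $\heps_a$ is then the morphism induced by $(\hat d_1, d_0)$ into the pullback~\eqref{eq:epssplit} that defines $\heps_a$.
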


\begin{proof}
  We may assume that \( X_0 = S \pt 1 \). Our first step is to notice that \(
  d_1 \colon X_1 \to T(S \pt 1) \) factors uniquely through \( \heps_{T(S \pt
  1)} \); we have \( e_1 = \heps_{T1} \circ (\overline e_1 \pt 1) \) (by
  definition of \( \overline e \)), so there exists a unique \( \hat d_1 \),
  depicted by dashed morphism in \eqref{eq:epscart}
  \begin{equation}
    \label{eq:epscart}
    \begin{tikzcd}
      X_1 \ar[rd,dashed,"\hat d_1" description]
          \ar[rrd,bend left=15,"d_1"] \ar[d,"!",swap] \\
      1 \ar[rd,"\overline e_1 \pt 1",swap]
        & \Tt S \pt 1 \ar[r,"\heps_{T(S\pt 1)}"]
                      \ar[d,"\Tt ! \pt 1",swap] 
                      \ar[rd,"\ulcorner",phantom,very near start]
        & T(S \pt 1) \ar[d,"T(! \pt 1)"] \\
        & \Tt 1 \pt 1 \ar[r,"\heps_{T1}",swap]
        & T1
    \end{tikzcd}
  \end{equation}
  making the adjacent diagrams commute.

  We may conclude that there is a unique morphism \( \omega \colon X_1 \to
  M_{\cat V(1,a)\pt 1} \) such that \( \heps_a \circ \omega = \id \)
  (confirming that \( \heps_a \) is a split epimorphism) and \( (\hat d_1,
  d_0) = (d_1, d_0) \circ \omega \), as depicted in
  \eqref{eq:epssplit}\footnote{It should be noted that \( \heps_a \) is
  \textit{defined} by this pullback square, see {\cite[(2.5)]{PL23b}}, noting
  that \( \cat V \) is lextensive.}

  \begin{equation}
    \label{eq:epssplit}
    \begin{tikzcd}
      X_1 \ar[rrrd,"{\hat d_1,d_0}",bend left=15]
          \ar[rdd,equal,bend right=25]
          \ar[rd,"\omega" description,dashed] \\
      & M_{\cat V(1,a) \pt 1} \ar[d,"\heps_a",swap]
                              \ar[rr,"{l_{\cat V(1,a) \pt 1},
                                       r_{\cat V(1,a) \pt 1}}"] 
                              \ar[rrd,"\ulcorner",phantom,very near start]
      && \Tt S \pt 1 \times S \pt 1 \ar[d,"\heps_{T(S \pt 1)} \times \id"] \\
      & X_1 \ar[rr,swap,"{d_1,d_0}"] && T(X \pt 1) \times S \pt 1
    \end{tikzcd}
  \end{equation}

  Moreover, if \( \heps_{T1} \) is a monomorphism, then, by the pullback
  square in \eqref{eq:epscart}, so is \( \heps_{T(S \pt 1)} \), and by the
  pullback square in \eqref{eq:epssplit}, we conclude \( \heps_a \) is a
  monomorphism. Therefore, \( \heps_a \) must be invertible.
\end{proof}

As a corollary, we conclude that the enriched \( (\Tt,\cat V) \)-categories
are precisely the internal \( T \)-categories whose object of objects
is discrete. More precisely, we have:

\begin{lemma}[{\cite[Theorem 10.3]{PL23b}}]
  \label{lem:tvcat.pspb}
  If \( \heps_{T1} \) is a monomorphism, we have a pseudopullback diagram
  \begin{equation*}
    \begin{tikzcd}
      \TtVCat \ar[r,"- \pt 1"] \ar[d]
              \ar[rd,"\iso" description,phantom]
        & \CatTV \ar[d] \\
      \Set \ar[r,swap,"- \pt 1"]
        & \cat V
    \end{tikzcd}
  \end{equation*}
  of categories with pullbacks and pullback-preserving functors.
\end{lemma}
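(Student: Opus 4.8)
The goal is to exhibit the square
\begin{equation*}
  \begin{tikzcd}
    \TtVCat \ar[r,"- \pt 1"] \ar[d]
            \ar[rd,"\iso" description,phantom]
      & \CatTV \ar[d] \\
    \Set \ar[r,swap,"- \pt 1"]
      & \cat V
  \end{tikzcd}
\end{equation*}
as a pseudopullback, where the vertical arrows are the object-of-objects functors. The strategy is the usual one for recognising pseudopullbacks: first check the four functors are well-defined and pullback-preserving, then verify that the comparison functor from $\TtVCat$ into the pseudopullback $\cat P$ of the cospan $\cat V \xleftarrow{-\pt 1} \Set$, $\cat V \xleftarrow{(-)_0} \CatTV$ is (i) essentially surjective, (ii) full, and (iii) faithful. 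The hypothesis $\heps_{T1}$ monic is exactly what is needed to make (i) go through, via Lemma \ref{lem:counit.iso.when}.

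First I would note that $(-)_0 \colon \CatTV \to \cat V$ and $(-)_0 \colon \TtVCat \to \Set$ preserve pullbacks, since pullbacks in both categories of generalized multicategories are computed componentwise (as recorded in Lemma \ref{lem:cat.tv.eq} for $\CatTV$, and analogously for $\TtVCat$ — its underlying reflexive graph structure is inherited from $\VMat$, which is componentwise), and $- \pt 1 \colon \Set \to \cat V$ preserves pullbacks because it is a left adjoint whose right adjoint $\cat V(1,-)$ is, being representable, limit-preserving, and the composite of fully faithful adjoints behaves well; in any case $\Set$ has a very rigid pullback structure and $-\pt 1$ is fully faithful, so one checks directly that it preserves the finite limits in play. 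The embedding $-\pt 1 \colon \TtVCat \to \CatTV$ preserves pullbacks by Theorem \ref{prop:tvcat.sub.cattv} together with the explicit description of $-\pt 1$ on objects given after that theorem (the hypothesis that $\heps_{T(-\pt1)}$ is cartesian is available here, since $\heps_{T1}$ monic is a consequence — indeed $\heps_{T(-\pt1)}$ cartesian is the standing hypothesis of this section).

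The substantive content is identifying $\TtVCat$ with $\cat P$. An object of $\cat P$ is a triple $(S, X, \sigma)$ with $S$ a set, $X$ an internal $(T,\cat V)$-category, and $\sigma \colon S \pt 1 \xrightarrow{\iso} X_0$ an isomorphism in $\cat V$. Transporting the $(T,\cat V)$-category structure of $X$ along $\sigma$, we may assume $X_0 = S \pt 1$ is discrete. Lemma \ref{lem:counit.iso.when} then applies: since $\heps_{T1}$ is a monomorphism, the counit component $\heps_a \colon \Tt S \pt 1 \to T(S \pt 1)$ for the underlying $T$-graph span $a$ of $X$ is an isomorphism, which exhibits $X$ as $(-\pt 1)$ of an enriched $(\Tt,\cat V)$-category: one reconstructs $(X, a', \upsilon', \mu')$ with $X$ the object set, $a' = \cat V(1, a)$ the enriching $\cat V$-matrix, and the unit and composition data pulled back through the now-invertible $\heps_a$ (the coherence laws for the enriched structure follow from those for the internal one, since $\heps$ is a morphism of equipments/monads). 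This gives essential surjectivity of the comparison $\TtVCat \to \cat P$. Fullness and faithfulness then follow from the fact that $-\pt 1 \colon \TtVCat \to \CatTV$ is fully faithful (Theorem \ref{prop:tvcat.sub.cattv}): a morphism in $\cat P$ between objects coming from $\TtVCat$ is, after the identifications, precisely a $\CatTV$-morphism between the images whose object component lies in the image of $\Set \to \cat V$ and is compatible with $\sigma$, hence a $\TtVCat$-morphism; and distinct $\TtVCat$-morphisms have distinct images.

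The main obstacle is the essential surjectivity step — specifically, being careful that "transport along $\sigma$" genuinely produces an internal $(T,\cat V)$-category isomorphic to $X$ (routine but needs the componentwise description of the structure), and then that the invertibility of $\heps_a$ supplied by Lemma \ref{lem:counit.iso.when} really lets one recover all of the enriched data $\upsilon', \mu'$ and not just the underlying $\cat V$-matrix. Since the entire internal structure of $X$ (unit, multiplication, associativity and unit laws) is expressed in terms of pullbacks and composites built from $a$, $Ta$, $TTa$, and since $\heps$ is compatible with $T$ and with the monad structures (so $\Tt a$, $\overline m$, $\overline e$ transport to $Ta$, $m$, $e$ along the $\heps$-components, which are isomorphisms on the relevant objects by Lemma \ref{lem:counit.iso.when}), this transfer is mechanical; but writing it without invoking more general machinery from \cite{PL23b} than is stated here requires some care. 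Everything else — the pullback-preservation bookkeeping and the full-faithfulness — is immediate from results already in the excerpt.
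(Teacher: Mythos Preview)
Your approach is essentially the same as the paper's: identify objects of the pseudopullback as internal $(T,\cat V)$-categories with discrete object-of-objects, invoke Lemma~\ref{lem:counit.iso.when} to get $\heps_a$ invertible, and then recover the enriched structure (the paper defers this last step to \cite[Section~6]{PL23b}, whereas you sketch it directly). One small slip: you write ``$\heps_a \colon \Tt S \pt 1 \to T(S \pt 1)$'', but that is $\heps_{T(S\pt 1)}$; the relevant $\heps_a$ is the counit at the span $a$, with codomain $X_1$ (cf.\ diagram~\eqref{eq:epssplit}).
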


\begin{proof}
  The objects of the pseudopullback are triples \( (S,X,\phi) \) where \( S \)
  is a set, \( X \) is an internal \( (T,\cat V) \)-category, and \( \phi
  \) is an isomorphism \( \phi \colon S \pt 1 \to X \). By Lemma
  \ref{lem:counit.iso.when}, it follows that \( \heps_a \) is invertible,
  where \( a \) is the span given by the underlying \(T\)-graph of \(X\), as
  in \eqref{eq:span.of.x}. 

  By general remarks about change-of-base adjunctions between horizontal lax
  algebras given in~\cite[Section 6]{PL23b}, this implies that \( X \) is
  isomorphic to an enriched \( (\Tt, \cat V) \)-category.
\end{proof}

Everything is set up to apply the results about effective descent morphisms in
bilimits from Chapter \ref{chap:descent}, which gives the following reflection
result:

\begin{lemma}[{\cite[Lemma 10.4]{PL23b}}]
  \label{lem:lem.eff.desc.refl.tvcat}
  If \( \heps_{T1} \) is a monomorphism, then \( - \pt 1 \colon \TtVCat \to
  \CatTV \) reflects effective descent morphisms.
\end{lemma}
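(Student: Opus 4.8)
The plan is to deduce the statement from the pseudopullback description of Lemma~\ref{lem:tvcat.pspb}, in complete analogy with the proof of Theorem~\ref{thm:vcat.famv.cat.refl}. Let $F \colon \cat C \to \cat D$ be a $(\Tt,\cat V)$-functor such that $F \pt 1$ is an effective descent morphism in $\CatTV$; I must show that $F$ is an effective descent morphism in $\TtVCat$. Since $\heps_{T1}$ is a monomorphism, Lemma~\ref{lem:tvcat.pspb} provides a pseudopullback square of categories with pullbacks and pullback-preserving functors
\begin{equation*}
  \begin{tikzcd}
    \TtVCat \ar[r,"- \pt 1"] \ar[d,"U",swap]
            \ar[rd,"\iso" description,phantom]
      & \CatTV \ar[d,"(-)_0"] \\
    \Set \ar[r,swap,"- \pt 1"] & \cat V,
  \end{tikzcd}
\end{equation*}
in which $U$ sends a $(\Tt,\cat V)$-category to its underlying set and $(-)_0$ sends an internal $(T,\cat V)$-category to its object-of-objects. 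By Proposition~\ref{prop:pspb.descent}, applied to the morphism $F$ of $\TtVCat$, it suffices to check that $UF$ is an effective descent morphism in $\Set$ and that $(UF) \pt 1 \iso (F \pt 1)_0$ is a descent morphism in $\cat V$; the third hypothesis of that proposition is precisely that $F \pt 1$ be effective for descent, which holds by assumption.

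For the condition in $\cat V$, recall from Remark~\ref{rem:fln.gen} that $(-)_0 \colon \CatTV \to \cat V$ has fully faithful left and right adjoints and thus satisfies the hypotheses of Lemma~\ref{lem:preserve.desc}, so it preserves descent morphisms. Since $F \pt 1$ is, in particular, a descent morphism, $(F \pt 1)_0 \iso (UF) \pt 1$ is a descent morphism, hence a pullback-stable regular epimorphism, in $\cat V$.

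It remains to see that this forces $UF$ to be surjective, i.e.\ an effective descent morphism in $\Set$. Writing $f = UF \colon S \to S'$, extensivity of $\cat V$ decomposes $f \pt 1$ over the coproduct $S' \pt 1 = \sum_{s' \in S'} 1$, so that the pullback of $f \pt 1$ along each injection $1 \to S' \pt 1$ is the codiagonal $(f^{-1}(s')) \pt 1 \to 1$; since $f \pt 1$ is a descent morphism, each of these pullbacks is a regular epimorphism, which for an empty fibre $f^{-1}(s') = \emptyset$ would make $0 \to 1$ a regular epimorphism — impossible unless $\cat V \eqv 1$, in which case the statement is vacuous. Hence every fibre of $f$ is inhabited, $f = UF$ is surjective, and Proposition~\ref{prop:pspb.descent} yields that $F$ is an effective descent morphism in $\TtVCat$. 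The argument is thus almost entirely formal: the one genuinely non-routine point is this last use of extensivity to reflect surjectivity through $- \pt 1$, and the real difficulty of the result lies upstream, in establishing the pseudopullback of Lemma~\ref{lem:tvcat.pspb} via Lemma~\ref{lem:counit.iso.when}.
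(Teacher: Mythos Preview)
Your proof is correct and follows essentially the same route as the paper's: both use the pseudopullback of Lemma~\ref{lem:tvcat.pspb} together with Proposition~\ref{prop:pspb.descent}, invoking Remark~\ref{rem:fln.gen} to get that $(-)_0$ preserves descent morphisms. The only difference is that where the paper simply asserts that $- \pt 1 \colon \Set \to \cat V$ reflects descent morphisms, you spell out the extensivity argument showing that $f \pt 1$ being a pullback-stable regular epimorphism forces each fibre of $f$ to be inhabited; this is a welcome elaboration rather than a different approach.
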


\begin{proof}
  We follow the same approach as Theorem \ref{thm:vcat.famv.cat.refl}. As
  stated in Remark \ref{rem:fln.gen}, the functor \( \CatTV \to \cat V \)
  preserves descent morphisms, which are reflected by \( - \pt 1 \colon \Set
  \to \cat V \). 

  Since descent morphisms in \( \Set \) are effective for descent, we may
  apply Proposition \ref{prop:pspb.descent} and Lemma \ref{lem:tvcat.pspb} to
  conclude our result.
\end{proof}

Now, we can apply our knowledge of effective descent morphisms in \( \CatTV \)
to obtain the main result of this chapter:

\begin{theorem}[{\cite[Theorem 10.5]{PL23b}}]
  \label{thm:desc.tvcat}
  Let \( (f, \phi) \colon (X,a,\upsilon,\mu) \to (Y,b,\upsilon,\mu) \) be a
  functor of enriched \( (\Tt,\cat V) \)-categories. If \( \heps_{T1} \)
  is a monomorphism, and if
  \begin{itemize}[label=--,noitemsep]
    \item
      \( ((f,\phi) \pt 1)_1 \) is an effective descent morphism,
    \item
      \( ((f,\phi) \pt 1)_2 \) is a descent morphism,
    \item
      \( ((f,\phi) \pt 1)_3 \) is an almost descent morphism,
  \end{itemize}
  then \( (f,\phi) \) is an effective descent morphism in \( \TtVCat \).
\end{theorem}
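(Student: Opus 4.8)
The plan is to combine the reflection result of Lemma~\ref{lem:lem.eff.desc.refl.tvcat} with the description of effective descent morphisms in $\CatTV$ furnished by Theorem~\ref{thm:int.eff.desc} (or, alternatively, Theorem~\ref{thm:desc.cattv}). The hypothesis that $\heps_{T1}$ is a monomorphism is precisely what is needed to invoke Lemma~\ref{lem:lem.eff.desc.refl.tvcat}, so the strategy reduces to verifying that the image $(f,\phi)\pt 1$ of the given functor under the embedding $-\pt 1\colon \TtVCat \to \CatTV$ is an effective descent morphism in $\CatTV$. Once that is established, reflecting along the embedding yields that $(f,\phi)$ itself is an effective descent morphism in $\TtVCat$.

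First I would recall, as spelled out in Section~\ref{sect:base.change}, that the embedding sends $(X,a,\upsilon,\mu)$ to the internal $T$-category whose underlying $T$-graph has $X\pt 1$ as object-of-objects and $M_{a\pt 1}$ as object-of-arrows, and that its object of $2$-chains is $\sum_{x_i\in T^iX} a(x_2,x_1,x_0)$, with an analogous formula for $3$-chains. In particular the components $((f,\phi)\pt 1)_n$ for $n=1,2,3$ are exactly the morphisms appearing in the hypotheses of the theorem. Thus the three assumed conditions — that $((f,\phi)\pt 1)_1$ is an effective descent morphism, $((f,\phi)\pt 1)_2$ is a descent morphism, and $((f,\phi)\pt 1)_3$ is an almost descent morphism — are literally the hypotheses of Theorem~\ref{thm:int.eff.desc} applied to the internal $T$-category functor $(f,\phi)\pt 1$. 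Here I should note that Theorem~\ref{thm:int.eff.desc} is stated for $T$ a monad on $\cat V$ with finite limits that preserves pullbacks; since $\cat V$ is assumed lextensive and cartesian monoidal throughout this chapter, and $T$ is cartesian (hence pullback-preserving), the hypotheses are met. Therefore $(f,\phi)\pt 1$ is an effective descent morphism in $\CatTV$.

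Finally I would invoke Lemma~\ref{lem:lem.eff.desc.refl.tvcat}: under the standing hypothesis that $\heps_{T1}$ is a monomorphism, the functor $-\pt 1\colon \TtVCat \to \CatTV$ reflects effective descent morphisms. Applying this to $(f,\phi)$, whose image $(f,\phi)\pt 1$ we have just shown to be effective for descent, we conclude that $(f,\phi)$ is an effective descent morphism in $\TtVCat$, as required.

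The only subtlety — and the closest thing to an obstacle — is bookkeeping: one must be sure that the component $((f,\phi)\pt 1)_n$ of the internal functor on $n$-chains really coincides (up to the canonical isomorphisms) with the morphism $\sum_{x_i\in T^iX} a(x_2,\dots,x_0)\pt 1 \to \sum_{y_i\in T^iY} b(y_2,\dots,y_0)\pt 1$ induced by $\phi$, so that the hypotheses transfer cleanly. This identification rests on Lemma~\ref{lem:counit.iso.when} and the pullback-square description of the object of $2$-chains recalled in Section~\ref{sect:base.change}, which in turn uses that $\heps_{T(-\pt 1)}$ is cartesian; since $\heps_{T1}$ being a monomorphism already forces this (again via Lemma~\ref{lem:counit.iso.when}), no extra hypothesis is needed and the argument closes.
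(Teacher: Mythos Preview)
Your proof is correct and follows exactly the paper's approach: apply Theorem~\ref{thm:int.eff.desc} to conclude that $(f,\phi)\pt 1$ is effective for descent in $\CatTV$, then reflect via Lemma~\ref{lem:lem.eff.desc.refl.tvcat}. One small correction: your final sentence claims that $\heps_{T1}$ being a monomorphism forces $\heps_{T(-\pt 1)}$ to be cartesian via Lemma~\ref{lem:counit.iso.when}, but that lemma does not establish this implication --- cartesianness of $\heps_{T(-\pt 1)}$ is the separate standing hypothesis~\ref{enum:fibwise.disc} of Section~\ref{sect:scope}, already assumed so that the embedding of Theorem~\ref{prop:tvcat.sub.cattv} exists; in any case this aside is unnecessary, since the theorem's hypotheses are stated directly in terms of the components $((f,\phi)\pt 1)_n$.
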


\begin{proof}
  By Theorem \ref{thm:int.eff.desc}, the above conditions guarantee that \(
  (f, \phi) \pt 1 \) is an effective descent morphism in \( \CatTV \). Since
  \( \heps_{T1} \) is a monomorphism, we may apply Lemma
  \ref{lem:lem.eff.desc.refl.tvcat} to conclude that \( (f,\phi) \) is an
  effective descent morphism in \( \TtVCat \).
\end{proof}

\section{Scope of the findings}
\label{sect:scope}

Our main result holds in the context of a lextensive category \( \cat V \)
such that \( - \pt 1 \colon \Set \to \cat V \) is fully faithful. These
properties are enjoyed by the categories \( \Cat \), \( \Top \), any connected
Grothendieck topos, and any free coproduct completion \( \FamB \) of a
category \( \cat B \) with finite limits. Moreover, we have required two more
hypotheses:
\begin{enumerate}[label=(\alph*),noitemsep]
  \item
    \label{enum:fibwise.disc}
    \( \heps_{T(- \pt 1)} \) is a cartesian natural transformation.
  \item
    \label{enum:disc.obj.objs}
    \( \heps_{T1} \) is a monomorphism.
\end{enumerate}

We can verify that \ref{enum:disc.obj.objs} holds when
\begin{itemize}[noitemsep]
  \item
    the terminal object is a \textit{separator}, that is, when \( \cat V(1,-)
    \) is faithful, so that \( \heps \) is a componentwise monomorphism. This
    is the case for \( \Cat \), \( \Top \), any \textit{hyperconnected}
    Grothendieck topos (by definition), but not the case for \( \Grph \) nor
    \( \Fam(\Set) \)\footnote{These are Grothendieck toposes which are not
    hyperconnected.},
  \item
    \( T \) is \textit{discrete}, that is \( \heps_{T1} \) is an isomorphism.
    This is the case when \(T\) is the free monoid monad on any category \(
    \cat V \) under the above conditions, but not when \(T\) is the free
    category monad on \( \Grph \).
\end{itemize}
Further discussion may be found in \cite[Section 10]{PL23b}.

On the other hand, we have found no examples of cartesian monads \( T \) on \(
\cat V \) that do not satisfy \ref{enum:fibwise.disc}. This is discussed at
length in \cite[Section 8]{PL23b}; here we merely recall the results required
to discuss our examples.

\subsection{Classical multicategories}

The free monoid monad \( \mathfrak M \) on \( \cat V \) satisfies
\ref{enum:fibwise.disc} and \ref{enum:disc.obj.objs} \cite[Lemma 8.7 and
Subsection 10.2]{PL23b}. In this case, we let \( \VMultiCat =
(\overline{\mathfrak M},\cat V)\dash \Cat \) be the category of
\textit{enriched $\cat V$-multicategories}.

If \( (f,\phi) \colon (X,a,\upsilon,\mu) \to (Y,b,\upsilon,\mu) \) is a
functor of enriched \( \cat V \)-multicategories such that
\begin{equation*}
  \phi \colon \sum_{x_i \in (\mathfrak M^if)^*(y_i)} 
                a(x_1, x_0) \to b(y_1, y_0)
\end{equation*}
is an effective descent morphism,
\begin{equation*}
  \phi \times \mathfrak M \phi 
    \colon \sum_{x_i \in (\mathfrak M^if)^*(y_i)}
                a(x_2, x_1, x_0) \to b(y_2, y_1, y_0) 
\end{equation*}
is a descent morphism, and
\begin{equation*}
  \phi \times \mathfrak M \phi \times \mathfrak M^2 \phi \colon
    \sum_{x_i \in (\mathfrak M^if)^*(y_i)}
         a(x_3, x_2, x_1, x_0) \to b(y_3, y_2, y_1, y_0) 
\end{equation*}
is an almost descent morphism, for all \( y_i \in \mathfrak M^iY \) with \(
i=0,1,2,3 \), then \( (f,\phi) \) is an effective descent morphism in \(
\VMultiCat \), by Theorem \ref{thm:desc.tvcat}.

\subsection{Cartesian and cocartesian multicategories}

The free finite coproduct completion \( \FinFam \) on \( \Cat \) satisfies
\ref{enum:fibwise.disc} \cite[Subsection 8.5]{PL23b}. Since the terminal
category is a separator in \( \Cat \), we conclude that
\ref{enum:disc.obj.objs} holds. Thus, by Lemma \ref{lem:tvcat.pspb}, \(
(\overline{\Fam}_\fin, \Cat) \dash \Cat \) is the category of enhanced
cocartesian multicategories with a discrete object-of-objects. For this
reason, we refer to its objects as cocartesian multicategories. Likewise, \(
(\overline{\Fam}^*_\fin, \Cat) \dash \Cat \) is the category of cartesian
multicategories\footnote{These are a \textit{wide} subcategory of the category
of multi-sorted Lawvere theories -- the morphisms between cartesian
multicategories are precisely the ``degree one'' morphisms between Lawvere
theories. See also \cite[Example 4.17]{CS10}.}.

Via the description of effective descent morphisms for functors of enhanced
multicategories, given in Section \ref{sect:application} we obtain a
description of the effective descent functors for cartesian and cocartesian
multicategories.

\subsection{Graded, operadic and symmetric multicategories}

Let \( S, T \) be endofunctors on \( \cat V \), and let \( \alpha \colon S
\to T \) be a cartesian natural transformation. If \(T\) satisfies
\ref{enum:fibwise.disc}, so does \( S \). Thus, it follows that
\ref{enum:fibwise.disc} is satisfied by \( \cat V \)-operadic monads, as well
as the free symmetric strict monoidal category monad \( \mathfrak S \), when
\( \cat V = \Cat \). 

However, we do not guarantee that every \( \cat V \)-operadic monad satisfies
\ref{enum:disc.obj.objs} in general. It certainly is true if the terminal
object of \( \cat V \) is a separator, and if \( \mathfrak O \) is a \( \cat V
\)-operad such that \( \mathfrak O_n \) is discrete for all \( n \in \N \),
then the \( \cat V \)-operadic monad \( \mathfrak M_{\mathfrak O} \) induced
by \( \mathfrak O \) is discrete, so the property is also satisfied in this
setting. We call such \( \cat V \)-operads \textit{discrete}.

If the \( \cat V \)-operad \( \mathfrak O \) is discrete, we let \(
(\overline{\mathfrak M}_{\mathfrak O},\cat V)\dash \Cat \) be the category of
\textit{enriched \( \mathfrak O \)-categories}. Via the results of
\ref{sect:application}, and Theorem \ref{thm:desc.tvcat}, we obtain a
description for the effective descent functors of enriched \( \mathfrak O
\)-categories. In particular, we also obtain the \textit{enriched graded
multicategories} by a discrete \( \cat V \)-monoid \(M\), and a description of
the respective effective descent functors. 

Since the terminal category is a separator in \( \Cat \), it follows that
\ref{enum:disc.obj.objs} is satisfied for \( \mathfrak S \). Arguing as we did
in the case of (co)cartesian multicategories, we let \( (\overline{\mathfrak
S},\Cat)\dash \Cat \) be the category of \textit{symmetric multicategories},
for which we also obtain a description of the respective effective descent
functors.

\chapter{Fibration of split opfibrations}
\label{chap:fib-descent}

The bifibration \( F_D = \CAT(-,\Set) \colon \Cat^\op \to \CAT \) of discrete
opfibrations is the main object of study in \cite{Sob04}. Therein, the
functors \(p \colon E \to B \) of (effective) \( F_D \)-descent were
characterized.  Indeed, \( p \) is a \( F_D \)-descent morphism if and only if
\(p\) is a \textit{lax epimorphism}, while \( p \) is an effective \( F_D
\)-descent morphism if and only if \(p\) is a \textit{fully faithful} lax
epimorphism.

Our goal for this chapter, covering the work done in \cite{LPS23}, is to show
that the results of \cite{Sob04} for discrete opfibrations can be applied
just as well to other settings. Among them, we are able to characterize the
effective \( F \)-descent morphisms for the bifibration \( F = \CAT(-,\Cat)
\colon \Cat^\op \to \CAT \) of split opfibrations. We confirm that a functor
\( p \) is of (effective) \( F \)-descent if and only if it is of (effective)
\( F_D \)-descent (Theorem \ref{thm:desc.fib.opfib}).

We begin Section \ref{sect:ffle} by recalling the notions of fully faithful
morphism and lax epimorphism in a 2-category \( \bicat A \), as well as a
couple of relevant results. Then we restrict our attention to the setting of
enriched categories, recalling from \cite{LS21} the notions of \( \cat V
\)-fully faithful and \(\cat V\)-lax epimorphic \( \cat V \)-functors, for \(
\cat V \) a complete and cocomplete symmetric monoidal closed category, and
comparing them with the notions of fully faithful morphism and lax epimorphism
in the 2-category \( \VCat \) of \textit{small} \( \cat V \)-categories. 

\section{Fully faithful morphisms and lax epimorphisms}
\label{sect:ffle}

Let \( \bicat A \) be a 2-category. A morphism \( f \colon x \to y \) is said
to be
\begin{itemize}[label=--,noitemsep]
  \item
    \textit{fully faithful} if \( \bicat A(w,f) \colon \bicat A(w,x) \to
    \bicat A(w,y) \) is fully faithful for all \( w \), 
  \item
    \textit{lax epimorphic} if \( \bicat A(f,z) \colon \bicat A(y,z) \to
    \bicat A(x,z) \) is fully faithful for all \( z \).
\end{itemize}

A comprehensive study of lax epimorphisms in a 2-category is carried out in
\cite{LS21}. We shall recall some fundamental aspects.

If we have an adjunction \( f \adj g \) in \( \bicat A \), the following are
equivalent:
\begin{itemize}[label=--,noitemsep]
  \item
    the unit of \( f \adj g \) is invertible,
  \item
    \( g \) is fully faithful,
  \item
    \( f \) is a lax epimorphism. 
\end{itemize}
Codually, it follows that the following are equivalent:
\begin{itemize}[label=--,noitemsep]
  \item
    the counit of \( f \adj g \) is invertible,
  \item
    \( g \) is a lax epimorphism,
  \item
    \( f \) is fully faithful.
\end{itemize}
From this, we may conclude that:

\begin{proposition}[{\cite[p. 134]{LPS23}}]
  \label{prop:adj.ff.lax.epi.is.eqv}
  If a morphism \( f \) has a left or right adjoint, and is a fully faithful
  lax epimorphism, then \(f\) is an equivalence.
\end{proposition}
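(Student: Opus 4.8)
The plan is to combine the two bulleted equivalences stated just above Proposition \ref{prop:adj.ff.lax.epi.is.eqv}, applied to whichever adjunction $f$ participates in. Suppose first that $f$ has a right adjoint $g$, so $f \adj g$. Since $f$ is a lax epimorphism, the first equivalence tells us the unit of $f \adj g$ is invertible; since $f$ is fully faithful, the codual equivalence tells us the counit of $f \adj g$ is invertible. An adjunction whose unit and counit are both invertible is by definition an adjoint equivalence, so $f$ is an equivalence.

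For the remaining case, suppose $f$ has a \emph{left} adjoint $h$, so $h \adj f$. Now I would apply the same two equivalences, but with the roles of the two morphisms in the adjunction swapped: in $h \adj f$, the morphism $f$ plays the role of the right adjoint ``$g$'', and $h$ plays the role of the left adjoint ``$f$''. The hypothesis that $f$ is a lax epimorphism, read through the codual equivalence (where ``$g$ is a lax epimorphism'' is equivalent to ``the counit is invertible''), gives that the counit of $h \adj f$ is invertible; the hypothesis that $f$ is fully faithful, read through the first equivalence (``$g$ is fully faithful'' $\iff$ ``the unit is invertible''), gives that the unit of $h \adj f$ is invertible. Again both unit and counit are invertible, so $h \adj f$ is an adjoint equivalence and hence $f$ is an equivalence.

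I do not expect a serious obstacle here: the proposition is essentially a bookkeeping corollary of the four-fold list of equivalences that the excerpt has already recorded. The only point requiring a little care is making sure the ``codual'' list is invoked with the correct orientation of the adjunction in each of the two cases — that is, correctly identifying which of $f$ and its adjoint is the left and which is the right member, and matching that against whether ``fully faithful'' or ``lax epimorphism'' is the relevant property in that slot. Once that matching is done, the conclusion in both cases is simply that the adjunction is an adjoint equivalence, and therefore $f$ is an equivalence. I would write the argument in two short paragraphs, one per case, and would not belabour the (standard) fact that an adjunction with invertible unit and counit exhibits an equivalence.
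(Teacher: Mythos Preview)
Your proposal is correct and matches the paper's approach: the proposition is stated immediately after the two bulleted lists of equivalences with the phrase ``From this, we may conclude that,'' and no further proof is given, so the intended argument is precisely the bookkeeping you describe. Your case split and the matching of ``fully faithful'' and ``lax epimorphism'' to invertibility of unit and counit in each orientation of the adjunction are handled correctly.
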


For the remainder of this chapter, we assume that \( \cat V \) is a complete
and cocomplete, symmetric monoidal closed category. We now restrict our scope
to the 2-category \( \bicat A = \VCat \) of small \( \cat V \)-categories, \(
\cat V \)-functors and \( \cat V \)-natural transformations; this setting was
studied in \cite[Section 5]{LS21}. We will recall the pertinent definitions
and results therein.

A \( \cat V \)-functor \( p \colon E \to B \) between small \( \cat V
\)-categories is said to be
\begin{itemize}[label=--,noitemsep]
  \item
    \( \cat V \)-fully faithful if \( p \colon E(a,b) \to B(pa,pb) \) is an
    isomorphism for all objects \( a, b \),
  \item
    a \( \cat V \)-lax epimorphism if the \( \cat V \)-functor
    \begin{equation*}
      \VCat[p,C] \colon \VCat[B,C] \to \VCat[E,C]
    \end{equation*}
    is \( \cat V \)-fully faithful for all small \( \cat V \)-categories
    \(C\) \cite[Definition 5.4]{LS21}.
\end{itemize}

\begin{proposition}[{\cite[Lemma 5.1]{LS21}}]
  \label{prop:vcat.ff}
  If \(p \colon E \to B \) is a \( \cat V \)-fully faithful \( \cat V
  \)-functor, then \(p\) is a fully faithful morphism in \( \VCat \). The
  converse holds if \(p\) has a left or right adjoint.
\end{proposition}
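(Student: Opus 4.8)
The plan is to prove the two directions of Proposition \ref{prop:vcat.ff} separately.

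For the first assertion, suppose $p \colon E \to B$ is $\cat V$-fully faithful; we want to show $p$ is fully faithful as a morphism of the $2$-category $\VCat$, i.e.\ that for every small $\cat V$-category $W$ the functor $\VCat(W,p) \colon \VCat(W,E) \to \VCat(W,B)$ is fully faithful. Here $\VCat(W,-)$ denotes the ordinary hom-category (objects are $\cat V$-functors, morphisms are $\cat V$-natural transformations). So I would fix $\cat V$-functors $G, H \colon W \to E$ and check that post-composition with $p$ induces a bijection between $\cat V$-natural transformations $G \to H$ and $\cat V$-natural transformations $pG \to pH$. A $\cat V$-natural transformation $\alpha \colon G \to H$ is a family of components $\alpha_w \colon I \to E(Gw, Hw)$ in $\cat V$ (for $w$ an object of $W$) satisfying the usual $\cat V$-naturality hexagon. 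Since $p \colon E(Gw,Hw) \to B(pGw,pHw)$ is an \emph{isomorphism} by hypothesis, each component $\alpha_w$ is uniquely determined by $p \circ \alpha_w$, giving injectivity; and given any $\cat V$-natural $\beta \colon pG \to pH$, define $\alpha_w = p^{-1} \circ \beta_w$ — the $\cat V$-naturality condition for $\alpha$ transports across the isomorphisms $p$ from that for $\beta$, giving surjectivity. This is a routine diagram chase using only that $p$ is a componentwise isomorphism on hom-objects.

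For the converse, assume $p$ is fully faithful in $\VCat$ and has a left or right adjoint $q$ (in the $2$-category $\VCat$, so an adjunction of $\cat V$-functors). By the general $2$-categorical facts recalled just above the proposition: if $p$ is fully faithful and $p$ has a \emph{left} adjoint $q \dashv p$, then the counit of $q \dashv p$ is invertible, hence $p$ is also a lax epimorphism and in fact the adjunction is an adjoint equivalence-type situation making $p$ $\cat V$-fully faithful; more directly, if $q \dashv p$ with invertible counit $\varepsilon \colon qp \cong \id$ — wait, the correct reading is: $p$ right adjoint and fully faithful forces the counit $qp \to \id$? Let me instead argue concretely: one knows that an ordinary fully faithful morphism in $\VCat$ need not be $\cat V$-fully faithful, but when $p$ admits an adjoint we can use the triangle identities to recover the isomorphism on hom-objects. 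Say $q \dashv p$ with unit $\eta \colon \id \to pq$ and counit $\varepsilon \colon qp \to \id$. Fully faithfulness of $p$ in $\VCat$ implies (by the standard fact cited) that $\varepsilon$ is invertible. Then for objects $a,b$ of $E$ one has a chain of isomorphisms in $\cat V$
\begin{equation*}
  E(a,b) \cong E(qpa, b) \cong B(pa, pb),
\end{equation*}
the first from $\varepsilon$ being invertible and the second from the adjunction $q \dashv p$ (enriched hom-isomorphism $E(qx,y) \cong B(x, py)$), and one checks this composite is precisely the action of $p$ on hom-objects via a triangle-identity computation. The case of a right adjoint $p \dashv q$ is handled dually, using that then the unit $\id \to qp$ is invertible.

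The main obstacle I anticipate is the converse direction: one must be careful that the ``standard'' biequivalent characterizations (fully faithful $\iff$ the relevant unit/counit is invertible) are being invoked in the right variance, and then that the resulting abstract isomorphism $E(a,b) \cong B(pa,pb)$ genuinely coincides with the morphism induced by $p$ itself, not merely \emph{some} isomorphism. This last identification is a triangle-identity diagram chase in $\cat V$; it is routine but is the only place where real care is needed. The forward direction is entirely formal.
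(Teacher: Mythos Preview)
The paper does not supply its own proof of this proposition: it is simply quoted from \cite[Lemma 5.1]{LS21} and stated without argument. So there is nothing in the paper to compare your attempt against.

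That said, your proposal is essentially correct. The forward direction is exactly the expected argument: a \( \cat V \)-natural transformation is a family of components \( I \to E(Gw,Hw) \), and since \( p \) acts by isomorphisms on hom-objects the post-composition bijection is immediate.

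For the converse, your reasoning lands in the right place but the exposition is muddled (as you yourself flag with the ``wait'' asides). The clean statement is: if \( q \dashv p \) and \( p \) is fully faithful in \( \VCat \), then the counit \( \varepsilon \colon qp \to \id_E \) is invertible; hence
\[
  E(a,b) \xrightarrow{\ E(\varepsilon_a^{-1},b)\ } E(qpa,b) \xrightarrow{\ \cong\ } B(pa,pb)
\]
is an isomorphism, and a triangle-identity check shows the composite is exactly \( p_{a,b} \). Dually, if \( p \dashv q \) and \( p \) is fully faithful, the unit \( \eta \colon \id_E \to qp \) is invertible and one uses \( E(a,b) \cong E(a,qpb) \cong B(pa,pb) \). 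Your identification of the only genuine obstacle --- checking that the abstract isomorphism coincides with \( p_{a,b} \) via the triangle identities --- is correct, and that step is indeed routine. (Be aware that the unit/counit labelling in the paragraph of the paper you are citing from appears to be swapped relative to the standard convention; your argument follows the standard, correct, convention.)
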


\begin{proposition}[{\cite[Theorem 5.6]{LS21}}]
  \label{prop:vcat.laxepi}
  Let \( p \colon E \to B \) be a \( \cat V \)-functor. The following are
  equivalent:
  \begin{itemize}[label=--,noitemsep]
    \item
      \( p \) is a lax epimorphism in \( \VCat \),
    \item
      \( p \) is a \( \cat V \)-lax epimorphism,
    \item
      the functor \( \VCAT(p, \cat V) \colon \VCAT(B, \cat V) \to
      \VCAT(E, \cat V) \) is fully faithful.
  \end{itemize}
\end{proposition}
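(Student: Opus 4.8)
The plan is to prove the cycle of implications $(2)\Rightarrow(1)\Rightarrow(3)\Rightarrow(2)$, writing $(1),(2),(3)$ for the three listed conditions in order. Throughout, $p^{*}$ denotes restriction along $p$ in whichever functor $\cat V$-category is relevant, and I freely use Proposition~\ref{prop:vcat.ff} together with the standard fact that a tensor-preserving $\cat V$-functor between tensored $\cat V$-categories is $\cat V$-fully faithful as soon as its underlying ordinary functor is fully faithful (probe the hom-objects with $\cat V(v,-)$ and transpose across the tensor adjunction). The implication $(2)\Rightarrow(1)$ is immediate: the underlying ordinary functor of $\VCat[p,\cat C]$ is $\VCat(p,\cat C)$, and applying $\cat V(I,-)$ to the hom-isomorphisms witnessing $\cat V$-full faithfulness shows $\VCat(p,\cat C)$ is fully faithful, for every small $\cat C$. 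For $(1)\Rightarrow(3)$: given $\cat V$-functors $F,G\colon B\to\cat V$ and a $\cat V$-natural $\beta\colon Fp\Rightarrow Gp$, the objects $\{Fx,Gx : x\in\ob B\}$ span a small full $\cat V$-subcategory $\cat C\hookrightarrow\cat V$; then $F$, $G$ and $\beta$ factor through $\cat C$, any competing extension of $\beta$ over $B$ automatically has all components in $\cat C$, and $(1)$ applied to this $\cat C$ yields the unique extension of $\beta$ inside $\VCAT(B,\cat V)$.

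The content is $(3)\Rightarrow(2)$, which I would carry out in two moves. First move: from $(3)$ deduce that $p^{*}\colon\VCAT[B,\cat V]\to\VCAT[E,\cat V]$ is $\cat V$-fully faithful. These functor $\cat V$-categories exist because $B,E$ are small and $\cat V$ is complete, they are tensored because $\cat V$ is cocomplete (tensors being pointwise copowers), $p^{*}$ preserves tensors pointwise, and the underlying ordinary functor of $p^{*}$ is $\VCAT(p,\cat V)$, which is fully faithful by $(3)$; the standard fact above gives the claim. Consequently, for every small $\cat V$-category $\cat D$ the exponential symmetry $\VCAT(B,\VCAT[\cat D,\cat V])\iso\VCAT(\cat D,\VCAT[B,\cat V])$ identifies $\VCAT(p,\VCAT[\cat D,\cat V])$ with post-composition $\VCAT(\cat D,p^{*})$ by the $\cat V$-fully faithful $p^{*}$ just produced, and post-composing with a $\cat V$-fully faithful functor yields a fully faithful ordinary functor; hence $\VCAT(p,\VCAT[\cat D,\cat V])$ is fully faithful for all small $\cat D$.

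Second move: fix a small $\cat V$-category $\cat C$ and write $\hat{\cat C}=\VCAT[\cat C^{\op},\cat V]$ for its presheaf $\cat V$-category, with Yoneda embedding $j=Y_{\cat C}\colon\cat C\to\hat{\cat C}$, which is $\cat V$-fully faithful. Taking $\cat D=\cat C^{\op}$ above, $\VCAT(p,\hat{\cat C})$ is fully faithful; since $\hat{\cat C}$ is tensored (again $\cat V$ cocomplete), the same tensor-preservation argument upgrades this to: $p^{*}\colon\VCAT[B,\hat{\cat C}]\to\VCAT[E,\hat{\cat C}]$ is $\cat V$-fully faithful. Now post-composition with $j$ gives a commuting square of $\cat V$-functors
\begin{equation*}
  \begin{tikzcd}
    \VCat[B,\cat C] \ar[r,"p^*"] \ar[d,"j_*",swap]
      & \VCat[E,\cat C] \ar[d,"j_*"] \\
    \VCAT[B,\hat{\cat C}] \ar[r,"p^*",swap]
      & \VCAT[E,\hat{\cat C}]
  \end{tikzcd}
\end{equation*}
in which both vertical arrows $j_{*}$ are $\cat V$-fully faithful (post-composition with the $\cat V$-fully faithful $j$, an end of isomorphisms) and the bottom arrow is $\cat V$-fully faithful; cancelling $j_{*}$ on the left forces the top arrow $\VCat[p,\cat C]$ to be $\cat V$-fully faithful. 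As $\cat C$ was an arbitrary small $\cat V$-category, $p$ is a $\cat V$-lax epimorphism, closing the cycle.

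The main obstacle is not any single computation but the size bookkeeping around the large $\cat V$-category $\cat V$: one must check that the functor $\cat V$-categories $\VCAT[B,\cat V]$ and $\VCAT[B,\hat{\cat C}]$ carrying the tensored structure have genuinely small-indexed end hom-objects, and — the real structural point — that one cannot apply the tensor-preservation fact to $\VCat[p,\cat C]$ directly, since for a general small $\cat C$ the category $\VCat[B,\cat C]$ need not be tensored; routing through the presheaf category $\hat{\cat C}$, which is tensored, is exactly what repairs this.
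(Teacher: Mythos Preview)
The paper does not include a proof of this proposition; it is quoted from \cite[Theorem~5.6]{LS21} without reproduction, so there is no in-paper argument to compare against. Your cycle $(2)\Rightarrow(1)\Rightarrow(3)\Rightarrow(2)$ is correct: the small-subcategory trick for $(1)\Rightarrow(3)$ is standard and handles the size mismatch between $\VCat$ and the large $\cat V$; the tensor-preservation upgrade in the first move of $(3)\Rightarrow(2)$ is the right lemma (a tensor-preserving $\cat V$-functor between tensored $\cat V$-categories is $\cat V$-fully faithful as soon as its underlying functor is); and the Yoneda detour through $\hat{\cat C}=\VCAT[\cat C^{\op},\cat V]$ in the second move is exactly the structural repair needed, since $\VCat[B,\cat C]$ need not be tensored for arbitrary small $\cat C$ while $\VCAT[B,\hat{\cat C}]$ is. The size bookkeeping you flag is genuine but unproblematic: all ends defining the relevant hom-objects are indexed by the small categories $B$, $E$, or $\cat C^{\op}$, and $\cat V$ is complete.
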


The result analogous to Proposition \ref{prop:vcat.laxepi} for \( \cat V
\)-fully faithful \( \cat V \)-functors, despite not being a consequence of
duality, can be shown to hold as well:

\begin{lemma}[{\cite[Proposition 2.4]{LPS23}}]
  \label{lem:vcat.vff}
  A \( \cat V \)-functor \( p \colon E \to B \) is \( \cat V \)-fully faithful
  if and only if the functor \( \VCAT(p,\cat V) \) is a lax epimorphism.
\end{lemma}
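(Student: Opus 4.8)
The plan is to exploit that $\VCAT(p,\cat V)$ has a left adjoint, and to combine the characterization of lax epimorphisms among right adjoints recalled in Section \ref{sect:ffle} with the standard fact that left Kan extension into $\cat V$ along $p$ is fully faithful exactly when $p$ is $\cat V$-fully faithful. Concretely, since $\cat V$ is complete and cocomplete and $E$, $B$ are small, the restriction functor $p^{\ast}=\VCAT(p,\cat V)\colon\VCAT(B,\cat V)\to\VCAT(E,\cat V)$ admits a left adjoint $\Lan_{p}$, the pointwise left Kan extension along $p$, given on objects by $(\Lan_{p}F)(b)=\int^{e\in E}B(pe,b)\otimes Fe$. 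Applying the observations of Section \ref{sect:ffle} to the adjunction $\Lan_{p}\adj p^{\ast}$ — namely that, for an adjunction $L\adj R$, the right adjoint $R$ is a lax epimorphism if and only if the unit is invertible, if and only if the left adjoint $L$ is fully faithful — we obtain that $p^{\ast}$ is a lax epimorphism precisely when $\Lan_{p}$ is fully faithful. It thus remains to show that $\Lan_{p}$ is fully faithful precisely when $p$ is $\cat V$-fully faithful.

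For this, I would analyse the component at $e\in E$ of the unit $\eta_{F}\colon F\to p^{\ast}\Lan_{p}F$ of $\Lan_{p}\adj p^{\ast}$, namely $Fe\to(\Lan_{p}F)(pe)=\int^{e'}B(pe',pe)\otimes Fe'$. If $p$ is $\cat V$-fully faithful, then $B(pe',pe)\cong E(e',e)$, so the co-Yoneda lemma yields $(\Lan_{p}F)(pe)\cong Fe$, and one checks that $\eta_{F}$ realises this isomorphism; hence $\eta$ is invertible and $\Lan_{p}$ is fully faithful. Conversely, specialising to the representable copresheaf $E(e_{0},-)$, for which $\Lan_{p}E(e_{0},-)\cong B(pe_{0},-)$ (again by co-Yoneda), the unit becomes at each $e$ the hom-morphism $p_{e_{0},e}\colon E(e_{0},e)\to B(pe_{0},pe)$; if $\Lan_{p}$ is fully faithful this is an isomorphism for all $e_{0},e$, which is exactly $\cat V$-full faithfulness of $p$. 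Chaining the three equivalences gives the lemma.

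The step I expect to require the most care is the identification of the unit of $\Lan_{p}\adj p^{\ast}$ at representables with the action of $p$ on hom-objects, together with the co-Yoneda computations; these are routine, but they rely on the existence of the relevant weighted colimits in $\cat V$-copresheaf categories, which is where cocompleteness of $\cat V$ enters (smallness of $E$, $B$ keeping everything well-defined). I would also remark that this argument mirrors the proof of Proposition \ref{prop:vcat.laxepi}, with the roles of the unit and counit of $\Lan_{p}\adj p^{\ast}$ — hence of lax epimorphism and fully faithful morphism — interchanged, which explains why the statement is a genuine analogue of, rather than a formal dual of, that result.
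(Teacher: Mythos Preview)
Your proposal is correct and follows essentially the same approach as the paper: both use the adjunction $\Lan_p \dashv \VCAT(p,\cat V)$ together with the characterization of lax epimorphisms among right adjoints to reduce the statement to the equivalence ``$\Lan_p$ is fully faithful if and only if $p$ is $\cat V$-fully faithful''. The paper simply cites \cite[Proposition~4.23]{Kel05} for this last step, while you unpack it directly via co-Yoneda and evaluation at representables; your sketch of that computation is sound.
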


\begin{proof}
  We have an adjunction \( \Lan_p \adj \VCAT(p,\cat V) \). From previous
  remarks, \( \VCAT(p,\cat V) \) is a lax epimorphism if and only if \( \Lan_p
  \) is fully faithful.

  By the enriched Yoneda lemma, \( p \) is \( \cat V \)-fully faithful if and
  only if \( \Lan_p \) is fully faithful (see \cite[Proposition
  4.23]{Kel05}).
\end{proof}

\section{Enriched Cauchy completion}

Let \( \cat C \) be a (possibly large) \( \cat V \)-category. An object \( x
\) on \( \cat C \) is said to be \textit{tiny} (also called \textit{absolutely
presentable} in \cite{BD86} and \textit{small-projective} in \cite{Kel05}) if
the representable \( \cat V \)-functor \( \cat C(x,-) \colon \cat C \to \cat V
\) preserves colimits. 

The \textit{Cauchy completion} of a small \( \cat V \)-category \( X \) is the
full sub-\( \cat V \)-category of tiny objects of \( \VCAT[X^\op,\cat V] \),
and is denoted by \( \Cauchy X \). We observe that, in general, \( \Cauchy X \) is
not a small \( \cat V \)-category; for instance, let \( \cat V \) be the
category of complete lattices. In \cite[Section 5.5]{Kel05}, it was shown that
\( \Cauchy I \) is not small, where \( I \) is the unit \( \cat V
\)-category. Hence, we will assume for the remainder of this chapter that:
\begin{equation}
  \label{eq:cauchy.small}
  \text{\( \Cauchy X \) is a small \( \cat V \)-category for all small \( \cat V
  \)-categories \(X\).}
\end{equation}
This property holds for many base categories \( \cat V \) of our interest,
such as \( \Cat \), \( \Set \), and any small quantale. More generally, it was
shown in \cite{Joh89} that if the underlying category of \( \cat V \) is
locally presentable, then \eqref{eq:cauchy.small} holds.

The following result confirms that \( \cat V \)-equivalences preserve tiny
objects:

\begin{lemma}[{\cite[Lemma 2.1]{LPS23}}]
  \label{lem:tiny.preserved}
  Let \( F \adj G \colon \cat D \to \cat C \) be a \( \cat V \)-adjunction of
  (possibly large) \( \cat V \)-categories. If \(G\) preserves colimits, then
  \(F\) preserves tiny objects.
\end{lemma}

\begin{proof}
  If \( a \) is tiny, then \( \cat D(Fa,-) \iso \cat C(a,G(-)) \) is a
  composite of functors that preserve colimits, hence \(Fa\) is tiny.
\end{proof}

A consequence of Lemma \ref{lem:tiny.preserved} is that for any \( \cat V
\)-functor \( p \colon X \to Y \), we may define  \( \Cauchy p \colon \Cauchy
X \to \Cauchy Y \) by restricting the enriched left Kan extension \( \Lan_p
\colon \VCAT[X,\cat V] \to \VCAT[Y,\cat V] \) to the tiny objects.  Indeed, we
have a chain of \( \cat V \)-adjunctions
\begin{equation*}
  \Lan_p \adj \VCAT[p,\cat V] \adj \Ran_p
\end{equation*}
which confirms that \( \VCAT[p, \cat V] \) preserves colimits, so \( \Lan_p \)
preserves tiny objects.

By the enriched Yoneda lemma, we readily confirm that any representable
\( \cat V \)-presheaf is tiny,
\begin{align*}
  \VCAT[X^\op, \cat V](X(-,x),\colim(W,F))
    &\iso \colim(W,F)x \\
    &\iso \colim(W,F(-,x))  \\
    &\iso \colim(W,\VCAT[X^\op, \cat V](X(-,x),F)),
\end{align*}
so that the Yoneda \( \cat V \)-embedding \( \mathfrak y \colon X \to
\VCAT[X^\op, \cat V] \) restricts to a \( \cat V \)-functor \( \eta_X \colon X
\to \Cauchy X \). It also follows that for any \( \cat V \)-functor \( p
\colon X \to Y \), we have a \( \cat V \)-natural isomorphism \( \eta \circ p
\iso \Cauchy p \circ \eta \), and that \( \VCAT(\eta,\cat V) \) is an 
equivalence.

This allows us to highlight the relationship between Cauchy completion and
fully faithful morphisms/lax epimorphisms:

\begin{lemma}[{\cite[Lemma 2.2]{LPS23}}]
  \label{lem:ff.laxepi}
  If \( p \colon X \to Y \) is a \( \cat V \)-functor, then the induced
  functor 
  \begin{equation*}
    \VCAT(p,\cat V) \colon \VCAT(Y,\cat V) \to \VCAT(X,\cat V) 
  \end{equation*}
  is fully faithful (respectively, a lax epimorphism) if and only if \(
  \VCAT(\Cauchy p,\cat V) \) is fully faithful (respectively, a lax
  epimorphism).
\end{lemma}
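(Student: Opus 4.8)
The plan is to reduce the statement to the already-noted fact that $\VCAT(\eta,\cat V)$ is an equivalence, using the naturality of the Cauchy completion. Recall from the discussion above that for any $\cat V$-functor $p \colon X \to Y$ there is a $\cat V$-natural isomorphism $\eta_Y \circ p \iso \Cauchy p \circ \eta_X$; that is, the square with horizontal legs $p$, $\Cauchy p$ and vertical legs $\eta_X$, $\eta_Y$ commutes up to an invertible $2$-cell in $\VCat$. First I would apply the contravariant pseudofunctor $\VCAT(-,\cat V)$ to this square. Since a pseudofunctor preserves composition up to coherent isomorphism and carries invertible $2$-cells to invertible $2$-cells, this yields a square
\begin{equation*}
  \begin{tikzcd}[column sep=huge]
    \VCAT(\Cauchy Y,\cat V) \ar[r,"{\VCAT(\Cauchy p,\cat V)}"]
                            \ar[d,"{\VCAT(\eta_Y,\cat V)}",swap]
    & \VCAT(\Cauchy X,\cat V) \ar[d,"{\VCAT(\eta_X,\cat V)}"] \\
    \VCAT(Y,\cat V) \ar[r,"{\VCAT(p,\cat V)}",swap]
    & \VCAT(X,\cat V)
  \end{tikzcd}
\end{equation*}
that commutes up to natural isomorphism, and in which --- crucially --- both vertical functors are equivalences of categories.

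Next I would invoke the stability of the two relevant classes of morphisms under composition with equivalences. In any $2$-category, the fully faithful morphisms and the lax epimorphisms each contain all equivalences and are closed under composition --- for lax epimorphisms this is immediate from the defining condition, since $\bicat A(g \circ f, z) = \bicat A(f,z) \circ \bicat A(g,z)$ --- and hence each class is invariant under pre- and post-composition with equivalences. Applying this to the square above, whose vertical legs are equivalences: if $\VCAT(p,\cat V)$ is fully faithful (respectively a lax epimorphism), then so is $\VCAT(p,\cat V)\circ\VCAT(\eta_Y,\cat V) \iso \VCAT(\eta_X,\cat V)\circ\VCAT(\Cauchy p,\cat V)$, and cancelling the equivalence $\VCAT(\eta_X,\cat V)$ on the left shows $\VCAT(\Cauchy p,\cat V)$ has the same property; the converse implication is symmetric. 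This is precisely the assertion of the lemma.

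I do not anticipate a genuine obstacle here. The only points that require a little care are that $\VCAT(-,\cat V)$ is merely a pseudofunctor, so the square it produces commutes up to isomorphism rather than strictly, and that the closure of the class of lax epimorphisms under composition with equivalences must be used in both directions --- to pass from $\VCAT(p,\cat V)$ to $\VCAT(\Cauchy p,\cat V)$ and back.
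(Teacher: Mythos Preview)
Your argument is correct and is essentially the paper's own proof: both build the commutative-up-to-isomorphism square obtained by applying $\VCAT(-,\cat V)$ to the naturality square $\eta_Y \circ p \iso \Cauchy p \circ \eta_X$, note that the $\VCAT(\eta,\cat V)$ legs are equivalences, and conclude by invariance of fully faithful morphisms and lax epimorphisms under composition with equivalences. The paper's version is more terse (``Since the rows are equivalences, the result follows''), but the content is identical.
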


\begin{proof}
  We observe that the following diagram commutes up to isomorphism:
  \begin{equation*}
    \begin{tikzcd}[column sep=large]
      \VCAT(\Cauchy Y,\cat V) 
        \ar[r,"{\VCAT(\eta_Y,\cat V)}"]
        \ar[d,"{\VCAT(\Cauchy p,\cat V)}",swap]
        \ar[rd,"\iso" description,phantom]
      & \VCAT(Y, \cat V)
        \ar[d,"{\VCAT(p,\cat V)}"] \\
      \VCAT(\Cauchy X,\cat V) 
        \ar[r,"{\VCAT(\eta_X,\cat V)}",swap]
      & \VCAT(X, \cat V)
    \end{tikzcd}
  \end{equation*}
  Since the rows are equivalences, the result follows.
\end{proof}

We immediately conclude that:

\begin{corollary}[{\cite[Proposition 2.3]{LPS23}}]
  \label{cor:lax.epi}
  The following are equivalent for a \( \cat V \)-functor \( p \colon X \to Y
  \):
  \begin{enumerate}[label=(\roman*),noitemsep]
    \item
      \label{enum:p.laxepi}
      \( p \) is a lax epimorphism,
    \item
      \label{enum:cp.laxepi}
      \( \Cauchy p \) is a lax epimorphism,
    \item
      \label{enum:pstar.ff}
      \( \VCAT(p,\cat V) \) is fully faithful.
    \item
      \label{enum:cpstar.ff}
      \( \VCAT(\Cauchy p,\cat V) \) is fully faithful.
  \end{enumerate}
\end{corollary}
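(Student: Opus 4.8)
The plan is to concatenate the two preceding results; essentially no new argument is required, since all the work has already been done in Proposition~\ref{prop:vcat.laxepi} and Lemma~\ref{lem:ff.laxepi}.

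First I would apply Proposition~\ref{prop:vcat.laxepi} to $p$ itself, which yields at once the equivalence $\ref{enum:p.laxepi} \iff \ref{enum:pstar.ff}$ (namely its first and third bullets). Next I would apply the \emph{same} proposition, this time to the $\cat V$-functor $\Cauchy p \colon \Cauchy X \to \Cauchy Y$ — a legitimate move because the standing hypothesis \eqref{eq:cauchy.small} guarantees that $\Cauchy X$ and $\Cauchy Y$ are again small $\cat V$-categories — giving $\ref{enum:cp.laxepi} \iff \ref{enum:cpstar.ff}$. The bridge between these two pairs is the ``fully faithful'' half of Lemma~\ref{lem:ff.laxepi}: $\VCAT(p,\cat V)$ is fully faithful if and only if $\VCAT(\Cauchy p,\cat V)$ is, i.e.\ $\ref{enum:pstar.ff} \iff \ref{enum:cpstar.ff}$. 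Chaining the three equivalences, $\ref{enum:p.laxepi} \iff \ref{enum:pstar.ff} \iff \ref{enum:cpstar.ff} \iff \ref{enum:cp.laxepi}$, which is exactly the assertion of the corollary.

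I do not anticipate any real obstacle here; the only point deserving a moment's care is the applicability of the cited results to $\Cauchy p$ rather than $p$. For Proposition~\ref{prop:vcat.laxepi} this needs smallness of $\Cauchy X$ and $\Cauchy Y$, supplied by \eqref{eq:cauchy.small}, while for Lemma~\ref{lem:ff.laxepi} it needs the square relating $\VCAT(p,\cat V)$ and $\VCAT(\Cauchy p,\cat V)$ through the equivalences $\VCAT(\eta_X,\cat V)$ and $\VCAT(\eta_Y,\cat V)$ to commute up to isomorphism, which is precisely the content of that lemma's proof.
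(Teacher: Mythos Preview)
Your proposal is correct and matches the paper's own proof essentially verbatim: the paper also derives (i)$\iff$(iii) and (ii)$\iff$(iv) from Proposition~\ref{prop:vcat.laxepi} and then bridges (iii)$\iff$(iv) via Lemma~\ref{lem:ff.laxepi}. Your extra remark that applying Proposition~\ref{prop:vcat.laxepi} to $\Cauchy p$ requires the smallness hypothesis~\eqref{eq:cauchy.small} is a worthwhile clarification the paper leaves implicit.
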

\begin{proof}
  The equivalences \ref{enum:p.laxepi} \( \iff \) \ref{enum:pstar.ff} and
  \ref{enum:cp.laxepi} \( \iff \) \ref{enum:cpstar.ff} were obtained in
  Proposition \ref{prop:vcat.laxepi}, and the equivalence \ref{enum:pstar.ff}
  \( \iff \) \ref{enum:cpstar.ff} follows by Lemma \ref{lem:ff.laxepi}.
\end{proof}

\begin{corollary}[{\cite[Proposition 2.4]{LPS23}}]
  \label{cor:vff}
  The following are equivalent for a \( \cat V \)-functor \( p \colon X \to Y
  \):
  \begin{enumerate}[label=(\roman*),noitemsep]
    \item
      \label{enum:p.vff}
      \( p \) is \( \cat V \)-fully faithful,
    \item
      \label{enum:cp.vff}
      \( \Cauchy p \) is \( \cat V \)-fully faithful,
    \item
      \label{enum:pstar.laxepi}
      \( \VCAT(p,\cat V) \) is a lax epimorphism.
    \item
      \label{enum:cpstar.laxepi}
      \( \VCAT(\Cauchy p,\cat V) \) is a lax epimorphism.
  \end{enumerate}
\end{corollary}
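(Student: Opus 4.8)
The plan is to mirror the structure of Corollary \ref{cor:lax.epi} almost verbatim, but with the roles of ``fully faithful'' and ``lax epimorphism'' interchanged, using Lemma \ref{lem:vcat.vff} in place of Proposition \ref{prop:vcat.laxepi}. First I would establish the equivalence \ref{enum:p.vff} $\iff$ \ref{enum:pstar.laxepi}: this is precisely the content of Lemma \ref{lem:vcat.vff}, applied to $p \colon X \to Y$, which asserts that $p$ is $\cat V$-fully faithful if and only if $\VCAT(p,\cat V)$ is a lax epimorphism. Applying the same lemma to $\Cauchy p \colon \Cauchy X \to \Cauchy Y$ in place of $p$ gives the equivalence \ref{enum:cp.vff} $\iff$ \ref{enum:cpstar.laxepi}.

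Next I would connect the ``$p$''-statements to the ``$\Cauchy p$''-statements via Lemma \ref{lem:ff.laxepi}, which says that $\VCAT(p,\cat V)$ is a lax epimorphism if and only if $\VCAT(\Cauchy p,\cat V)$ is a lax epimorphism; that yields \ref{enum:pstar.laxepi} $\iff$ \ref{enum:cpstar.laxepi}. Chaining these three two-sided implications closes the cycle: \ref{enum:p.vff} $\iff$ \ref{enum:pstar.laxepi} $\iff$ \ref{enum:cpstar.laxepi} $\iff$ \ref{enum:cp.vff}, and all four are equivalent. The argument is entirely formal, given the cited results, so there is no real calculational content to carry out.

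The only point that requires a moment's care — and the closest thing to an obstacle — is making sure Lemma \ref{lem:vcat.vff} legitimately applies to $\Cauchy p$. Lemma \ref{lem:vcat.vff} is stated for $\cat V$-functors between small $\cat V$-categories, so I would invoke the standing assumption \eqref{eq:cauchy.small} (in force for the remainder of the chapter) that $\Cauchy X$ is small whenever $X$ is, which guarantees $\Cauchy p$ is a morphism in $\VCat$ to which the lemma applies. I would also note, as was done for $\Cauchy p$ earlier in this section, that the adjunction $\Lan_{\Cauchy p} \adj \VCAT(\Cauchy p,\cat V)$ used inside the proof of Lemma \ref{lem:vcat.vff} exists, since $\VCAT[{-},\cat V]$-style Kan extensions along $\Cauchy p$ are available; this is implicit in the construction of $\Cauchy p$ as the restriction of $\Lan_p$ to tiny objects. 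With these observations the proof is complete.

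\begin{proof}
  The equivalence \ref{enum:p.vff} $\iff$ \ref{enum:pstar.laxepi} is Lemma
  \ref{lem:vcat.vff} applied to $p$. Since $\Cauchy X$ and $\Cauchy Y$ are
  small $\cat V$-categories by \eqref{eq:cauchy.small}, Lemma
  \ref{lem:vcat.vff} also applies to $\Cauchy p \colon \Cauchy X \to \Cauchy
  Y$, giving \ref{enum:cp.vff} $\iff$ \ref{enum:cpstar.laxepi}. Finally, by
  Lemma \ref{lem:ff.laxepi}, $\VCAT(p,\cat V)$ is a lax epimorphism if and
  only if $\VCAT(\Cauchy p,\cat V)$ is a lax epimorphism, which is
  \ref{enum:pstar.laxepi} $\iff$ \ref{enum:cpstar.laxepi}. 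Combining the three
  equivalences yields the result.
\end{proof}
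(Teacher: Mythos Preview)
Your proof is correct and follows essentially the same approach as the paper: both use Lemma~\ref{lem:vcat.vff} to get \ref{enum:p.vff} $\iff$ \ref{enum:pstar.laxepi} and \ref{enum:cp.vff} $\iff$ \ref{enum:cpstar.laxepi}, and Lemma~\ref{lem:ff.laxepi} to get \ref{enum:pstar.laxepi} $\iff$ \ref{enum:cpstar.laxepi}. Your explicit invocation of the standing smallness assumption \eqref{eq:cauchy.small} to justify applying Lemma~\ref{lem:vcat.vff} to $\Cauchy p$ is a welcome clarification that the paper leaves implicit.
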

\begin{proof}
  The equivalences \ref{enum:p.vff} \( \iff \) \ref{enum:pstar.laxepi} and
  \ref{enum:cp.vff} \( \iff \) \ref{enum:cpstar.laxepi} were obtained in
  Proposition \ref{lem:vcat.vff}, and we obtain the equivalence
  \ref{enum:pstar.laxepi} \( \iff \) \ref{enum:cpstar.laxepi} via Lemma
  \ref{lem:ff.laxepi}.
\end{proof}

Combining Corollaries \ref{cor:lax.epi} and \ref{cor:vff}, we obtain

\begin{theorem}[{\cite[Theorem 2.5]{LPS23}}]
  \label{thm:cauchy}
  The following are equivalent for a \( \cat V \)-functor \( p \colon X \to Y
  \):
  \begin{enumerate}[label=(\roman*),noitemsep]
    \item
      \label{enum:p.vff.laxepi}
      \( p \) is a \( \cat V \)-fully faithful lax epimorphism,
    \item
      \label{enum:cp.vff.laxepi}
      \( \Cauchy p \) is a \( \cat V \)-fully faithful lax epimorphism,
    \item
      \label{enum:pstar.vff.laxepi}
      \( \VCAT(p,\cat V) \) is a fully faithful lax epimorphism,
    \item
      \label{enum:cp.eqv}
      \( \Cauchy p \) is an equivalence,
    \item
      \label{enum:pstar.eqv}
      \( \VCAT(p,\cat V) \) is an equivalence.
  \end{enumerate}
\end{theorem}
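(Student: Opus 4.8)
The plan is to bootstrap off Corollaries \ref{cor:lax.epi} and \ref{cor:vff}, which already treat the ``lax epimorphism'' and ``$\cat V$-fully faithful'' halves separately, and then to close the two remaining ``equivalence'' clauses with Proposition \ref{prop:adj.ff.lax.epi.is.eqv}. For the first three items, I would simply conjoin the two corollaries: condition \ref{enum:p.vff.laxepi} asserts that $p$ is at once $\cat V$-fully faithful and a lax epimorphism, so running the first property through Corollary \ref{cor:vff} and the second through Corollary \ref{cor:lax.epi} shows that $p$ has this combined property exactly when $\Cauchy p$ does (items \ref{enum:cp.vff} and \ref{enum:cp.laxepi}) and exactly when $\VCAT(p,\cat V)$ is both fully faithful and a lax epimorphism (items \ref{enum:pstar.laxepi} and \ref{enum:pstar.ff}). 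That settles \ref{enum:p.vff.laxepi} $\iff$ \ref{enum:cp.vff.laxepi} $\iff$ \ref{enum:pstar.vff.laxepi} with no computation.

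Next I would bring in the equivalence conditions. Since $\Lan_p \adj \VCAT(p,\cat V)$, a fully faithful lax epimorphism of the form $\VCAT(p,\cat V)$ is an equivalence by Proposition \ref{prop:adj.ff.lax.epi.is.eqv}, while conversely every equivalence is a fully faithful lax epimorphism; this gives \ref{enum:pstar.vff.laxepi} $\iff$ \ref{enum:pstar.eqv}. To pass to $\Cauchy p$, I would reuse the commuting square from the proof of Lemma \ref{lem:ff.laxepi}, whose horizontal legs $\VCAT(\eta_X,\cat V)$ and $\VCAT(\eta_Y,\cat V)$ are equivalences: hence $\VCAT(p,\cat V)$ is an equivalence iff $\VCAT(\Cauchy p,\cat V)$ is. When the latter holds, its left adjoint $\Lan_{\Cauchy p}$ is a pseudo-inverse equivalence of presheaf $\cat V$-categories; an equivalence preserves and reflects tiny objects (Lemma \ref{lem:tiny.preserved}), and because $\Lan_{\Cauchy p}\circ\mathfrak y \iso \mathfrak y\circ\Cauchy p$ while the Yoneda embedding of the Cauchy complete $\cat V$-category $\Cauchy X$ is essentially surjective onto the tiny presheaves, restricting $\Lan_{\Cauchy p}$ to tiny objects identifies it with $\Cauchy p$. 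So $\Cauchy p$ is an equivalence, which yields \ref{enum:pstar.eqv} $\iff$ \ref{enum:cp.eqv} and links all five statements.

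I expect the only genuinely nontrivial point to be this last step — deducing that $\Cauchy p$ itself is an equivalence from $\VCAT(\Cauchy p,\cat V)$ being one — since $\Cauchy p$ need not admit any adjoint, so Proposition \ref{prop:adj.ff.lax.epi.is.eqv} is not directly available for it and the argument must detour through $\Lan_{\Cauchy p}$ on presheaves together with the idempotency of the Cauchy completion (over a Cauchy complete base every tiny presheaf is representable). Everything else is bookkeeping with the two corollaries and the square of Lemma \ref{lem:ff.laxepi}.
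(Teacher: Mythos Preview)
Your proof is correct and follows the same skeleton as the paper: Corollaries \ref{cor:lax.epi} and \ref{cor:vff} give \ref{enum:p.vff.laxepi} $\iff$ \ref{enum:cp.vff.laxepi} $\iff$ \ref{enum:pstar.vff.laxepi}, and the adjunction $\Lan_p \adj \VCAT(p,\cat V)$ together with Proposition \ref{prop:adj.ff.lax.epi.is.eqv} gives \ref{enum:pstar.vff.laxepi} $\implies$ \ref{enum:pstar.eqv}.

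The only place you diverge is in closing the loop through \ref{enum:cp.eqv}. You argue \ref{enum:pstar.eqv} $\iff$ \ref{enum:cp.eqv} by passing through the square of Lemma \ref{lem:ff.laxepi} to $\VCAT(\Cauchy p,\cat V)$, then invoking $\Lan_{\Cauchy p}$ and the idempotency of Cauchy completion to recover $\Cauchy p$ itself. The paper instead runs a shorter cycle: it observes \ref{enum:cp.eqv} $\implies$ \ref{enum:cp.vff.laxepi} trivially (every equivalence is a $\cat V$-fully faithful lax epimorphism), and for \ref{enum:pstar.eqv} $\implies$ \ref{enum:cp.eqv} it works directly with $\Lan_p$ rather than $\Lan_{\Cauchy p}$: if $\VCAT(p,\cat V)$ is an equivalence then so is its adjoint $\Lan_p$, and since an equivalence restricts to an equivalence on tiny objects, its restriction---which is $\Cauchy p$ \emph{by definition}---is an equivalence. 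This avoids the detour through the square and the explicit appeal to Cauchy idempotency that you flagged as the nontrivial step. Your route is sound, but the paper's is cleaner precisely because $\Cauchy p$ was set up as the restriction of $\Lan_p$ in the first place.
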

\begin{proof}
  Corollaries \ref{cor:lax.epi} and \ref{cor:vff} guarantee that
  \ref{enum:p.vff.laxepi} \( \iff \) \ref{enum:cp.vff.laxepi} \( \iff \)
  \ref{enum:pstar.vff.laxepi}. 

  As any equivalence is a fully faithful lax epimorphism, we have
  \ref{enum:cp.eqv} \( \implies \) \ref{enum:cp.vff.laxepi}, and since
  equivalences fix tiny objects, we conclude \ref{enum:pstar.eqv} \( \implies
  \) \ref{enum:cp.eqv}.

  Finally, we note that we have an adjunction \( \Lan_p \adj \VCAT(p, \cat V)
  \), so we obtain \ref{enum:pstar.vff.laxepi} \( \implies \)
  \ref{enum:pstar.eqv} by Proposition \ref{prop:adj.ff.lax.epi.is.eqv}.
\end{proof}

\section{Descent for the bifibration of split fibrations}

\begin{lemma}[{\cite[Proposition 3.1]{LPS23}}]
  \label{lem:star.set.cat}
  A functor \( p \colon E \to B \) between small categories is fully faithful
  (respectively, a lax epimorphism) if and only if \( \CAT(p,\Cat) \) is a lax
  epimorphism (respectively, fully faithful).
\end{lemma}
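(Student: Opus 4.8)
The plan is to read both equivalences as the $\cat V = \Cat$ instances of the enriched dictionary assembled in Section~\ref{sect:ffle}. The key preliminary observation is that $\Cat$ is a complete and cocomplete cartesian closed category, hence an admissible base for that dictionary, and that a small ordinary category $E$ is the same datum as a locally discrete $\Cat$-category. Under this identification a $\cat V$-functor is exactly an ordinary functor; a functor is $\Cat$-fully faithful iff it is fully faithful (an isomorphism between discrete hom-categories is just a bijection of hom-sets); and the ordinary category $\CAT(B,\Cat)$ is precisely the underlying category of the $\Cat$-enriched presheaf $\Cat$-category, with $\CAT(p,\Cat)$ the restriction (= precomposition) functor. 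So $\CAT(p,\Cat) = \VCAT(p,\Cat)$ in the notation of this chapter.

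With this in hand, the equivalence ``$p$ fully faithful $\iff$ $\CAT(p,\Cat)$ a lax epimorphism'' is Lemma~\ref{lem:vcat.vff} read at $\cat V = \Cat$. Unpacked: the restriction functor $\CAT(p,\Cat)$ has a left adjoint $\Lan_p$, the pointwise left Kan extension (available since $[E,\Cat]$ and $[B,\Cat]$ are cocomplete); by the adjunction criteria of Section~\ref{sect:ffle}, for $f \adj g$ the counit is invertible iff $g$ is a lax epimorphism iff $f$ is fully faithful, so $\CAT(p,\Cat)$ is a lax epimorphism precisely when $\Lan_p$ is fully faithful; and $\Lan_p$ is fully faithful iff $p$ is (the coend formula $\Lan_p F(b) = \int^{e} B(pe,b)\cdot Fe$ makes the unit $F \Rightarrow \CAT(p,\Cat)\Lan_p F$ invertible when $p$ is fully faithful, by co-Yoneda, and conversely evaluating it at representables recovers $E(e,e') \iso B(pe,pe')$; this is the $\Cat$-valued instance of \cite[Proposition 4.23]{Kel05}, used identically in the proof of Lemma~\ref{lem:vcat.vff}).

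For the equivalence ``$p$ a lax epimorphism $\iff$ $\CAT(p,\Cat)$ fully faithful'', the direction $(\Leftarrow)$ is immediate: the fully faithful inclusion $\Set \hookrightarrow \Cat$ of discrete categories is preserved by restriction, so full faithfulness of $\CAT(p,\Cat)$ restricts to full faithfulness of $\CAT(p,\Set)$, which is exactly the definition of $p$ being a lax epimorphism. For $(\Rightarrow)$, I would invoke the classical fact that lax epimorphy of a functor between small categories is already detected by $\Set$-valued presheaves (\cite{ABSV01}; equivalently, Proposition~\ref{prop:vcat.laxepi} applied appropriately at $\cat V = \Cat$): the standard proof rewrites a natural transformation $F \Rightarrow G$ of functors $B \to \cat X$, via the Yoneda embedding of $\cat X$ and Fubini for ends, as a compatible family of natural transformations of $\Set$-valued functors on $B$, each of which lifts uniquely along $p$; hence $\CAT(p,\Set)$ fully faithful forces $\CAT(p,\cat X)$ fully faithful for every $\cat X$, and in particular for $\cat X = \Cat$.

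The main obstacle is precisely this last point: upgrading full faithfulness tested against $\Set$-valued functors (the working definition of lax epimorphism) to full faithfulness against $\Cat$-valued functors — equivalently, reconciling ``lax epimorphism in $\CAT$'' with the $2\dash\CAT$-level statement that Proposition~\ref{prop:vcat.laxepi} produces at $\cat V = \Cat$. This cannot be done by a purely formal adjunction manipulation (the relevant coend $\int^{e} B(pe,b)\cdot G(pe)$ need not be built from discrete pieces), so one really does need the ``testing on $\Set$ suffices'' theorem. Everything else in the argument is bookkeeping with the locally discrete identification together with the adjunction calculus already recalled in Section~\ref{sect:ffle}.
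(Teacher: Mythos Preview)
Your argument is correct, and for the ``fully faithful \( \iff \) \( \CAT(p,\Cat) \) lax epimorphism'' half it coincides with the paper's: both identify ordinary small categories with locally discrete \( \Cat \)-categories and read off Lemma~\ref{lem:vcat.vff} at \( \cat V = \Cat \). The difference lies in the ``lax epimorphism \( \iff \) \( \CAT(p,\Cat) \) fully faithful'' half. The paper treats both halves uniformly via the change-of-base 2-functor \( J_! \colon \Set\dash\CAT \to \Cat\dash\CAT \) induced by the inclusion \( J \colon \Set \to \Cat \): since \( J \) is fully faithful with both adjoints, \( J_! \) is a fully faithful 2-functor with both 2-adjoints, and hence (by results from \cite{LS21}) \emph{creates} fully faithful morphisms and lax epimorphisms. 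Thus ``\( p \) is a lax epimorphism in \( \Cat \)'' is equivalent to ``\( J_!p \) is a lax epimorphism in \( \Cat\dash\Cat \)'', and Corollary~\ref{cor:lax.epi} at \( \cat V = \Cat \) finishes. Your route instead handles the two implications separately --- \( (\Leftarrow) \) by restricting along \( \Set \hookrightarrow \Cat \), \( (\Rightarrow) \) by the Yoneda-and-ends argument from \cite{ABSV01} --- which is more elementary and self-contained but less uniform. One caution: your parenthetical ``equivalently, Proposition~\ref{prop:vcat.laxepi} applied appropriately at \( \cat V = \Cat \)'' is not quite right, since that proposition at \( \cat V = \Cat \) characterises lax epimorphisms in \( \Cat\dash\Cat \), and bridging to lax epimorphisms in \( \Cat \) is exactly the creation step you are circumventing; your Yoneda argument is what actually does the work there.
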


\begin{proof}
  We consider the fully faithful functor \( J \colon \Set \to \Cat \), which
  has left and right adjoint functors. Hence, its direct image \( J_!  \colon
  \Set\dash \CAT \to \Cat\dash \CAT \) is a fully faithful 2-functor, and has
  left and right 2-adjoints. By \cite[Remark 2.8 and Lemma 2.10]{LS21}, we
  conclude that \( J_! \) creates fully faithful morphisms and lax
  epimorphisms.

  Hence, we conclude from Corollary \ref{cor:vff} (respectively, Corollary
  \ref{cor:lax.epi}) that \( J_!p \) is fully faithful (a lax epimorphism) if
  and only if \( \Cat \dash \CAT(J_!p,\Cat) \iso \CAT(p,\Cat) \) is a lax
  epimorphism (fully faithful).
\end{proof}

\begin{theorem}[{\cite[Theorem 3.2]{LPS23}}]
  \label{thm:desc.fib.opfib}
  For a functor \(p \colon E \to B \) between small categories, we consider
  the lax codescent factorization:
  \begin{equation*}
    \begin{tikzcd}
      E \ar[rr,"p"] \ar[rd]
        && B \\
      & \CoDesc(\Ker(p)) \ar[ur,"\mathcal K^{\Ker(p)}",swap] 
    \end{tikzcd}
  \end{equation*}
  The following are equivalent:
  \begin{enumerate}[label=(\roman*),noitemsep]
    \item
      \label{enum:f.desc}
      \( p \) is an effective \( F \)-descent morphism (respectively, \( F
      \)-descent morphism),
    \item
      \label{enum:kstar.cat.eqv}
      \( \CAT(\mathcal K^{\Ker(p)}, \Cat) \) is an equivalence (respectively,
      fully faithful),
    \item
      \label{enum:fd.desc}
      \( p \) is an effective \( F_D \)-descent morphism (respectively, \( F_D
      \)-descent morphism),
    \item
      \label{enum:kstar.set.eqv}
      \( \CAT(\mathcal K^{\Ker(p)}, \Set) \) is an equivalence (respectively,
      fully faithful),
    \item
      \label{enum:k.ffle}
      \( \mathcal K^{\Ker(p)} \) is a fully faithful lax epimorphism
      (respectively, a lax epimorphism),
    \item
      \label{enum:ck.eqv}
      \( \Cauchy \mathcal K^{\Ker(p)} \) is an equivalence (respectively, a
      lax epimorphism),
  \end{enumerate}
\end{theorem}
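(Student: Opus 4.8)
The plan is to treat the six conditions as spokes around one hub — the condition that \(\mathcal K := \mathcal K^{\Ker(p)}\) be a fully faithful lax epimorphism (resp.\ a lax epimorphism) — and to show every other condition is equivalent to it by invoking a result already established. First I would fix notation: write \(\mathcal K \colon \CoDesc(\Ker(p)) \to B\), and note that both its domain and codomain are small categories, since a lax codescent object of a diagram of small categories is small. This matters because Theorem~\ref{thm:cauchy}, Corollary~\ref{cor:lax.epi} and Lemma~\ref{lem:star.set.cat}, all phrased for small \((\cat V\text{-})\)categories, will then apply to \(\mathcal K\). I would prove all the ``effective / equivalence'' statements equivalent and indicate the parallel argument for the ``descent / fully faithful / lax epimorphism'' variants at each step.

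Step one anchors everything on \(\mathcal K\) via lax codescent. Since \(\CAT(-,\Cat)\) and \(\CAT(-,\Set)\) are representable \(2\)-functors on \(\CAT\), they preserve all \(2\)-limits, hence — after restriction along \(\Cat^\op \hookrightarrow \CAT^\op\) — lax descent objects, as recorded in the text. The factorization displayed in the statement is precisely the lax codescent factorization \eqref{eq:lax.codesc.fact} of \(p\), so Lemma~\ref{lem:lax.desc.preserve} gives at once \ref{enum:f.desc} \(\iff\) \ref{enum:kstar.cat.eqv} (i.e.\ \(p\) is effective \(F\)-descent iff \(F\mathcal K = \CAT(\mathcal K,\Cat)\) is an equivalence) and \ref{enum:fd.desc} \(\iff\) \ref{enum:kstar.set.eqv}, together with their ``\(F\)-descent \(\iff\) fully faithful'' analogues.

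Step two handles the \(\Cat\)-leg, \ref{enum:kstar.cat.eqv} \(\iff\) \ref{enum:k.ffle}. Applying Lemma~\ref{lem:star.set.cat} to \(\mathcal K\): \(\mathcal K\) is fully faithful iff \(\CAT(\mathcal K,\Cat)\) is a lax epimorphism, and \(\mathcal K\) is a lax epimorphism iff \(\CAT(\mathcal K,\Cat)\) is fully faithful; combining, \(\mathcal K\) is a fully faithful lax epimorphism iff \(\CAT(\mathcal K,\Cat)\) is one. Now \(\CAT(\mathcal K,\Cat)\) is precomposition along \(\mathcal K\) between functor categories into the cocomplete \(\Cat\), so it has a left adjoint, namely \(\Lan_{\mathcal K}\); hence Proposition~\ref{prop:adj.ff.lax.epi.is.eqv} upgrades ``fully faithful lax epimorphism'' to ``equivalence'', the converse being trivial. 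For the ``descent'' variant this step is just the single biconditional: \(\mathcal K\) is a lax epimorphism iff \(\CAT(\mathcal K,\Cat)\) is fully faithful.

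Step three handles the Cauchy / \(\Set\)-leg, \ref{enum:k.ffle} \(\iff\) \ref{enum:ck.eqv} \(\iff\) \ref{enum:kstar.set.eqv}. I would take \(\cat V = \Set\) in Theorem~\ref{thm:cauchy} and feed it \(\mathcal K\); since \(\Set\) is complete, cocomplete, symmetric monoidal closed and satisfies \eqref{eq:cauchy.small}, the conditions \ref{enum:p.vff.laxepi}, \ref{enum:cp.eqv}, \ref{enum:pstar.eqv} of that theorem become exactly \ref{enum:k.ffle}, \ref{enum:ck.eqv}, \ref{enum:kstar.set.eqv} here, so these three are equivalent; for the ``descent'' variant one uses Corollary~\ref{cor:lax.epi} instead. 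Chaining the three legs closes the cycle \ref{enum:f.desc} \(\iff\) \ref{enum:kstar.cat.eqv} \(\iff\) \ref{enum:k.ffle} \(\iff\) \ref{enum:ck.eqv} \(\iff\) \ref{enum:kstar.set.eqv} \(\iff\) \ref{enum:fd.desc}. The genuinely routine parts are the smallness remark and verifying the adjoint required by Proposition~\ref{prop:adj.ff.lax.epi.is.eqv}; the one point I would be careful about — the only place where something could go subtly wrong — is the identification that makes Theorem~\ref{thm:cauchy} applicable, namely that the abstract functor \(\VCAT(\mathcal K,\Set)\) in its statement is on the nose the functor \(F_D(\mathcal K) = \CAT(\mathcal K,\Set)\) appearing here, so that ``\(\VCAT(\mathcal K,\Set)\) is an equivalence'' literally means \(F_D(\mathcal K)\) is an equivalence. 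Once these conventions are lined up, no computation remains.
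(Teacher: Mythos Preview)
Your proof is correct and follows essentially the same approach as the paper: both use Lemma~\ref{lem:lax.desc.preserve} for \ref{enum:f.desc}\,\(\iff\)\,\ref{enum:kstar.cat.eqv} and \ref{enum:fd.desc}\,\(\iff\)\,\ref{enum:kstar.set.eqv}, Theorem~\ref{thm:cauchy} (resp.\ Corollary~\ref{cor:lax.epi}) for \ref{enum:k.ffle}\,\(\iff\)\,\ref{enum:ck.eqv}\,\(\iff\)\,\ref{enum:kstar.set.eqv}, and Lemma~\ref{lem:star.set.cat} for \ref{enum:kstar.cat.eqv}\,\(\iff\)\,\ref{enum:k.ffle}. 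Your treatment is in fact more explicit than the paper's at the last step, since you spell out the use of Proposition~\ref{prop:adj.ff.lax.epi.is.eqv} to pass from ``\(\CAT(\mathcal K,\Cat)\) is a fully faithful lax epimorphism'' to ``equivalence'', which the paper leaves implicit.
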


\begin{proof}
  By Theorem \ref{thm:cauchy} (respectively, Corollary \ref{cor:lax.epi}), we
  deduce that \ref{enum:k.ffle} \( \iff \) \ref{enum:ck.eqv} \( \iff \)
  \ref{enum:kstar.set.eqv}.

  Since both \( F \) and \( F_D \) preserve lax descent objects, we conclude
  by Lemma \ref{lem:lax.desc.preserve} that \ref{enum:f.desc} \( \iff \)
  \ref{enum:kstar.cat.eqv}  and \ref{enum:fd.desc} \( \iff \)
  \ref{enum:kstar.set.eqv}.

  Finally, by Lemma \ref{lem:star.set.cat} we obtain \ref{enum:kstar.cat.eqv}
  \( \iff \) \ref{enum:k.ffle}.
\end{proof}



\begin{spacing}{0.9}




\printbibliography[heading=bibintoc, title={References}]

\end{spacing}





\printthesisindex 

\end{document}